\title{Fractional Besov-Sobolev Spaces on Quasicircles}
\let\OLDthebibliography\thebibliography
\renewcommand\thebibliography[1]{
  \OLDthebibliography{#1}
  \setlength{\parskip}{0pt}
  \setlength{\itemsep}{0pt plus 0.3ex}
}
\numberwithin{equation}{section}
\theoremstyle{plain}
\newtheorem{theo}{\protect\theoremname}
  \theoremstyle{plain}
  \newtheorem{lemma}[theo]{\protect\lemmaname}
    \newtheorem{prop}[theo]{\protect\propname}
\newtheorem{df}[theo]{Definition}
\numberwithin{theo}{section}
\newtheorem{cor}[theo]{Corollary}
\newtheorem{rem}[theo]{Remark}
\providecommand{\propname}{Proposition}
\providecommand{\lemmaname}{Lemma}
\providecommand{\theoremname}{Theorem}
 \author{
 Huaying Wei\thanks{Center for applied Mathematics, Tianjin University, Weijin Road 92, Tianjin, 300072, P.R. China, email: \url{hywei@tju.edu.cn} } \, and Michel Zinsmeister\thanks{Institut Denis Poisson,  Orl\'eans, 45067,  France, email: \url{zins@univ-orleans.fr}}}
 \date{\today}
\begin{document}

\maketitle

\begin{abstract}
     Let $\Gamma$ be a bounded Jordan curve and  $\Omega_i,\Omega_e$ its two complementary components. For $p\in (1, \infty),\,s\in(0,1)$ we define the two spaces $\mathcal{B}_{p,p}^s(\Omega_{i,e})$ as the set of harmonic functions $u$ respectively in $\Omega_i$ and $\Omega_e$ such that
$$ \iint_{\Omega_{i,e}} |\nabla u(z)|^p d(z,\Gamma)^{(1-s)p-1}  dxdy<+\infty.$$
When it is possible to identify these spaces with spaces of functions on the boundary (trace spaces), we address the question of their equality. When $\Gamma$ is the unit circle, these two spaces coincide with homogeneous fractional Besov-Sobolev spaces and the framework of quasicircles appears to be an appropriate generalization. In this framework, we study the boundedness of the Plemelj-Calder\'on operator and  apply the results to show that for some values of $p,s$,
if the two spaces coincide, they are restrictions to $\Gamma$ of some weighted Sobolev space. 

If $\Gamma$ is further assumed to be rectifiable, we define $B_{p,p}^s(\Gamma)$ as the space of  functions $f\in L^p(\Gamma)$ such that 
$$\iint_{\Gamma\times \Gamma}\frac{|f(z)-f(\zeta)|^p}{|z-\zeta|^{1+ps}} |dz||d\zeta|<+\infty.$$
Again, these spaces coincide with the homogeneous fractional Besov-Sobolev spaces for the unit circle. 
 While the chord-arc property is the necessary and sufficient condition for the equality
 $$\mathcal{B}_{p,p}^s(\Omega_{i})=\mathcal{B}_{p,p}^s(\Omega_{e})=B_{p,p}^s(\Gamma)$$
  in the case of $s=1/p,\, p\ge 2$, this is no longer the case for general $s\in (0,1)$. However, we show that equality holds for radial-Lipschitz curves.   
Finally, we re-interpretate some of our results as some "almost"-Dirichlet principle in the spirit of Maz'ya. 

\bigskip 

   \noindent \textbf{Keywords:} Cauchy integral, fractional Sobolev spaces, Besov spaces, quasicircles,   chord-arc curves, Lipschitz curves, Dirichlet spaces
   
   \noindent \textbf{2020 MSC:} Primary 42B20, 46E35; Secondary 31A05, 30H35

\end{abstract}

\newpage

\tableofcontents

\section{Introduction}
\subsection{Dirichlet principle}
In all this work, we will consider a bounded Jordan curve $\Gamma$ and call $\Omega_i,\Omega_e$ the interior and exterior connected components of its complement in the sphere. If $f$ is a continuous function on $\Gamma$ and $F$ a continuous extension of $f$ to $\overline{\Omega}_{i,e}$, which is furthermore assumed to be $C^1$ in $\Omega_{i,e}$, we call $(2-)$energy the quantity
$$ \mathcal{E}_{i,e}(F)=\iint_{\Omega_{i,e}}|\nabla F(z)|^2dxdy\leq +\infty.$$
The Dirichlet principle states that among all the extensions of $f$ the one with the lowest energy is the harmonic one. Notice that the $(2-)$energy of the harmonic extension of a continuous function may be infinite, in which case every extension has infinite $(2-)$energy. When $\Gamma=\mathbb T$, the unit circle, we define $\mathcal{D}(\mathbb D_i)$, the Dirichlet space on the unit disk $\Omega_i=\mathbb D_i$ as the space of harmonic functions in $\mathbb D_i$ with finite $(2-)$energy. This space coincides with the space of Poisson integrals of $L^2(\mathbb T)$-functions such that
$$ \sum\limits_{n\in \mathbb Z}|n||\hat{f}(n)|^2<\infty.$$
As Douglas observed in \cite{Dou} it also coincides with the space of Poisson integrals of $L^2(\mathbb T)$-functions such that
$$\iint_{\mathbb{T}\times\mathbb{T}}\frac{|f(z)-f(\zeta)|^2}{|z-\zeta|^2}|dz||d\zeta|<\infty.$$
The map $z\mapsto 1/\bar z$ is a reflection about the unit circle that is bi-Lipschitz from $\{1/2<|z|<2\}$ onto itself. It follows that if $u_i\in \mathcal{D}(\mathbb D_i)$ is the Poisson integral of $f\in L^2(\mathbb T)$ then the Poisson extension $u_e$ of $f$ in $\mathbb D_e = \overline{\mathbb C}\setminus \overline{\mathbb D}_i$ also has a finite energy.

The Dirichlet space is a special case of homogeneous fractional Sobolev spaces. More precisely, if $0<s<1$, we define $H^s(\mathbb T)$ as the subspace of $L^2(\mathbb T)$ of functions $f$ such that
$$\|f\|^2_{H^s}=\sum\limits_{n\in \mathbb Z}|n|^{2s}|\hat{f}(n)|^2<\infty.$$ 
It is not difficult to see that the above quantity, the square of the $H^s$-semi-norm, is equivalent  (in particular, in the case of $s=1/2$ it is equal, up to a constant multiplicative factor) to two other: 
\begin{enumerate}
    \item The (square of the) Littlewood-Paley semi-norm
    $$ \iint_{\mathbb D_i}|\nabla u_i(z)|^2(1-|z|)^{1-2s}dxdy,$$
    \item The (square of the) Douglas norm
    $$\iint_{\mathbb T\times \mathbb T}\frac{|f(z)-f(\zeta)|^2}{|z-\zeta|^{1+2s}}|dz||d\zeta|.$$
\end{enumerate}
The mention of Littlewood-Paley comes from the fact that for $s=0$ the Littlewood-Paley semi-norm is exactly the $L^2$-norm of $\mathsf g(f)$, the Littlewood-Paley function associated with $f$ (see \cite{Ste}).

In terms of Besov spaces, 
$$H^s(\mathbb T)=B_{2,2}^s(\mathbb T),$$
and  this space also coincides with $(I-\Delta)^{-s/2}(L^2(\mathbb T))$, that is, the space of Bessel potentials of order $s$.

We can further generalize these spaces by replacing $2$ with $p>1$ and define $B_{p,p}^s(\mathbb T)$ as the set of $L^p(\mathbb T)$-functions such that either
\begin{enumerate}
    \item (Littlewood-Paley)
    $$\iint_{\mathbb D_i}|\nabla u_i(z)|^p(1-|z|)^{(1-s)p-1} dxdy<\infty,$$
    \item (Douglas)
    $$\iint_{\mathbb T\times \mathbb T}\frac{|f(z)-f(\zeta)|^p}{|z-\zeta|^{1+ps}}|dz||d\zeta|<\infty.$$
\end{enumerate}
It should be noted that if $p\neq 2$,
$$B_{p,p}^s(\mathbb T)\neq(I-\Delta)^{-s/2}(L^p(\mathbb T)),$$
so that, to avoid confusion, we will  use the Besov terminology. The spaces $B_{p,p}^s(\mathbb T)$, $s \in (0, 1)$, $p \in (1, \infty)$, are "intermediate"  spaces between two Sobolev spaces, so they are also  called fractional Sobolev spaces in the literature. Throughout this paper, we call them (along with their generalizations to the more general Jordan curves) fractional Besov-Sobolev spaces. 

The main goal of this paper is to extend these notions to more general Jordan curves. The first restriction on the Jordan curve $\Gamma$ is that we want it to share with $\mathbb T$ the reflection property, that is,  the existence of  a reflection across $\Gamma$ which is bi-Lipschitz in a neighborhood of $\Gamma$. The curves satisfying this property are well-known: they are the quasicircles, i.e., the quasiconformal images of circles \cite{Ahl}. Geometrically, the Jordan curve $\Gamma$ is a quasicircle if and only if there exists $C>1$ such that
$$\forall z_1,z_2\in \Gamma,\quad \min(\mathrm{diam}(\gamma_1),\mathrm{diam}(\gamma_2))\le C|z_1-z_2|$$
where $\gamma_j,j=1,2$, are the two subarcs of $\Gamma$ with endpoints $z_1,z_2$.

A quasicircle does not necessarily need to be smooth, and the Hausdorff dimension of a quasicircle can take any value in $[1, 2)$. As an example, the Von Koch snowflake curve is a quasicircle without any tangent, and its Hausdorff dimension, which coincides with its Minkowski dimension, is $\log 4/\log 3$.

From now on, all the Jordan curves that we  consider will be assumed to be quasicircles.

\subsection{Statement of main theorems}
Let $\Gamma$ be a quasicircle and $\Omega_{i,e}$ the inner and outer connected components of $\overline{\mathbb C}\setminus \Gamma$ as above. Suppose $\Omega$ is one of these two components and, correspondingly, $\mathbb D$ is one of $\mathbb D_i$ and $\mathbb D_e$. 
We define, for $p>1$ and $0<s<1$, 
$$ \mathcal{B}_{p,p}^s(\Omega)=\{u\;\mathrm{harmonic\;in}\;\Omega:\;      \Vert u \Vert_{\mathcal{B}_{p,p}^s(\Omega)}^p = \iint_\Omega |\nabla u(z)|^p d(z,\Gamma)^{(1-s)p-1} dxdy<\infty\}.$$
We now define for functions in these spaces and for some values of $p,\,s$ their boundary values on $\Gamma$, which will allow us to identify $ \mathcal{B}_{p,p}^s(\Omega)$ with a space of functions defined on $\Gamma$ that we denote by $ \mathcal{B}_{p,p}^s(\Omega\!\to\!\Gamma)$.

In order to define this space, we distinguish three cases, namely $s>1/p,\,s=1/p,\, s<1/p$.
\begin{enumerate}
\item Case $s>1/p$: we will see that in this case $\mathcal{B}_{p,p}^s(\Omega)\subset \Lambda^\alpha(\overline{\Omega})$, the H\"older space with exponent $\alpha=s-1/p$. The boundary value of a function in $\mathcal{B}_{p,p}^s(\Omega)$ is then just its restriction to $\Gamma$.
\item Case $s=1/p$: we use here the remarkable fact that these spaces are conformally invariant. In particular, if $f\in\mathcal{B}_{p,p}^s(\Omega)$, then $f\circ \varphi\in\mathcal{B}_{p,p}^s(\mathbb D)$ where $\varphi$ is a Riemann mapping from $\mathbb D$ onto $\Omega$, and this isomorphism $T$ is almost isometric in the sense that, due to the Koebe distortion  theorem,
$$\forall f\in \mathcal{B}_{p,p}^{1/p}(\Omega), \quad \frac 14\|f\|_{ \mathcal{B}_{p,p}^{1/p}(\Omega)}\le\|Tf\|_{ \mathcal{B}_{p,p}^{1/p}(\mathbb D)}\le 4\|f\|_{ \mathcal{B}_{p,p}^{1/p}(\Omega)}.$$
Since the latter space is included in the harmonic Hardy  space $h^p(\mathbb D)$, we conclude that $f$ has a well-defined boundary value in $L^p(\omega, \Gamma)$, where $\omega$ is the harmonic measure for $\Omega$.
\item Case $s<1/p$: we proceed here differently. Let $C_c^\infty(\mathbb C)|_{\Gamma} = \{f|_\Gamma:  \; f\in C_c^\infty(\mathbb C)\}$, $C_c^\infty(\mathbb C)$-functions being $C^\infty$-functions with compact support in $\mathbb C$. Let $\mathcal{D}_p^s(\Gamma)$ be the set of functions in $C_c^\infty(\mathbb C)|_{\Gamma}$ such that  their harmonic extensions to $\Omega_{i,e}$ belong to $\mathcal{B}_{p,p}^s(\Omega_{i,e})$, and we denote the set of such extensions by $\mathcal D_p^s(\Omega_{i,e})$. 
We define the space of boundary values $\mathcal{B}_{p,p}^s(\Omega_{i,e}\!\to\!\Gamma)$ as the abstract closure of this space, i.e., the equivalence classes of Cauchy sequences in $\mathcal{D}_p^s(\Gamma)$, equipped with the $\mathcal{B}_{p,p}^s(\Omega_{i,e})$-norm. Notice that this procedure only defines boundary values for the closures of $\mathcal{D}_p^s(\Omega_{i,e})$ in $\mathcal{B}_{p,p}^s(\Omega_{i,e})$, which may not be the full space. However, the space $\mathcal{B}_{p,p}^s(\Omega_{i,e})$ in this case is quite problematic, and the closure of $\mathcal{D}_p^s(\Omega_{i,e})$ in $\mathcal{B}_{p,p}^s(\Omega_{i,e})$ is actually a more robust variant of this space. 
\end{enumerate}
For all these cases, we define
$$\mathcal{B}_{p,p}^s(\Gamma)=\mathcal{B}_{p,p}^s(\Omega_i\to\Gamma)\cap\mathcal{B}_{p,p}^s(\Omega_e\to\Gamma),$$
equipped with the natural norm. 
The case 2  above is particularly interesting, especially since the two harmonic measures may be mutually singular. This drawback has been ruled out by \cite{Sc1} and \cite{Bou} by exhibiting a change of variables realizing a "transmission" operator between the spaces on both sides which happens to be an isomorphism, thus showing the equality of the three spaces for all quasicircles.

When $\Gamma$ is furthermore assumed to be rectifiable, we define, following Douglas \cite{Dou},
$$B_{p,p}^s(\Gamma)=\{f\in L^p(\Gamma):\; \Vert f \Vert_{B_{p,p}^s(\Gamma)}^p =\iint_{\Gamma\times\Gamma}\frac{|f(z)-f(\zeta)|^p}{|z-\zeta|^{1+ps}}|dz||d\zeta|<\infty\}.$$
Here and in what follows, $|dz|$ denotes the arc-length measure. 
If $\Gamma $ is the unit circle, the three spaces $ \mathcal{B}_{p,p}^s(\Omega_i\to\Gamma),\mathcal{B}_{p,p}^s(\Omega_e\to\Gamma),B_{p,p}^s(\Gamma)$ coincide (see p.151 in \cite{Stein1970}), but they have no reason to coincide in general. Finding cases of equality is one of our goals. In order to state the first result in this direction, we need some definitions:

A curve $\Gamma$ is said to be chord-arc (or $K$-chord-arc) if it is the bi-Lipschitz image of a circle. These curves are geometrically characterized by the fact that they are rectifiable and that
$$\exists K>1:\quad\forall z_1,z_2\in \Gamma,\,\min(\mathrm{length}(\gamma_1),\mathrm{length}(\gamma_2))\le K|z_1-z_2|,$$
where $\gamma_j,\,j=1,2$, are the two subarcs of $\Gamma$ with endpoints $z_1,z_2$. Notice that chord-arc curves are, in particular, rectifiable quasicircles (but the converse is not true).

We will also need the notion of radial-Lipschitz curves. These are the curves given in polar coordinates by the equation $z(\theta)=r(\theta)e^{i\theta}$ where $r:\mathbb R\to (0,+\infty)$ is $2\pi$-periodic and Lipschitz continuous (The norm of Lipschitz curve will be defined in Section 5.). The radial-Lipschitz curves are in particular chord-arc.

\begin{theo} \label{WZ12}
Let $p>1$. 
If  $\Gamma$ is a chord-arc, then 
\begin{equation}
     \mathcal{B}_{p,p}^{1/p}(\Omega_i\to\Gamma)=\mathcal{B}_{p,p}^{1/p}(\Omega_e\to\Gamma)=B_{p,p}^{1/p}(\Gamma). \label{equality}
\end{equation}
Conversely, if $\Gamma$ is a rectifiable quasicircle such that (\ref{equality}) holds and $p\ge 2$ then $\Gamma$ is chord-arc.
\end{theo}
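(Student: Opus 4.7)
By the transfer isomorphisms of \cite{Sc1} and \cite{Bou} cited above, the equality $\mathcal{B}_{p,p}^{1/p}(\Omega_i\to\Gamma) = \mathcal{B}_{p,p}^{1/p}(\Omega_e\to\Gamma)$ already holds for every quasicircle, so the only new content of (\ref{equality}) is the identification of this common space with the boundary Douglas space $B_{p,p}^{1/p}(\Gamma)$. The plan is to reduce everything to $\mathbb{T}$ through two different parametrizations of $\Gamma$. Let $\gamma:\mathbb{T}\to\Gamma$ denote an arc-length parametrization and $\varphi_i:\mathbb{D}_i\to\Omega_i$ the Riemann map. When $\Gamma$ is chord-arc, $\gamma$ is bi-Lipschitz, so the Douglas seminorm on $\Gamma$ is equivalent to the Douglas seminorm of $f\circ\gamma$ on $\mathbb{T}$. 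On the other hand, the conformal invariance of the Littlewood--Paley seminorm at the critical exponent $s=1/p$ (recorded above via the Koebe almost-isometry) identifies $\mathcal{B}_{p,p}^{1/p}(\Omega_i\to\Gamma)$ with $B_{p,p}^{1/p}(\mathbb{T})$ through pull-back by $\varphi_i|_{\mathbb{T}}$. Therefore (\ref{equality}) is equivalent to the statement that the welding homeomorphism $h:=\gamma^{-1}\circ\varphi_i|_{\mathbb{T}}:\mathbb{T}\to\mathbb{T}$ induces a bounded composition operator on $B_{p,p}^{1/p}(\mathbb{T})$.

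For the direct implication, assume $\Gamma$ is chord-arc. By a classical characterization (Lavrentiev), this is equivalent to $|\varphi_i'|$ being an $A_\infty$-weight on $\mathbb{T}$, and hence so is $h'$. I would then prove directly that a self-homeomorphism of $\mathbb{T}$ whose derivative is $A_\infty$ acts boundedly on the Douglas integral
$$\iint_{\mathbb{T}\times\mathbb{T}} \frac{|g(z)-g(\zeta)|^p}{|z-\zeta|^{2}}\,|dz||d\zeta|.$$
After the obvious change of variable, $|z-\zeta|$ gets replaced by $|h^{-1}(z)-h^{-1}(\zeta)|$, and comparing the two denominators requires a Whitney-type decomposition of a neighborhood of the diagonal of $\mathbb{T}\times\mathbb{T}$ together with the self-improvement and reverse H\"older properties of $A_\infty$ weights.

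For the converse, assume $\Gamma$ is a rectifiable quasicircle, $p\geq 2$, and (\ref{equality}) holds. The same reduction shows that $g\mapsto g\circ h$ is an isomorphism of $B_{p,p}^{1/p}(\mathbb{T})$, which forces quantitative regularity on $h$ and hence on $|\varphi_i'|$. Exploiting the boundedness theory of the Plemelj--Calder\'on operator developed earlier in the paper, this regularity translates into an $L^p$-type estimate for the Cauchy singular integral on $\Gamma$. The hypothesis $p\geq 2$ is what allows one to pass from this to $L^2$-boundedness of the Cauchy transform (via duality/interpolation), whereupon David's theorem gives Ahlfors regularity of the arc-length measure on $\Gamma$; a rectifiable Ahlfors-regular quasicircle is chord-arc.

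The decisive step, and the one I expect to be the main obstacle, is the change of variable in the Douglas integral under an $A_\infty$ welding: the Besov seminorm at the critical exponent $s=1/p$ is not preserved by arbitrary quasisymmetric homeomorphisms, and the argument must exploit rather precisely the Muckenhoupt structure coming from the chord-arc hypothesis. On the converse side, the subtle point is to extract from the equality of abstract function spaces a concrete singular-integral bound on $\Gamma$ at an exponent where David's rectifiability theorem applies, which is precisely what the restriction $p\geq 2$ accommodates.
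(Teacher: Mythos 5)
Your reduction to the circle matches the paper's own sketch (subsection 5.4): pull back to $\mathbb{T}$ once by the arc-length parametrization and once by the Riemann map, and conclude by comparing the two pull-backs. However, you handle the decisive step --- boundedness of $V_h: g\mapsto g\circ h$ on $B_{p,p}^{1/p}(\mathbb{T})$, $h=\gamma^{-1}\circ\varphi_i|_{\mathbb{T}}$ --- by a genuinely different route that you yourself flag as the ``main obstacle,'' and it is left as an unfinished sketch. This is the gap. You propose to derive the bound from $|h'|\in A_\infty$ via a Whitney decomposition near the diagonal and the reverse H\"older inequality, but none of the required estimates are carried out, and there is no obvious way to control the off-diagonal part of the Douglas kernel
$|z-\zeta|^{-2}$ under the substitution $z\mapsto h(z)$ using only the $A_\infty$ property of $|h'|$ --- the difference quotients of $h$ on arcs of comparable length, not just the derivative, enter the estimate.

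The paper's route is cleaner and avoids weight estimates entirely. It first recalls (Section 2.1) that $B_{p,p}^{1/p}(\mathbb T)$ is stable under composition by \emph{any} quasisymmetry (Nag--Sullivan for $p=2$, Bourdaud in general), and the argument given there for this stability is geometric rather than analytic: the quasiconformal reflection plus the almost-Dirichlet principle (Lemma \ref{almost1p}) give the transmission isomorphism between $\mathcal B_{p,p}^{1/p}(\Omega_i)$ and $\mathcal B_{p,p}^{1/p}(\Omega_e)$; conjugating by the Riemann maps and using conformal invariance, this becomes boundedness of $V_h$ on $\mathcal B_{p,p}^{1/p}(\mathbb T)$ for the welding $h$ of $\Gamma$; since every quasisymmetry is a conformal welding, $V_h$ is bounded for all quasisymmetries; and on $\mathbb T$ the Littlewood--Paley and Douglas norms coincide, so the same holds for the Douglas seminorm. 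Since $\lambda^{-1}\circ\varphi_i|_{\mathbb T}$ is a quasisymmetry for any quasicircle, this settles your step 2 without ever touching $A_\infty$. (Aside: you call $\gamma^{-1}\circ\varphi_i|_{\mathbb T}$ ``the welding homeomorphism,'' but the welding is $\varphi_i^{-1}\circ\varphi_e$; the map you need is a quasisymmetry but not the welding.)

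For the converse, the paper does not give a proof here but cites \cite{WZ,WZ2}. Your sketch --- deduce from the space equality a singular-integral bound on $\Gamma$, use $p\ge 2$ to reach $L^2$, then apply David's theorem and the fact that an Ahlfors-regular rectifiable quasicircle is chord-arc --- is in the right spirit, but ``quantitative regularity on $h$'' and the passage to an $L^2$ estimate for the Cauchy transform are stated at a level of vagueness that does not yet constitute a proof.
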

 The second statement was first proved in \cite{WZ} and \cite{WZ2} by the present authors. 
For the proof of the first statement, we refer to \cite{Liu, LS-1, LS} (see  also \cite{SS-AASF, Sc1, SS-EMS}).
The special features of the case $s=1/p$ lie in (and the proof of this theorem depends primarily on) 
  conformal invariance of spaces $\mathcal{B}_{p,p}^{1/p}(\Omega_{i,e})$ and the existence of a transmission operator.  

In Section 5 we will prove our main results in this direction  (see Theorems \ref{interpol} and \ref{interpolpps})  
that loosely speaking say that  
\begin{theo}[see Corollary \ref{MMM}]\label{a} If $\Gamma$ is a radial-Lipschitz curve with norm $M$ then 
\begin{equation}\label{equalitybis}
\mathcal{B}_{p,p}^{s}(\Omega_i\!\to\!\Gamma)=\mathcal{B}_{p,p}^{s}(\Omega_e\!\to\!\Gamma)=B_{p,p}^{s}(\Gamma). 
\end{equation}
for any $s\in (0,1)$ and $p\in \left(1+ \frac{2\arctan M}{\pi}, 1+\frac{\pi}{2\arctan M}\right)$. In particular, this interval converges to $(1, \infty)$ as $M\to 0$ and to a singleton $\{2\}$ as $M\to\infty$.
\end{theo}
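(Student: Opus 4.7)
The plan is to deduce the corollary from the two interpolation statements \ref{interpol} and \ref{interpolpps} promised for Section 5, so I describe how I would organise that reduction. The starting point is Theorem \ref{WZ12}: every radial-Lipschitz curve is chord-arc, so for each $p\in(1,\infty)$ the diagonal identity
$$
\mathcal{B}_{p,p}^{1/p}(\Omega_i\!\to\!\Gamma)=\mathcal{B}_{p,p}^{1/p}(\Omega_e\!\to\!\Gamma)=B_{p,p}^{1/p}(\Gamma)
$$
is already known. The whole problem is therefore to propagate this single equality to every regularity $s\in(0,1)$, at the price of restricting the exponent $p$ to the range appearing in the statement.

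The natural tool is a complex (or real $K$-method) interpolation in the parameter $s$ on each of the three scales appearing in the statement. The Littlewood-Paley definition presents $\mathcal{B}_{p,p}^{s}(\Omega\!\to\!\Gamma)$ as a weighted $L^p$-space depending analytically on $s$, and the Douglas integral does the same for $B_{p,p}^{s}(\Gamma)$. I would first verify that each of these two scales behaves as a genuine interpolation scale (using the Poisson extension as a retract and its boundary trace as a co-retract), and then use the diagonal equality at one point together with a soft endpoint identification as $s\to 0$ or $s\to 1$ (where on a Lipschitz curve all three spaces reduce, up to equivalence of norms, to a weighted $L^p$- respectively $W^{1,p}$-type space on the curve) to identify the three scales at every intermediate value of $s$. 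Applied symmetrically on both sides of $\Gamma$, this yields the announced equality.

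The restriction of $p$ to $(1+2\arctan M/\pi,\,1+\pi/(2\arctan M))$ enters through the Plemelj-Calder\'on singular integral, the retract that realises the decomposition of a boundary function into its Hardy projections in $\Omega_i$ and $\Omega_e$. For a radial-Lipschitz curve with norm $M$, the tangent vector ranges in an arc of aperture $2\arctan M$, and the sharp $L^p$-boundedness interval of the Cauchy integral is precisely the self-dual interval $(p_0,p_0')$ named in the statement; inside it, the interpolation argument of the previous paragraph closes, and outside it the required retract simply ceases to be bounded. The main obstacle, and the technical core of the proof, is therefore to show that the Plemelj-Calder\'on operator is a bounded retract on the full Besov scale $\{B_{p,p}^{s}(\Gamma)\}_{s\in(0,1)}$ (and on its Littlewood-Paley companion on each $\Omega$) for every $p$ in this sharp interval — which is exactly what Theorems \ref{interpol} and \ref{interpolpps} are designed to provide. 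Once this boundedness is in hand, the degeneration of the interval to $\{2\}$ as $M\to\infty$ and to $(1,\infty)$ as $M\to 0$ appears automatically: the first is the Hilbert-space limit in which only $L^2$-theory survives on very oscillating graphs, and the second recovers the smooth unit-circle case in which all three spaces coincide for every $(p,s)$.
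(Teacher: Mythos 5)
Your outline starts at the right place (the diagonal identity at $s=1/p$ from Theorem~\ref{WZ12}) and correctly identifies that some interpolation in $s$ is needed, but there is a genuine misconception in the second half of the argument, and the interpolation step is vague in a way that hides the real technical core.

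The misconception is your explanation of the $p$-restriction. You attribute the interval $\left(1+\tfrac{2\arctan M}{\pi},\,1+\tfrac{\pi}{2\arctan M}\right)$ to a loss of $L^p$-boundedness of the Cauchy integral on a Lipschitz curve. This is false: by Coifman--McIntosh--Meyer and David, the Cauchy integral is bounded on $L^p(\Gamma)$ for \emph{every} $p\in(1,\infty)$ on any chord-arc curve, and indeed the paper's Theorem~\ref{Tspp} extends this to the whole scale $B_{p,p}^s(\Gamma)$, $s\in[0,1]$, with no restriction on $p$. The actual source of the restriction is entirely different: one needs $B_{p,p}^s(\Gamma)$ to be \emph{conjugate-stable}, i.e.\ closed under $f\mapsto\tilde f$ where $\tilde u$ is the harmonic conjugate. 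This is Theorem~\ref{conj}, and it requires $|\varphi'|\in A_p$ (or $A_{p'}$) on $\mathbb{T}$, where $\varphi$ is the Riemann map. For a radial-Lipschitz curve the Helson--Szeg\H{o} representation $\log|\varphi'|=u-H(\mathrm{Arg}\tfrac{z\varphi'}{\varphi})$ with $\|\mathrm{Arg}\tfrac{z\varphi'}{\varphi}\|_\infty=\arctan M$ gives $p_0\le 1+\tfrac{2\arctan M}{\pi}$, and the self-dual interval $(p_0,p_0/(p_0-1))$ is what appears in the statement. So the $p$-window is about the $A_p$-weight $|\varphi'|$, not about the Cauchy singular integral.

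The interpolation step is also under-specified and risks circularity. You propose using "the Poisson extension as a retract," but the boundedness of the Poisson extension from $B_{p,p}^s(\Gamma)$ into $\mathcal{B}_{p,p}^s(\Omega)$ is precisely one of the inclusions to be proved; it cannot be assumed. What the paper actually does is quite specific: after reducing to holomorphic subspaces $\mathcal{HB}_{p,p}^s(\Omega)$ via conjugation, it exhibits an $(s,p)$-dependent operator $V_{s,p}(f)(z)=\int_0^z (f\circ\varphi)'(u)\varphi'(u)^{1/p-s}\,du$ that identifies $\mathcal{HB}_{p,p}^s(\Omega)$ with the standard weighted Bergman space $A^p_{(1-s)p-1}$ on the disk, then runs Voigt's abstract Stein interpolation theorem (Theorem~\ref{Voigt}) on the family $\widetilde T_z=e^{cz^2}V_{\alpha(z),q(z)}(\cdot)'$, using the bound $\|\mathrm{Arg}\,\varphi'\|_\infty<\infty$ to keep the family bounded on the strip, and using the two known endpoints (the diagonal $s=1/p$ for every $p$ from Theorem~\ref{WZ12}, and the $p=2$ axis for every $s$ from Theorem~\ref{interpol}) to deduce the rest of the $(p,s)$-range. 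The fact that the interpolation operator itself varies analytically in the interpolation parameter is essential and is invisible in your sketch.
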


In Section 3, we will also prove a result in this regard  for general quasicircles. Before stating it, we need to introduce some notions.

Astala \cite{Ast} has introduced the notion of "Minkowski content" $h(\Gamma)$: the proper definition will be recalled below but let us just mention here that $h(\Gamma)$ is the Minkowski dimension if $\Gamma$ is a self-similar fractal set (as is the Von  Koch snowflake curve, for example), and that chord-arc curves are exactly quasicircles with $h(\Gamma) = 1$. 

The Muckenhoupt weights $A_p,\,p>1$, in $\mathbb C$ are the weights $\omega$ such that the Hardy-Littlewood maximal function is bounded in $L^p(\omega, \mathbb C)$. Let $\omega$ be such a weight. We define the weighted Sobolev space $W^{1,p}(\omega, \mathbb C)$ as the space of tempered distributions $f$ such that $f$ is locally integrable and, in the sense of distributions, $\nabla f\in L^p(\omega, \mathbb C)$.
\begin{theo}[see Theorem \ref{123b}]\label{b} 
Let $\Gamma $ be a quasicircle and  $\omega(z) = d(z, \Gamma)^{(1-s)p-1}$. If 
\begin{equation}\label{pands}
    p>2, \quad \frac{h(\Gamma)}{2}-\frac{h(\Gamma)-1}{p} < s < 1-\frac{h(\Gamma)-1}{p}
\end{equation}
(the latter reduces to $1/2<s<1$ when $\Gamma$ is chord-arc)
and 
\begin{equation}
\label{equalbe}
\mathcal{B}_{p,p}^s(\Omega_i\to\Gamma)=\mathcal{B}_{p,p}^s(\Omega_e\to\Gamma),
\end{equation}
then
\begin{equation}
\label{equality2}
\mathcal{B}_{p,p}^s(\Omega_i\to\Gamma)=\mathcal{B}_{p,p}^s(\Omega_e\to\Gamma)=W^{1,p}(\omega, \mathbb C)|_\Gamma.
\end{equation}
\end{theo}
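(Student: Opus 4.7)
The plan is to establish the two inclusions $\mathcal{B}_{p,p}^{s}(\Omega_{i,e}\to\Gamma)\subseteq W^{1,p}(\omega,\mathbb{C})|_{\Gamma}$ and $W^{1,p}(\omega,\mathbb{C})|_{\Gamma}\subseteq\mathcal{B}_{p,p}^{s}(\Omega_{i,e}\to\Gamma)$. The hypothesis (\ref{equalbe}) will be used decisively in the first: it guarantees that any common boundary value $f$ can be extended to a single global function on $\mathbb{C}$ by pasting the two harmonic extensions.

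For the first inclusion, I would start from $f\in\mathcal{B}_{p,p}^{s}(\Omega_{i}\to\Gamma)=\mathcal{B}_{p,p}^{s}(\Omega_{e}\to\Gamma)$ and let $u_{i},u_{e}$ denote its harmonic extensions. The lower bound in (\ref{pands}), combined with $h(\Gamma)\geq 1$ and $p>2$, forces $s>1/p$, so by Case 1 of the boundary-value discussion each $u_{i,e}$ extends continuously up to $\Gamma$ and agrees there with $f$. I then set $F=u_{i}$ on $\overline{\Omega_{i}}$ and $F=u_{e}$ on $\overline{\Omega_{e}}$; this is well-defined and continuous on $\mathbb{C}$, with
$$\iint_{\mathbb{C}}|\nabla F|^{p}\,d(z,\Gamma)^{(1-s)p-1}\,dxdy\;=\;\|u_{i}\|_{\mathcal{B}_{p,p}^{s}(\Omega_{i})}^{p}+\|u_{e}\|_{\mathcal{B}_{p,p}^{s}(\Omega_{e})}^{p}<\infty$$
in the classical sense off $\Gamma$. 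The remaining step is to verify that the classical and distributional gradients of $F$ coincide: I would obtain this by testing $F$ against $\varphi\in C_{c}^{\infty}(\mathbb{C})$ and showing, via exhaustion of $\mathbb{C}\setminus\Gamma$ by $\varepsilon$-neighborhoods of $\Gamma$ and the continuity of $F$ across $\Gamma$, that the boundary contribution vanishes in the limit. The upper bound $s<1-(h(\Gamma)-1)/p$ in (\ref{pands}) is exactly the weighted Sobolev capacity condition that makes $\Gamma$ removable for continuous $W^{1,p}(\omega,\mathbb{C})$ functions.

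The reverse inclusion proceeds dually: given $F\in W^{1,p}(\omega,\mathbb{C})$, Morrey-type embedding in the weighted setting (available because $p>2$) gives $F$ a continuous representative, so its trace $F|_{\Gamma}$ is well-defined pointwise. Letting $u_{i,e}$ be the harmonic extensions of $F|_{\Gamma}$ to $\Omega_{i,e}$, the core step is the weighted Dirichlet-type inequality
$$\iint_{\Omega_{i,e}}|\nabla u_{i,e}|^{p}\,d(z,\Gamma)^{(1-s)p-1}\,dxdy\;\leq\; C\iint_{\mathbb{C}}|\nabla F|^{p}\,d(z,\Gamma)^{(1-s)p-1}\,dxdy,$$
which immediately yields $F|_{\Gamma}\in\mathcal{B}_{p,p}^{s}(\Omega_{i,e}\to\Gamma)$.

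This weighted Dirichlet-type inequality is the principal obstacle. For $p=2$ it would follow from the Hilbert-space orthogonal projection underlying the classical Dirichlet principle, but for $p\neq 2$ the argument must go through singular integral theory: I would decompose $F$ via the Cauchy transform across $\Gamma$ into pieces analytic in $\Omega_{i}$ and $\Omega_{e}$ and invoke the weighted $L^{p}$-boundedness of the Plemelj-Calderón operator established earlier in the paper. The range (\ref{pands}) together with $p>2$ is precisely tailored so that $\omega(z)=d(z,\Gamma)^{(1-s)p-1}$ is an $A_{p}$ weight relative to the Minkowski content $h(\Gamma)$, which is what allows the Calderón-Zygmund machinery on the quasicircle $\Gamma$ to deliver the desired estimate in the absence of any chord-arc regularity.
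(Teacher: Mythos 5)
The inclusion $W^{1,p}(\omega,\mathbb C)|_\Gamma\subset\mathcal{B}_{p,p}^s(\Gamma)$ in your proposal is essentially on the right track: the paper too uses the decomposition $f|_\Gamma=F_i-F_e$ of $f\in C_c^\infty(\mathbb C)$ via the Cauchy integral, the $A_p$-boundedness of the Beurling transform, and then hypothesis \eqref{equalbe} (each $F_{i,e}|_\Gamma$ lies in $\mathcal B_{p,p}^s(\Omega_{i,e}\to\Gamma)=\mathcal B_{p,p}^s(\Gamma)$, hence so does $f|_\Gamma$), followed by density of $C_c^\infty(\mathbb C)$ in $W^{1,p}(\omega,\mathbb C)$. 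One small remark: your framing of the key step as a ``weighted Dirichlet-type inequality'' for the harmonic extensions is logically backwards relative to the paper --- that inequality (the almost-Dirichlet principle) is \emph{deduced} from Theorem~\ref{123b}, not used to prove it; the Cauchy decomposition yields membership in $\mathcal B_{p,p}^s(\Gamma)$ directly, without passing through such an inequality.

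The first inclusion is where you have a genuine gap. Your plan is to paste the two harmonic extensions $u_i,u_e$ into a single continuous function $F$ on $\mathbb C$, and assert that the upper bound $s<1-(h(\Gamma)-1)/p$ makes $\Gamma$ removable, so that the pasted function lies in $W^{1,p}(\omega,\mathbb C)$. This removability claim is not established, and is precisely the point where the paper's authors are stuck: see Proposition~\ref{distrib} and the remarks following it, where they show that the distributional and classical gradients of such a pasted function agree only under the stronger condition $s>1/p+(h(\Gamma)-1)$ (requiring $\alpha$-H\"older continuity across $\Gamma$ with $\alpha>h(\Gamma)-1$), and explicitly state that this may be incompatible with the hypotheses of Theorem~\ref{123b} when $h(\Gamma)>4/3$. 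They then pose the pasting question as open. The paper avoids this obstacle entirely: rather than gluing both sides, it extends $u_i$ alone from $\Omega_i$ to the whole plane using Chua's extension theorem for weighted Sobolev spaces over quasidisks (which applies because $\omega\in A_p$ under \eqref{pands} and $\Omega_i$ is a quasidisk). So to repair your proof of the first inclusion, replace the pasting argument with an appeal to the quasicircle extension property for $W^{1,p}(\omega,\cdot)$: extend one harmonic extension across $\Gamma$ abstractly, without trying to make the pasted object itself belong to the weighted Sobolev space.
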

\noindent We remark here that in this theorem the condition \eqref{pands} is used to guarantee that $W^{1,p}(\omega, \mathbb C)$ consists of continuous functions so that its restriction on $\Gamma$  makes sense, and the condition \eqref{pands} can also ensure that $\omega$ is an $A_p$-weight. 

Examples of pairs $(p,s)$ such that equation \eqref{equalbe} holds are $(p,1/p)$ and $(\infty,s)$ with $s<1/2$. If one could interpolate between these special spaces, one could obtain much more spaces satisfying \eqref{equalbe}. We will investigate this possibility in our forthcoming paper. Another class of quasicircles satisfying \eqref{equalbe} is given by Theorem \ref{a}: it follows that in this case 
$$ \mathcal{B}_{p,p}^s(\Omega_i\to\Gamma)=\mathcal{B}_{p,p}^s(\Omega_e\to\Gamma)= B_{p,p}^s(\Gamma)=W^{1,p}(\omega,\mathbb C)|_\Gamma.$$

A corollary of this study is an "almost-Dirichlet principle" generalizing, in dimension $2$ a result of Maz'ya simplified by Mironescu-Russ (\cite{Maz}, \cite{MR}). We say that the quasidisk $\Omega$ satisfies the almost-Dirichlet principle for $p,s$ if any continuous function $f:\Gamma\to \mathbb C$ having a continuous extension $F:\overline\Omega\to \mathbb C$ which is  in $C^1(\Omega)$ and satisfies
$$ \iint_\Omega |\nabla F(z)|^pd(z,\Gamma)^{(1-s)p-1}dxdy<\infty$$
is in $\mathcal{B}_{p,p}^s(\Omega\to \Gamma)$, or, in other words, is such that
$$ \iint_\Omega |\nabla u(z)|^pd(z,\Gamma)^{(1-s)p-1}dxdy<\infty,$$
where $u$ is the harmonic extension of $f$ in $\Omega$.
\begin{cor}[see Corollary \ref{M-almost}]
If $\Gamma$ is a radial-Lipschitz curve with norm $M$, then
$\mathcal{B}_{p,p}^s(\Omega)$ has the almost-Dirichlet property if 
$$\frac{1}{2} < s < 1\quad \text{and} \quad  2<p<1+\frac{\pi}{2\arctan M},$$
that converges to $(2, \infty)$ as $M\to 0$ and to a singleton $\{2\}$ as $M\to\infty$.
    \end{cor}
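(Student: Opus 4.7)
The plan is to combine Theorems~A and B of the introduction to identify $\mathcal{B}_{p,p}^{s}(\Omega\to\Gamma)$ with the concrete trace space $W^{1,p}(\omega,\mathbb{C})|_\Gamma$, where $\omega(z)=d(z,\Gamma)^{(1-s)p-1}$, and then extend the given $F$ across $\Gamma$ to show its boundary restriction lies in that space.

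First, I would verify that the $(p,s)$-range of the corollary sits inside the ranges where both theorems apply. A radial-Lipschitz curve is chord-arc, so $h(\Gamma)=1$ and the $s$-window in Theorem~B collapses to $(1/2,1)$. The lower endpoint $1+\tfrac{2\arctan M}{\pi}$ in Theorem~A is less than $2$ for every $M>0$, so every $p>2$ automatically lies in the $p$-window of Theorem~A, while the upper endpoint $1+\tfrac{\pi}{2\arctan M}$ is exactly the one imposed here. Theorem~A therefore supplies the equality $\mathcal{B}_{p,p}^{s}(\Omega_i\to\Gamma)=\mathcal{B}_{p,p}^{s}(\Omega_e\to\Gamma)$, which is precisely the extra hypothesis of Theorem~B, and Theorem~B then yields
$$\mathcal{B}_{p,p}^{s}(\Omega\to\Gamma)=W^{1,p}(\omega,\mathbb{C})|_\Gamma.$$
Moreover $s>1/2>1/p$, so we are in case~1 of the three cases, and boundary values of functions in $\mathcal{B}_{p,p}^{s}(\Omega)$ are just continuous restrictions to $\Gamma$; in particular $f=F|_\Gamma$ is unambiguous.

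Second, I would construct a global extension of $F$ to $\mathbb{C}$. Pick a reflection $\sigma$ across $\Gamma$ that is bi-Lipschitz on a neighborhood $U$ of $\Gamma$ (available by the quasicircle property recalled in the introduction), and a cutoff $\chi\in C_c^\infty(\mathbb{C})$ supported in $U$ and equal to $1$ in a smaller neighborhood of $\Gamma$. Set
$$\widetilde{F}(z)=\begin{cases}\chi(z)F(z), & z\in\overline{\Omega},\\ \chi(z)F(\sigma(z)), & z\in U\setminus\Omega,\\ 0, & z\in\mathbb{C}\setminus U.\end{cases}$$
Since $\sigma$ is bi-Lipschitz with $d(\sigma(z),\Gamma)\asymp d(z,\Gamma)$, a change of variables bounds the weighted $(p,s)$-energy of $F\circ\sigma$ on $U\setminus\Omega$ by a constant times that of $F$ on $\Omega$, which is finite by hypothesis. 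The cutoff contributes additional terms involving $|\nabla\chi|$ on a compact set where $\omega$ is bounded and where $F$ is bounded by its continuity on $\overline{\Omega}$, so those are finite as well. Continuous matching of $F$ and $F\circ\sigma$ on $\Gamma$ then rules out a singular part in the distributional gradient across $\Gamma$, and we conclude $\widetilde{F}\in W^{1,p}(\omega,\mathbb{C})$.

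Finally, since $\widetilde{F}|_\Gamma=F|_\Gamma=f$, we get $f\in W^{1,p}(\omega,\mathbb{C})|_\Gamma=\mathcal{B}_{p,p}^{s}(\Omega\to\Gamma)$, which by definition is the almost-Dirichlet conclusion for this $f$. The step I expect to be the most delicate is verifying the absence of a singular part in the distributional gradient of $\widetilde{F}$ across the possibly irregular curve $\Gamma$: this should be handled by approximating $F$ on $\overline{\Omega}$ in $W^{1,p}(\omega,\Omega)$-norm by functions supported away from $\Gamma$, applying the reflection construction to each approximant, and passing to the limit in $W^{1,p}(\omega,\mathbb{C})$, using that continuous matching on $\Gamma$ is preserved under this limit. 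The rest is bookkeeping and direct appeals to Theorems~A and~B.
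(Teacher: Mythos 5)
Your reduction of the corollary to Theorems~A and~B is exactly the paper's strategy: a radial-Lipschitz curve is chord-arc, so $h(\Gamma)=1$ collapses the $s$-window of Theorem~B to $(1/2,1)$, and Theorem~A supplies the transmission identity $\mathcal{B}_{p,p}^{s}(\Omega_i\!\to\!\Gamma)=\mathcal{B}_{p,p}^{s}(\Omega_e\!\to\!\Gamma)$ that Theorem~B needs, yielding $\mathcal{B}_{p,p}^{s}(\Gamma)=W^{1,p}(\omega,\mathbb{C})|_\Gamma$. That much matches Section~4.5 of the paper.

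The divergence, and the gap, is in your second step. The paper does not build the global extension by hand; it observes that $F\in W^{1,p}(\omega,\Omega)$ (its distributional gradient on the open set $\Omega$ is the classical one) and then cites Chua's extension theorem for weighted Sobolev spaces on quasidisks with $\omega\in A_p$ to obtain $\widetilde F\in W^{1,p}(\omega,\mathbb{C})$ directly. Your reflection-plus-cutoff construction is, in spirit, what underlies such extension theorems, but the crucial point you wave at — that continuity of $\widetilde F$ across $\Gamma$ rules out a singular part of the distributional gradient concentrated on $\Gamma$ — is not automatic. The paper's own Proposition~\ref{distrib} addresses precisely this question and requires $\alpha$-H\"older continuity with $\alpha>h(\Gamma)-1$, not mere continuity; you never establish that $\widetilde F$ is H\"older before concluding. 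Knowing only that $F$ is continuous on $\overline\Omega$ and that the pointwise gradient of $\widetilde F$ off $\Gamma$ lies in $L^p(\omega)\subset L^1_{\mathrm{loc}}$ does not by itself give $\nabla\widetilde F = (\text{pointwise gradient})$ in $\mathcal{D}'(\mathbb{C})$. Your proposed repair — approximate $F$ in $W^{1,p}(\omega,\Omega)$ by functions ``supported away from $\Gamma$'' — cannot work as stated (such functions vanish near $\Gamma$ and so cannot converge to a nonzero $F$ there); and if you instead mean approximation by restrictions of $C_c^\infty(\mathbb{C})$-functions, that density statement in $W^{1,p}(\omega,\Omega)$ is itself essentially equivalent to the extension theorem you are trying to avoid. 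In short: the reduction to a trace-space inclusion is right, but the extension step needs Chua's theorem (or a genuine substitute with a rigorous treatment of the distributional derivative across $\Gamma$), not the sketch you give.
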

    
    One of the tools for proving Theorem \ref{b} will be  Plemelj-Calder\'on property, which is of independent interest.

\subsection{Plemelj-Calder\'on problem: statement of two more main theorems}
It is an old problem, given a bounded Jordan curve $\Gamma$ in the plane and a (complex valued) function $f$ defined on $\Gamma$, to find two holomorphic functions $G_i,G_e$ defined in the interior and the exterior-connected components $\Omega_{i,e}$ of $\Gamma$ such that 
$$ f=G_i|_\Gamma+G_e|_\Gamma.$$
Here, the boundary traces are defined in some sense.

This problem was actually raised and solved by Sokhotski in 1873  before being rediscovered by Plemelj as a main ingredient of his attempt to solve Hilbert's 22th problem in 1908 (\cite{Ple}). Assume first that $\Gamma$ is  smooth and $f$ is a $C^1$-function on $\Gamma$. 
The idea is to use the Cauchy integral and define
$$ F(z)=\frac{1}{2\pi i}\int_\Gamma\frac{f(\zeta)}{\zeta-z}d\zeta, \qquad z \in \mathbb C\setminus\Gamma.$$
This defines a function that is holomorphic outside the curve $\Gamma$. Moreover,  this function has boundary values on $\Gamma$ from inside and outside, and one has  Plemelj formulae on $\Gamma$: 
\begin{align*}
    F_i|_{\Gamma} = Tf + 1/2 f,\\
   F_e|_{\Gamma} = Tf-1/2 f
   \end{align*}
where $F_i = F|_{\Omega_i}$, $F_e = F|_{\Omega_e}$ and 
the Cauchy integral 
$$ Tf(z)= \frac{1}{2\pi i}\text{p.v.}\int_\Gamma \frac{f(\zeta)}{\zeta-z}d\zeta = \frac{1}{2\pi i} \lim\limits_{\varepsilon\to 0}\int_{|\zeta-z|>\varepsilon} \frac{f(\zeta)}{\zeta-z}d\zeta.$$ 
The problem is then solved with $G_i=F_i$ and $G_e=-F_e$.

Of course, the result remains true for much more general curves and functions. Subsequent generalizations relax the smoothness requirements on the curve $\Gamma$ and the function $f$: In order for  Plemelj formulae to hold, one indeed only needs $\Gamma$ to be rectifiable and $f\in L^1(\Gamma)$. The modern approach of this problem is then to solve the problem in a given space: Calder\'on (\cite{Cal})
asked, for instance, for which rectifiable curves it is true that for any $f\in L^2(\Gamma)$  Plemelj functions $F_i|_\Gamma$ and $F_e|_\Gamma$ are also in  $ L^2(\Gamma)$? Equivalently, when is the operator $T$ bounded on  $f\in L^2(\Gamma)$? This problem has been solved by Coifman-McIntosh-Meyer (\cite{CMM}) for Lipschitz curves (Calder\'on  had previously solved it for curves with small Lipschitz bound (\cite {Cal})) and then the last word was given by David (\cite{Dav}) (using the Lipschitz result \cite{CMM}) who proved that $T$ is bounded on  $f\in L^2(\Gamma)$ (or $L^p(\Gamma)$, $1<p<\infty$) if and only if $\Gamma$ is Ahlfors-regular, meaning that the length of the part of $\Gamma$ lying in a disk of radius $r$ is no bigger than $Cr$ for some  $C$ independent of the chosen disk.

Now  Plemelj-Calder\'on problem may be addressed for curves that are not necessarily  rectifiable. An example has been given in \cite{Zi1} where  Plemelj-Calder\'on problem for H\"older classes has been raised and solved in some particular cases. The idea is to replace the Cauchy integral with the identity
$$ f(z)=-\frac 1\pi \iint_\mathbb{C}\frac{\bar{\partial}f(\zeta)}{\zeta-z}d\xi d\eta$$ 
which is true for any test function $f\in C_c^\infty(\mathbb C)$, and from which it follows that, in the case of a rectifiable boundary,
\begin{align*}\label{ple}
    F_i(z)&=-\frac 1\pi \iint_{\Omega_e}\frac{\bar{\partial}f(\zeta)}{\zeta-z}d\xi d\eta,\qquad z \in \Omega_i,\\
    F_e(z)&=\frac 1\pi \iint_{\Omega_i}\frac{\bar{\partial}f(\zeta)}{\zeta-z}d\xi d\eta,\qquad z \in \Omega_e.
\end{align*}
 Later, Astala (\cite{Ast}) obtained more precise results for the H\"older classes: these will be discussed in Section 2 where we will draw a parallel with fractional Besov-Sobolev spaces $\mathcal B_{p,p}^s(\Gamma)$. Recently, Plemelj-Calder\'on problem for critical Besov spaces (the case of $s=1/p$) has been treated by Liu-Shen \cite{LS}, and by Matsuzaki \cite{Mat}. 
Schippers-Staubach has further discussed this issue in a broader sense; see \cite{SS-MAAN} and references therein. 

In Section 2,  we address the following Plemelj-Calder\'on problem: given $p>1,\,s\in(0,1)$, can one write a function $f\in\mathcal B_{p,p}^s(\Gamma)$ as a sum of two functions $F_{i,e}$ that are holomorphic in $\Omega_{i,e}$ and whose boundary values belong to $\mathcal B_{p,p}^s(\Gamma)$? 

The main result of the present paper in this direction is Theorems \ref{2case}, \ref{pcase}, \ref{As2} and Corollaries \ref{pp1p}, \ref{dense} (each focusing on different $(p, s)$-values).
The a priori version of these results is as follows.   
\begin{theo} \label{PCal} Suppose that $\Gamma$ is a quasicircle. For $p>1,\,s\in (0,1)$, the a priori Plemelj-Calder\'on property holds for  $C_c^\infty(\mathbb C)|_{\Gamma}\cap\mathcal B_{p,p}^s(\Gamma)$  if 
$$(p-1)(h(\Gamma)-1)<sp<p+1-h(\Gamma)$$
that reduces to $0<s<1$ when $\Gamma$ is chord-arc. 
\end{theo}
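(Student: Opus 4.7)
The plan is to construct $F_i,F_e$ via the Cauchy--Pompeiu area formula and then estimate each piece in weighted $L^p$ by reducing to the boundedness of the Beurling--Ahlfors transform on $L^p(w,\mathbb C)$ for a distance weight that turns out to be Muckenhoupt $A_p$. Given $f\in C_c^\infty(\mathbb C)$ with $f|_\Gamma\in\mathcal B_{p,p}^s(\Gamma)$, I first set
$$F_i(z)=-\frac{1}{\pi}\iint_{\Omega_e}\frac{\bar\partial f(\zeta)}{\zeta-z}\,dA(\zeta),\quad z\in\Omega_i,$$
and symmetrically $F_e(z)=-\pi^{-1}\iint_{\Omega_i}\bar\partial f(\zeta)(\zeta-z)^{-1}\,dA(\zeta)$ for $z\in\Omega_e$. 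Each is holomorphic on its domain, and the Pompeiu identity $f=-\pi^{-1}\iint_{\mathbb C}\bar\partial f/(\zeta-z)\,dA$ gives $F_i|_\Gamma+F_e|_\Gamma=f|_\Gamma$ as non-tangential boundary values from each side.

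Next, I differentiate under the integral sign to obtain $F_i'(z)=-\pi^{-1}\iint_{\Omega_e}\bar\partial f(\zeta)(\zeta-z)^{-2}\,dA$, and then substitute $\bar\partial f$ by $\bar\partial u_e$, where $u_e$ denotes the harmonic extension of $f|_\Gamma$ to $\Omega_e$. The substitution is justified by Stokes' theorem applied to the $1$-form $(f-u_e)(\zeta-z)^{-2}\,d\zeta$ on the exhaustion $\{\varepsilon<d(\zeta,\Gamma),\,|\zeta|<R\}\subset\Omega_e$: both boundary contributions vanish in the limit---at infinity because $(\zeta-z)^{-2}$ has a single-valued antiderivative and $u_e-u_e(\infty)=O(1/|\zeta|)$, and near $\Gamma$ via the non-tangential trace identification $f|_\Gamma=u_e|_\Gamma$ together with a length-control estimate on $\partial\{d(\cdot,\Gamma)>\varepsilon\}$ that is uniform under the stated hypothesis. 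Extending $g=\bar\partial u_e\,\mathbf 1_{\Omega_e}$ by zero to $\mathbb C$, one obtains $F_i'(z)=(T_\beta g)(z)$ for $z\in\Omega_i$, where $T_\beta$ is the Beurling--Ahlfors transform. The classical weighted bound $\|T_\beta\|_{L^p(w,\mathbb C)\to L^p(w,\mathbb C)}\le C$ for any $w\in A_p(\mathbb C)$, combined with $|\bar\partial u_e|\lesssim|\nabla u_e|$, then yields
$$\iint_{\Omega_i}|F_i'|^p w\,dA\;\le\; C\iint_{\Omega_e}|\nabla u_e|^p w\,dA\;=\;C\|f|_\Gamma\|_{\mathcal B_{p,p}^s(\Omega_e\to\Gamma)}^p$$
with $w(z)=d(z,\Gamma)^{(1-s)p-1}$, so $F_i\in\mathcal B_{p,p}^s(\Omega_i)$; symmetrically $F_e\in\mathcal B_{p,p}^s(\Omega_e)$. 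For the cross-domain traces it suffices to note that the harmonic extension of $F_i|_\Gamma$ to $\Omega_e$ equals $u_e-F_e$, whose $w$-weighted gradient norm has already been controlled.

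The main obstacle lies in verifying that $w(z)=d(z,\Gamma)^{(1-s)p-1}$ belongs to $A_p(\mathbb C)$ exactly under the hypothesis $(p-1)(h(\Gamma)-1)<sp<p+1-h(\Gamma)$. For a chord-arc $\Gamma$ (Minkowski content $h=1$) this reduces to the familiar interval $-1<(1-s)p-1<p-1$, a standard range. For a general quasicircle, the $A_p$-averages $|Q|^{-1}\int_Q w$ and $|Q|^{-1}\int_Q w^{1-p'}$ over disks $Q$ meeting $\Gamma$ are governed by Astala's area-distortion estimates in terms of the Minkowski content, and the two-sided inequality in the hypothesis is precisely what is required so that both $w$ and $w^{1-p'}$ are locally integrable at the correct rate near $\Gamma$. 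This is where Astala's quasiconformal machinery enters decisively, and it is the technical heart of the theorem.
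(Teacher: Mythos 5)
Your overall strategy is the same as the paper's: write $F_i, F_e$ via the Cauchy--Pompeiu area integral, differentiate to obtain the Beurling transform of $\bar\partial u_{e}\chi_{\Omega_e}$ (resp.\ $\bar\partial u_i\chi_{\Omega_i}$), invoke weighted $L^p$ boundedness of the Beurling transform for $A_p$ weights, and convert the $A_p$ condition on $\omega(z)=d(z,\Gamma)^{(1-s)p-1}$ into the stated $(p,s,h(\Gamma))$ inequality through Astala's theorem. However, the step where you replace $\bar\partial f$ by $\bar\partial u_e$ contains a real gap. Your Stokes-theorem exhaustion on $\{d(\cdot,\Gamma)>\varepsilon\}\cap\{|\zeta|<R\}$ needs the boundary term $\int_{\{d=\varepsilon\}}(f-u_e)(\zeta-z)^{-2}\,d\zeta$ to vanish, but for a general (non-rectifiable) quasicircle the length of the level set $\{d(\cdot,\Gamma)=\varepsilon\}$ blows up like $\varepsilon^{1-h(\Gamma)}$ by the coarea formula. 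Even in the best subcase $s>1/p$ (where $u_e$ is $(s-1/p)$-H\"older up to $\Gamma$), the product is only $O(\varepsilon^{\,s-1/p+1-h(\Gamma)})$, which tends to $0$ only when $sp>p(h(\Gamma)-1)+1$. That is \emph{strictly stronger} than the hypothesis $sp>(p-1)(h(\Gamma)-1)$, and when $s\le 1/p$ one has no H\"older control on $u_e$ at all. The paper avoids this entirely: it extends $u_{i}$ (which is in $W^{1,2}(\Omega_i)$ by the Dirichlet principle) to $W^{1,2}(\mathbb C)$ via Gol'dshtein--Latfullin--Vodop'yanov, approximates by $C_c^\infty$, passes to the limit in the Beurling transform, and identifies the decomposition by uniqueness (Lemma~2.2). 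That argument makes no length estimates on level sets and only uses $W^{1,2}$ energy.

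A secondary point: you gesture at Astala's area-distortion estimates, but you don't carry out the reduction. Astala's equivalence (Theorem~2.5) is stated for exponents $\alpha-1$ with $\alpha\in(0,1)$, i.e.\ negative powers of $d(z,\Gamma)$. When $(1-s)p-1>0$ (which happens for $s<1-1/p$) one must first apply the duality $\omega\in A_p\Leftrightarrow\omega^{-1/(p-1)}\in A_{p'}$ to land in the range where Astala's theorem applies; that case split is precisely what produces the two-sided inequality $(p-1)(h(\Gamma)-1)<sp<p+1-h(\Gamma)$. Without it, asserting that the hypothesis is "precisely what is required" is an unverified claim.
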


  Assume that the curve $\Gamma$ is rectifiable so that the space $B_{p,p}^s(\Gamma)$ is well-defined. We will see that  Plemelj-Calder\'on problem on $B_{p,p}^s(\Gamma)$ is much easier to handle: In Section 4, we will show the following:
\begin{theo}[see Theorem \ref{Tspp}]
    Suppose that the curve $\Gamma$ is chord-arc. For $0<s<1$ and $1<p<\infty$, the Cauchy integral operator $T$ is bounded on $B_{p,p}^s(\Gamma)$. 
\end{theo}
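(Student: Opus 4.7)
The plan is to combine David's $L^p$-boundedness theorem (which applies since chord-arc curves are Ahlfors regular) with a direct estimate of the double integral defining the Besov seminorm. David's result gives the $L^p$-bound for $T$ as a base case, and the chord-arc hypothesis allows us to freely interchange chord distances with arc-length distances along $\Gamma$ in all subsequent estimates.

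The starting point is the identity
\begin{equation*}
Tf(z)-Tf(\zeta) = \frac{z-\zeta}{2\pi i}\int_\Gamma \frac{f(w)-f(z)}{(w-z)(w-\zeta)}\,dw,
\end{equation*}
valid for almost every $z,\zeta\in\Gamma$ (with a principal-value interpretation near $w=\zeta$). It follows from the fact that $T(1)=1/2$ a.e.\ on a closed chord-arc curve together with an algebraic manipulation of $\frac{1}{w-z}-\frac{1}{w-\zeta}$. The value of this identity is that it re-expresses the difference $Tf(z)-Tf(\zeta)$ as an integral of pointwise differences $f(w)-f(z)$, which are exactly the objects the Besov seminorm measures.

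Next I would split the $w$-integration into a local part $\{|w-z|<2|z-\zeta|\}$ and a nonlocal part $\{|w-z|\ge 2|z-\zeta|\}$. On the nonlocal piece $|w-\zeta|\sim|w-z|$, so the integrand is essentially $|f(w)-f(z)|/|w-z|^2$; H\"older's inequality and Fubini reduce the corresponding contribution to a constant multiple of $\|f\|_{B_{p,p}^s(\Gamma)}^p$ via a standard Hardy-type estimate. On the local piece, one rewrites the corresponding integral as a Cauchy integral of a localized version of $f-f(z)$ (after subtracting a local average, which is legitimate because the difference $Tf(z)-Tf(\zeta)$ annihilates constants) and applies David's $L^p$-bound, converting the result into the $L^p$-oscillation of $f$ on an arc of length comparable to $|z-\zeta|$.

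The main obstacle is the local piece, where the genuine singularity of the Cauchy kernel forces us to invoke David's theorem rather than rely on a pointwise kernel estimate. After David's bound produces an $L^p$-oscillation of $f$ on a small arc $I$ of length comparable to $|z-\zeta|$, one uses the fractional Poincar\'e-type inequality
\begin{equation*}
\int_I |f - f_I|^p\,|dw| \lesssim |I|^{ps}\iint_{I\times I}\frac{|f(z)-f(\zeta)|^p}{|z-\zeta|^{1+ps}}\,|dz|\,|d\zeta|
\end{equation*}
to convert this oscillation into a piece of the Besov seminorm; summing via Fubini over the pairs $(z,\zeta)$ then yields the desired bound $\|Tf\|_{B_{p,p}^s(\Gamma)}\lesssim\|f\|_{B_{p,p}^s(\Gamma)}$. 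The chord-arc hypothesis is indispensable both for David's theorem and for the geometry that makes arcs of $\Gamma$ behave like intervals for the Poincar\'e inequality.
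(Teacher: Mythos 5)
Your approach is genuinely different from the paper's, and it is a viable one, but there is a step where the sketch as written leaks a logarithm.

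The paper's proof of Theorem~\ref{Tspp} is an endpoint-plus-interpolation argument: it first proves $T$ is bounded on $B_{p,p}^1(\Gamma)$ (Theorem~\ref{H1}) by applying David's theorem to $f'$, anti-differentiating the resulting holomorphic summands $G_{i,e}$, and invoking the Hardy--Littlewood theorem to pass anti-derivatives to the boundary; it then invokes Calder\'on's complex interpolation theorem $B_{p,p}^s(\mathbb T) = [L^p(\mathbb T), B_{p,p}^1(\mathbb T)]_s$ (after transferring to $\mathbb T$ by the bi-Lipschitz arc-length parametrization). You instead attack the double-integral seminorm directly, using the algebraic cancellation $T(1)=1/2$ to produce the kernel identity, splitting into near and far $w$, handling the far part by a Hardy-type inequality, and handling the near part by applying David's theorem to a localized, mean-zero piece of $f$ followed by a fractional Poincar\'e inequality. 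This is a standard real-variable Calder\'on--Zygmund route and it does work for $0<s<1$; what it costs, relative to the paper's proof, is the exact norm identity $\|T\|_{B_{p,p}^s\to B_{p,p}^s} = \|T\|_{L^p\to L^p}$, which the paper gets for free because complex interpolation is exact of exponent $\theta$ and Theorem~\ref{H1} gives equality of the two endpoint norms.

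The one place your sketch needs to be tightened is the local piece. After David's theorem you arrive, at dyadic scale $2^{-n}$, at a term of size roughly $2^{nps}\sum_k\|(f-f_{I_k})\chi_{I_k}\|_{L^p}^p$ where the arcs $I_k$ have length $\sim 2^{-n}$. If you now insert your stated Poincar\'e inequality $\int_I |f-f_I|^p \lesssim |I|^{ps}\iint_{I\times I}|f(z)-f(\zeta)|^p/|z-\zeta|^{1+ps}$, the factors $2^{nps}$ and $|I_k|^{ps}$ cancel, and when you then Fubini over $(n,k)$ for a fixed pair $(w,v)$ at distance $r$ you count $\asymp \log(1/r)$ pairs, producing an extra logarithm that the Besov seminorm cannot absorb. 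The fix is to use the sharper Jensen form
\begin{equation*}
\int_I |f-f_I|^p\,|dw| \;\le\; \frac{1}{|I|}\iint_{I\times I} |f(w)-f(v)|^p\,|dw|\,|dv|,
\end{equation*}
which is what your inequality becomes before the (lossy) estimate $|w-v|^{1+ps}\le |I|^{1+ps}$. With this version the $n$-sum is geometric and Fubini returns exactly $\iint |f(w)-f(v)|^p/|w-v|^{1+ps}$, with no log. You should also take care, when pulling out the constant $f_I$ in the near field, that the correction term is $(f_I-f(z))\bigl(T(\chi_I)(z)-T(\chi_I)(\zeta)\bigr)$; the difference $T(\chi_I)(z)-T(\chi_I)(\zeta)$ is bounded uniformly (even though $T(\chi_I)$ itself is only $O(\log(\mathrm{diam}\,\Gamma/|I|))$), so this contributes another oscillation term $|f(z)-f_I|$ which is absorbed by the same Poincar\'e estimate. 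With these two adjustments the argument closes.
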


\section{Plemelj-Calder\'on problem for  
$\mathcal{B}_{p,p}^{s}(\Gamma)$ on quasicircles: the non-rectifiable case}
The main goal of this section is to address Plemelj-Calder\'on Problem for the space
$\mathcal{B}_{p,p}^{s}(\Gamma)$, $p>1$ and $0<s<1$, on quasicircles $\Gamma$. We now introduce several notions closely related to quasicircles which will be used later. 

Recall that a quasidisk is the image of the unit disk $\mathbb D_i$ under a quasiconformal homeomorphism of the plane, and its boundary curve is called a quasicircle. An homeomorphism $\Phi$ of the plane is called quasiconformal if its gradient in the sense of distribution is in $L^2_{loc}(\mathbb C)$ and if, in addition, there exists a constant $k<1$ such that
$$ \frac{\partial\Phi }{\partial \bar z} = \mu(z) \frac{\partial\Phi }{\partial z}$$
for a function $\mu \in L^{\infty}(\mathbb C)$ with $\|\mu\|_\infty\le k$. If we put $K=\frac{1+k}{1-k}$ then $K$ represents the maximal eccentricity of infinitesimal images of circles by the quasiconformal mapping.

Any quasicircle $\Gamma$ admits a quasiconformal reflection across $\Gamma$ that is bi-Lipschitz around $\Gamma$. Here,  by a quasiconformal reflection we mean  an anti-quasiconformal self-homeomorphism $R$ of $\overline{\mathbb C}$ such that  $R\circ R = \text{id}$ on $\overline{\mathbb C}$ and $R = \text{id}$ when restricted to $\Gamma$. Precisely, let $\Phi$ be a quasiconformal homeomorphism of $\overline{\mathbb C}$  we then have 
$R(z) = \Phi\circ (1/\overline{\Phi^{-1}(z)})$ is a quasiconformal reflection  with respect to the quasicircle $\Gamma = \partial\Phi(\mathbb D_i)$. 

Let $\Omega_i$ and $\Omega_e$ be domains bounded by a quasicircle $\Gamma$. 
If $\varphi_i$ and $\varphi_e$ are conformal isomorphisms from $\mathbb D_i$ onto  $\Omega_i$ and from $\mathbb D_e$ onto $\Omega_e$, respectively,  then they can be extended to a homeomorphism between the closures such that 
$h = \varphi_e^{-1}\circ\varphi_i$ is a homeomorphism of $\mathbb T$ that is quasisymmetric in the sense that there exists a constant $C > 0$ such that 
$$
C^{-1} \leq \frac{\vert h(e^{i(t+\alpha)})-h(e^{it})\vert}{\vert h(e^{it})-h(e^{i(t-\alpha)})\vert}\leq C
$$
for every $t\in \mathbb R$ and $-\pi/2<\alpha\leq\pi/2$.  Moreover, every quasisymmetry arises in this way from some quasicircle. For these results on quasiconformal theory, we refer to \cite{Ahl} for details.

\subsection{Boundary values of functions in a dense subspace of $\mathcal{B}_{p,p}^{1/p}(\Omega)$}

Let $\Gamma$ be a Jordan curve and $p>1$. We recall that the spaces $\mathcal{B}_{p,p}^{1/p}(\Omega_{i,e})$ , called the $p$-critical Besov spaces, have the remarkable property of being conformally invariant: if $\Omega,\Omega'$ are two Jordan domains and $\varphi:\Omega\to \Omega'$ is a biholomorphism, then the map $u \mapsto u\circ\varphi$ is a quasi-isometry between $\mathcal{B}_{p,p}^{1/p}(\Omega')$ and $\mathcal{B}_{p,p}^{1/p}(\Omega)$. In order to be as self-contained as possible, here is a sketch of the proof: by the change of variables $\zeta=\varphi(z)$ we have
$$\iint_{\Omega'}|\nabla u(\zeta)|^pd(\zeta,\Gamma')^{p-2}dxdy=\iint_{\Omega}|\nabla u(\varphi(z))|^pd(\varphi(z),\Gamma')^{p-2}|\varphi'(z)|^2dxdy.$$
Now, the Koebe distortion theorem (see \cite{Ahl}) implies that
\begin{equation}\label{koebe}
    \forall z\in \Omega,\quad \frac 14\le\frac{d(\varphi(z),\Gamma')}{d(z,\Gamma)|\varphi'(z)|}\le 4,
\end{equation}
and the result follows.

Applying this result to $\Omega'=\mathbb D$, the inner or outer disk bounded by the unit circle $\mathbb T$,  we see that all the $p$-critical Besov spaces over $\Omega$ are quasi-isometric to $\mathcal{B}_{p,p}^{1/p}(\mathbb D)$. It is a classical result \cite{Ste} that the space $\mathcal{C}_p(\mathbb D)$ consisting of functions in $\mathcal{B}_{p,p}^{1/p}(\mathbb D)$ that have a continuous extension to $\overline{\mathbb D}$ is dense in $\mathcal{B}_{p,p}^{1/p}(\mathbb D)$. This can also be derived from the fact that the set of polynomials is dense in the Bergman space $A^p_{p-2}$, the set of analytic functions $g$ in $\mathbb D$ such that 
 $$
 \iint_{\mathbb D}|g(z)|^p|1-|z|^2|^{p-2}dxdy < \infty.
 $$
Since a bi-holomorphic homeomorphism between two Jordan domains extends to a homeomorphism between the closures, the same property will hold for the analogous space $\mathcal{C}_p(\Omega)$.

 In this subsection, we  prove that $\mathcal{C}_p(\Omega_i)|_\Gamma=\mathcal{C}_p(\Omega_e)|_\Gamma$ for any quasicircle $\Gamma$. To proceed, we first address the case of $p=2$. 

Let $u_i\in\mathcal{C}_{2}(\Omega_i)$ and $f=u_i|_\Gamma$. Recall that there exists a quasiconformal reflection $R$ across $\Gamma$. 
Put $v(z)=u_i(R(z)), z\in \Omega_e$. By the bi-Lipschitz property of $R$ around $\Gamma$ we have the following 
$$\iint_{\Omega_e}|\nabla v(z)|^2dxdy<\infty$$
(strictly speaking, there is a technical problem at $\infty$, which is settled in the appendix). We then invoke the Dirichlet principle which implies that 
$$\iint_{\Omega_e}|\nabla u_e(z)|^2dxdy\le\iint_{\Omega_e}|\nabla v(z)|^2dxdy<+\infty,$$
where $u_e = P_e(f)$ stands for the harmonic extension of $f$ in $\Omega_e$. That is, we have that the transmission operator 
$$
\mathcal{C}_{2}(\Omega_i) \ni u_i \mapsto u_e  \in \mathcal{C}_{2}(\Omega_e) 
$$
is  bounded. Actually, it is a bounded isomorphism since the boundedness of the inverse operator can be seen by exchanging the roles of $\Omega_i$ and $\Omega_e$, which proves the result.

For the case of a general $p\neq 2$ we loose the Dirichlet principle  and  use instead the almost-Dirichlet principle. Recall that the almost-Dirichlet principle is said to be valid for $\mathcal{B}_{p,p}^s(\Omega)$, $p>1,\,0<s<1$, if for any $f$ defined on $\Gamma$, $u$ denotes its harmonic extension in $\Omega$ and $v$ is any  continuous extension of $f$ to $\Omega$ which is in $C^1(\Omega)$, we have
$$\iint_{\Omega}|\nabla u(z)|^pd(z,\Gamma)^{(1-s)p-1}dxdy\le C\iint_{\Omega}|\nabla v(z)|^pd(z,\Gamma)^{(1-s)p-1}dxdy $$
for some constant $C$ depending only on $\Omega,\,p,\,s$.  In \cite{MR} it was proved  that the almost-Dirichlet principle holds for $\mathcal{B}_{p,p}^s(\mathbb D_{i,e})$ for all $p>1,\,0<s<1$.
\begin{lemma}\label{almost1p}
Let $\Gamma$ be a quasicircle and  $1<p<\infty$.   The almost-Dirichlet principle holds for the space $\mathcal B_{p,p}^{1/p}(\Omega_{i,e})$. 
\end{lemma}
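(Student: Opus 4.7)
The plan is to pull everything back to the unit disk via a Riemann mapping and invoke the already-established almost-Dirichlet principle on the disk \cite{MR}, exploiting the conformal invariance of the $p$-critical weighted Dirichlet integral that was worked out at the start of this subsection. Fix one component, say $\Omega_i$, and let $\varphi:\mathbb{D}_i\to\Omega_i$ be a conformal isomorphism. Because $\Gamma$ is a Jordan curve, Carath\'eodory's theorem furnishes a homeomorphic extension $\varphi:\overline{\mathbb{D}}_i\to\overline{\Omega}_i$. Given $f\in C(\Gamma)$ with harmonic extension $u$ in $\Omega_i$ and a competitor $v\in C(\overline{\Omega}_i)\cap C^1(\Omega_i)$ with $v|_\Gamma=f$, set $\tilde{u}=u\circ\varphi$ and $\tilde{v}=v\circ\varphi$. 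Since $\varphi$ is holomorphic, $\tilde{u}$ is harmonic in $\mathbb{D}_i$; both $\tilde{u}$ and $\tilde{v}$ extend continuously to $\mathbb{T}$ with common value $\tilde{f}=f\circ\varphi$, so $\tilde{u}$ is the harmonic extension of $\tilde{f}$, while $\tilde{v}\in C^1(\mathbb{D}_i)$ is an admissible competitor on the disk.

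Next I would observe that the conformal-invariance computation at the start of the subsection applies verbatim to any real-valued $C^1$ function, not merely to harmonic ones. Indeed the chain rule for Wirtinger derivatives gives $|\nabla(w\circ\varphi)(z)|=|\nabla w(\varphi(z))|\,|\varphi'(z)|$, and combined with the change of variables $\zeta=\varphi(z)$ and the Koebe estimate (\ref{koebe}) this yields, for any $w\in C^1(\Omega_i)$,
$$\iint_{\mathbb{D}_i}|\nabla (w\circ\varphi)(z)|^p(1-|z|)^{p-2}\,dxdy\;\asymp\;\iint_{\Omega_i}|\nabla w(\zeta)|^p\,d(\zeta,\Gamma)^{p-2}\,d\xi d\eta,$$
with constants depending only on $p$. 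Applied to $w=u$ and $w=v$, this gives two-sided equivalences between the weighted Dirichlet integrals on $\Omega_i$ and the corresponding weighted integrals of $\tilde{u},\tilde{v}$ on $\mathbb{D}_i$.

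I would then invoke the disk version of the almost-Dirichlet principle from \cite{MR}, in the case $s=1/p$, applied to the pair $(\tilde{u},\tilde{v})$:
$$\iint_{\mathbb{D}_i}|\nabla\tilde{u}|^p(1-|z|)^{p-2}\,dxdy\;\le\;C_p\iint_{\mathbb{D}_i}|\nabla\tilde{v}|^p(1-|z|)^{p-2}\,dxdy,$$
and chain the three comparisons to obtain the desired inequality on $\Omega_i$. The argument on $\Omega_e$ is identical, using a conformal map $\mathbb{D}_e\to\Omega_e$; the technical nuisance at $\infty$ that was flagged in the $p=2$ paragraph can be handled by the same workaround mentioned in the appendix.

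The only non-routine point, and thus the main (albeit mild) obstacle, is checking that the conformal-invariance equivalence genuinely survives when transported functions are merely $C^1$ rather than harmonic. This is not a serious difficulty: the earlier derivation only uses holomorphicity of $\varphi$ (to collapse the Jacobian via Wirtinger derivatives) and the purely geometric Koebe estimate, neither of which invokes any property of $w$ beyond $C^1$-regularity. Once this observation is in place, the proof is essentially a pullback/pushforward sandwich and contributes no new analytic estimates of its own.
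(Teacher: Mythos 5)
Your proof is correct and follows essentially the same route as the paper: pull the almost-Dirichlet principle on the unit disk (from \cite{MR}) across the Riemann map using the conformal invariance of the $p$-critical weighted Dirichlet integral (Koebe estimate plus change of variables), which, as you rightly note, applies to any $C^1$ function and not just harmonic ones. The paper phrases this as pushing forward the disk inequality under $\varphi^{-1}$, while you pull back and then chain the comparisons, but the two are logically identical.
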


\begin{proof}
The almost-Dirichlet principle in $\mathbb D_i$ applied to $s=1/p$, reads
\begin{equation}
    \iint_{\mathbb D_i}|\nabla u(z)|^p (1-|z|)^{p-2}dxdy \leq  C\iint_{\mathbb D_i}|\nabla v(z)|^p (1-|z|)^{p-2}dxdy, 
\end{equation}
with the same notations as before: $u$ is the harmonic function with the boundary value $u|_\Gamma = f$ and $v$ is any $C^1$ extension of $f$ to $\mathbb D_i$.    
Let $\varphi$ be a conformal mapping from $\mathbb D_i$ onto $\Omega_i$. By the conformal invariance of the space $W^{1,p}(\omega, \mathbb D_i)$ where $\omega(z) = (1-|z|)^{p-2}$ and using the change of variables, we have
\begin{equation}
    \iint_{\Omega_i}|\nabla (u\circ\varphi^{-1})|^p d(z,\,\Gamma)^{p-2}dxdy \leq  C\iint_{\Omega_i}|\nabla (v\circ\varphi^{-1})|^p d(z, \,\Gamma)^{p-2}dxdy. 
\end{equation}
Noting that $u\circ\varphi^{-1}$ is  harmonic we have proved the almost-Dirichlet principle for the space $\mathcal B_{p,p}^{1/p}(\Omega_i).$ The assertion for the space $\mathcal B_{p,p}^{1/p}(\Omega_e)$ can be treated similarly. 
    \end{proof}

As a consequence, if $f$ is a continuous function on $\Gamma$, its harmonic extension to $\Omega_i$ is in $\mathcal{C}_{p}(\Omega_i)$ if and only if its harmonic extension to $\Omega_e$ is in  $\mathcal{C}_{p}(\Omega_e)$. 
Then, $C_p(\Omega_{i}\!\to\!\Gamma)=C_p(\Omega_{e}\!\to\!\Gamma)$ so that 
we may  define $\mathcal C_p(\Gamma) = \mathcal C_p(\Omega_{i,e}\!\to\!\Gamma)$. 

Let $\varphi_{i,e}$ denote the Riemann mappings from $\mathbb D_{i,e}$ onto $\Omega_{i,e}$ as before. By conformal invariance, we have $f\circ\varphi_{i,e}\in  \mathcal{C}_{p}(\mathbb T)$ and $f\circ\varphi_e=f\circ\varphi_i\circ h$ with $h =\varphi_i^{-1}\circ\varphi_e$ being a circle homeomorphism called conformal welding. As we have recalled above, this homeomorphism is a quasisymmetry: we deduce from this that $V_h:g\mapsto g\circ h$ is an isomorphism of $\mathcal{C}_p(\mathbb T)$ that extends to an isomorphism of  $\mathcal{B}_{p,p}^{1/p}(\mathbb T)$ with $V_h^{-1}=V_{h^{-1}}$. Now, if one considers any quasisymmetry $h$ of the unit circle, we know \cite{Ahl} that $h$ is the conformal welding of some quasicircle. We deduce from this geometric approach that $V_h$ is an isomorphism of  $\mathcal{B}_{p,p}^{1/p}(\mathbb T)$ for any quasisymmetry $h$. Conversely, Nag-Sullivan \cite{NaS} for the case $p=2$ and Bourdaud \cite{Bou} for  general case have proven that if $h$ is a circle homeomorphism and if $V_h$ is  an isomorphism of  $\mathcal{B}_{p,p}^{1/p}(\mathbb T)$ then $h$ must be a quasisymmetry. Vodop'Yanov\cite{Vo}, and later Bourdaud \cite{Bou}  moreover proved that if $s\neq 1/p$ then the circle homeomorphisms $h$ such that $V_h$ operates on  $\mathcal{B}_{p,p}^{s}(\mathbb T)$ isomorphically are the bi-Lipschitz ones which form a proper subgroup of the group of quasisymmetries.

\subsection{Boundary values of functions in $\mathcal B_{p,p}^{1/p}(\Omega)$}\label{bv}
When  dealing with a function $u$ in $\mathcal{B}_{p,p}^{1/p}(\Omega_{i})$ that is continuous up to the boundary, the boundary values are obvious: it is just the restriction to the boundary, and moreover we have just seen that if we take the harmonic extension of this boundary value in $\Omega_{e}$ we get a function $v\in\mathcal{B}_{p,p}^{1/p}(\Omega_{e})$ with a norm equivalent to the one we started with. Now let us consider a general function $u\in \mathcal{B}_{p,p}^{1/p}(\Omega_{i})$: as we have seen, this function is the limit of a sequence $(u_n)\in \mathcal{C}_p(\Omega_i)$ and we put $f_n$ to be the boundary value of $u_n$. The harmonic extension to $\Omega_e$ of $f_n$ is then a Cauchy sequence in $\mathcal{B}_{p,p}^{1/p}(\Omega_{e})$, thus convergent to some $v\in\mathcal{B}_{p,p}^{1/p}(\Omega_{e})$ and we may regard the couple $(u,v)$ as an abstract version of the boundary value of $u$, and also $v$. The set of such boundary functions is denoted by $\mathcal B_{p,p}^{1/p}(\Gamma)$. 

If we want to be concrete, we use the Riemann maps $\varphi_{i,e}$: $u\circ \varphi_i\in \mathcal{B}_{p,p}^{1/p}(\mathbb D_{i})\subset h^p(\mathbb D)$, the classical harmonic  Hardy space of the disk, so it has radial boundary values almost everywhere on the circle defining a function  $b_i\in L^p(\mathbb T)$. Notice that this fact translates in $\Omega_i$ by saying that $u$ has limits along $\omega_i$-almost every internal ray, where $\omega_i$ stands for harmonic measure in $\Omega_i$. Notice that this limit $b_i$ characterizes $u$. We may do the same thing with $v$ and define an analogous function $b_e\in L^p(\mathbb T)$. When $u\in \mathcal{C}_p(\Omega_i)$ we have seen that $b_e=V_h(b_i)$ where $h$ is the conformal welding of $\Gamma$. Unfortunately, it does not make sense in $L^p(\mathbb T)$ since $h$ need not be absolutely continuous with respect to the Lebesgue measure, a fact that geometrically transfers to the fact that $\omega_i$ and $\omega_e$ need not be mutually absolutely continuous. Nevertheless, the relation $b_e=V_h(b_i)$ remains true if we interpret $V_h$ on $\mathcal{B}_{p,p}^{1/p}(\mathbb T)$ as the completion of $V_h|_{\mathcal{C}_{p}^{1/p}(\mathbb T)}$.

In the case of $p=2$,  Schippers-Staubach (\cite{Sc1}, \cite{SS-EMS}) has given an even more concrete description of the boundary values: we outline their argument. On the one hand, for any $u \in \mathcal{B}_{2,2}^{1/2}(\Omega_{i})$ we have $u\circ \varphi_i \in \mathcal B_{2,2}^{1/2}(\mathbb D_i)$ as before. In this case,  $u\circ\varphi_i$ has radial boundary values everywhere on the circle $\mathbb T$ except for  
a Borel set $F_1$ of Logarithmic capacity zero, 
defining a function  $ b_i \in L^2(\mathbb T)$.   
On the other hand, let $h = \varphi_i^{-1}\circ\varphi_e$, the conformal welding with respect to the quasicircle $\Gamma$, that is a quasisymmetry (so is $h^{-1}$) as we pointed out above. Since a quasisymmetry 
 takes a Borel set of Logarithmic capacity zero to a Borel set of Logarithmic capacity zero (see \cite{ArcozziR}) we see that $F_2 := h^{-1}(F_1)$ is also a Borel set of Logarithmic capacity zero. Then $b_e = b_i\circ\varphi_i^{-1}\circ\varphi_e = V_h(b_i)$ is well-defined on $\mathbb T\setminus F_2$ and belongs to $L^2(\mathbb T)$. It is known that the union $F_1\cup F_2$ is also of Logarithmic capacity zero, and, in particular, Lebesgue measure zero since a set of Logarithmic capacity zero has zero Lebesgue measure. 
Put $ P_e(b_e)$ as the harmonic extension in $\mathbb D_e$ of $b_e$, and
 $v := P_e(b_e)\circ\varphi_e^{-1}$ is the harmonic extension in $\Omega_e$ of $b_i\circ\varphi_i^{-1}$. We can see that $u$ and $v$ have the same boundary values along every internal ray on $\Gamma$, except  on a set whose harmonic measure is zero with respect to both $\Omega_i$ and $\Omega_e$. Moreover, since spaces $\mathcal{B}_{p,p}^{1/p}(\Omega_{i,e})$ increase with $p$, the same argument holds for the case of $1<p<2$. 

\subsection{Plemelj-Calder\'on problem for 
$\mathcal{B}_{2,2}^{1/2}(\Gamma)$}
 We can now address  Plemelj-Calder\'on problem for $\mathcal{B}_{2,2}^{1/2}(\Gamma)$. The main result of this subsection is Theorem \ref{2case}, which has been proven in the literature; see \cite{Sc1}, \cite{LS-1}. However, the proof we present below is also worth recording, and will be used further in later subsections.  The argument is broken down into two steps. In the first step,  we consider the function in the space $C_c^{\infty}(\mathbb C)|_\Gamma$, the restriction space to $\Gamma$ of $C_c^{\infty}(\mathbb C)$. The second step consists in an approximation process. 
 
For any test function $f$, we have the following. 
\begin{equation}\label{convolution}
   f(z) = -\frac{1}{\pi}\iint_{\mathbb C}\frac{\bar\partial f(\zeta)}{\zeta - z}d\xi d\eta, \qquad z \in \mathbb C.
\end{equation}
Indeed, 
\begin{equation*}\label{Dirac}
    \frac{1}{\pi}\iint_{\mathbb C}\frac{\bar\partial f(\zeta)}{z - \zeta}d\xi d\eta = \bar\partial f(z) \ast \left(\frac{1}{\pi z}\right) = \bar\partial \left(\frac{1}{\pi z}\right)\ast f(z) = \delta_0\ast f(z) = f(z)
\end{equation*}
where $\ast$ stands for convolution, and $\delta_0$ denotes the Dirac measure at $0$.   
Define
\begin{equation}\label{Fi1}
    \widehat{F_i}(z) = -\frac{1}{\pi}\iint_{\Omega_e}\frac{\bar\partial f(\zeta)}{\zeta - z}d\xi d\eta, \qquad z \in \Omega_i,
\end{equation}
\begin{equation}\label{Fe1}
    \widehat{F_e}(z) = \frac{1}{\pi}\iint_{\Omega_i}\frac{\bar\partial f(\zeta)}{\zeta - z}d\xi d\eta, \qquad z \in \Omega_e.
\end{equation}
It is easily seen that $\widehat{F_i}$ and $\widehat{F_e}$ are holomorphic functions in $\Omega_i$ and $\Omega_e$ respectively with $\widehat{F_e}(z) = O(\frac{1}{|z|})$ at $\infty$. Furthermore, it can be shown that 
both $\widehat{F_i}$ and $\widehat{F_e}$ define continuous functions in $\mathbb C$. 
It follows from \eqref{convolution} that on $\Gamma$,  
\begin{equation}\label{split}
    f  = \widehat{F_i} - \widehat{F_e}. 
\end{equation}
Here, we have used the fact that the quasicircle $\Gamma$ has zero area with respect to the two-dimensional Lebesgue measure. Taking the derivative on both sides of \eqref{Fi1} we get
$$
\widehat{F_i}'(z) = -\frac{1}{\pi}\iint_{\Omega_e}\frac{\bar\partial f(\zeta)}{(\zeta - z)^2}d\xi d\eta 
= B(\bar\partial f \chi_{\Omega_e})(z).
$$ 
Here, $\chi_{\Omega_e}$ is the characteristic function of the domain $\Omega_e$ and $B$ denotes the Beurling operator, i.e., the convolution with $-\frac 1\pi \text{p.v.}\frac{1}{z^2}$ which is an isometry of $L^2(\mathbb C)$. Then 
\begin{align}\label{Beur}
   \Vert \widehat{F_i} \Vert_{\mathcal B_{2,2}^{1/2}(\Omega_i)} &= \Vert \widehat{F_i}' \Vert_{L^2(\Omega_i)} = \Vert B(\bar\partial f \chi_{\Omega_e})\|_{L^2(\Omega_i)}\\ 
    &\leq  \Vert B(\bar\partial f \chi_{\Omega_e}) \Vert_{L^2(\mathbb C)} = \Vert \bar\partial f \Vert_{L^2(\Omega_e)} \le \Vert \bar\partial f \Vert_{L^2(\mathbb C)}. \nonumber
\end{align} 
Similarly, from \eqref{Fe1} it follows that 
\begin{equation}
    \Vert \widehat{F_e} \Vert_{\mathcal{B}_{2,2}^{1/2}(\Omega_e)} \leq  \|\bar{\partial}f\|_{L^2(\mathbb C)}.
\end{equation}

\begin{lemma}\label{notdep}
    For any $f \in C_c^\infty(\mathbb C)$, the integrals in \eqref{Fi1} and \eqref{Fe1} depend only on the boundary values  $f$ on the quasicircle $\Gamma$,  not  on the specific extensions belonging to $B_{2,2}^{1/2}(\Omega_{i,e})$ of $f$ to $\Omega_i$ and $\Omega_e$. In particular, $\widehat{F_i}$ and $\widehat{F_e}$ defined in respectively \eqref{Fi1} and \eqref{Fe1} equal to 
\begin{equation}\label{Fi2}
    F_i(z) = -\frac{1}{\pi}\iint_{\Omega_e}\frac{\bar\partial u_e(\zeta)}{\zeta - z}d\xi d\eta, \qquad z \in \Omega_i,
\end{equation}
\begin{equation}\label{Fe2}
   F_e(z) = \frac{1}{\pi}\iint_{\Omega_i}\frac{\bar\partial u_i(\zeta)}{\zeta - z}d\xi d\eta, \qquad z \in \Omega_e 
\end{equation}
so that 
\begin{equation}\label{split1}
    f = F_i - F_e
\end{equation}
on $\Gamma$. 
Here, $u_i$ and $u_e$ are harmonic extensions of $f$ to $\Omega_i$ and $\Omega_e$.
\end{lemma}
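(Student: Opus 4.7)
The plan is to first establish that the integral in \eqref{Fi1} is insensitive to any change of the extension $f$ on $\Omega_e$ that preserves the trace on $\Gamma$, and then to identify this common value with $F_i$ (the case of $\widehat{F_e}$ is symmetric). Given two extensions $f_1, f_2 \in C_c^\infty(\mathbb C)$ with $f_1|_\Gamma = f_2|_\Gamma$, setting $g := f_1 - f_2$, it then suffices to show that
\begin{equation*}
\iint_{\Omega_e} \frac{\bar\partial g(\zeta)}{\zeta - z}\, d\xi d\eta = 0 \qquad (z \in \Omega_i)
\end{equation*}
for every $g \in C_c^\infty(\mathbb C)$ vanishing on $\Gamma$.

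For this I approximate $g|_{\Omega_e}$ by functions compactly supported in $\Omega_e$. Let $\phi \in C^\infty([0,\infty))$ satisfy $\phi \equiv 0$ on $[0,1]$ and $\phi \equiv 1$ on $[2,\infty)$, and set $g_\varepsilon(z) := g(z)\,\phi(d(z,\Gamma)/\varepsilon)$ for $z \in \Omega_e$, extended by $0$ across $\Gamma$, so that $g_\varepsilon \in C^\infty_c(\mathbb C \setminus \Gamma)$. Identity \eqref{convolution} applied to $g_\varepsilon$ on $\mathbb C$ gives
\begin{equation*}
\iint_{\Omega_e} \frac{\bar\partial g_\varepsilon(\zeta)}{\zeta - z}\, d\xi d\eta = -\pi g_\varepsilon(z) = 0 \qquad (z \in \Omega_i).
\end{equation*}
Differentiating in $z$ identifies the integral, up to a constant, with the Beurling transform of $\bar\partial g_\varepsilon \cdot \chi_{\Omega_e}$, and the $L^2$-isometry of $B$ already exploited in \eqref{Beur} reduces matters to proving $\bar\partial g_\varepsilon \to \bar\partial g \cdot \chi_{\Omega_e}$ in $L^2(\mathbb C)$.

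Writing
\begin{equation*}
\bar\partial g_\varepsilon = \phi(d(\cdot,\Gamma)/\varepsilon)\,\bar\partial g + g \cdot \bar\partial[\phi(d(\cdot,\Gamma)/\varepsilon)],
\end{equation*}
the first summand converges by dominated convergence. For the second, the hypothesis $g|_\Gamma = 0$ combined with the Lipschitz character of $g$ yields $|g(z)| \leq C\,d(z,\Gamma)$, while $|\bar\partial[\phi(d(\cdot,\Gamma)/\varepsilon)]| \leq C/\varepsilon$ is supported in the strip $S_\varepsilon := \{z : \varepsilon \leq d(z,\Gamma) \leq 2\varepsilon\}$. Since quasicircles have Minkowski dimension $h(\Gamma) < 2$, the content bound $|S_\varepsilon| \leq C\,\varepsilon^{2-h(\Gamma)}$ gives
\begin{equation*}
\|g \cdot \bar\partial[\phi(d(\cdot,\Gamma)/\varepsilon)]\|_{L^2(\Omega_e)}^2 \leq C\,\varepsilon^{2}\cdot\varepsilon^{-2}\cdot\varepsilon^{2-h(\Gamma)} \to 0,
\end{equation*}
which completes the proof of independence among $C_c^\infty$-extensions.

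Finally, the Beurling bound $\|\widehat{F_i}\|_{\mathcal B_{2,2}^{1/2}(\Omega_i)} \leq \|\bar\partial f\|_{L^2(\Omega_e)}$ extends $f \mapsto \widehat{F_i}$ continuously to any $B_{2,2}^{1/2}(\Omega_e)$-extension, in particular to the harmonic extension $u_e$, which lies in $\mathcal B_{2,2}^{1/2}(\Omega_e)$ by the quasiconformal-reflection argument of Section 2.1 and shares the trace $f|_\Gamma$. Approximating $u_e$ in the $B_{2,2}^{1/2}$-seminorm by test extensions of the same trace identifies $\widehat{F_i}$ with $F_i$; the argument for $\widehat{F_e} = F_e$ is analogous, and the splitting \eqref{split1} on $\Gamma$ is inherited from \eqref{split}. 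The principal obstacle is the non-rectifiability of $\Gamma$, which rules out a direct Stokes or Cauchy-Pompeiu argument; this is bypassed by the cutoff approximation, whose error is controlled precisely by the quantitative bound $h(\Gamma) < 2$ valid for every quasicircle.
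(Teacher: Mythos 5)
Your cutoff argument in the first part is a genuinely different route from the paper, and it is an attractive one: you reduce the independence claim to showing that, for $g\in C_c^\infty(\mathbb C)$ with $g|_\Gamma=0$, the Cauchy transform $\iint_{\Omega_e}\bar\partial g/(\zeta-z)$ vanishes on $\Omega_i$, and you get this by cutting off near $\Gamma$, using the Lipschitz estimate $|g|\le C\,d(\cdot,\Gamma)$ and the Minkowski bound $|S_\varepsilon|\lesssim\varepsilon^{2-\delta}$ for any $\delta>h(\Gamma)$ (as stated you should write $\delta$ rather than $h(\Gamma)$, since the infimum need not be attained, and note that $d(\cdot,\Gamma)$ is only Lipschitz, so $g_\varepsilon$ is Lipschitz rather than $C^\infty$, but that does not affect the Cauchy--Pompeiu step). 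Up to these minor points, this part is correct and arguably cleaner than the paper's argument.

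The gap is in your last paragraph. Your cutoff lemma applies only to $g\in C_c^\infty$ with $g|_\Gamma=0$, because it rests on $|g(z)|\le C\,d(z,\Gamma)$. To pass from $\widehat F_i$ (computed from $f$) to $F_i$ (computed from $u_e$) you write ``approximating $u_e$ in the $B_{2,2}^{1/2}$-seminorm by test extensions of the same trace,'' i.e.\ you assert the existence of $v_n\in C_c^\infty(\mathbb C)$ with $v_n|_\Gamma=f|_\Gamma$ and $\nabla v_n\to\nabla u_e$ in $L^2(\Omega_e)$. This is equivalent to $u_e-f\in W_0^{1,2}(\Omega_e)$, i.e.\ that the zero-trace $W^{1,2}$ function $u_e-f$ lies in the closure of $C_c^\infty(\Omega_e)$. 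That is a nontrivial fact about Sobolev spaces on quasidisks, requiring either the identification $W_0^{1,2}(\Omega_e)=\{w\in W^{1,2}(\Omega_e):\mathrm{Tr}\,w=0\}$ or a Hardy-type inequality $\iint_{\Omega_e}|w|^2 d(\cdot,\Gamma)^{-2}\lesssim\iint_{\Omega_e}|\nabla w|^2$; you cannot get it from your pointwise cutoff estimate because $u_e$ is merely continuous up to $\Gamma$, not Lipschitz, so $|u_e-f|\le C\,d(\cdot,\Gamma)$ fails. The paper sidesteps exactly this obstruction: instead of fixing the trace, it approximates the Jones extensions $U_{i},U_e$ by arbitrary $C_c^\infty$ functions $U_{i,n},U_{e,n}$ (whose traces converge but are not equal to $f$), obtains three a priori different splittings $f=\widehat F_i-\widehat F_e=G_i-F_e=F_i-G_e$ on $\Gamma$, and then invokes uniqueness of the Plemelj decomposition to identify them. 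Either filling your gap with a Hardy inequality / $W_0^{1,2}$-characterization for quasidisks, or switching to the paper's uniqueness argument, would complete the proof.
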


\begin{proof}
    Note that $u_i$ and $u_e$ are continuous on $\overline{\Omega}_i$ and $\overline{\Omega}_e$, respectively, since the boundary function $f$ is continuous on $\Gamma$. By the Dirichlet principle, $u_{i,e}\in\mathcal B_{2,2}^{1/2}(\Omega_{i,e})$. Using a similar computation to that of $\widehat{F_i}$ and $\widehat{F_e}$,  we can see  that the holomorphic functions $F_{i} \in \mathcal B_{2,2}^{1/2}(\Omega_i)$ and $F_{e} \in \mathcal B_{2,2}^{1/2}(\Omega_e)$.

By the theorem of Gol'dshtein-Latfullin-Vodop'yanov \cite{Gold} (see also \cite{Jon}) saying that a simply connected domain $\Omega$ satisfies the condition $W^{1,2}(\mathbb C)|_\Omega = W^{1,2}(\Omega)$ if and only if $\partial\Omega$ is a quasicircle, 
$u_i$ can be extended to $U_i\in W^{1,2}(\mathbb C)$ by defining $U_i = u_i\circ R$ on $\Omega_e$ where $R$ is a quasiconformal reflection across $\Gamma$. There exists a sequence $(U_{i,n})$ of  $C_c^\infty(\mathbb C)$  converging to $U_i$ in $W^{1,2}(\mathbb C)$ (see, e.g., Ch.11 in \cite{Leoni2017}). Denote $U_{i,n}|_{\Gamma}$ by $f_{i,n}$.   By what proceeds, $f_{i,n}$ may be written as $\widehat{F_{i,n}}-\widehat{F_{e,n}}$ on $\Gamma$. Here, 
$$
\widehat{F_{i,n}}(z) = -\frac{1}{\pi}\iint_{\Omega_e}\frac{\bar\partial U_{i,n}(\zeta)}{\zeta-z}d\xi d\eta, \qquad z \in \Omega_i
$$
and
$$
\widehat{F_{e,n}}(z) = \frac{1}{\pi}\iint_{\Omega_i}\frac{\bar\partial U_{i,n}(\zeta)}{\zeta-z}d\xi d\eta, \qquad z \in \Omega_e. 
$$
Notice that $\widehat{F_{i,n}}$ and $\widehat{F_{e,n}}$ are 
holomorphic in $\Omega_i,\,\Omega_e$, respectively, continuous in $\mathbb C$, and $\widehat{F_{e,n}}(z) = O(\frac{1}{|z|})$ as $z\to\infty$. 
Similarly to the reasoning in \eqref{Beur} we get
\begin{equation*}
    \begin{split}
        \Vert \widehat{F_{e,n}} - F_e\Vert_{\mathcal B_{2,2}^{1/2}(\Omega_e)} &= 
     \Vert (\widehat{F_{e,n}} - F_e)'\Vert_{L^2(\Omega_e)} \leq \Vert \bar\partial U_{i,n} - \bar\partial u_i) \Vert_{L^2(\Omega_i)} \\
     & \leq \Vert \nabla(U_{i,n} - u_i) \Vert_{L^2(\Omega_i)} \leq \Vert U_{i,n} - U_i \Vert_{W^{1,2}(\mathbb C)}\to 0
    \end{split}
\end{equation*}
as $n\to\infty$. Combining with the holomorphy of $\widehat{F_{e,n}}$ and $F_e$ in $\Omega_e$ and $\widehat{F_{e,n}}(z)$, $F_e(z)$ being $O(\frac{1}{|z|})$ as $z\to\infty$ this implies that the sequence  $(\widehat{F_{e,n}})$ tends to  $F_e$ at each point $z\in\Omega_e$ as $n\to\infty $. 
By the Dirichlet principle, we can also see 
\begin{equation*}
       \Vert \nabla(P_i(f_{i,n}) - u_i) \Vert_{L^2(\Omega_i)} \leq  \Vert \nabla(U_{i,n} - u_i)\Vert_{L^2(\Omega_i)}\to 0
\end{equation*}
where $P_i(f_{i,n})$ denotes the harmonic extension in $\Omega_i$ of $f_{i,n}$. Thus, we have $f_{i,n}(z)\to f(z)$ at each point $z \in \Gamma$. 

The sequence $(\widehat{F_{i,n}})$ also converges to a holomorphic function in $\Omega_i$, say, $G_i\in \mathcal{B}_{2,2}^{1/2}(\Omega_i)$. This shows that the function $f$  on $\Gamma$ has a decomposition
\begin{equation}\label{split2}
    f=G_i-F_e.
\end{equation}
Let us now consider $u_e$ the harmonic extension of $f$ in $\Omega_e$ and $U_e$ an extension of $u_e$ belonging to $W^{1,2}(\mathbb C)$. If we take a sequence $(U_{e,n})$ of $C_c^\infty(\mathbb C)$ converging to $U_e$ we can prove, as before, that 
\begin{equation}\label{split3}
    f=F_i-G_e.
\end{equation}

By the uniqueness of the decomposition (see e.g. \cite{LS}),  we see that these two decompositions \eqref{split2},\eqref{split3} are the same as \eqref{split} so that 
$\widehat{F_i} = F_i$,\; $\widehat{F_e} = F_e$ and 
 $f=F_i-F_e$ on $\Gamma$. 
\end{proof}

Concerning the above proof, we make two side remarks. Firstly,  using a part of the argument in the proof of Lemma \ref{notdep}, we can see that $C_c^\infty(\mathbb C)|_\Gamma$ is a dense subset of $\mathcal B_{2,2}^{1/2}(\Gamma)$. Specifically, for any $f\in\mathcal B_{2,2}^{1/2}(\Gamma)$, its harmonic extension $u_i$ in $\Omega_i$ can be extended to $U_i\in W^{1,2}(\mathbb C)$. There exists a sequence $(U_{i,n})$ of  $C_c^\infty(\mathbb C)$  converging to $U_i$ in $W^{1,2}(\mathbb C)$. Using the Dirichlet principle, we may conclude the assertion.

Secondly, we  explain that if the quasicircle $\Gamma$ is rectifiable then the assertion in Lemma \ref{notdep} can be derived easily from the generalized Green's formula (see p.150 in \cite{Lehto-Virtanen}). It says that if $f$ has  $L^1$-derivatives in the domain $G$ and if $\overline\Omega \subset G$ is a  Jordan domain  with rectifiable boundary $\Gamma = \partial\Omega$ then 
$$
\iint_\Omega f_{\bar z} dxdy  = -\frac{i}{2}\int_{\Gamma}f dz.
$$ 
Let $f\in C_c^\infty(\mathbb C)$ and $\Gamma$ be a rectifiable Jordan curve and let $u_e$ be the harmonic extension of $f$, restricted on $\Gamma$, to $\Omega_e$, which is continuous in $\Omega_e\cup\Gamma$. By the Dirichlet principle, $u_e \in W^{1,2}(\Omega_e)$. 
As we have noticed before, $u_e$ can be extended to $U_e \in W^{1,2}(\mathbb C)$ using the quasiconformal reflection across $\Gamma$. Note that $f-U_e$ has $L^2$-derivatives in $\mathbb C$. 
By the  generalized Green's formula,
$$
\iint_{\Omega_e}\frac{\bar\partial (f - u_e)(\zeta)}{\zeta - z} d\xi d\eta = -\frac{i}{2 }\int_{\Gamma}\frac{(f-u_e)|_{\Gamma}(\zeta)}{\zeta - z}d\zeta,  \quad z \in \Omega_i
$$
which equals $0$ since $(f-u_e)|_{\Gamma} = 0$ on $\Gamma$. Thus, we see that
$$
\iint_{\Omega_e}\frac{\bar\partial f(\zeta)}{\zeta - z} d\xi d\eta = \iint_{\Omega_e}\frac{\bar\partial u_e(\zeta)}{\zeta - z} d\xi d\eta
$$
which implies that the integral does not depend on the specific extension provided that $\Gamma$ is rectifiable.

\begin{theo}\label{2case}
Let $\Gamma$ be a quasicircle. 
    Any function $f \in \mathcal B_{2,2}^{1/2}(\Gamma)$ admits a unique  decomposition  $f=F_i -F_e$ on $\Gamma$, see \eqref{split1},  with  $F_{i,e}$ 
    being holomorphic, $F_{i,e}\in \mathcal{B}_{2,2}^{1/2}(\Omega_{i,e})$ and 
$$\|F_{i,e}\|_{\mathcal{B}_{2,2}^{1/2}(\Omega_{i,e})}\leq C \|f\|_{\mathcal{B}_{2,2}^{1/2}(\Gamma)}$$ 
for every $f \in \mathcal{B}_{2,2}^{1/2}(\Gamma)$ where the constant $C$ depends only on $\Gamma$. 
\end{theo}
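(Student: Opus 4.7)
The plan is to leverage three ingredients already assembled in this section: the explicit Cauchy-type formulas \eqref{Fi1}--\eqref{Fe1} that provide a candidate decomposition for $f \in C_c^\infty(\mathbb C)|_\Gamma$, the $L^2(\mathbb C)$-isometry of the Beurling operator used in the estimate \eqref{Beur}, and the density of $C_c^\infty(\mathbb C)|_\Gamma$ in $\mathcal B_{2,2}^{1/2}(\Gamma)$ recorded in the first remark following Lemma \ref{notdep}. The idea is to first establish the decomposition with the correct quantitative control on this dense subspace and then pass to the limit.

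For the a priori estimate, given $f \in C_c^\infty(\mathbb C)|_\Gamma$, Lemma \ref{notdep} identifies $\widehat{F_i}, \widehat{F_e}$ with the second form $F_i, F_e$ built from the harmonic extensions $u_i, u_e$ via \eqref{Fi2}--\eqref{Fe2}. Plugging $u_i$ into the integral defining $F_e$ and repeating the Beurling computation of \eqref{Beur} verbatim, I obtain
$$
\|F_e\|_{\mathcal B_{2,2}^{1/2}(\Omega_e)} \;=\; \|F_e'\|_{L^2(\Omega_e)} \;\leq\; \|\bar\partial u_i\|_{L^2(\Omega_i)} \;\leq\; C\,\|\nabla u_i\|_{L^2(\Omega_i)} \;=\; C\,\|f\|_{\mathcal B_{2,2}^{1/2}(\Omega_i \to \Gamma)},
$$
the last equality using that for $s=1/p=1/2$ the weight $d(z,\Gamma)^{(1-s)p-1}$ is identically $1$. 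By the symmetric argument using $u_e$ in the formula for $F_i$, one obtains $\|F_i\|_{\mathcal B_{2,2}^{1/2}(\Omega_i)} \leq C \|f\|_{\mathcal B_{2,2}^{1/2}(\Omega_e \to \Gamma)}$, and both quantities are controlled by $\|f\|_{\mathcal B_{2,2}^{1/2}(\Gamma)}$.

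For a general $f \in \mathcal B_{2,2}^{1/2}(\Gamma)$, I approximate by $(f_n) \subset C_c^\infty(\mathbb C)|_\Gamma$ with $f_n \to f$ in $\mathcal B_{2,2}^{1/2}(\Gamma)$. The a priori bound just proved, applied to $f_n - f_m$, makes the sequences $F_{i,n}, F_{e,n}$ Cauchy in $\mathcal B_{2,2}^{1/2}(\Omega_i)$ and $\mathcal B_{2,2}^{1/2}(\Omega_e)$ respectively; their limits $F_{i,e}$ remain holomorphic (since $\mathcal B_{2,2}^{1/2}$-convergence implies normal convergence on compact subsets via the mean value property applied to the derivative), inherit the required norm bound, and satisfy $f = F_i - F_e$ on $\Gamma$ by continuity of the trace map implicit in the definition of $\mathcal B_{2,2}^{1/2}(\Gamma)$. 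For uniqueness, two admissible decompositions of the same $f$ differ by a pair $(H_i, H_e)$ of holomorphic functions with $H_{i,e} \in \mathcal B_{2,2}^{1/2}(\Omega_{i,e})$, sharing the same boundary trace and with $H_e$ vanishing at $\infty$; invoking the Gol'dshtein--Latfullin--Vodop'yanov theorem used in Lemma \ref{notdep} glues them across $\Gamma$ to a function in $W^{1,2}_{\mathrm{loc}}(\mathbb C)$ that is holomorphic off $\Gamma$, and quasicircles being removable for such functions forces an entire extension, hence zero by Liouville. The most delicate point, and the step I expect to be the main obstacle, is this removability across the quasicircle together with the verification that the glued $W^{1,2}_{\mathrm{loc}}$ extension is indeed available; I would quote both facts from the literature rather than reproving them.
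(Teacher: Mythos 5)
Your proof follows essentially the same route as the paper: the Beurling-operator estimate from \eqref{Beur}, the identification of $\widehat{F_{i,e}}$ with $F_{i,e}$ via Lemma \ref{notdep}, the density of $C_c^\infty(\mathbb C)|_\Gamma$ in $\mathcal B_{2,2}^{1/2}(\Gamma)$, and passage to the limit. One small inaccuracy in the uniqueness sketch: you describe the Gol'dshtein--Latfullin--Vodop'yanov theorem as producing the gluing of $H_i$ and $H_e$ across $\Gamma$, but that theorem is a one-sided extension result ($W^{1,2}(\mathbb C)|_\Omega = W^{1,2}(\Omega)$ for a quasidisk $\Omega$), not a gluing result; the actual gluing requires verifying that matching traces leave no distributional derivative supported on $\Gamma$, which is a separate (true but nontrivial) step that you rightly flag as the delicate part and would, like the paper (which cites \cite{LS} for uniqueness), defer to the literature.
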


\begin{proof}
    Let now $f$ be any function of $\mathcal B_{2,2}^{1/2}(\Gamma)$. It can be shown that the functions $F_i$ and $F_e$, defined as \eqref{Fi2} and \eqref{Fe2}, are still  well-defined in this general case, holomorphic in the domains of definition and continuous in $\mathbb C$. By the same reasoning as above, we  have
\begin{equation*}
    \begin{split}
        &\Vert F_i \Vert_{\mathcal B_{2,2}^{1/2}(\Omega_i)} \leq \Vert u_e \Vert_{\mathcal B_{2,2}^{1/2}(\Omega_e)} \simeq \|f\|_{\mathcal B_{2,2}^{1/2}(\Gamma)},\\
        &\Vert F_e \Vert_{\mathcal B_{2,2}^{1/2}(\Omega_e)} \leq \Vert u_i \Vert_{\mathcal B_{2,2}^{1/2}(\Omega_i)} \simeq \|f\|_{\mathcal B_{2,2}^{1/2}(\Gamma)}.
    \end{split}
\end{equation*} 
Here and in what follows, the notation  "$A \simeq B$" means that there exists a constant $C$ such that $A/C\leq B \leq CA$. 
In the above, the implicit constant depends only on $\Gamma$. 

 Recall that $C_c^\infty(\mathbb C)|_\Gamma$ is a dense subset of $\mathcal B_{2,2}^{1/2}(\Gamma)$. Using \eqref{split1} it is not hard to show $f = F_i|_{\Gamma} - F_e|_{\Gamma}$ by an approximation process. The details are left to the reader.
\end{proof}
 
 \subsection{Plemelj-Calder\'on problem for 
$\mathcal{B}_{p,p}^{1-1/p}(\Gamma)$}

The space $\mathcal{B}_{2,2}^{1/2}(\Gamma)$ has two remarkable properties: it is conformally invariant and its definition does not involve the distance to the boundary. As  we have seen,  Plemelj-Calder\'on property holds for this space for all quasicircles. For $p\neq 2$ these properties split: the conformality property remains for the spaces $\mathcal{B}_{p,p}^{1/p}(\Gamma),\, p>1$, but in its definition a distance term $d(z, \Gamma)^{p-2}$ appears. This case will be discussed in the next subsection, together with the general case of $s \in (0, 1)$ with a distance term $d(z, \Gamma)^{(1-s)p-1}$. The other property transfers to $\mathcal{B}_{p,p}^{1-1/p}(\Gamma)$. Notice that $1-1/p=1/p'$ where $p'$ is the coefficient conjugate to $p$. 

Define 
$\widetilde{\mathcal B}_{p,p}^{1-1/p}(\Gamma)$ as the closure of
$C_c^\infty(\mathbb C)|_\Gamma\cap\mathcal B_{p,p}^{1-1/p}(\Omega_i\to\Gamma)\cap\mathcal B_{p,p}^{1-1/p}(\Omega_e\to\Gamma)$ in  $\mathcal B_{p,p}^{1-1/p}(\Gamma)$.
We have the following

\begin{theo}\label{pcase} 
Let $\Gamma$ be a quasicircle and $p>2$. Each $f\in \widetilde{\mathcal{B}}_{p,p}^{1-1/p}(\Gamma)$ admits a unique  decomposition $f = F_i-F_e$ with $F_{i,e}$ being holomorphic, $F_{i,e}\in \mathcal{B}_{p,p}^{1-1/p}(\Omega_{i,e})$ and 
$$\|F_{i,e}\|_{\mathcal{B}_{p,p}^{1-1/p}(\Omega_{i,e})}\leq C \|f\|_{\mathcal{B}_{p,p}^{1-1/p}(\Gamma)}$$ 
for every $f \in \widetilde{\mathcal{B}}_{p,p}^{1-1/p}(\Gamma)$ where the constant $C$ depends only on $\Gamma$ and $p$. 
\end{theo}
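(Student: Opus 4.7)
The plan is to mirror the proof of Theorem \ref{2case} for general $p>2$, replacing the $L^2$-isometry of the Beurling transform $B$ by its $L^p$-boundedness (a standard Calder\'on--Zygmund fact valid for all $1<p<\infty$), and the Gol'dshtein--Latfullin--Vodop'yanov $W^{1,2}$-extension theorem by Jones' $W^{1,p}$-extension theorem, which applies to quasidisks since they are uniform domains. For $s=1-1/p$ the weight exponent $(1-s)p-1$ vanishes, so the relevant norm collapses to $\|F_{i,e}'\|_{L^p(\Omega_{i,e})}$; moreover $p>2$ gives $s>1/p$, placing us in Case~1 of the boundary-value trichotomy and ensuring that $\mathcal{B}_{p,p}^{1-1/p}(\Omega_{i,e})$ embeds continuously into a H\"older class on $\overline{\Omega}_{i,e}$. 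Consequently boundary traces are classical restrictions and the decomposition $f=F_i-F_e$ on $\Gamma$ is unambiguous pointwise.

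First I would handle the dense subspace $C_c^\infty(\mathbb{C})|_\Gamma \cap \mathcal{B}_{p,p}^{1-1/p}(\Omega_i\!\to\!\Gamma)\cap \mathcal{B}_{p,p}^{1-1/p}(\Omega_e\!\to\!\Gamma)$. For $f$ the restriction of some $F\in C_c^\infty(\mathbb{C})$ in this subspace, define $\widehat{F_i},\widehat{F_e}$ by \eqref{Fi1}--\eqref{Fe1}. Differentiation under the integral yields $\widehat{F_i}'=B(\bar\partial F\,\chi_{\Omega_e})$, so that
$$\|\widehat{F_i}\|_{\mathcal{B}_{p,p}^{1-1/p}(\Omega_i)}=\|\widehat{F_i}'\|_{L^p(\Omega_i)}\le \|B\|_{L^p\to L^p}\,\|\bar\partial F\|_{L^p(\Omega_e)},$$
and symmetrically for $\widehat{F_e}$; the identity \eqref{convolution} gives $F=\widehat{F_i}-\widehat{F_e}$ on $\Gamma$.

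The decisive step is the $L^p$-analogue of Lemma \ref{notdep}: replacing the generic extension $F$ by the harmonic extensions $u_{i,e}$ to show $\widehat{F_{i,e}}=F_{i,e}$, with $F_{i,e}$ given by \eqref{Fi2}--\eqref{Fe2}. By Jones' theorem \cite{Jon}, since the quasidisk $\Omega_i$ is uniform, $u_i\in W^{1,p}(\Omega_i)$ extends to $U_i\in W^{1,p}(\mathbb{C})$, which one approximates in $W^{1,p}$-norm by $U_{i,n}\in C_c^\infty(\mathbb{C})$. Applying the Beurling estimate to $U_{i,n}-U_{i,m}$ shows the associated $\widehat{F_{e,n}}$ form a Cauchy sequence in $\mathcal{B}_{p,p}^{1-1/p}(\Omega_e)$, whose pointwise value at each $z\in\Omega_e$ tends to $F_e(z)$ from \eqref{Fe2} (by $L^p$-convergence of $\bar\partial U_{i,n}$ paired with the $L^{p'}$-kernel $1/(\zeta-z)$ on the bounded domain $\Omega_i$). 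The H\"older embedding propagates this convergence to $\Gamma$. Running the symmetric argument with $u_e$ extended to some $U_e\in W^{1,p}(\mathbb{C})$ produces a second decomposition, and the uniqueness of the holomorphic splitting (see \cite{LS}) forces $\widehat{F_{i,e}}=F_{i,e}$, yielding
$$\|F_{i,e}\|_{\mathcal{B}_{p,p}^{1-1/p}(\Omega_{i,e})}\le C\,\|f\|_{\mathcal{B}_{p,p}^{1-1/p}(\Gamma)}.$$

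Finally I would extend by continuity to all of $\widetilde{\mathcal{B}}_{p,p}^{1-1/p}(\Gamma)$: given $f\in\widetilde{\mathcal{B}}_{p,p}^{1-1/p}(\Gamma)$ and a sequence $f_n$ in the dense subspace with $f_n\to f$, the corresponding $F_{i,e}^{n}$ are Cauchy in $\mathcal{B}_{p,p}^{1-1/p}(\Omega_{i,e})$ by the estimate above, hence converge to holomorphic limits $F_{i,e}\in \mathcal{B}_{p,p}^{1-1/p}(\Omega_{i,e})$ satisfying the claimed bound and, via the H\"older embedding, the decomposition $f=F_i-F_e$ on $\Gamma$; uniqueness persists in the limit. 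The main obstacle, as in the $p=2$ case, is the extension-independence step: one must manage convergence of the Cauchy-type integrals in the $\mathcal{B}_{p,p}^{1-1/p}(\Omega_e)$-norm while retaining the normalization $\widehat{F_{e,n}}(z)=O(1/|z|)$ at infinity uniformly in $n$, and reconcile the pointwise limits on $\Gamma$ with the functional-analytic $L^p$-limits so that the uniqueness criterion for the holomorphic decomposition applies.
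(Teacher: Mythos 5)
Your proposal follows essentially the same route as the paper: Jones' $W^{1,p}$-extension theorem for quasidisks in place of Gol'dshtein--Latfullin--Vodop'yanov, the $L^p(\mathbb C)$-boundedness of the Beurling transform in place of the $L^2$-isometry, and an approximation argument mirroring Lemma~\ref{notdep}, followed by density. The only nuance worth noting is the uniqueness step: the paper justifies it by observing that $L^p(\Omega_i)\subset L^2(\Omega_i)$ and $\mathcal{B}_{p,p}^{1-1/p}(\Omega_e)\subset \mathcal{B}_{2,2}^{1/2}(\Omega_e)$, reducing to the already-settled $p=2$ uniqueness, whereas you directly invoke \cite{LS}; this is immaterial since for $s=1-1/p>1/p$ the H\"older embedding makes the entire-function/Liouville argument for uniqueness available directly, and your use of that embedding to propagate pointwise convergence to $\Gamma$ is an explicit version of what the paper leaves implicit under ``the proof is exactly the same as that for $p=2$.''
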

Before we proceed to the proof of this theorem, one must properly define what we mean by $\mathcal{B}_{p,p}^{1-1/p}(\Gamma)$. It happens that functions in $\mathcal{B}_{p,p}^{1-1/p}(\Omega_{i,e})$ are H\"older continuous in the closure of $\Omega_{i,e}$ with exponent $\alpha=1-2/p$. This may be seen by two different ways. Firstly, $s=1-1/p>1/p$, we will prove later that the functions in $\mathcal{B}_{p,p}^{s}(\Omega_{i,e})$ are $\alpha$-H\"older continuous in the closure of $\Omega_{i,e}$
 with $\alpha=s-1/p$. The other way to prove the same thing is to appeal to Jones' theorem: a function in $\mathcal{B}_{p,p}^{1-1/p}(\Omega_{i,e})$ can be extended to a function in $W^{1,p}(\mathbb C)$  even though clearly one cannot extend in this case  by using quasiconformal reflection argument, used in the case of $p=2$ as above 
 (see \cite{Jon}, also see \cite{Chua}).  The Morrey's inequality states that  $W^{1,p}(\mathbb C)\subset \Lambda^\alpha(\mathbb C)$, $p>2$ (note that this inclusion relation holds in $\mathbb R^n$ for $p>n$) for $\alpha=1-2/p$, the space of $\alpha$-H\"older continuous functions in $\mathbb C$. 
 Knowing this fact, one may re-define the space $\mathcal{B}_{p,p}^{1-1/p}(\Gamma)$ as the space of $\Lambda^{1-2/p}$-functions on $\Gamma$ whose harmonic extensions to $\Omega_{i,e}$ belong to $\mathcal{B}_{p,p}^{1-1/p}(\Omega_{i,e})$, that is, 
 $$
 \mathcal{B}_{p,p}^{1-1/p}(\Gamma) = \Lambda^{1-2/p}(\Gamma)\cap \mathcal{B}_{p,p}^{1-1/p}(\Omega_i\to\Gamma)\cap \mathcal{B}_{p,p}^{1-1/p}(\Omega_e\to\Gamma).
 $$

 \begin{proof}
     Thanks to Jones' theorem, the proof is exactly the same as that for $p=2$ because the Beurling transform is bounded on $L^p(\mathbb C) $ for all $p>1$. Notice that the argument of uniqueness used in the case $p=2$ goes through here because $L^p(\Omega_{i})\subset L^2(\Omega_i)$ for $p>2$, and  $\mathcal{B}_{p,p}^{1-1/p}(\Omega_e)\subset \mathcal{B}_{2,2}^{1/2}(\Omega_e)$ by the argument in Appendix. 
 \end{proof}

\subsection{Plemelj-Calder\'on problem for general $\mathcal B_{p,p}^{s}(\Gamma)$}
In order to deal with Plemelj-Calder\'on property for general $p>1,s\in(0,1)$  we need to consider the boundedness of the Beurling transform on $L^p(\omega,\,\mathbb C)$ where $\omega$ is a weight. By weight we mean a 
nonnegative locally integrable function in $\mathbb C$. Since the Beurling transform is a Calder\'on-Zygmund operator, we know \cite{CoF} that this property holds if and only if $\omega$ belongs to the so-called class $A_p$ whose definition we now recall.

\begin{df} 
A weight $\omega:\mathbb C\to \mathbb R^+$ 
 is said to satisfy the $A_1$ condition on $\mathbb C$ if there exists $C>0$ such that for any disk $D$ of the plane,   
$$\frac{1}{|D|}\iint_D\omega(z)dxdy\leq C\omega(z)$$ 
for almost all $z\in D$; while $\omega$ is said to satisfy the $A_{\infty}$ condition if
$$
\frac{1}{|D|}\iint_D\omega(z)dxdy\leq C\exp \left(\frac{1}{|D|}\iint_D\log\omega(z)dxdy\right). 
$$ 
For any $p>1$, the $A_p$ class is the set of weights $\omega:\mathbb C\to \mathbb R^+$ such that there exists $C>1$ such that for every disk $D$ of the plane,
\begin{equation}
 \frac{1}{|D|}\iint_D\omega(z)dxdy\left(\frac{1}{|D|}\iint_Dw(z)^{-\frac{1}{p-1}}dxdy\right)^{p-1}\leq C.
 \label{AP}
 \end{equation}
\end{df}

For future use, let us note that the $A_p$-weights on $\mathbb T$ are defined similarly. 
 It is known that for all $p>1$, $A_1\subset A_p \subset A_{\infty}$,  $\omega\in A_p$ is equivalent to $\omega^{-1/(p-1)} \in A_{p'}$ where $1/p+1/p'=1$, and   $A_{p_1}\subset A_{p_2}$ for $1 < p_1 < p_2$. By  H\"older inequality, the left part of the inequality (\ref{AP}) is always not less than $1$: thus one can interpret this class as verifying some kind of reverse H\"older inequality. If the weight $\omega$ has $A_p$ then any Calderon-Zygmund operator is bounded on the weighted space $L^p(\omega, \mathbb C)$ (see \cite{CoF}). 
 
 If $f\in C_c^\infty(\mathbb C)$ we recall that
 \begin{align*}
     f(z)&=-\frac 1\pi\iint_{\mathbb C}\frac{\bar\partial f(\zeta)}{\zeta-z}d\xi d\eta\\
     &=F_i(z) - F_e(z)
 \end{align*}
 where
 $$F_{i}(z)=-\frac 1\pi\iint_{\Omega_{e}}\frac{\bar\partial f(\zeta)}{\zeta-z}d\xi d\eta, \quad F_{e}(z)=\frac 1\pi\iint_{\Omega_{i}}\frac{\bar\partial f(\zeta)}{\zeta-z}d\xi d\eta. $$
 Notice  
  $$F_i'(z) = B(\bar\partial f \chi_{\Omega_e})(z)=B(\bar\partial u_e \chi_{\Omega_e})(z)$$ and $$F_e'(z) = -B(\bar\partial f \chi_{\Omega_i})(z) = -B(\bar\partial u_i \chi_{\Omega_i})(z)$$ by Lemma \ref{notdep}, where $u_{i,e}$ are the harmonic extensions of $f|_{\Gamma}$ to $\Omega_{i,e}$, we see that a sufficient condition on $\Gamma$ which implies $F_{i,e}\in \mathcal{B}_{p,p}^s(\Omega_{i,e})$  is that the weight $\omega(z)=d(z,\Gamma)^{(1-s)p-1}$ satisfies the $A_p$ condition. 
Precisely, under this condition we have the following.
\begin{align*}
    &\|F_i\|_{\mathcal{B}_{p,p}^s(\Omega_i)} \leq C \|u_e\|_{\mathcal B_{p,p}^s(\Omega_e)} \leq C \|f\|_{\mathcal B_{p,p}^s(\Gamma)},\\
    &\|F_e\|_{\mathcal B_{p,p}^s(\Omega_e)} \leq C \|u_i\|_{\mathcal B_{p,p}^s(\Omega_i)} \leq C \|f\|_{\mathcal B_{p,p}^s(\Gamma)}
\end{align*}
where $C$ is an absolute constant. 
 
The dependence of the condition $d(z,\Gamma)^{(1-s)p-1}$ having $A_p$ on the geometry of the curve has been studied by Astala \cite{Ast}. In order to state his results, we first need to introduce some notions.

For a compact set $E\subset \mathbb C$ and $0<\delta\le 2$, let
$$ M_\delta(E,t)=\frac{|E+D(0,t)|}{t^{2-\delta}}.$$
Here, $E+D(0,t)$, by the definition $\{e+z: \; e\in E, \; z \in D(0,t)\}$, is a set of points at a distance less than $t$ from $E$, which is called  Minkowski sausage. (Here and in the sequel we write $|E+D(0,t)|$ for the Lebesgue measure of the set $E+D(0,t)$.) 
We then define a kind of Minkowski content by
$$ h_\delta(E)=\sup\limits_{0<t\leq \mathrm{diam}(E)}{M_\delta(E,t)}.$$
We mention here that in the definition of $h_\delta(E)$, $M_\delta(E, t)$ is sometimes replaced in the literature by
$$
H_\delta(E, t) := \inf\{nt^\delta: \; E\subset\cup_{i=1}^n D(z_i, t), \; n \in \mathbb N\}.
$$
This replacement does not make any difference 
since there are universal constants $c_1$ and $c_2$ such that (see \cite{Martio-V})
$$
c_1M_\delta(E, t) \leq H_\delta(E, t) \leq c_2 M_\delta(E, t). 
$$

\begin{df} 
We say that a Jordan curve $\Gamma$ is $\delta$-regular if there exists $C>0$ such that for every disk $D(z,R)\subset \mathbb C$,
$$h_\delta(\Gamma\cap D(z,R))\leq CR^\delta.$$
\end{df}
\noindent It is known  that $1$-regularity is equivalent to Ahlfors-regularity. 

It has been proved that for any quasicircle $\Gamma$ there exists $\delta<2$ such that $\Gamma$ is $\delta$-regular. We may thus have, for a quasicrcle $\Gamma$,
$$h(\Gamma)=\inf\{\delta: \Gamma\, \mathrm{is}\, \delta\text{-}\mathrm{regular}\} \in [1, 2)$$
and we may say that the degree of regularity of $\Gamma$ is $h(\Gamma)$.  
For more information on $h(\Gamma)$, and its relation with the Hausdorff dimension $\text{dim}_H(\Gamma)$ of $\Gamma$; see \cite{Ast}. 
 
\begin{df} A compact subset $E$ of the complex plane is said to be porous if there exists $c\in (0,1)$ and $r_0 > 1$ such that for every $z\in \mathbb C$ and $0 < r \leq r_0$, the disk $D(z,r)$ contains a disk of radius $cr$ which does not intersect $E$.
\end{df}
 
\begin{theo}[\cite{Ast}]\label{As1} 
Let $\alpha\in (0,1)$ and $\Gamma$ be a porous Jordan curve. We  have,  for any $1<p<\infty$,
\begin{equation}
    d(z,\Gamma)^{\alpha-1}\in A_p \Leftrightarrow d(z,\Gamma)^{\alpha-1}\in A_1 \Leftrightarrow\alpha >h(\Gamma)-1.
\end{equation} 
\end{theo}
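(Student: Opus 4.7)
The plan is to reduce the three-way equivalence to proving the two nontrivial implications, since $A_1 \subset A_p$ is automatic. So I need to show (a) $\alpha > h(\Gamma) - 1 \Rightarrow \omega \in A_1$ and (b) $\omega \in A_p \Rightarrow \alpha > h(\Gamma) - 1$, where $\omega(z) = d(z, \Gamma)^{\alpha-1}$. In both directions the bridge between $A_p$ and regularity is a Minkowski-type estimate of the form $|\{z \in D(z_0, R) : d(z, \Gamma) \leq t\}| \lesssim R^{?} t^{?}$, obtained either from $\delta$-regularity or from the $A_p$ condition.

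For (a), since $\alpha > h(\Gamma) - 1$, I can select $\delta$ with $h(\Gamma) \leq \delta < \alpha + 1$ so that $\Gamma$ is $\delta$-regular. To verify $A_1$ on a disk $D = D(z_0, R)$, I split into the easy case $d(z_0, \Gamma) > 2R$ (where $\omega$ is essentially constant on $D$) and the case $d(z_0, \Gamma) \leq 2R$. In the latter case every $z \in D$ satisfies $d(z, \Gamma) \leq 3R$, so $\omega(z) \geq (3R)^{\alpha-1}$ (recall $\alpha - 1 < 0$). I bound the average of $\omega$ by a dyadic decomposition: $\delta$-regularity applied to a slightly larger disk yields $|\{z \in D : d(z, \Gamma) \leq t\}| \lesssim R^\delta t^{2-\delta}$ for $t \leq R$, and then
$$ \iint_D \omega \;\lesssim\; \sum_{k \geq 0} (2^{-k} R)^{\alpha - 1} \cdot R^\delta (2^{-k} R)^{2 - \delta} \;=\; R^{\alpha + 1} \sum_{k \geq 0} 2^{-k(\alpha + 1 - \delta)}, $$
which converges precisely because $\delta < \alpha + 1$. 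Dividing by $|D| \simeq R^2$ gives an average $\lesssim R^{\alpha-1} \lesssim \omega(z)$, which is the $A_1$ bound.

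For (b), I test $A_p$ on disks $D = D(x, R)$ centered on $\Gamma$. Porosity produces $D(y, cR) \subset D$ disjoint from $\Gamma$, hence $d(z, \Gamma) \geq cR/2$ on $D(y, cR/2)$ and so $\omega^{-1/(p-1)}(z) \gtrsim R^{(1-\alpha)/(p-1)}$ there. This yields $\bigl(|D|^{-1} \iint_D \omega^{-1/(p-1)}\bigr)^{p-1} \gtrsim R^{1-\alpha}$, and combining with the $A_p$ condition gives $\iint_D \omega \lesssim R^{\alpha + 1}$. Now on $\{z \in D : d(z, \Gamma) \leq t\}$ one has $\omega(z) \geq t^{\alpha - 1}$, so Chebyshev yields $|\{z \in D : d(z, \Gamma) \leq t\}| \lesssim R^{\alpha + 1} t^{1 - \alpha}$. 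Since $\Gamma \cap D + D(0, t) \subset \{z \in D(x, 2R) : d(z, \Gamma) \leq t\}$ for $t \leq R$, the same bound (applied to $D(x, 2R)$) shows $M_{1+\alpha}(\Gamma \cap D, t) \lesssim R^{1 + \alpha}$ uniformly in $R$ and $t$, hence $\Gamma$ is $(1+\alpha)$-regular and $h(\Gamma) \leq 1 + \alpha$.

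The main obstacle is upgrading to the \emph{strict} inequality $h(\Gamma) < 1 + \alpha$. Here I invoke the standard openness of the $A_p$ class: $\omega \in A_p$ implies the reverse H\"older bound, so $\omega^{1+\varepsilon} \in A_q$ for some $\varepsilon > 0$ and some $q$. For our specific weight $\omega = d^{\alpha - 1}$, this means $d^{\alpha' - 1} \in A_q$ with $\alpha' := 1 + (1+\varepsilon)(\alpha - 1) < \alpha$; reapplying the argument of (b) to $\alpha'$ produces $h(\Gamma) \leq 1 + \alpha' < 1 + \alpha$, closing the gap. Verifying the precise form of this self-improvement in the non-rectifiable setting, and ensuring the constants in the Minkowski estimate are uniform in the centering disk, will be the technical work behind the plan.
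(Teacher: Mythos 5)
The paper cites this result from Astala's 1988 paper without reproducing a proof, so there is nothing internal to compare against; I will assess your argument on its merits. Your plan is the standard one (and to my knowledge essentially Astala's own): close the implication cycle $A_1\Rightarrow A_p\Rightarrow(\alpha>h-1)\Rightarrow A_1$, with the geometric content carried by a Minkowski-sausage estimate and with reverse H\"older self-improvement supplying the strict inequality. The dyadic estimate in direction (a), the porosity lower bound in direction (b), the Chebyshev step, and the reduction of $(1+\alpha)$-regularity for general disks to disks centered on $\Gamma$ are all correct, and the self-improvement trick for strictness is exactly the right device.

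A few small points you should tighten when writing this up. In (a) you need $\delta$ \emph{strictly} greater than $h(\Gamma)$ (and then $\delta<\alpha+1$), since $h(\Gamma)$ is an infimum and $\Gamma$ need not be $h(\Gamma)$-regular; the interval $(h(\Gamma),\alpha+1)$ is nonempty so this costs nothing. In (b) the porosity condition is only stated for $0<r\le r_0$, so you must say a word about disks of radius $R>r_0$; compactness of $\Gamma$ handles this, since for $R$ a fixed multiple of $\mathrm{diam}(\Gamma)$ a half of $D(x,R)$ is already at distance $\gtrsim R$ from $\Gamma$. In the self-improvement step you should note that $\varepsilon$ can be taken small enough that $\alpha'=1+(1+\varepsilon)(\alpha-1)$ stays positive, so the hypothesis $\alpha'\in(0,1)$ remains in force when you rerun (b). Finally, the cleanest version of the self-improvement is simply that $\omega\in A_p$ forces $\omega^{1+\tau}\in A_p$ (same $p$) for some $\tau>0$, which is what your reapplication actually needs.

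With these details filled in the argument is complete.
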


Define $\widetilde{\mathcal B}_{p,p}^s(\Gamma)$ as the closure of

$$C_c^\infty(\mathbb C)|_\Gamma\cap\mathcal B_{p,p}^s(\Omega_i\to\Gamma)\cap\mathcal B_{p,p}^s(\Omega_e\to\Gamma)$$ in $\mathcal B_{p,p}^s(\Gamma).$
Notice that every quasicircle is porous (see \cite{Va}). 
We may now state the principal theorem of this subsection.
\begin{figure}[!h]\label{domain1}
\begin{center}
\hspace{\stretch{1}}\includegraphics[width=6.5cm]{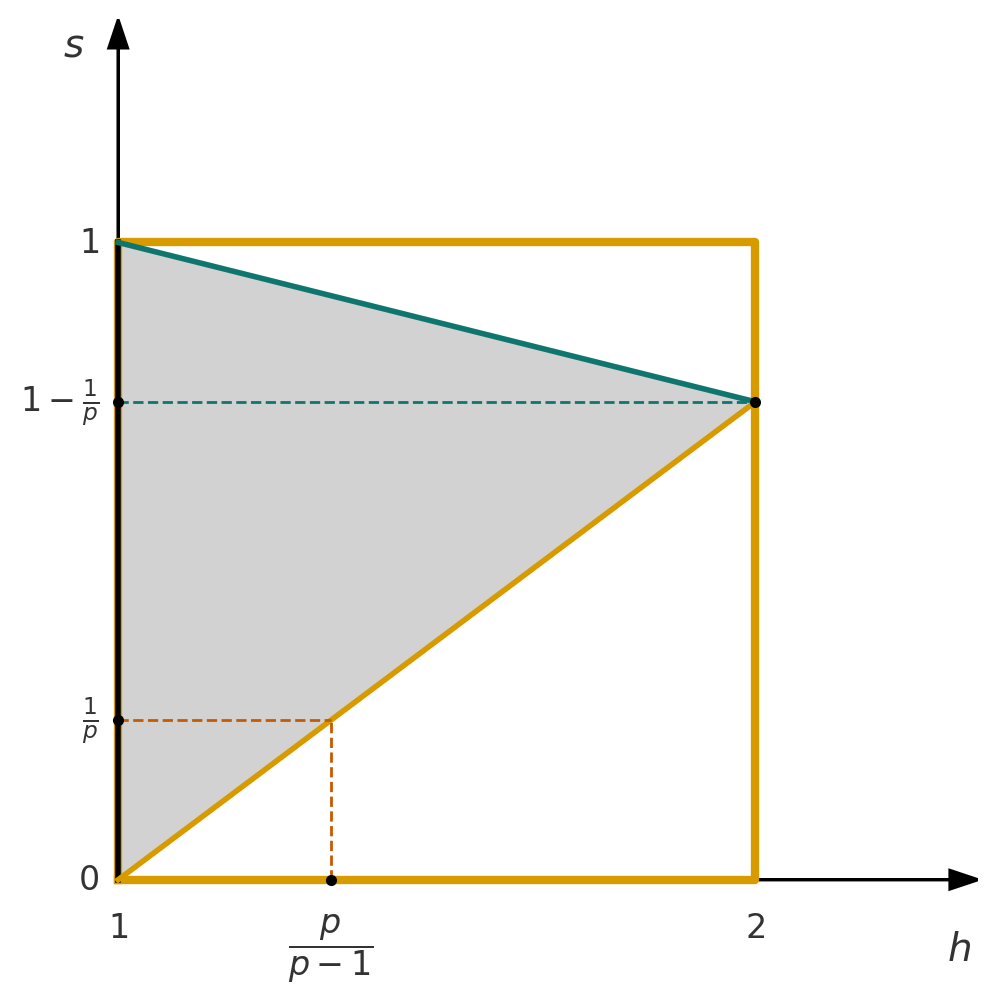}\hspace{\stretch{1}}\includegraphics[width=6.5cm]{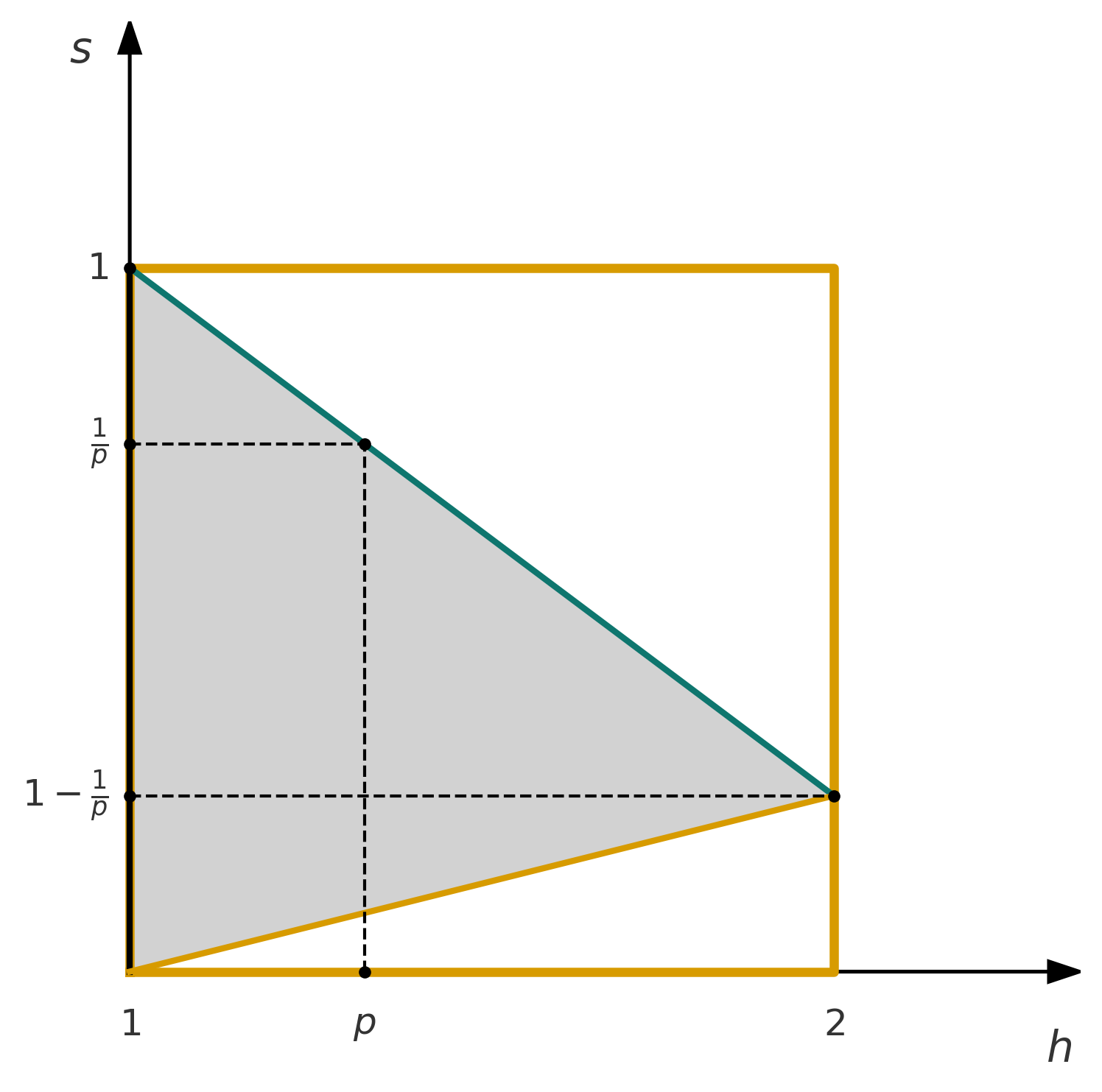}\hspace{\stretch{1}}
\caption{Domain formed by points $(h(\Gamma), s)$: case p>2 and case p<2}
\end{center}
\end{figure}

\begin{theo} \label{As2} 
Let $\Gamma$ be a quasicircle and $1<p<\infty, \; 0<s<1$. If the point $(h(\Gamma),s)$ is located in the shadowed region (see Figure 1); that is, 
\begin{equation}\label{admissible}
    (h(\Gamma)-1)\frac{p-1}{p}<s<\frac{p+1-h(\Gamma)}{p}
\end{equation}
(that would be simplified to $0<s<1$ if $h(\Gamma) = 1$, i.e. $\Gamma$ is a chord-arc curve ), 
then any $f\in \widetilde{\mathcal{B}}_{p,p}^s(\Gamma)$ has a decomposition  $f = F_i -F_e $ with $F_{i,e}$ being holomorphic, $F_{i,e}\in \mathcal{B}_{p,p}^s(\Omega_{i,e})$ and 
\begin{equation}\label{BppsC}
    \|F_{i,e}\|_{\mathcal{B}_{p,p}^s(\Omega_{i,e})}\leq C \|f\|_{\mathcal{B}_{p,p}^s(\Gamma)}
\end{equation}
for every $f \in \widetilde{\mathcal{B}}_{p,p}^s(\Gamma)$ where the constant $C$ depends only on $\Gamma$ and $p, \; s$. 
\end{theo}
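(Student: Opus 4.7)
The plan is to follow the template already established in Theorem \ref{2case} (case $p=2$) and Theorem \ref{pcase} (case $s=1-1/p$): construct $F_{i,e}$ as Cauchy-type area integrals on the dense subspace of test-function boundaries, derive the $\mathcal{B}_{p,p}^s$-bound from the weighted boundedness of the Beurling transform, and extend by continuity to $\widetilde{\mathcal{B}}_{p,p}^s(\Gamma)$. The single new analytic input is that the weight $\omega(z)=d(z,\Gamma)^{(1-s)p-1}$ lies in the Muckenhoupt class $A_p$ precisely when $(h(\Gamma),s)$ satisfies \eqref{admissible}; once this is in hand, the weighted Calder\'on-Zygmund theory of \cite{CoF} carries the argument exactly as in the $p=2$ case.

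I would first verify the $A_p$-characterization of \eqref{admissible}. Every quasicircle is porous \cite{Va}, so Astala's Theorem \ref{As1} applies to weights of the form $d(z,\Gamma)^{\alpha-1}$ with $\alpha\in(0,1)$. Splitting on the sign of $(1-s)p-1$: when $s\geq 1-1/p$ write $\omega=d(z,\Gamma)^{\alpha-1}$ with $\alpha=(1-s)p\in(0,1]$; Astala yields $\omega\in A_p\iff\alpha>h(\Gamma)-1$, i.e.\ $s<(p+1-h(\Gamma))/p$. When $s\leq 1-1/p$ use the duality $\omega\in A_p\iff\omega^{-1/(p-1)}\in A_{p'}$; now $\omega^{-1/(p-1)}=d(z,\Gamma)^{\tilde\alpha-1}$ with $\tilde\alpha=sp/(p-1)\in(0,1]$, and Astala applied to $A_{p'}$ gives $s>(h(\Gamma)-1)(p-1)/p$. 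At the midpoint $s=1-1/p$ the weight is identically $1$. Combining, the admissibility region \eqref{admissible} coincides exactly with the set of $(p,s)$ for which $\omega\in A_p$.

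Next I would run the construction on test-function boundaries. Take $f$ in the dense subspace $C_c^\infty(\mathbb{C})|_\Gamma\cap\mathcal{B}_{p,p}^s(\Omega_i\!\to\!\Gamma)\cap\mathcal{B}_{p,p}^s(\Omega_e\!\to\!\Gamma)$, identified with a $C_c^\infty$-extension still denoted $f$, and define $F_{i,e}$ by \eqref{Fi1}-\eqref{Fe1}. These are holomorphic in $\Omega_{i,e}$, continuous in $\mathbb{C}$, and by \eqref{split} satisfy $f=F_i-F_e$ on $\Gamma$. Differentiating, $F_i'=B(\bar\partial f\,\chi_{\Omega_e})$, and the extension-independence Lemma \ref{notdep}, whose proof rests on a $W^{1,2}(\mathbb{C})$-approximation built from the quasiconformal reflection across $\Gamma$ and so carries over here since $f-u_e$ vanishes on $\Gamma$, permits the replacement $\bar\partial f\leadsto\bar\partial u_e$ with $u_e$ the harmonic extension of $f|_\Gamma$ to $\Omega_e$. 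The weighted $L^p(\omega,\mathbb{C})$-boundedness of $B$ then gives
$$\|F_i\|_{\mathcal{B}_{p,p}^s(\Omega_i)}=\|F_i'\|_{L^p(\omega,\Omega_i)}\leq\|B(\bar\partial u_e\,\chi_{\Omega_e})\|_{L^p(\omega,\mathbb{C})}\leq C\|\nabla u_e\|_{L^p(\omega,\Omega_e)}\leq C\|f\|_{\mathcal{B}_{p,p}^s(\Gamma)},$$
and symmetrically $\|F_e\|_{\mathcal{B}_{p,p}^s(\Omega_e)}\leq C\|f\|_{\mathcal{B}_{p,p}^s(\Gamma)}$.

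Finally I pass to the limit. By the very definition of $\widetilde{\mathcal{B}}_{p,p}^s(\Gamma)$ any $f$ there is approximated by a Cauchy sequence $(f_n)$ in the dense subspace; the linearity of $f_n\mapsto F_{i,e}^{(n)}$ together with the bound above shows the $(F_{i,e}^{(n)})$ are Cauchy in $\mathcal{B}_{p,p}^s(\Omega_{i,e})$. The limits $F_{i,e}$ are holomorphic (uniform limits on compacta of holomorphic functions), satisfy \eqref{BppsC}, and inherit the decomposition $f=F_i-F_e$ on $\Gamma$. Uniqueness comes either from the embedding $\mathcal{B}_{p,p}^s(\Omega_{i,e})\hookrightarrow\mathcal{B}_{2,2}^{1/2}(\Omega_{i,e})$ established in the appendix (reducing to the already-proven $p=2$ case, as was done in Theorem \ref{pcase}) or from the general uniqueness lemma of \cite{LS}. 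The main obstacle I expect is the weighted analog of Lemma \ref{notdep}: for general $(p,s)$ the smooth-extension-to-harmonic-extension swap inside a weighted integral cannot be handled by Jones' plain $W^{1,2}$-extension theorem, and one has to rely on a weighted-Sobolev approximation argument whose availability is ensured precisely by the $A_p$-window \eqref{admissible}.
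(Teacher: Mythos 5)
Your proof is correct and follows essentially the paper's own route: identify \eqref{admissible} with the condition $\omega(z)=d(z,\Gamma)^{(1-s)p-1}\in A_p$ via Astala's Theorem \ref{As1} together with the $A_p$--$A_{p'}$ duality, then run the Beurling-transform estimate on the dense subspace and pass to the limit exactly as in Theorem \ref{2case}. Two stray remarks in your last paragraph are not quite right, though neither affects the validity of the argument: the concern about a ``weighted analog of Lemma \ref{notdep}'' is moot, because that lemma establishes the \emph{pointwise equality of the holomorphic functions} $\widehat F_{i,e}=F_{i,e}$ (by the unweighted $L^2$-approximation), after which the weighted $L^p$-estimate applies to $F_{i,e}'=B(\bar\partial u_{e,i}\chi_{\Omega_{e,i}})$ with no further input; and the uniqueness claim via an embedding $\mathcal B_{p,p}^s(\Omega_{i,e})\hookrightarrow\mathcal B_{2,2}^{1/2}(\Omega_{i,e})$ is both unnecessary (Theorem \ref{As2} asserts only the existence of a decomposition) and false in part of the admissible window --- for instance, on a chord-arc curve that embedding would require $s>1/2$, whereas \eqref{admissible} allows all of $0<s<1$.
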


\begin{proof}
    Following the discussion above, clearly \eqref{BppsC} holds if  $d(z,\Gamma)^{(1-s)p-1}\in A_p$ holds. 
    Assume first that $s>\frac{p-1}{p}$ so that $(1-s)p<1$. By Theorem \ref{As1}, $d(z,\Gamma)^{(1-s)p-1}\in A_p \Leftrightarrow (1-s)p>h(\Gamma)-1\Leftrightarrow s<\frac{p+1-h(\Gamma)}{p}$.
    
    Assume now $s<\frac{p-1}{p}$. Then $d(z,\Gamma)^{(1-s)p-1}\in A_p \Leftrightarrow d(z,\Gamma)^{\frac{1-(1-s)p}{p-1}}\in A_{p'}$. Here, $\frac{1-(1-s)p}{p-1}=\frac{sp}{p-1}-1$ with $\frac{sp}{p-1}<1$. By Theorem \ref{As1} again we have thus $d(z,\Gamma)^{(1-s)p-1}\in A_p \Leftrightarrow s>\frac{p-1}{p}(h-1)$. The decomposition can be shown to be similar to that of Theorem \ref{2case}.  The case of $s = 1-1/p$ has been addressed in the last subsection. 
\end{proof}

We define $\mathcal{A}$ as the admissible set involved in Theorem \ref{As2}, that is, 
$$\mathcal{A}=\{(h(\Gamma),p,s)\in [1,2) \times (1,+\infty)\times (0,1): (h(\Gamma)-1)(p-1)<sp<p+1-h(\Gamma)\},$$
so that if $\Gamma$ is a quasicircle and $p>1,\,s\in (0,1)$ such that $(h(\Gamma),p,s)\in \mathcal{A}$ then  Plemelj-Calder\'on property holds for $\widetilde{\mathcal{B}}_{p,p}^s(\Gamma)$.  
Figure 1 shows slices of $\mathcal{A}$ first with $p>1$ fixed and  Figure 2 with $h(\Gamma)$ given.

\begin{figure}[!h]\label{domain2}
\begin{center}
\includegraphics[width=13cm]{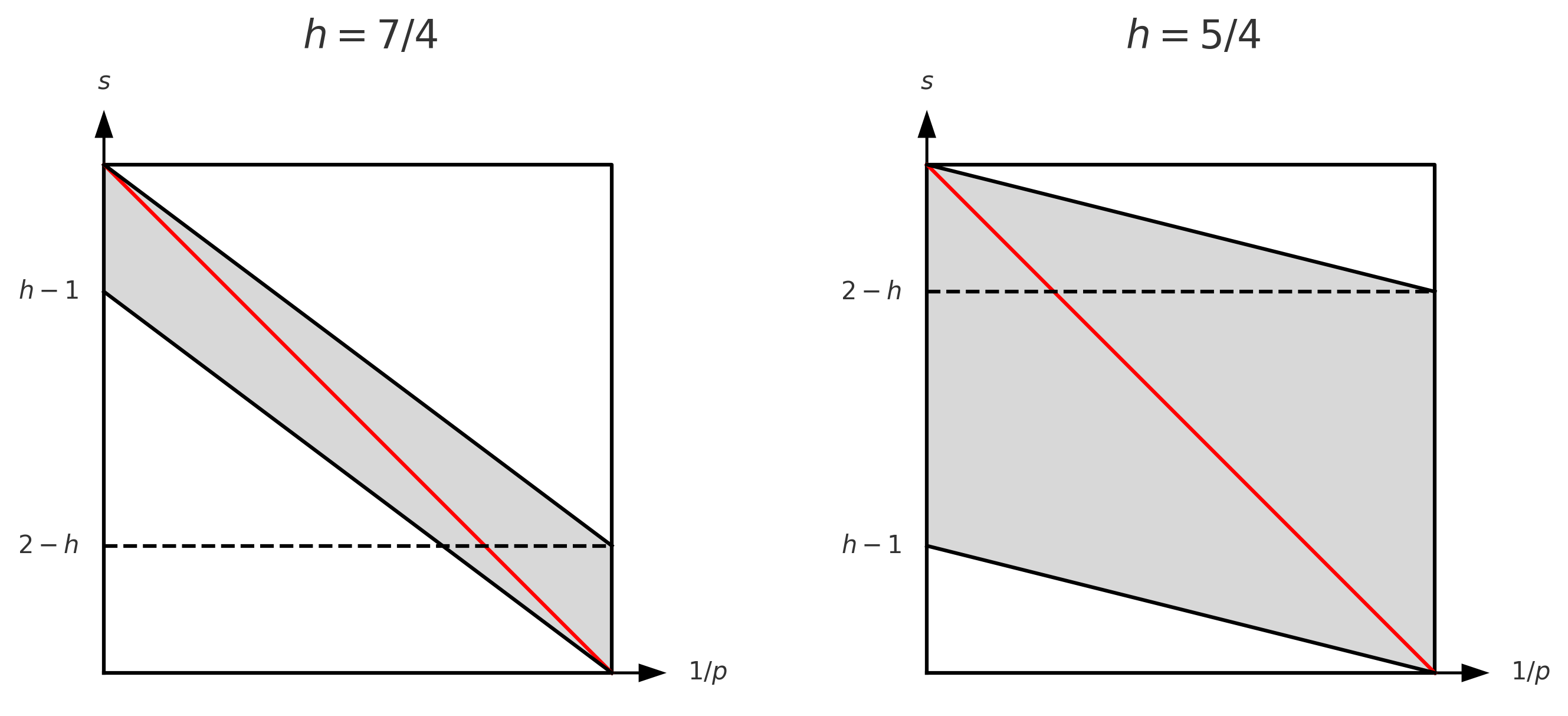}
\caption{Domain formed by points $(p,s)$:  case h>3/2 and case h<3/2}
\end{center}
\end{figure}

\begin{rem}\label{E012}
 {\rm   We end this subsection with a comment on the triangular domain above the shadowed region in Figure 1.   
Divide the domain $\Omega_i$ into pieces: 
\begin{align*}
     E_0 &=\{z \in \Omega_i:\; d(z, \Gamma) \geq 2^{-1}\},\\
        E_n &=\{z \in \Omega_i:\; 2^{-(n+1)} \leq d(z, \Gamma) < 2^{-n}\},\quad n=1, 2, 3, \cdots
\end{align*}
If $t < R$ it follows from  basic covering theorems that 
there exists a constant $C_1$ such that (see p.474 in \cite{Ast})
$$
t^{h(\Gamma)-2}|\{z \in D(z_0, R):\; d(z, \Gamma)<t\}|\leq C_1R^{h(\Gamma)}
$$
where $D(z_0, R)$ stands for the open disk centered at $z_0$ and of radius $R$. 
From this we see $|E_n|\leq C_2 2^{-n(2-h(\Gamma))}$. Then, by direct computation one can see that the integral $\iint_{\Omega_i} d(z, \Gamma)^{(1-s)p-1} dxdy$ converges if and only if the series $\sum 2^{-n(p-sp+1-h(\Gamma))}$ converges, namely, 
 $s < (p+1-h(\Gamma))/p$. Consequently, assuming $s\geq (p+1-h(\Gamma))/p$ one can see that even the simplest function $u(z) = z$ is not in the space $\mathcal B^s_{p,p}(\Omega_i)$.}
\end{rem}

\subsection{Plemelj-Calder\'on problem for $\mathcal B_{p,p}^{1/p}(\Gamma)$}

Recall that the dense subspace $\mathcal C_p(\Omega_{i,e})$ of $\mathcal B_{p,p}^{1/p}(\Omega_{i,e})$ has well-defined boundary values so that $\mathcal C_p(\Omega_i\!\to\!\Gamma) = \mathcal C_p(\Omega_e\!\to\!\Gamma)$, this space of boundary values called $\mathcal C_p(\Gamma)$. We may then define in an abstract way boundary values $\mathcal B_{p,p}^{1/p}(\Gamma)$ 
of functions in $\mathcal B_{p,p}^{1/p}(\Omega_{i,e})$ as equivalence classes of Cauchy sequences in $\mathcal C_p(\Gamma)$. 

We may notice that 
the dense subspace defined in this way is not very tractable since we do not know the conformal mapping explicitly. The following result gives an explicit dense subspace of $\mathcal{B}_{p,p}^{1/p}(\Gamma)$.

\begin{theo}\label{Cc}
Let $\Gamma$ be a quasicircle. 
    If $h(\Gamma)<p<\frac{h(\Gamma)}{h(\Gamma)-1}$,  then $C_c^\infty(\mathbb C)|_{\Gamma}$ is dense in $\mathcal{B}_{p,p}^{1/p}(\Gamma)$.
\end{theo}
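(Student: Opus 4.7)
The hypothesis $h(\Gamma)<p<h(\Gamma)/(h(\Gamma)-1)$ is precisely the case $s=1/p$ of the admissibility condition of Theorem~\ref{As2}, and in particular it is equivalent to saying that the weight $\omega(z):=d(z,\Gamma)^{p-2}$ belongs to the Muckenhoupt class $A_p$. The plan is to use this to transfer the approximation to the much more tractable weighted Sobolev space $W^{1,p}(\omega,\mathbb{C})$. Since $\mathcal{C}_p(\Gamma)$ is dense in $\mathcal{B}_{p,p}^{1/p}(\Gamma)$ by its very definition as the abstract completion, it suffices to approximate an arbitrary $f\in\mathcal{C}_p(\Gamma)$ by elements of $C_c^\infty(\mathbb{C})|_\Gamma$ in the $\mathcal{B}_{p,p}^{1/p}(\Gamma)$-norm.

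Fix such an $f$ and let $u_i$ denote its harmonic extension to $\Omega_i$, continuous on $\overline{\Omega_i}$. The first step is to build a continuous global extension $U\colon\mathbb{C}\to\mathbb{C}$ of $f$ lying in $W^{1,p}(\omega,\mathbb{C})$. I do this by reflection: choose a quasiconformal reflection $R$ across $\Gamma$ that is bi-Lipschitz in a neighborhood $V$ of $\Gamma$, together with a cutoff $\chi\in C_c^\infty(\mathbb{C})$ with $\chi\equiv 1$ near $\Gamma$ and $\operatorname{supp}\chi\subset V$, and set
\[
U(z):=\begin{cases} u_i(z), & z\in\overline{\Omega_i},\\ \chi(z)\,u_i(R(z)), & z\in\Omega_e.\end{cases}
\]
Since $R$ is bi-Lipschitz near $\Gamma$ and $d(z,\Gamma)\simeq d(R(z),\Gamma)$ there, a change of variables shows that $U\in W^{1,p}(\omega,\mathbb{C})$; moreover $U$ is continuous on $\mathbb{C}$ with $U|_\Gamma=f$.

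Next, because $\omega\in A_p$, a standard mollification argument yields density of $C_c^\infty(\mathbb{C})$ in $W^{1,p}(\omega,\mathbb{C})$. Pick $U_n\in C_c^\infty(\mathbb{C})$ with $U_n\to U$ in $W^{1,p}(\omega,\mathbb{C})$ and set $f_n:=U_n|_\Gamma\in C_c^\infty(\mathbb{C})|_\Gamma$. Denoting by $u_{i,n}$ and $u_{e,n}$ the harmonic extensions of $f_n$, the function $u_{i,n}-u_i$ is harmonic in $\Omega_i$ with boundary value $f_n-f$, while $(U_n-U)|_{\Omega_i}$ is an admissible extension of the same boundary datum. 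The almost-Dirichlet principle (Lemma~\ref{almost1p}) applied on each side gives
\[
\|u_{i,n}-u_i\|_{\mathcal{B}_{p,p}^{1/p}(\Omega_i)}^p+\|u_{e,n}-u_e\|_{\mathcal{B}_{p,p}^{1/p}(\Omega_e)}^p\leq C\iint_{\mathbb{C}}|\nabla(U_n-U)|^p\,\omega(z)\,dxdy\longrightarrow 0,
\]
so $f_n\to f$ in $\mathcal{B}_{p,p}^{1/p}(\Gamma)$, which is the desired density.

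The main obstacle will be to rigorously justify that the reflected extension $U$ lies in $W^{1,p}(\omega,\mathbb{C})$ in the \emph{distributional} sense across $\Gamma$: although the quasicircle $\Gamma$ has zero two-dimensional Lebesgue measure, it may be highly irregular and a priori could contribute a singular part to the distributional gradient. The continuity of $U$ across $\Gamma$ combined with $\omega\in A_p$ (which controls the weighted capacity of $\Gamma$) should rule this out; a cleaner alternative is to invoke the weighted Jones-type extension theorem of Chua for uniform domains, of which quasidisks are an instance, as a black box.
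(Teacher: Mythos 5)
Your argument follows essentially the same route as the paper: recognize that the hypothesis on $p$ is exactly what makes $\omega=d(\cdot,\Gamma)^{p-2}$ an $A_p$-weight, extend the interior harmonic function to a global $W^{1,p}(\omega,\mathbb C)$-function, approximate by $C_c^\infty(\mathbb C)$, and transfer back via the almost-Dirichlet principle (Lemma~\ref{almost1p}). The cosmetic differences are these. (i) You first reduce to $f\in\mathcal C_p(\Gamma)$; the paper handles a general $f\in\mathcal B_{p,p}^{1/p}(\Gamma)$ directly, which costs nothing since Chua's weighted extension theorem applies to any $u\in W^{1,p}(\omega,\Omega_i)$, not just continuous ones. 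Your reduction is harmless but unnecessary. (ii) You propose a hands-on reflection-and-cutoff extension $U$, then correctly flag that making the distributional gradient rigorous across $\Gamma$ is delicate and that Chua's theorem is the clean alternative; the paper simply cites Chua at that point, so your fallback \emph{is} the paper's proof. (iii) A small point worth tightening: you apply Lemma~\ref{almost1p} with competitor $(U_n-U)|_{\Omega_e}$, but $U|_{\Omega_e}$ (whether built by a bi-Lipschitz reflection or coming from Chua) need not be $C^1$ in $\Omega_e$, which is what the lemma as stated requires. The paper sidesteps this by applying the almost-Dirichlet principle only to the smooth differences $U_n-U_m$, showing $(P_e(U_n|_\Gamma))_n$ is Cauchy in $\mathcal B_{p,p}^{1/p}(\Omega_e)$ and hence convergent; you should do the same (or mollify $U$ first). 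With that repair your proof is correct and coincides in substance with the paper's.
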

\begin{rem}\label{densepp1p}
{\rm Under the assumption of Theorem \ref{Cc},
when $s<1/p$ the only thing we can say is that $\mathcal{B}_{p,p}^{s}(\Gamma)$ contains $\mathcal{B}_{p,p}^{1/p}(\Gamma)$ and thus also $C_c^\infty(\mathbb C)|_{\Gamma}$.}
\end{rem}

\begin{proof}[Proof of Theorem \ref{Cc}]
The assumption on $p$ is exactly the condition for $(h(\Gamma),p,\frac 1p)\in \mathcal{A}$. If it is satisfied we then have that $\omega(z)=d(z,\Gamma)^{p-2}$ is an $A_p$-weight,  in particular,  integrable locally in $\mathbb C$. Suppose $g\in C_c^\infty(\mathbb C)$. Then, 
$$
\iint_{\Omega_{i,e}}|\nabla g|^p d(z, \Gamma)^{p-2}dxdy \leq \iint_{\mathbb C}|\nabla g|^p d(z, \Gamma)^{p-2}dxdy < \infty.
$$
By that and the almost-Dirichlet principle (Lemma \ref{almost1p}) one may show $g|_{\Gamma} \in \mathcal B_{p,p}^{1/p}(\Gamma)$. 

For a general $f \in \mathcal B_{p,p}^{1/p}(\Gamma)$, we will use the argument on the boundary values of $\mathcal B_{p,p}^{1/p}(\Omega_{i,e})$ in the first paragraph of subsection 2.1. 
Let $u$ be the harmonic extension to $\Omega_i$ of $f$. Noting that $\omega \in A_p$ we see that $u$ has an extension $U \in W^{1,p}(\omega,\; \mathbb C)$ to the whole plane (see \cite{Chua}), and there exists a sequence $U_{n}$ in $C_c^\infty(\mathbb C)$ such that (see Lemma \ref{Ccdense} and also \cite{Chua})
\begin{equation}\label{appro1p}
    \Vert U_{n} - u \Vert_{W^{1,p}(\omega,\; \Omega_i)} \leq \Vert U_{n} - U \Vert_{W^{1,p}(\omega, \;\mathbb C)}\to 0, \qquad n\to\infty.
\end{equation}
By the almost-Dirichlet principle (Lemma \ref{almost1p}), \eqref{appro1p} implies 
$$
\Vert P_i(U_{n}|_{\Gamma}) - u\Vert_{\mathcal B_{p,p}^{1/p}(\Omega_i)}\to 0.
$$
The notations $P_i$ here and $P_e$ below denote the harmonic extensions in respectively $\Omega_i$ and $\Omega_e$ of functions defined on $\Gamma$. \eqref{appro1p} also implies that the sequence $(U_{n})$  is a Cauchy sequence in $W^{1,p}(\omega, \Omega_e)$, and then, using the almost-Dirichlet principle again, the sequence $(P_e(U_{n}|_\Gamma))$  is also a Cauchy sequence in $W^{1,p}(\omega, \Omega_e)$, thus convergent to some $v \in \mathcal B_{p,p}^{1/p}(\Omega_e)$. 

Consequently, 
$$
\Vert U_{n}|_{\Gamma} - f\Vert_{\mathcal B_{p,p}^{1/p}(\Gamma)}^p = \Vert P_i(U_{n}|_{\Gamma}) - u\Vert_{\mathcal B_{p,p}^{1/p}(\Omega_i)}^p + \Vert P_e(U_{n}|_{\Gamma}) - v\Vert_{\mathcal B_{p,p}^{1/p}(\Omega_e)}^p\to 0. 
$$
This completes the proof of Theorem \ref{Cc}. 
\end{proof}

As a corollary of Theorem \ref{As2} and Theorem \ref{Cc}  we immediately have the following. 
\begin{cor}\label{pp1p}
Let $\Gamma$ be a quasicircle. 
If $h(\Gamma)<p<\frac{h(\Gamma)}{h(\Gamma)-1}$ then any function $f \in \mathcal B_{p,p}^{1/p}(\Gamma)$ admits the decomposition  $f=F_i -F_e$ on $\Gamma$ 
    with  $F_i$ and $F_e$ 
    being holomorphic, $F_{i,e}\in \mathcal{B}_{p,p}^{1/p}(\Omega_{i,e})$ and 
$$\|F_{i,e}\|_{\mathcal{B}_{p,p}^{1/p}(\Omega_{i,e})}\leq C \|f\|_{\mathcal{B}_{p,p}^{1/p}(\Gamma)}$$  
for every $f \in \mathcal B_{p,p}^{1/p}(\Gamma)$ where the constant $C$ depends on $\Gamma$ and $p$. 
\end{cor}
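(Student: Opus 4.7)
The plan is to combine Theorem \ref{As2} (specialized to $s = 1/p$) with the density result of Theorem \ref{Cc}. My first step would be to check that the hypothesis $h(\Gamma) < p < h(\Gamma)/(h(\Gamma)-1)$ is precisely the membership $(h(\Gamma), p, 1/p) \in \mathcal A$. With $s = 1/p$ we have $sp = 1$, so $p > h(\Gamma)$ is the same as $p + 1 - h(\Gamma) > sp$, and $p(h(\Gamma)-1) < h(\Gamma)$ rearranges to $(p-1)(h(\Gamma)-1) < 1 = sp$. Thus Theorem \ref{As2} applies and yields the decomposition together with the estimate \eqref{BppsC} for every element of $\widetilde{\mathcal B}_{p,p}^{1/p}(\Gamma)$.

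The next step is to promote this from $\widetilde{\mathcal B}_{p,p}^{1/p}(\Gamma)$ to the full space $\mathcal B_{p,p}^{1/p}(\Gamma)$. The admissibility just verified gives $d(z,\Gamma)^{p-2} \in A_p$ via Theorem \ref{As1}, so for any $g \in C_c^\infty(\mathbb C)$ the weighted integrals $\iint_{\Omega_{i,e}}|\nabla g|^p d(z,\Gamma)^{p-2}\,dxdy$ are finite; combining this with the almost-Dirichlet principle (Lemma \ref{almost1p}) applied on each side of $\Gamma$ shows that $g|_\Gamma$ lies in both $\mathcal B_{p,p}^{1/p}(\Omega_i \to \Gamma)$ and $\mathcal B_{p,p}^{1/p}(\Omega_e \to \Gamma)$. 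This is exactly the first half of the argument for Theorem \ref{Cc}. Hence $C_c^\infty(\mathbb C)|_\Gamma$ is contained in the subspace defining $\widetilde{\mathcal B}_{p,p}^{1/p}(\Gamma)$, and since by Theorem \ref{Cc} the same subspace $C_c^\infty(\mathbb C)|_\Gamma$ is already dense in $\mathcal B_{p,p}^{1/p}(\Gamma)$, we conclude
$$\widetilde{\mathcal B}_{p,p}^{1/p}(\Gamma) = \mathcal B_{p,p}^{1/p}(\Gamma).$$

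Given $f \in \mathcal B_{p,p}^{1/p}(\Gamma)$, I would pick an approximating sequence $(f_n) \subset C_c^\infty(\mathbb C)|_\Gamma$ converging to $f$ in $\mathcal B_{p,p}^{1/p}(\Gamma)$. Theorem \ref{As2} produces for each $n$ holomorphic functions $F_{i,n}, F_{e,n}$ in $\Omega_{i,e}$ with $f_n = F_{i,n} - F_{e,n}$ on $\Gamma$ and $\|F_{i,e,n}\|_{\mathcal B_{p,p}^{1/p}(\Omega_{i,e})} \le C \|f_n\|_{\mathcal B_{p,p}^{1/p}(\Gamma)}$. The linearity of the $F_i, F_e$ construction (via $F_{i,n}' = B(\bar\partial u_{e,n}\chi_{\Omega_e})$ and the corresponding formula for $F_{e,n}'$) and the uniform bound make $(F_{i,n})$ and $(F_{e,n})$ Cauchy in their respective Besov spaces, with limits $F_i, F_e$ that remain holomorphic and satisfy $f = F_i - F_e$ on $\Gamma$ in the abstract boundary-value sense of subsection \ref{bv}, together with the required norm estimate.

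I do not expect a genuine obstacle here: the work is essentially bookkeeping that stitches Theorems \ref{As2} and \ref{Cc} together. The only point requiring any care is confirming that $C_c^\infty(\mathbb C)|_\Gamma$ really sits inside the intersected, harmonic-extension space that defines $\widetilde{\mathcal B}_{p,p}^{1/p}(\Gamma)$, and this is handled by the $A_p$-weight plus almost-Dirichlet observation above.
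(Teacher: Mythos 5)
Your proposal is correct and follows exactly the route the paper intends: it combines Theorem \ref{As2} (specialized to $s=1/p$) with the density result of Theorem \ref{Cc} to identify $\widetilde{\mathcal B}_{p,p}^{1/p}(\Gamma)$ with $\mathcal B_{p,p}^{1/p}(\Gamma)$ and then pass to the limit. The paper states this as an immediate corollary without spelling out the argument, and your write-up supplies precisely the bookkeeping (the algebraic check that the hypothesis is equivalent to $(h(\Gamma),p,1/p)\in\mathcal A$, the containment of $C_c^\infty(\mathbb C)|_\Gamma$ in the intersected boundary-value space via the $A_p$-weight and Lemma \ref{almost1p}, and the Cauchy-sequence limiting step) that the authors left implicit.
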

\begin{rem} {\rm We already know that this decomposition is unique if $p=2$. Since the spaces $\mathcal{B}_{p,p}^{1/p}(\mathbb D)$ increase with $p$, we still have uniqueness for $h(\Gamma)<p\le 2$.}
\end{rem}

\section{$A_p$-Weighted Sobolev spaces on quasidisks}

\subsection{Boundary values of functions in $\mathcal{B}_{p,p}^s(\Omega)$}

Let $\Gamma $ be a quasicircle and $\Omega$ one of its complementary domains
 $\Omega_{i,e}$  as above. The boundary values of $\mathcal{B}_{p,p}^s(\Omega)$-functions in the critical case $s=1/p$ have been treated in the last section. As announced in the introduction, we will define now what we mean by the boundary values of the functions in $\mathcal{B}_{p,p}^s(\Omega),\,p>1,\,s\in(0,1)$. As we shall see, the cases $s<1/p$ and $s>1/p$ will show to be very different. We now consider the case of $s>1/p$. 
 
\begin{prop} 
Let $\Omega$ be a domain  bounded by a bounded Jordan curve $\Gamma$. For any $u\in \mathcal{B}_{p,p}^s(\Omega)$ with $s \in (1/p, 1)$ 
there exists a constant $C>0$ such that 
$$|\nabla u(z)|\le Cd(z,\Gamma)^{\frac{-1-(1-s)p}{p}}, \qquad z \in \Omega.$$
\end{prop}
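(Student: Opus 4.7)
The strategy is the standard subharmonicity/mean-value argument, tailored to the weight $d(z,\Gamma)^{(1-s)p-1}$. Let $z\in\Omega$ and set $\rho=d(z,\Gamma)$. Since the disk $D(z,\rho/2)$ lies in $\Omega$ and $u$ is harmonic there, each partial derivative $\partial_x u, \partial_y u$ is also harmonic, so $|\nabla u|^p$ is subharmonic (being a convex increasing function of $|\nabla u|$, itself a subharmonic quantity). The sub--mean-value inequality on $D(z,\rho/2)$ yields
\begin{equation*}
|\nabla u(z)|^p \;\le\; \frac{C}{\rho^2}\iint_{D(z,\rho/2)} |\nabla u(\zeta)|^p\,d\xi\,d\eta.
\end{equation*}

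The second step is to absorb the weight. For every $\zeta\in D(z,\rho/2)$ the triangle inequality gives $\rho/2\le d(\zeta,\Gamma)\le 3\rho/2$, so $d(\zeta,\Gamma)^{(1-s)p-1}$ is comparable to $\rho^{(1-s)p-1}$, with constants depending only on $p,s$ (the sign of the exponent plays no role here). Inserting this into the integral,
\begin{equation*}
\iint_{D(z,\rho/2)} |\nabla u(\zeta)|^p\,d\xi\,d\eta \;\le\; C\,\rho^{-((1-s)p-1)} \iint_{D(z,\rho/2)} |\nabla u(\zeta)|^p\, d(\zeta,\Gamma)^{(1-s)p-1}\,d\xi\,d\eta \;\le\; C\,\rho^{1-(1-s)p}\,\|u\|_{\mathcal{B}^{s}_{p,p}(\Omega)}^p.
\end{equation*}

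Combining both inequalities, $|\nabla u(z)|^p \le C\,\rho^{-1-(1-s)p}\,\|u\|_{\mathcal{B}^{s}_{p,p}(\Omega)}^p$, and taking the $p$-th root gives the desired pointwise estimate. The whole argument is purely local in $\Omega$ and does not use any regularity of $\Gamma$, so no obstacle arises from the quasicircle geometry; the mild technical point is only to justify the sub--mean-value principle for $|\nabla u|^p$, which I would do by writing $|\nabla u|$ as $\sqrt{(\partial_x u)^2+(\partial_y u)^2}$ and invoking that $|h|^p$ is subharmonic whenever $h$ is harmonic and $p\ge 1$, or equivalently applying the mean-value identity to the harmonic components $\partial_x u,\partial_y u$ separately and then using Jensen's inequality.
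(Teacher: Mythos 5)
Your argument is correct and follows essentially the same approach as the paper: both use the disk $D(z,d(z,\Gamma)/2)$, a sub-mean-value estimate for $|\nabla u(z)|^p$, and the comparability of the weight $d(\zeta,\Gamma)^{(1-s)p-1}$ with $d(z,\Gamma)^{(1-s)p-1}$ on that disk. The only cosmetic difference is that you invoke subharmonicity of $|\nabla u|^p$ directly, whereas the paper applies the mean-value identity to $\nabla u$ and then H\"older's inequality; these produce the same inequality.
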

\begin{proof}
 Let $z\in \Omega$ and $D$ be the disk $D(z,d(z,\Gamma)/2)$. By the mean value property of harmonic functions, one may write
$$ \nabla u(z)=\frac{1}{|D|}\iint_D\nabla u(\zeta)d\xi d\eta.$$ 
so that, by H\"older inequality,
\begin{align*}
|\nabla u(z)|^p&\le \frac{4}{\pi d(z, \Gamma)^2}\iint_D|\nabla u(\zeta)|^pd\xi d\eta\\
&\le  Cd(z, \Gamma)^{-1-(1-s)p}\iint_D|\nabla u(\zeta)|^p d(\zeta,\Gamma)^{(1-s)p-1} d\xi d\eta\\
& \leq C\|u\|_{\mathcal{B}_{p,p}^s(\Omega)}^pd(z,\Gamma)^{-1-(1-s)p}.
\end{align*}
\end{proof}
It follows that there exists $C>0$ such that $|\nabla u(z)|\le C d(z,\Gamma)^{\alpha-1}$ with $\alpha=s-1/p$. When $\Gamma$ is a quasicircle we may apply the following result due to Gehring-Martio (\cite{GM}). 
\begin{prop} Let $\Omega$ be a domain  bounded by a quasicircle $\Gamma$ and $\alpha \in (0,1]$. The following are equivalent: 
\begin{enumerate}
\item[\rm(1)] $\forall z\in \Omega,\; |\nabla u(z)|\le Cd(z,\Gamma)^{\alpha - 1}$ for some constant $C > 0$;
\item[\rm(2)] $u\in \Lambda^\alpha(\overline{\Omega})$ .
\end{enumerate}
Here, $\Lambda^\alpha(\overline{\Omega})$ stands for the space of H\"older functions of order $\alpha$.
\end{prop}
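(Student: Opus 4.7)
The implication (2)$\Rightarrow$(1) is the easy direction and does not use the quasicircle hypothesis, only harmonicity of $u$. For $z\in\Omega$, set $r=d(z,\Gamma)/2$ so that $D(z,r)\subset\Omega$. Since each partial derivative of $u$ is harmonic, the standard gradient bound for harmonic functions gives
$$|\nabla u(z)|\le \frac{C}{r}\sup_{\zeta\in D(z,r)}|u(\zeta)-u(z)|.$$
If $u\in\Lambda^\alpha(\overline{\Omega})$ with seminorm $[u]_\alpha$, then the supremum on the right is at most $[u]_\alpha r^\alpha$, which yields (1) with constant $C\,[u]_\alpha$.

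The substantive direction is (1)$\Rightarrow$(2). The key geometric input is that a quasidisk $\Omega$ is a \emph{uniform} domain: there exists $K>1$ such that any two points $z_1,z_2\in\Omega$ can be joined by a rectifiable curve $\gamma\subset\Omega$ satisfying $\mathrm{length}(\gamma)\le K|z_1-z_2|$ and the ``cigar'' condition
$$d(\zeta,\Gamma)\ge K^{-1}\min\bigl(\ell(\zeta,z_1),\ell(\zeta,z_2)\bigr),\qquad \zeta\in\gamma,$$
where $\ell(\zeta,z_j)$ denotes the arc length along $\gamma$ from $\zeta$ to $z_j$. This is a classical consequence of the quasiconformal reflection across $\Gamma$ (see Ahlfors, and Gehring-Osgood).

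Using this, the plan is to estimate $|u(z_1)-u(z_2)|\le\int_\gamma|\nabla u(\zeta)|\,|d\zeta|$, insert the hypothesis $|\nabla u|\le C\,d(\cdot,\Gamma)^{\alpha-1}$, and split $\gamma$ at its midpoint. Parametrizing each half by the arc-length distance $t$ to the closer endpoint and using the cigar bound, one gets
$$\int_\gamma d(\zeta,\Gamma)^{\alpha-1}|d\zeta|\;\le\; 2 K^{1-\alpha}\int_0^{L/2} t^{\alpha-1}\,dt \;=\; \frac{2K^{1-\alpha}}{\alpha}\Bigl(\tfrac{L}{2}\Bigr)^{\alpha},$$
with $L=\mathrm{length}(\gamma)\le K|z_1-z_2|$. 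This yields the interior Hölder estimate $|u(z_1)-u(z_2)|\le C'|z_1-z_2|^{\alpha}$ with $C'=C'(K,\alpha,C)$.

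Finally, this interior estimate shows $u$ is uniformly continuous on $\Omega$, so it extends continuously to $\overline{\Omega}$, and the Hölder inequality passes to the closure by continuity, giving $u\in\Lambda^\alpha(\overline{\Omega})$. The main obstacle is really the uniform-domain/cigar property of quasidisks; once this is available the integration is routine. For $\alpha=1$ the integrand becomes $1$ and the estimate $|u(z_1)-u(z_2)|\le C\,L\le CK|z_1-z_2|$ gives the Lipschitz conclusion directly without the midpoint split.
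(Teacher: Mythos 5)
Your proof is correct, but there is nothing in the paper to compare it to: the paper states this proposition as a result ``due to Gehring--Martio (\cite{GM})'' and does not prove it. The route you take --- the interior gradient estimate for the easy direction, and integration of $|\nabla u|$ along a curve satisfying the length/cigar condition of a uniform domain for the converse --- is precisely the standard Hardy--Littlewood argument that the cited reference develops, so you have essentially reconstructed the proof of the quoted theorem rather than found an alternative to the paper's own argument. Two small points worth making explicit if you were to write this up: (a) the equivalence as the paper states it tacitly assumes $u$ is harmonic (or at least $C^1$), which is needed both for your mean value gradient estimate in (2)$\Rightarrow$(1) and for the fundamental theorem of calculus along $\gamma$ in (1)$\Rightarrow$(2); this is implicit in the paper since the proposition is applied only to harmonic members of $\mathcal B_{p,p}^s(\Omega)$, but a Hölder function that is not harmonic need not satisfy (1). (b) The fact that a quasidisk is a uniform domain (your ``key geometric input'') should be cited as such --- it is a theorem of Martio--Sarvas / Gehring--Osgood --- since it is the only place the quasicircle hypothesis enters and is the heart of the matter. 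With those caveats the argument is complete and the constants you track are the right ones.
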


If now  $s \in (1/p, 1)$ then $\alpha=s-1/p\in (0,1)$ and  Gehring-Martio theorem  implies that the functions in $\mathcal{B}_{p,p}^s(\Omega)$  are continuous on $\overline \Omega$ , so the boundary values of the functions in $\mathcal{B}_{p,p}^s(\Omega_{i,e})$ are well-defined and characterize $u$, as being the unique harmonic extension of this continuous boundary value. We may now define the space $\mathcal{B}_{p,p}^s(\Gamma)$ for this range $s\in (1/p,1)$ as being the space of 
$(s-1/p)$-H\"older functions $f$ on $\Gamma$ whose harmonic extension $u_{i,e}$ to $\Omega_{i,e}$ belongs to $\mathcal{B}_{p,p}^s(\Omega_{i,e})$, and the space  $\mathcal{B}_{p,p}^s(\Gamma)$ is assigned the natural  norm $\|\cdot\|_{\mathcal{B}_{p,p}^s(\Gamma)}$ so that $\|f\|^p_{\mathcal{B}_{p,p}^s(\Gamma)} = \Vert u_i\Vert^p_{\mathcal{B}_{p,p}^s(\Omega_i)} + \Vert u_e\Vert^p_{\mathcal{B}_{p,p}^s(\Omega_e)}$.

\subsection{Trace mappings on $\mathcal{B}_{p,p}^s(\Omega)$}
As we have already seen, in the case $s>1/p$ the spaces $\mathcal{B}_{p,p}^s(\Omega_{i,e})$ have well-defined boundary values and, with the terminology adopted in the introduction, the trace mapping
$$\mathcal{B}_{p,p}^s(\Omega_{i,e})\to \mathcal{B}_{p,p}^s(\Omega_{i,e}\to \Gamma)$$ 
is an isomorphism. The goal of this subsection is to discuss conditions that imply 
$$\mathcal{B}_{p,p}^s(\Omega_{i}\to \Gamma)=\mathcal{B}_{p,p}^s(\Omega_{e}\to \Gamma).$$

We already know the result for $s=1/p$ and $h(\Gamma)<p<h(\Gamma)/(1-h(\Gamma))$ (but in this case the boundary value has a different meaning). 

Another case is $p=\infty$: the space $\Lambda^s(\Omega)$ introduced above in connection with the Gehring-Martio theorem coincides with the Besov space $\mathcal{B}_{\infty,\infty}^s(\Omega)$. A theorem of Hinkkanen \cite{Hin} (see also \cite{Zi1}) states that if the curve $\Gamma$ is any Jordan curve and if $f\in \Lambda^s(\Gamma)$ with $s<1/2$ then the harmonic extension of $f$ to $\Omega_{i,e}$ is in  $\mathcal{B}_{\infty,\infty}^s(\Omega)$  with norm bounded by that of $f$ and the index $1/2$ is critical. 
Hinkkanen's result is true for all Jordan curves, and it 
has been improved to  $s<1/K$ in 
\cite{Zi1} for quasicircles such that their Riemann mapping has a $K$-quasiconformal extension with $1\leq K \leq 2$. Thus, we define
$$K_*:=\min{(K,2)}.$$ 
Let $P_e$, as before, be the operator that takes a function on $\Gamma$ to its harmonic extension in $\Omega_e$. We have recalled that  if 
$$ h(\Gamma)<q<\frac{h(\Gamma)}{1-h(\Gamma)}, \,\;\;0<\sigma<1/K_*$$
then 
\begin{align*}
 P_e:\,\mathcal{B}_{q,q}^{1/q}(\Omega_i\to\Gamma)&\to \mathcal{B}_{q,q}^{1/q}(\Omega_e),\\
 P_e:\,\mathcal{B}_{\infty,\infty}^\sigma(\Omega_i\to \Gamma)&\to\mathcal{B}_{\infty,\infty}^\sigma(\Omega_e),\\
\end{align*} 
are both bounded. 
We want to interpolate these results. It is known that in the sense of complex interpolation we have in the case of unit disk $\mathbb D$, for $0<\theta<1$,
$$[\mathcal{B}_{q,q}^{1/q}(\mathbb D),\mathcal{B}_{\infty,\infty}^\sigma(\mathbb D)]_\theta=\mathcal{B}_{p,p}^s(\mathbb D),$$
with 
\begin{align*}
s&=\frac{1-\theta}{q}+\theta\sigma, \\
\frac 1p&=\frac{1-\theta}{q}.
\end{align*} 
If the interpolation statement above remained true for quasicircles then, using interpolation for the operator $P_e$, we would produce many new examples of equality of Besov-Sobolev spaces from inside and outside $\Gamma$. We strongly believe that this interpolation for quasicircles is true but we could not prove it so far, except for the rectifiable case to be treated in Section 4.

\subsection{Trace mappings on weighted Sobolev spaces}

Let $\omega$ be an $A_p$-weight defined in $\mathbb C$, for some $p>1$.  The weighted Sobolev space $W^{1,p}(\omega, \mathbb C)$ is the space of tempered distributions $T$ such that $T$ is locally integrable and $\nabla T$, taken in the sense of distributions, is a function in $L^p(\omega, \mathbb C)$. We will use these spaces for $\omega(z)=d(z,\Gamma)^{(1-s)p-1}\in A_p$  provided that $(h(\Gamma),p,s)\in \mathcal{A}$. 

For the usual technical reasons, we need to know that the $C_c^\infty(\mathbb C)$-functions are dense in $W^{1,p}(\omega, \mathbb C)$. See \cite{Chua} for a proof of a more general case.  

\begin{lemma}\label{Ccdense}
If $\omega \in A_p$ then the space $C_c^\infty(\mathbb C)$ is dense in $W^{1,p}(\omega, \mathbb C)$. 
\end{lemma}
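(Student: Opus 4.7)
The plan is to use the two classical steps for density in Sobolev-type spaces, truncation followed by mollification, with the twist that the second step must go through the $A_p$ machinery because $\omega$ is not the Lebesgue measure.

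First I would handle the truncation. Fix $f \in W^{1,p}(\omega, \mathbb C)$, pick a cutoff $\eta \in C_c^\infty(\mathbb C)$ with $0 \le \eta \le 1$, $\eta \equiv 1$ on $D(0,1)$, $\eta \equiv 0$ off $D(0,2)$, set $\eta_R(z) = \eta(z/R)$, so $|\nabla \eta_R| \le C/R$. Then $f\eta_R$ has compact support, and by Leibniz
\[
\nabla(f\eta_R) = \eta_R \nabla f + f \nabla \eta_R .
\]
Dominated convergence in $L^p(\omega)$ (the majorants $|f|^p \omega$ and $|\nabla f|^p \omega$ are integrable) gives $f\eta_R \to f$ and $\eta_R \nabla f \to \nabla f$ in $L^p(\omega)$. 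For the remainder term,
\[
\iint_{\mathbb C} |f \nabla \eta_R|^p \omega \; dxdy
\le \frac{C^p}{R^p} \iint_{R \le |z| \le 2R} |f|^p \omega \; dxdy,
\]
and the right-hand side tends to $0$ by absolute continuity of the integral, using that $f \in L^p(\omega)$. (Actually one can even bound the right side more crudely by $C \iint_{|z| \ge R} |f|^p \omega \to 0$.) So it suffices to approximate compactly supported elements of $W^{1,p}(\omega, \mathbb C)$.

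Second I would mollify. Let $\rho \in C_c^\infty(\mathbb C)$ be a standard nonnegative radial mollifier with $\iint \rho = 1$, and $\rho_\varepsilon(z) = \varepsilon^{-2} \rho(z/\varepsilon)$. For $g \in W^{1,p}(\omega, \mathbb C)$ of compact support, put $g_\varepsilon = g * \rho_\varepsilon$. Then $g_\varepsilon \in C_c^\infty(\mathbb C)$ for all small $\varepsilon$, and the standard identity for convolution with a distributional derivative yields $\nabla g_\varepsilon = (\nabla g) * \rho_\varepsilon$. So both the claim $g_\varepsilon \to g$ in $L^p(\omega)$ and $\nabla g_\varepsilon \to \nabla g$ in $L^p(\omega)$ reduce to the following assertion: if $h \in L^p(\omega)$ then $h * \rho_\varepsilon \to h$ in $L^p(\omega)$.

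This last assertion is precisely where $\omega \in A_p$ is used, and it will be the main point. The pointwise bound $|h * \rho_\varepsilon(z)| \le C_\rho \, M h(z)$ (where $M$ is the Hardy–Littlewood maximal function) holds uniformly in $\varepsilon$, and the Muckenhoupt theorem gives $\|M h\|_{L^p(\omega)} \le C_{p,\omega} \|h\|_{L^p(\omega)}$. Combining this uniform majorant with the a.e.\ convergence $h * \rho_\varepsilon \to h$ (valid at every Lebesgue point of $h$, and Lebesgue points form a set of full Lebesgue measure, hence full $\omega$-measure since $\omega \in A_p \subset A_\infty$ is absolutely continuous with respect to Lebesgue measure), dominated convergence in $L^p(\omega)$ with dominant $(C_\rho M h)^p \omega \in L^1$ yields $h * \rho_\varepsilon \to h$ in $L^p(\omega)$. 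The hard part is really this $A_p$ input; once it is in hand, the two steps combine to give a sequence in $C_c^\infty(\mathbb C)$ converging to $f$ in $W^{1,p}(\omega, \mathbb C)$.
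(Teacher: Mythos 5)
Your mollification step is exactly the paper's argument: the paper also reduces to compactly supported $f$, forms $f*\varphi_\varepsilon$, controls $|\nabla f_\varepsilon|$ pointwise by the Hardy--Littlewood maximal function of $\nabla f$, invokes Muckenhoupt's theorem that $M$ is bounded on $L^p(\omega,\mathbb C)$ when $\omega\in A_p$, and closes with dominated convergence. That is the entire content of the paper's proof, and you reproduce it correctly, including the observation that a.e.\ convergence (at Lebesgue points) transfers to $\omega$-a.e.\ convergence because $\omega\,dxdy$ is absolutely continuous with respect to Lebesgue measure.

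The truncation step you added, however, has a genuine gap. The space $W^{1,p}(\omega,\mathbb C)$ in this paper is the \emph{homogeneous} weighted Sobolev space: $f$ is only assumed to be a locally integrable tempered distribution with $\nabla f\in L^p(\omega,\mathbb C)$, and the seminorm is $\|\nabla f\|_{L^p(\omega)}$. There is no hypothesis that $f\in L^p(\omega,\mathbb C)$. Consequently your domination of $f\eta_R\to f$ by ``the majorant $|f|^p\omega$'' and the estimate
\[
\iint_{\mathbb C}|f\nabla\eta_R|^p\omega\,dxdy \le \frac{C^p}{R^p}\iint_{R\le|z|\le 2R}|f|^p\omega\,dxdy
\]
cannot be closed by absolute continuity of the integral, since $\iint|f|^p\omega$ may well be infinite. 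In the homogeneous setting the standard fix is to replace $f$ by $f-c_R$, with $c_R$ a weighted average of $f$ over the annulus $\{R\le|z|\le 2R\}$ (constants are harmless modulo the seminorm), and then bound $\iint_{R\le|z|\le 2R}|f-c_R|^p\omega\,dxdy$ via a weighted Poincar\'e inequality on annuli for the $A_p$ weight; that produces the factor $R^p\iint_{R\le|z|\le 2R}|\nabla f|^p\omega$, which cancels the $R^{-p}$ and tends to $0$. To be fair, the paper only says ``without loss of generality we may assume $f$ has compact support'' and does not justify this reduction either; but since you chose to spell it out, the step as written does not go through without the Poincar\'e input.
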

\begin{proof}
 Consider $f\in W^{1,p}(\omega, \mathbb C)$: without loss of generality, we may assume that $f$ has compact support. Let then $(\varphi_\varepsilon)$ be a $C_c^\infty(\mathbb C)$-approximation of identity, so that $f_\varepsilon=f*\varphi_\varepsilon\in C _c^\infty(\mathbb C)$. Now 
$$\|f-f_\varepsilon\|_{W^{1,p}(\omega, \;\mathbb C)}^p=\iint_\mathbb C |\nabla f(z)-\nabla f_\varepsilon(z)|^p \omega (z)dxdy$$
which converges to $0$ by the Lebesgue dominated convergence theorem.  Indeed $|\nabla f_\varepsilon|$ is controlled by the Hardy-Littlewood maximal function of $\nabla f$ and this maximal function is in $L^p(\omega, \mathbb C)$ from the fact that $\omega\in A_p$ (see \cite{Muc72}). 
\end{proof}

In the classical case $\omega\equiv 1$ we recall that if $p>2$ then $W^{1,p}(\mathbb C)\subset \Lambda^\alpha(\mathbb C)$ with $\alpha=1-2/p$, and moreover, the property being local, a similar inclusion holds for $W^{1,p}(D)$, $D$ being a disk. 
\begin{prop}\label{embed}
If $p>2$, the following two embedding assertions hold:
\begin{itemize}
    \item If $\omega\in A_1$ then all functions in $W^{1,p}(\omega, \mathbb C)$ are continuous;
    \item If $\omega(z) = d(z,\Gamma)^{(1-s)p-1}$ is only assumed to  be locally integrable, then the same result holds if $s>\frac{h(\Gamma)}{2}-\frac{h(\Gamma)-1}{p}$.
\end{itemize}
\end{prop}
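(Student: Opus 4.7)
The plan is to treat both parts by a weighted Morrey-type embedding starting from the classical pointwise bound: for $f\in C_{c}^{\infty}(\mathbb{C})$ and $x,y\in\mathbb{C}$ with $|x-y|=R$,
$$|f(x)-f(y)|\le C\int_{B}|\nabla f(z)|\left(\frac{1}{|z-x|}+\frac{1}{|z-y|}\right)dz$$
on a ball $B$ of radius $\simeq R$ containing both $x$ and $y$. H\"older's inequality with the weight $\omega$ gives
$$\int_{B}\frac{|\nabla f(z)|}{|z-x|}\,dz\le \|\nabla f\|_{L^{p}(\omega,B)}\,J(x)^{1/p'},\qquad J(x):=\int_{B}\frac{\omega(z)^{-1/(p-1)}}{|z-x|^{p'}}\,dz,$$
so the task reduces to bounding $J(x)$ by a power $R^{\alpha}$ with $\alpha>0$. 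Once this is achieved for smooth $f$, approximation (Lemma~\ref{Ccdense} for Part~(i), with a mild adaptation for Part~(ii)) will produce a continuous representative for every $f\in W^{1,p}(\omega,\mathbb{C})$.

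\textbf{Part (i).} I would use that the $A_{1}$ condition $\tfrac{1}{|B|}\int_{B}\omega\le C\omega$ a.e.\ forces the essential infimum of $\omega$ over any ball to be positive, so that $\omega^{-1/(p-1)}\in L^{\infty}_{\mathrm{loc}}(\mathbb{C})$. Then
$$J(x)\lesssim \int_{B}\frac{dz}{|z-x|^{p'}}\lesssim R^{2-p'},$$
which is finite precisely because $p'<2$, i.e.\ $p>2$, and yields H\"older continuity with exponent $1-2/p$.

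\textbf{Part (ii).} Here $\omega^{-1/(p-1)}(z)=d(z,\Gamma)^{b}$ with $b=-((1-s)p-1)/(p-1)$, and the essential lower bound on $\omega$ is lost. The worst case is $x\in\Gamma$, as for $x\notin\Gamma$ the function $d(\cdot,\Gamma)$ is bounded below on a small neighborhood of $x$ and the analysis reduces to Part~(i). For $x\in\Gamma$, I would decompose $B(x,R)$ by a double dyadic scheme
$$A_{k,j}=\{z:\,|z-x|\simeq 2^{-k}R\}\cap\{z:\,d(z,\Gamma)\simeq 2^{-k-j}R\},\qquad k,j\ge 0$$
(using $|z-x|\ge d(z,\Gamma)$ when $x\in\Gamma$), and invoke Astala's Minkowski-content bound
$$|\{z\in D(z_{0},\rho):\,d(z,\Gamma)<t\}|\le C\rho^{h(\Gamma)}t^{2-h(\Gamma)}$$
(already used in Remark~\ref{E012}), which gives $|A_{k,j}|\le C(2^{-k}R)^{2}\,2^{-j(2-h(\Gamma))}$. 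Summing produces
$$J(x)\le C\,R^{b+2-p'}\sum_{k,j\ge 0}2^{-k(b+2-p')}\,2^{-j(b+2-h(\Gamma))},$$
and positivity of both geometric exponents---balanced against the elementary splitting $|z-x|^{-p'}\le d(z,\Gamma)^{-\theta p'}|z-x|^{-(1-\theta)p'}$ (valid for $x\in\Gamma$ and $\theta\in[0,1]$) and optimization in $\theta$---will pin down the sharp threshold $s>h(\Gamma)/2-(h(\Gamma)-1)/p$.

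\textbf{Main obstacle.} The heart of the difficulty lies in the fine balancing in Part~(ii): carrying out the double-dyadic bookkeeping with the optimal $\theta$ to extract precisely the exponent $h(\Gamma)/2-(h(\Gamma)-1)/p$ rather than a suboptimal weaker condition, and---since when $(1-s)p-1\ge 0$ the weight is bounded but vanishes on $\Gamma$ and hence fails to lie in $A_{1}$---justifying the approximation of $W^{1,p}(\omega,\mathbb{C})$-functions by smooth ones outside the $A_{p}$ framework of Lemma~\ref{Ccdense}.
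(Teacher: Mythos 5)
Your two parts fare quite differently, so let me take them in turn. For Part~(i) your argument is correct and genuinely different from the paper's: you use that $A_1$ forces a positive essential lower bound on every ball, so $\omega^{-1/(p-1)}\in L^\infty_{\mathrm{loc}}$ and $J(x)\lesssim_B R^{2-p'}$ directly. The paper instead picks $q\in(2,p)$, writes $\int_D|\nabla f|^q\le\bigl(\int_D|\nabla f|^p\omega\bigr)^{q/p}\bigl(\int_D\omega^{-q/(p-q)}\bigr)^{1-q/p}$, notes $A_1\subset A_{p/q}\Leftrightarrow\omega^{-q/(p-q)}\in A_{p/(p-q)}\subset L^1_{\mathrm{loc}}$, and invokes the unweighted Sobolev embedding $W^{1,q}\subset\Lambda^{1-2/q}$. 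Yours is more elementary here. For Part~(ii) the paper keeps the \emph{same} $q$-reduction and simply asks when $\iint_D\omega^{-q/(p-q)}<\infty$: by the Minkowski estimate of Remark~\ref{E012} this is the condition $-\tfrac{q}{p-q}\bigl((1-s)p-1\bigr)>h(\Gamma)-2$, which is affine in $q$, and optimizing over $q\in(2,p)$ gives precisely $s>\tfrac{h(\Gamma)}{2}-\tfrac{h(\Gamma)-1}{p}$. No weighted potential estimate is needed, and the approximation worry you raise never enters: once $\nabla f\in L^q_{\mathrm{loc}}$ for some $q>2$, the unweighted Sobolev embedding already supplies a continuous representative.

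Your Part~(ii) as stated has a genuine gap, and the plan for closing it points in the wrong direction. The proposed splitting $|z-x|^{-p'}\le d(z,\Gamma)^{-\theta p'}|z-x|^{-(1-\theta)p'}$ with $\theta>0$ can only \emph{worsen} the estimate: the extra factor $d(z,\Gamma)^{-\theta p'}$ is singular at $\Gamma$, so the $j$-sum convergence gets strictly harder as $\theta$ grows, and $\theta=0$ is already optimal. There is nothing to balance. Moreover, if you simply complete the raw double sum you started — using $|A_{k,j}|\lesssim(2^{-k}R)^2\,2^{-j(2-h(\Gamma))}$, $d(z,\Gamma)\simeq 2^{-k-j}R$ and $|z-x|\simeq 2^{-k}R$ — you obtain
$$J(x)\lesssim R^{\,b+2-p'}\sum_{k\ge 0}2^{-k(b+2-p')}\sum_{j\ge 0}2^{-j(b+2-h(\Gamma))},\qquad b=\frac{sp}{p-1}-1,$$
which converges exactly when $b+2-p'>0$ and $b+2-h(\Gamma)>0$, i.e.\ $s>1/p$ and $s>(h(\Gamma)-1)(p-1)/p$. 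Both thresholds are strictly below $\tfrac{h(\Gamma)}{2}-\tfrac{h(\Gamma)-1}{p}$ when $p>2$ and $h(\Gamma)<2$, so the hypothesis of the proposition is \emph{not} sharp for your route; there is no ``sharp threshold to pin down'', and the optimization you identify as the main obstacle is chasing a phantom. If you drop the $\theta$-splitting and just do the geometric-series bookkeeping above, your approach proves the proposition (and more). As written, though, the computation is not carried out and the stated strategy would not lead you there.
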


We recall here that 
when $\omega(z)=d(z,\Gamma)^{(1-s)p-1}$ it is known that for any $q>1$, $\omega\in A_q  \Leftrightarrow \omega\in A_1$ if $h(\Gamma)-1<(1-s)p<1$, i.e., $1-\frac{h-1}{p}s>1-1/p$.

\begin{proof}[Proof of Proposition \ref{embed}]
Suppose first that $\omega\in A_1$. Choose $q$ so that $2<q<p$ and let $D$ be an open disk in $\mathbb C$ . By  H\"older inequality, we get
\begin{equation}\label{sobem}
\int_D|\nabla f(x)|^qdxdy\le\left (\int_D|\nabla f(x)|^p\omega(x)dxdy\right)^{\frac qp}\left(\int_D\omega(x)^{-\frac{q}{p-q}}dxdy\right)^{1-\frac qp}<\infty    
\end{equation}
because $f\in W^{1,p}(\omega, \mathbb C)$, and $\omega\in A_1\subset A_{p/q}$, which is equivalent to $\omega^{-q/(p-q)} \in A_{p/(p-q)}$, and in particular implies that $\omega^{-q/(p-q)}$ is locally integrable. We conclude with the Sobolev embedding theorem mentioned above.

For the second case,  if $D$ is a disk containing $\Gamma$ then by the same computation as that in Remark \ref{E012} we see 
$$\iint_Dd(z,\Gamma)^\beta dxdy<\infty $$
if $\beta>h(\Gamma)-2$. By the previous computation it follows that a sufficient condition for the validity of the theorem is that the second integral in the right hand-side of \eqref{sobem} is finite, and this will be the case if there exists $q\in(2,p)$ such that
$$-\frac{q}{p-q}((1-s)p-1)>h(\Gamma)-2.$$
This condition is equivalent to, with $h=h(\Gamma)$:
$$ F(q,s,p,h)=q(h-1-(1-s)p)+p(2-h)>0.$$
The function $F$ being affine in $q$, the last inequality occurs if and only if 
$$\max(F(2,s,p,h),F(p,s,p,h))>0.$$ 
But
\begin{align*}
    F(2,s,p,h)&=2p\left(s-\left(\frac h2-\frac{h-1}{p}\right)\right),\\
    F(p,s,p,h)&=p^2\left(s-\left(1-\frac 1p\right)\right).
\end{align*}
The proposition easily follows by noticing that, if $p>2$,
$$\frac h2-\frac{h-1}{p}<1-\frac 1p.$$
\end{proof}

Let us now assume that the quasicircle $\Gamma$ satisfies 
$$\mathcal{B}_{p,p}^s(\Omega_i\to \Gamma)=\mathcal{B}_{p,p}^s(\Omega_e\to \Gamma),$$
and we then call this space of boundary values $\mathcal{B}_{p,p}^s(\Gamma)$.

\begin{theo} \label{123b} 
If $p>2,\,s>\frac{h(\Gamma)}{2}-\frac{h(\Gamma)-1}{p}$ and $\omega\in A_p$ where $\omega(z)=d(z,\Gamma)^{(1-s)p-1}$ then the elements of $W^{1,p}(\omega, \mathbb C)$ are continuous functions in $\mathbb C$ and
\begin{equation}\label{trace1p}
    \mathcal{B}_{p,p}^s(\Gamma)= W^{1,p}(\omega, \mathbb C)|_\Gamma,
\end{equation}
where   $W^{1,p}(\omega, \mathbb C)|_\Gamma=\{f|_\Gamma:\,f\in W^{1,p}(\omega,\mathbb C)\}.$  
\end{theo}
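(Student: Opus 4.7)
The continuity of any $F\in W^{1,p}(\omega,\mathbb{C})$ is immediate from Proposition~\ref{embed}, whose hypothesis is exactly part of our assumptions, so the restriction $F\mapsto F|_\Gamma$ is well-defined. I would first note that for $p>2$ the hypothesis $s>h(\Gamma)/2-(h(\Gamma)-1)/p$ automatically forces $s>1/p$ (the difference equals $h(\Gamma)(p-2)/(2p)>0$), so by the Gehring--Martio argument of Subsection~3.1 one has the continuous embedding $\mathcal{B}_{p,p}^s(\Omega_{i,e})\hookrightarrow\Lambda^{s-1/p}(\overline\Omega_{i,e})$. The assumption $\omega\in A_p$ is moreover equivalent, via Theorem~\ref{As1} applied to the porous curve $\Gamma$, to $(h(\Gamma),p,s)\in\mathcal{A}$, so the Plemelj--Calder\'on machinery of Section~2 and the boundedness of the Beurling transform on $L^p(\omega,\mathbb{C})$ are at our disposal. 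The proof consists in showing the two set inclusions of \eqref{trace1p} separately.

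For $W^{1,p}(\omega,\mathbb{C})|_\Gamma\subseteq\mathcal{B}_{p,p}^s(\Gamma)$, I would first treat $F\in C_c^\infty(\mathbb{C})$ by running the construction in the proof of Theorem~\ref{As2}. The Cauchy integrals
\[
F_i(z)=-\frac{1}{\pi}\iint_{\Omega_e}\frac{\bar\partial F(\zeta)}{\zeta-z}\,d\xi d\eta,\qquad F_e(z)=\frac{1}{\pi}\iint_{\Omega_i}\frac{\bar\partial F(\zeta)}{\zeta-z}\,d\xi d\eta
\]
are holomorphic in $\Omega_{i,e}$ with $F_{i,e}'=\pm B(\bar\partial F\chi_{\Omega_{e,i}})$, so that by $L^p(\omega,\mathbb{C})$-boundedness of the Beurling transform one has $\|F_{i,e}\|_{\mathcal{B}_{p,p}^s(\Omega_{i,e})}\leq C\|F\|_{W^{1,p}(\omega,\mathbb{C})}$; the identity $F|_\Gamma=F_i|_\Gamma-F_e|_\Gamma$ then holds pointwise via the H\"older embedding. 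The essential use of the hypothesis~\eqref{equalbe} now enters: since $F_e|_\Gamma\in\mathcal{B}_{p,p}^s(\Omega_e\!\to\!\Gamma)=\mathcal{B}_{p,p}^s(\Omega_i\!\to\!\Gamma)$, its harmonic extension to $\Omega_i$ belongs to $\mathcal{B}_{p,p}^s(\Omega_i)$, with norm controlled (via the closed graph theorem applied to the assumed equality of Banach spaces) by that of $F_e$ on $\Omega_e$. Combined with $F_i\in\mathcal{B}_{p,p}^s(\Omega_i)$, this places the harmonic extension of $F|_\Gamma$ in $\Omega_i$ inside $\mathcal{B}_{p,p}^s(\Omega_i)$, and symmetrically in $\Omega_e$, yielding $\|F|_\Gamma\|_{\mathcal{B}_{p,p}^s(\Gamma)}\leq C\|F\|_{W^{1,p}(\omega,\mathbb{C})}$. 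Density of $C_c^\infty(\mathbb{C})$ in $W^{1,p}(\omega,\mathbb{C})$ (Lemma~\ref{Ccdense}), together with uniform convergence on $\Gamma$ coming both from Proposition~\ref{embed} and from the H\"older embedding, extends the inclusion to all of $W^{1,p}(\omega,\mathbb{C})$.

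For the reverse inclusion, given $f\in\mathcal{B}_{p,p}^s(\Gamma)$ with harmonic extension $u_i$ to $\Omega_i$, I would observe that $u_i\in W^{1,p}(\omega,\Omega_i)$ directly from the definition of $\mathcal{B}_{p,p}^s(\Omega_i)$ together with the boundedness of $u_i$ on $\overline\Omega_i$ coming from Gehring--Martio. Since $\Omega_i$ is a quasidisk, hence an $(\varepsilon,\delta)$-domain by Jones's theorem, and $\omega\in A_p$, the weighted extension theorem of Chua yields $\widetilde u_i\in W^{1,p}(\omega,\mathbb{C})$ with $\widetilde u_i|_{\Omega_i}=u_i$. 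Proposition~\ref{embed} then makes $\widetilde u_i$ continuous on $\mathbb{C}$, so comparing boundary values with the H\"older-continuous $u_i$ yields $\widetilde u_i|_\Gamma=u_i|_\Gamma=f$, placing $f$ in the trace space. The step I expect to require the most care is therefore the first direction, specifically the exploitation of hypothesis~\eqref{equalbe} to transfer the ``wrong-side'' boundary value $F_e|_\Gamma$ to an object controlled on the $\Omega_i$-side; by contrast the reverse inclusion is a fairly direct packaging of the weighted extension theorem for $A_p$-weights on quasidisks.
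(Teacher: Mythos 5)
Your proposal is correct and follows essentially the same route as the paper: the inclusion $\mathcal{B}_{p,p}^s(\Gamma)\subset W^{1,p}(\omega,\mathbb{C})|_\Gamma$ via Chua's weighted extension theorem for $A_p$ weights on quasidisks, and the reverse inclusion via the Plemelj--Calder\'on decomposition $F|_\Gamma=F_i-F_e$ with Beurling-transform estimates and the hypothesis $\mathcal{B}_{p,p}^s(\Omega_i\!\to\!\Gamma)=\mathcal{B}_{p,p}^s(\Omega_e\!\to\!\Gamma)$, followed by density of $C_c^\infty(\mathbb{C})$. Your additional remarks (the automatic implication $s>1/p$, the closed-graph theorem for norm control, the $(\varepsilon,\delta)$-domain phrasing of Chua's hypothesis) are correct clarifications of details the paper leaves implicit, not departures from its argument.
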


\begin{proof}[Proof of Theorem \ref{123b}]
We first show that $\mathcal{B}_{p,p}^s(\Gamma)\subset W^{1,p}(\omega, \mathbb C)|\Gamma$. Let $f\in\mathcal{B}_{p,p}^s(\Gamma)$ and let $u_i$ the harmonic extension of $f$ to $\Omega_i$ . By hypotheses $u_i\in W^{1,p}(\omega, \Omega_i)$ and a theorem of \cite{Chua} asserts that since $\Gamma$ is a quasicircle and $\omega \in A_p$, $u_i$ can be extended to a function of $W^{1,p}(\omega, \mathbb C)$.

In order to prove that  $W^{1,p}(\omega, \mathbb C)|_\Gamma\subset\mathcal{B}_{p,p}^s(\Gamma) $, let us first consider $f\in C_c^\infty(\mathbb C)$. Recall that 
$$F_{i}(z)=-\frac 1\pi\iint_{\Omega_{e}}\frac{\bar{\partial}f(\zeta)}{\zeta-z}d\xi d\eta,\quad F_{e}(z)=\frac 1\pi\iint_{\Omega_{i}}\frac{\bar{\partial}f(\zeta)}{\zeta-z}d\xi d\eta$$
are holomorphic respectively in $\Omega_i$ and $\Omega_e$, 
and the fact that $\omega\in A_p$ implies that $F_{i,e}\in \mathcal{B}_{p,p}^s(\Omega_{i,e}\to \Gamma)$, which is equal to 
$\mathcal{B}_{p,p}^s(\Gamma)$
by hypotheses. Using $f|_\Gamma = F_i - F_e$ we may see $f|_{\Gamma} \in \mathcal B_{p,p}^s(\Gamma)$. 

Let now $f\in W^{1,p}(\omega, \mathbb C)$. To prove $f|_{\Gamma} \in \mathcal B_{p,p}^s(\Gamma)$ we use a standard approximation argument by Lemma \ref{Ccdense}.  There is a sequence $(f_n)\in C_c^\infty(\mathbb C)\cap W^{1,p}(\omega, \mathbb C)$ converging to $f$ for the $W^{1,p}(\omega, \mathbb C)$ norm. By the preceding discussion we can write $f_n|_\Gamma=(F_{n})_i-(F_{n})_e$, with $(F_{n})_{i,e} \in \mathcal{B}_{p,p}^s(\Omega_{i,e})$ and moreover $(F_{n})_i$ and $(F_{n})_e$ are converging sequences in $\mathcal{B}_{p,p}^s(\Omega_{i,e})$. Since $\mathcal{B}_{p,p}^s(\Omega_{i}\to \Gamma)=\mathcal{B}_{p,p}^s(\Omega_{e}\to \Gamma)$, $f|_{\Gamma} \in \mathcal B_{p,p}^s(\Gamma)$ follows.
\end{proof} 
\begin{cor}\label{dense}
Under the assumption of Theorem \ref{123b}, the space  $C_c^\infty(\mathbb C)|_\Gamma$ is dense in $\mathcal{B}_{p,p}^s(\Gamma)$, and thus the conclusions of Theorem \ref{As2} hold for all $f \in \mathcal B_{p,p}^s(\Gamma)$. 
\end{cor}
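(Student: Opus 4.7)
The plan is to deduce the density directly from the identification $\mathcal{B}_{p,p}^s(\Gamma) = W^{1,p}(\omega,\mathbb{C})|_\Gamma$ provided by Theorem~\ref{123b}, together with the density of $C_c^\infty(\mathbb{C})$ in $W^{1,p}(\omega,\mathbb{C})$ given by Lemma~\ref{Ccdense}. Given $f\in \mathcal{B}_{p,p}^s(\Gamma)$, I would choose an extension $F\in W^{1,p}(\omega,\mathbb{C})$ with $F|_\Gamma=f$ and pick $F_n\in C_c^\infty(\mathbb{C})$ with $F_n\to F$ in $W^{1,p}(\omega,\mathbb{C})$; the candidate approximating sequence is $f_n := F_n|_\Gamma \in C_c^\infty(\mathbb{C})|_\Gamma$.

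The heart of the matter is to show $f_n\to f$ in $\mathcal{B}_{p,p}^s(\Gamma)$. The computation already run in the proof of Theorem~\ref{123b} shows, for each $g\in C_c^\infty(\mathbb{C})$, that the decomposition $g|_\Gamma = G_i-G_e$ gives holomorphic $G_{i,e}$ with $\|G_{i,e}\|_{\mathcal{B}_{p,p}^s(\Omega_{i,e})}\le C\|g\|_{W^{1,p}(\omega,\mathbb{C})}$, where $C$ depends on $\omega\in A_p$ through the boundedness of the Beurling transform on $L^p(\omega,\mathbb{C})$. Applying this bound to $g=F_n-F_m$ shows $(f_n)$ is Cauchy in $\mathcal{B}_{p,p}^s(\Gamma)$ and converges there to some $\tilde f$. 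To identify $\tilde f$ with $f$, I would use that the hypotheses of Theorem~\ref{123b} force $s>1/p$ (since $h(\Gamma)/2-(h(\Gamma)-1)/p \ge 1/p$ whenever $p>2$), so by the Gehring-Martio argument of subsection~3.1 convergence in $\mathcal{B}_{p,p}^s(\Gamma)$ entails pointwise convergence of traces along $\Gamma$; on the other hand Proposition~\ref{embed} yields $F_n\to F$ uniformly on compact sets, hence $f_n\to f$ pointwise on $\Gamma$. The two pointwise limits must agree, so $\tilde f = f$ and density is proved.

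For the second assertion I would first observe that the admissibility condition of Theorem~\ref{As2} is automatic under the hypotheses of Theorem~\ref{123b}: applying Theorem~\ref{As1} to $\omega$ when $s>1-1/p$ and to $\omega^{-1/(p-1)}$ when $s<1-1/p$ shows that $\omega\in A_p$ is equivalent to $(h(\Gamma)-1)(p-1)/p < s < (p+1-h(\Gamma))/p$, which is exactly $(h(\Gamma),p,s)\in\mathcal{A}$. Moreover, any $g\in C_c^\infty(\mathbb{C})$ lies in $W^{1,p}(\omega,\mathbb{C})$ (by local integrability of $\omega$ and compact support of $g$), so $C_c^\infty(\mathbb{C})|_\Gamma \subset \mathcal{B}_{p,p}^s(\Omega_i\!\to\!\Gamma)\cap\mathcal{B}_{p,p}^s(\Omega_e\!\to\!\Gamma)$. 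Combining this inclusion with the density already proved gives $\widetilde{\mathcal{B}}_{p,p}^s(\Gamma)=\mathcal{B}_{p,p}^s(\Gamma)$, after which Theorem~\ref{As2} directly furnishes the decomposition and bound for every $f\in\mathcal{B}_{p,p}^s(\Gamma)$. The main delicate point I expect is the identification of the $\mathcal{B}_{p,p}^s(\Gamma)$-limit with the pointwise limit on $\Gamma$; everything else is a straightforward combination of the ingredients already assembled in Section~2 and the earlier parts of Section~3.
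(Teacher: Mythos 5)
Your argument is correct and follows essentially the same route as the paper, which leaves the corollary unproved precisely because the density falls out of the approximation argument already carried out in the proof of Theorem~\ref{123b} (extend $f$ to $W^{1,p}(\omega,\mathbb C)$, approximate by $C_c^\infty$ via Lemma~\ref{Ccdense}, and transfer the convergence to $\mathcal{B}_{p,p}^s(\Gamma)$ through the weighted Beurling bound and the hypothesis $\mathcal{B}_{p,p}^s(\Omega_i\!\to\!\Gamma)=\mathcal{B}_{p,p}^s(\Omega_e\!\to\!\Gamma)$). Your explicit verification that $s>1/p$ under the hypotheses and that $\omega\in A_p$ is equivalent to $(h(\Gamma),p,s)\in\mathcal{A}$, together with the use of Gehring--Martio and the Sobolev embedding to identify the two limits, makes explicit steps the paper leaves implicit but does not change the approach.
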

We end this subsection with a question. Is it true that within the hypotheses of Theorem \ref{123b} if $f\in \mathcal{B}_{p,p}^s(\Gamma)$ then its extension to the whole plane by the harmonic extension on both sides belongs to $W^{1,p}(\omega, \mathbb C)$? We do not know the answer to this question in general, but only for the cases with improved regularity (larger $s$):
\begin{prop} \label{distrib} 
Let $\Gamma$ be a quasicircle, and let $f$ be a function that is $\alpha$-H\"older continuous in $\mathbb C$ with $\alpha>h(\Gamma)-1$ and of class $C^1$ in $\mathbb C\backslash \Gamma$. Then if the function $F(z)=\nabla f(z),\,z\in \mathbb C\backslash \Gamma$ (defined almost everywhere) is in $L^1_{loc}(\mathbb C)$, we have, in the sense of distributions, $\nabla f=F$.
\end{prop}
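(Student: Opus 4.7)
The plan is to verify the distributional identity $\int_{\mathbb{C}} f\, \partial_j \varphi\, dm = -\int_{\mathbb{C}} F_j\, \varphi\, dm$ for every test function $\varphi\in C_c^\infty(\mathbb{C})$ and $j\in\{1,2\}$ (with $m$ denoting Lebesgue measure). I would introduce a smooth cutoff $\eta_\varepsilon$ that vanishes on the $\varepsilon/2$-neighborhood $U_{\varepsilon/2}$ of $\Gamma$, equals $1$ outside $U_\varepsilon$, and satisfies $|\nabla\eta_\varepsilon|\le C/\varepsilon$ (obtained by mollifying $\rho(d(\cdot,\Gamma)/\varepsilon)$ for a suitable profile $\rho$). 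Writing $\varphi=\eta_\varepsilon\varphi+(1-\eta_\varepsilon)\varphi$, the integral splits into two pieces. On the support of $\eta_\varepsilon\varphi$, $f$ is $C^1$, so classical integration by parts yields $\int f\,\partial_j(\eta_\varepsilon\varphi)\,dm = -\int F_j\,\eta_\varepsilon\varphi\, dm$, which converges to $-\int F_j\varphi\, dm$ as $\varepsilon\to 0$ by dominated convergence, using $F_j\in L^1_{loc}$ and $|\eta_\varepsilon\varphi|\le |\varphi|$. Thus everything reduces to showing that $\int f\,\partial_j\bigl((1-\eta_\varepsilon)\varphi\bigr)\,dm\to 0$.

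For this, I would exploit the $\delta$-regularity of $\Gamma$: given $\delta>h(\Gamma)$ (to be fixed later), the set $\Gamma\cap\mathrm{supp}(\varphi)$ admits a Vitali-type covering by disks $D_k=D(z_k,\varepsilon)$ with $z_k\in\Gamma$, bounded overlap, and cardinality $N\le C_\delta\,\varepsilon^{-\delta}$, whose fourfold dilates cover $U_\varepsilon\cap\mathrm{supp}(\varphi)$. Take a smooth partition of unity $\{\sigma_k\}$ subordinate to $\{4D_k\}$ with $|\nabla\sigma_k|\le C/\varepsilon$, and write $g_\varepsilon:=(1-\eta_\varepsilon)\varphi=\sum_k \sigma_k g_\varepsilon$. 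Each $\sigma_k g_\varepsilon$ is compactly supported in $4D_k$, so $\int\partial_j(\sigma_k g_\varepsilon)\,dm=0$, which allows me to subtract the constant $f(z_k)$:
$$
\int f\,\partial_j(\sigma_k g_\varepsilon)\,dm = \int \bigl(f-f(z_k)\bigr)\,\partial_j(\sigma_k g_\varepsilon)\,dm.
$$
The H\"older hypothesis gives $|f(z)-f(z_k)|\le C\varepsilon^\alpha$ on $4D_k$, while $|\partial_j(\sigma_k g_\varepsilon)|\le C/\varepsilon$ on a set of area $\le C\varepsilon^2$, so each summand is bounded by $C\varepsilon^{1+\alpha}$. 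Summing over $k$ and using the covering bound,
$$
\left|\int f\,\partial_j g_\varepsilon\, dm\right|\le C\, N\, \varepsilon^{1+\alpha}\le C\,\varepsilon^{1+\alpha-\delta}.
$$

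The main obstacle is the sharp balancing between the H\"older exponent $\alpha$ of $f$ and the regularity index $h(\Gamma)$ of the curve: the partition-of-unity trick saves a factor $\varepsilon^{1+\alpha}$ on each piece but pays $\varepsilon^{-\delta}$ for the covering, and the hypothesis $\alpha>h(\Gamma)-1$ is precisely what allows one to select $\delta\in(h(\Gamma),1+\alpha)$ so that $1+\alpha-\delta>0$. Once this choice is made, the above bound vanishes as $\varepsilon\to 0$, establishing the distributional identity. A secondary point to handle carefully is the construction of the covering with uniformly bounded overlap (so that the total variation of $\sum_k\sigma_k g_\varepsilon$ is comparable to $N$ times the per-disk contribution), but this is a standard consequence of the Vitali covering lemma together with $\delta$-regularity.
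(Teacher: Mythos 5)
Your proof is correct, and it takes a genuinely different route from the paper's. The paper mollifies $f$ itself: it considers $f*\psi_\varepsilon$, integrates by parts globally, and then controls $\nabla(f*\psi_\varepsilon)$ near $\Gamma$ by writing $f*\nabla\psi_\varepsilon$ and exploiting $\int \varphi_\varepsilon' = 0$ to insert a difference $f(x-t_1,\cdot)-f(x,\cdot)$ that H\"older continuity bounds by $C\varepsilon^{\alpha}$; the damage is then multiplied by the Minkowski bound $|\Gamma_\varepsilon|\lesssim\varepsilon^{2-h(\Gamma)}$ to give decay $\varepsilon^{\alpha+1-h(\Gamma)}$. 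You instead leave $f$ alone and surgeon the test function: you cut it off away from a neighborhood of $\Gamma$, integrate by parts where $f$ is honestly $C^1$, and dispose of the near-$\Gamma$ remainder with a partition of unity $\{\sigma_k\}$ subordinate to a covering by $\varepsilon$-disks, using $\int\partial_j(\sigma_k g_\varepsilon)=0$ (compact support) to subtract $f(z_k)$. This is the same mean-zero-plus-H\"older mechanism, but localized combinatorially via the covering count $N\lesssim\varepsilon^{-\delta}$ rather than via the Minkowski sausage measure; the exponent bookkeeping $1+\alpha-\delta>0$ for $\delta\in(h(\Gamma),1+\alpha)$ matches the paper's $\alpha+1-h(\Gamma)>0$. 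Your version avoids discussing what mollification does to $f$ near $\Gamma$, at the cost of invoking a $\delta$-regular Vitali covering and a partition of unity with controlled gradients — both standard, and you correctly flag the bounded-overlap point as the thing to keep straight (disjointness of the undilated disks gives it, and the disjoint disks sitting inside $U_\varepsilon$ give the count $N\lesssim\varepsilon^{-\delta}$ from the $\delta$-regularity bound on $|U_\varepsilon|$). Both proofs also implicitly use that a quasicircle has Lebesgue measure zero, you for the dominated-convergence step on the good term, the paper for the pointwise a.e. statement about $F$.
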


\begin{proof}
    Let $\varphi$ be a $C_c^\infty$-function in $\mathbb R$ with support included in $[-1,1]$. As usual, we define the approximation of identity $\varphi_\varepsilon(x)=\frac 1\varepsilon\varphi(\frac x\varepsilon)$. From this we can define the approximation of identity in two variables
$$\psi_\varepsilon(z)=\psi_\varepsilon(x+iy)=\varphi_\varepsilon (x)\varphi_\varepsilon (y).$$
We may assume without loss of generality that $f$ has compact support, and then the function $f*\psi_\varepsilon$ is a $C^\infty$-function with compact support and converges uniformly in $\mathbb C$ towards $f$. From this we deduce that
$$\iint_{\mathbb C}\nabla (f*\psi_\varepsilon)(z)h(z)dxdy=-\iint_{\mathbb c}f*\psi_\varepsilon(z)\nabla h(z)dxdy$$
for any $h\in C_c^\infty(\mathbb C)$, and the right-hand side converges to $-\iint_{\mathbb C}f(z)\nabla h(z)dxdy$ as $\varepsilon$ converges to $0$.

We divide the left-hand side into two parts:

Let $\Gamma_\varepsilon$ be the set of points at a distance less than $2\varepsilon$ from $\Gamma$. Then 
$$\iint_{\mathbb C\backslash \Gamma_\varepsilon}\nabla(f*\psi_\varepsilon)(z)h(z)dxdy=\iint_{\mathbb C\backslash \Gamma_\varepsilon}(\nabla F)* \psi_\varepsilon(z)h(z)dxdy$$ which converges as $\varepsilon$ goes to $0$ towards $\iint_\mathbb C \nabla F(z)h(z)dxdy$. In order to finish the proof of the proposition, it suffices now to prove that 
$$\iint_{\Gamma_\varepsilon}\frac{\partial( f*\psi_\varepsilon)(z)}{\partial x}dxdy$$
converges to $0$, the same fact for $y$ being the same. But 
$$\frac{\partial( f*\psi_\varepsilon)}{\partial x}(z)=f*\frac{\partial( \psi_\varepsilon)}{\partial x}(z)$$
which is also equal to 
$$\int_{[-\varepsilon,\varepsilon]}\varphi_\varepsilon(t_2)\left(\int_{[-\varepsilon,\varepsilon]}(f(x-t_1,y-t_2)-f(x,y-t_2))\varphi_\varepsilon'(t_1)dt_1\right)dt_2,$$
because $\int_{[-\varepsilon,\varepsilon]}\varphi_\varepsilon'(t_1)dt_1=0$.  Now $|f(x-t_1,y-t_2)-f(x,y-t_2)|\le C\varepsilon^\alpha$ and $|\varphi_\varepsilon'(t_1)|\le C\varepsilon^{-2} $. Since we integrate on an interval of length $2\varepsilon$, the absolute value of the entire integral is bounded by $C\varepsilon^{\alpha-1}$.

Finally,
$$\iint_{\Gamma_\varepsilon}\left|f*\frac{\partial( \psi_\varepsilon)}{\partial x}(z)\right|dxdy\le |\Gamma_\varepsilon|\varepsilon^{\alpha-1}.$$
But we know $|K\Gamma_\varepsilon|\varepsilon^{h(\Gamma)-2}\le C$; we conclude that
$$|\Gamma_\varepsilon|\varepsilon^{\alpha-1}\le C\varepsilon^{\alpha+1-h(\Gamma)},$$
and the result follows.
\end{proof} 
Based on Proposition \ref{distrib},  a sufficient condition for a positive answer to the question is the hypotheses of Theorem \ref{123b} and $s>\frac 1p+(h(\Gamma)-1)$.  
Notice that this condition need not be compatible with the assumptions of the theorem, in particular with the fact that $\omega\in A_p$: we indeed need 
$$\frac 1p+h-1<1+\frac{1-h}{p},$$
which is equivalent to
$$ p>\frac{h}{2-h},$$
and this condition is a new constraint if $h>4/3$.

\subsection{Almost-Dirichlet principle}
We start by recalling the classical Dirichlet principle. If $\Omega$ is a Jordan domain with $\partial\Omega = \Gamma$ and $f$ is a continuous function on  $\Gamma$ having a continuous extension $F: \overline \Omega \to \mathbb C$ that is in $C^1(\Omega)$. 
then, if $u$ stands for  the harmonic extension of $f$ to $\Omega$, we have
$$\iint_{\Omega}|\nabla u(z)|^2dxdy\le \iint_{\Omega}|\nabla F(z)|^2dxdy.$$
Now, let $\Gamma $ be a quasicircle, and $p>1,\,s\in (0,1)$. We will say that $\mathcal{B}_{p,p}^s(\Omega)$ satisfies an almost-Dirichlet principle if there exists $C>0$ such that
$$\iint_{\Omega}|\nabla u(z)|^pd(z,\Gamma)^{(1-s)p-1}dxdy\le C \iint_{\Omega}|\nabla F(z)|^p d(z,\Gamma)^{(1-s)p-1}dxdy.$$
Maz'ya \cite{Maz} has proven the almost-Dirichlet principle for the unit disk for all $p>1,s\in(0,1)$ and his proof has been simplified by \cite{MR}. Theorem \ref{123b} shows that if $\Gamma$ satisfies the hypotheses of this theorem, then the almost-Dirichlet principle holds.

\section{Plemelj-Calder\'on problem for  $B_{p,p}^s(\Gamma)$ on chord-arc curves: the rectifiable case}
\subsection{A review of classical facts about fractional Sobolev spaces in plane domains.}

We have already dealt with the  Sobolev space $W^{1,p}(\mathbb C)$, $1<p<\infty$, consisting of tempered distributions $f$ such that $f$ is locally integrable and  $\nabla f$, taken in the sense of distributions, is in $L^p(\mathbb C)$. The semi-norm 
$$\|f\|_{W^{1,p}(\mathbb C)}=\left(\iint_{\mathbb C}|\nabla f(z)|^p dxdy\right)^{\frac 1p}$$
equips $W^{1,p}(\mathbb C)$ with a structure of Banach space modulo constants called the homogeneous Sobolev space in the literature. The inhomogeneous Sobolev space is given by $\mathbb W^{1,p}(\mathbb C) = W^{1,p}(\mathbb C)\cap L^p(\mathbb C)$. It is a Banach space equipped with the norm 
$$\Vert f\Vert_{\mathbb W^{1,p}(\mathbb C)} = \left(\Vert f\Vert_{ W^{1,p}(\mathbb C)}^p + \Vert f \Vert_{L^p(\mathbb C)}^p\right)^{\frac{1}{p}}.$$
Let $\Omega$ be an open set in $\mathbb C$. $W^{1,p}(\Omega)$ and $\mathbb W^{1,p}(\Omega)$ should be defined similarly. We point out that if the open set $\Omega$ satisfies the classical Poincar\'e inequality, namely, 
$$
\Vert f\Vert_{L^p(\Omega)} \leq C\Vert f \Vert_{W^{1,p}(\Omega)}
$$
for every $f \in C_c^\infty(\Omega)$ where the constant $C$ depends only on $\Omega$ and $p$  then both spaces coincide. For instance,  bounded chord-arc domains are such domains. The bounded quasidisks also satisfy  Poincar\'e inequality, since it is an extension domain for Sobolev spaces $W^{1,p}(\mathbb C)$ (see \cite{Jon, Chua}) so that one can apply  Poincar\'e inequality in a  sufficiently regular larger domain, which, combined with the extension theorem,  leads to the conclusion.

For $0<s<1$ we define the fractional Besov-Sobolev space $B_{p,p}^s(\mathbb C)$ in $\mathbb C$ as the space of  $L^p(\mathbb C)$-functions $f:\mathbb C\to \mathbb C$ such that 
$$
\int_{\mathbb C}\int_{\mathbb C}\frac{|f(z)-f(\zeta)|^p}{|z-\zeta|^{2+sp}}dzd\zeta<\infty.
$$
(We remark that if $s\geq 1$ and $\Omega$ is a connected open set in $\mathbb C$ then any measurable function $f: \Omega\to\mathbb C$ such that the integral of the above integrand $|f(z)-f(\zeta)|^p/|z-\zeta|^{2+sp}$ over $\Omega\times\Omega$ converges is actually constant.) 
These spaces may be viewed as intermediate spaces between $W^{0,p}(\mathbb C)$, i.e., $L^p(\mathbb C)$  and $\mathbb W^{1,p}(\mathbb C)$. More precisely, they may be obtained as real-interpolate spaces between these two spaces: $[L^p(\mathbb C), \mathbb W^{1,p}(\mathbb C)]_{s,p}=B^s_{p,p}(\mathbb C)$, see \cite{Tri}. Notice that they are also special Besov spaces.  We call them fractional Besov-Sobolev spaces. If we use complex interpolation instead, we obtain the spaces $H_p^s(\mathbb C)$ consisting of  Bessel potentials of $L^p$-functions: 
$[L^p(\mathbb C), \mathbb W^{1,p}(\mathbb C)]_{s}=H^s_{p}(\mathbb C)$. It is known that  $B_{p,p}^s(\mathbb C)=H_p^s(\mathbb C)$ only if $p=2$ (if $p\neq 2$ these two spaces are not even isomorphic).

For further use, we extend the definition of fractional Besov-Sobolev spaces to the case of  $s\in(1,2)$  by saying that $f\in B_{p,p}^s(\mathbb C)$ with $s\in(1,2)$ if $f\in B_{p,p}^{s-1}(\mathbb C)$ and $\nabla f\in B_{p,p}^{s-1}(\mathbb C)$. Similarly, it can be extended to the case of  other non-integer orders $s>0$. It is known that the space $C_c^{\infty}(\mathbb C)$ is dense in $B_{p,p}^s(\mathbb C)$ for non-integer orders $s>0$ and $1<p<\infty$. 

Let us now consider a chord-arc sub-domain of the plane. We recall that a chord-arc domain is the image of a disk by a bi-Lipschitz homeomorphism of the plane. The name comes from the fact that a Jordan domain $\Omega$ is chord-arc if and only if $\Gamma=\partial \Omega$ is a chord-arc curve.

We define $B_{p,p}^s(\Omega)$ simply as the space of restrictions to $\Omega$ of functions in $B_{p,p}^s(\mathbb C)$. Such a function , in particular, satisfies 
$$\int_{\Omega}\int_\Omega\frac{|f(z)-f(\zeta)|^p}{|z-\zeta|^{2+sp}}dzd\zeta <\infty$$
and functions satisfying this last property can be extended to a function in $B_{p,p}^s(\mathbb C)$.

If $\Omega$ is a  chord-arc domain  with boundary $\Gamma$  we may define the fractional Besov-Sobolev spaces $B_{p,p}^s(\Gamma)$ on $\Gamma$ as follows:  $0<s<1$, $p>1$, 
$$ 
B_{p,p}^s(\Gamma)=\{f\in L^p(\Gamma):\, \iint_{\Gamma\times \Gamma}\frac{|f(z)-f(\zeta)|^p}{|z-\zeta|^{1+ps}} |dz||d\zeta|<\infty\}.
$$

We define the trace operator  $\gamma$ in $C_c^\infty(\mathbb C)$ by $\gamma(u)=u|_\Gamma$. 
\begin{theo}[p.182 in \cite{JW1}]
    Let $\Omega$ be a chord-arc domain and let $s \in (1/p, 1+1/p)$. The trace operator $\gamma$ can be extended to a bounded linear operator from $B_{p,p}^s(\Omega)$ onto $B_{p,p}^{s-1/p}(\Gamma)$, the order having $1/p$-loss, whose kernel is the closure of $C_c^\infty(\Omega)$ in  $B_{p,p}^s(\Omega)$.
\end{theo}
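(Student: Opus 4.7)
The plan is to reduce the statement to the Jonsson--Wallin trace theory for Besov spaces on $d$-sets in $\mathbb{R}^n$, specialized to $n=2$, $d=1$. A chord-arc curve $\Gamma$ is bi-Lipschitz equivalent to $\mathbb T$, hence Ahlfors $1$-regular: there is a constant $C_0$ such that $C_0^{-1} r \le \mathcal H^1(\Gamma\cap D(z,r)) \le C_0 r$ for all $z\in\Gamma$ and $0<r\le\mathrm{diam}(\Gamma)$. This is precisely the $1$-set property in the Jonsson--Wallin sense, and the $1/p$-loss in the target index is the universal loss $(n-d)/p$ that appears when restricting functions in $B^s_{p,p}(\mathbb R^n)$ to a $d$-set.

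I would organize the proof in three steps. First, the \emph{boundedness of $\gamma$}: given $f\in B^s_{p,p}(\Omega)$, its definition provides an extension $F\in B^s_{p,p}(\mathbb C)$ with comparable norm, and then one controls $\int_\Gamma \int_\Gamma |F(z)-F(\zeta)|^p / |z-\zeta|^{1+(s-1/p)p}|dz||d\zeta|$ by $\|F\|_{B^s_{p,p}(\mathbb C)}^p$. The idea is the standard Jonsson--Wallin estimate: for each ball $D(z,r)$ with $z\in\Gamma$, split $|F(z)-F(\zeta)|$ using mean values on balls of radius comparable to $|z-\zeta|$, bound the resulting differences by the $B^s_{p,p}(\mathbb C)$ seminorm via Fubini and the $1$-regularity of $\Gamma$, and sum dyadically. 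The restriction $s\in(1/p,1+1/p)$ is precisely what makes the target a non-trivial Besov space of order in $(0,1)$.

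Second, the \emph{surjectivity}: construct a bounded linear right inverse $E:B^{s-1/p}_{p,p}(\Gamma)\to B^s_{p,p}(\Omega)$ via a Whitney-type extension. Fix a Whitney decomposition $\{Q_j\}$ of $\mathbb C\setminus\Gamma$ and, for each $Q_j$, let $z_j\in\Gamma$ be a point realizing $d(Q_j,\Gamma)$ and set $g_j$ to be the average of $g$ over $\Gamma\cap D(z_j, C\,\mathrm{diam}(Q_j))$. With a partition of unity $\{\varphi_j\}$ subordinate to a slight dilation of the $Q_j$, define $E g = \sum_j g_j \varphi_j$ off $\Gamma$ and $Eg = g$ on $\Gamma$. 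The verification that $E g\in B^s_{p,p}(\mathbb C)$ is the technical core: one estimates the double integral $\iint |Eg(z)-Eg(\zeta)|^p|z-\zeta|^{-2-sp}\,dz\,d\zeta$ by splitting according to whether $z,\zeta$ lie in the same Whitney region, in adjacent regions, or in well-separated regions, and in each case reducing to the $B^{s-1/p}_{p,p}(\Gamma)$ seminorm of $g$ using Ahlfors regularity and standard telescoping between adjacent Whitney cubes. The restriction of $Eg$ to $\Omega$ then gives the desired preimage.

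Third, the \emph{kernel characterization}: clearly the closure of $C_c^\infty(\Omega)$ in $B^s_{p,p}(\Omega)$ lies in $\ker\gamma$, since the trace vanishes on $C_c^\infty(\Omega)$ and $\gamma$ is continuous. Conversely, if $u\in B^s_{p,p}(\Omega)$ satisfies $\gamma u=0$, one first regularizes $u$ by convolution with a smooth mollifier to get a $C^\infty$ function in $\Omega$ converging to $u$, and then multiplies by a cutoff $\chi_\varepsilon$ that vanishes on an $\varepsilon$-neighborhood of $\Gamma$. The cutoff is built so that $|\nabla \chi_\varepsilon|\lesssim d(\cdot,\Gamma)^{-1}$ on a thin shell, and the $B^s_{p,p}(\Omega)$-error introduced by the cutoff is controlled by a fractional Hardy inequality using that the trace of $u$ vanishes. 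The main obstacle is exactly this last point, namely quantifying the rate at which $u$ decays at $\Gamma$ from its vanishing trace; this is handled using the extension/restriction compatibility of $B^s_{p,p}$ with the $d=1$ trace theorem of Jonsson--Wallin. I expect the \textbf{construction of the bounded extension operator} to be the real obstacle, as it is where the chord-arc (equivalently $1$-regular) geometry of $\Gamma$ is used in an essential way.
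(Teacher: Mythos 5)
The paper does not prove this theorem: it is quoted verbatim, with attribution, from Jonsson--Wallin (p.~182 of \cite{JW1}), so there is no in-paper argument to compare against. Your outline correctly reconstructs the Jonsson--Wallin proof for the special case $n=2$, $d=1$: chord-arc $\Rightarrow$ Ahlfors $1$-regular $\Rightarrow$ $\Gamma$ is a $1$-set, the $1/p$ loss is the general $(n-d)/p$ loss, boundedness of the trace follows from the dyadic mean-value estimate, surjectivity from the Whitney extension operator, and the kernel characterization from cutoff plus a fractional Hardy inequality; these are precisely the ingredients in \cite{JW1}.
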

  
\subsection{Plemelj-Calder\'on problem on chord-arc curves}
We now  come back to Plemelj-Calder\'on problem.    
Recall that the Cauchy integral operator $T$ is bounded on $L^p(\Gamma)$ (i.e., $B_{p,p}^0(\Gamma)$) for every $p\in (1+\infty)$ (see subsection 1.3) if $\Gamma$ is chord-arc. Our goal in this subsection is to show that the operator $T$ is bounded on  $B_{p,p}^s(\Gamma)$ for all $p \in (1, +\infty)$ and $s \in (0, 1]$. 
We begin by proofing that the result is valid for $B_{p,p}^1(\Gamma)$ using that for  $B_{p,p}^0(\Gamma)$. 

\begin{theo}\label{H1}
    The operator $T$ is bounded on $B_{p,p}^1(\Gamma)$, and moreover, the operator norm on $B_{p,p}^1(\Gamma)$ is equal to that on $L^p(\Gamma)$. 
\end{theo}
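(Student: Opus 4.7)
The plan is to reduce the boundedness of $T$ on $B_{p,p}^1(\Gamma)$ to its (known) boundedness on $L^p(\Gamma)$ via a commutation-type identity relating tangential differentiation and $T$. Since $\Gamma$ is chord-arc it is rectifiable and admits an arc-length parametrization with unit tangent vector $\tau(z)$ of modulus $1$ almost everywhere; the space $B_{p,p}^1(\Gamma)$ is then naturally characterized by $\partial_\tau f \in L^p(\Gamma)$. The key identity I would aim to prove, for $f$ in a dense subclass (say $C_c^\infty(\mathbb C)|_\Gamma$), is
$$\partial_\tau (Tf)(z) \;=\; \tau(z)\, T\!\left(\frac{\partial_\tau f}{\tau}\right)\!(z), \qquad z \in \Gamma.$$

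To establish this, I would first work with smooth $f$ and consider the Cauchy extension $F(z)=(2\pi i)^{-1}\int_\Gamma f(\zeta)/(\zeta-z)\,d\zeta$ for $z\notin\Gamma$. Differentiating under the integral gives $F'(z)=(2\pi i)^{-1}\int_\Gamma f(\zeta)/(\zeta-z)^2\,d\zeta$, and an integration by parts along the closed curve $\Gamma$ (no boundary terms) rewrites this, using $df=(\partial_\tau f/\tau)\,d\zeta$, as
$$F'(z) \;=\; T\!\left(\frac{\partial_\tau f}{\tau}\right)\!(z), \qquad z \notin \Gamma.$$
Taking nontangential limits from $\Omega_i$ and $\Omega_e$, the Plemelj formulae applied to both sides produce $\pm \tfrac12 (\partial_\tau f/\tau)$ jump terms that cancel against the jump of $Tf$ itself when one uses that, for the holomorphic restrictions $F_{i,e}$, the tangential boundary derivative equals $\tau\cdot F_{i,e}'|_\Gamma$. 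The displayed identity drops out.

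Granted the identity, boundedness is immediate: since $|\tau|=1$ a.e. the multiplications $M_\tau$ and $M_{1/\tau}$ are linear isometries of $L^p(\Gamma)$, hence
$$\|\partial_\tau(Tf)\|_{L^p(\Gamma)} \;=\; \|T(\partial_\tau f/\tau)\|_{L^p(\Gamma)} \;\le\; \|T\|_{L^p\to L^p}\,\|\partial_\tau f\|_{L^p(\Gamma)},$$
so $\|T\|_{B^1_{p,p}\to B^1_{p,p}}\le \|T\|_{L^p\to L^p}$. For the reverse inequality, the identity exhibits $T$ acting on $B_{p,p}^1(\Gamma)$ (through the seminorm $\|\partial_\tau f\|_{L^p}$) as the conjugate $M_\tau\circ T\circ M_{1/\tau}$ of $T$ acting on $L^p(\Gamma)$; because $M_\tau$ and $M_{1/\tau}$ are isometries, the two operator norms coincide. (Any mean-zero $g\in L^p(\Gamma)$ is realized as $\partial_\tau f$ for some $f\in B^1_{p,p}(\Gamma)$, and a density/approximation argument promotes this to equality of norms.)

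The main obstacle is making the integration by parts and the passage to boundary values rigorous on a merely chord-arc curve, where $(\zeta-z)^{-1}$ and $(\zeta-z)^{-2}$ are only conditionally convergent and must be interpreted as principal values. I would circumvent this by carrying out the differentiation and integration by parts while $z$ is strictly off $\Gamma$ (so both kernels are bounded and classical calculus applies to the arc-length parametrization), which yields the cleaner identity $F'=T(\partial_\tau f/\tau)$ as holomorphic functions in $\Omega_{i,e}$. The $L^p$-boundedness of $T$ on chord-arc curves (Coifman--McIntosh--Meyer / David) together with nontangential convergence then legitimates passing to the boundary and applying Plemelj on both sides. A final routine density argument, using that $C_c^\infty(\mathbb C)|_\Gamma$ is dense in $B_{p,p}^1(\Gamma)$ and that both sides of the identity are continuous operators of $\partial_\tau f$ in $L^p$, extends the identity to all of $B_{p,p}^1(\Gamma)$ and completes the proof.
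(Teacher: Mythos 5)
Your proof is essentially the paper's argument run in the opposite direction: instead of decomposing $f'$ via Plemelj and anti-differentiating, you differentiate the Cauchy extension off the curve and pass to the boundary, arriving at the same commutation identity $(Tf)'=T(f')$ (where $f'$ denotes the complex derivative along $\Gamma$, i.e.\ $\partial_\tau f/\tau$ in your notation), after which the norm comparison $\|T\|_{B^1_{p,p}\to B^1_{p,p}}=\|T\|_{L^p\to L^p}$ follows exactly as you say. The one step you assert without justification --- that for the holomorphic restrictions $F_{i,e}$ the tangential boundary derivative equals the nontangential boundary value of $F_{i,e}'$ --- is not a routine limit interchange but a genuine regularity theorem: it is the Hardy--Littlewood result that a holomorphic function on $\mathbb D$ whose derivative lies in $E^1(\mathbb D)$ extends continuously to $\overline{\mathbb D}$, absolutely continuously to $\mathbb T$, with $(\Phi\circ\tau)'(\zeta)=\lim_{r\to 1}(\Phi\circ\tau)'(r\zeta)$ a.e.; the paper invokes precisely this (p.\,89 of Garnett) after transporting from $\Omega_{i,e}$ to the disk by the Riemann map. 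Supply that lemma explicitly at the boundary-passage step and your argument is complete and equivalent to the paper's.
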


Before we proceed to the proof of this theorem, one must properly define what we mean by $B^1_{p,p}(\Gamma)$ when $\Gamma$ is a chord-arc curve (or $K$-chord-arc curve to specify its chord-arc constant).
Without loss of generality, we suppose $\text{length}(\Gamma) = 2\pi$. 
Let $\varphi:\,\mathbb D\to \Omega$ be the Riemann map from the unit disk $\mathbb D$ onto the interior domain $\Omega$ of $\Gamma$, which can be extended to a homeomorphism of the closures such that $\varphi$ restricted on $\mathbb T$ is a quasisymmetry (see \cite{Pom}). 
Let $t\mapsto z(t)$ denote the arc-length parametrization of $\Gamma$ and $\lambda(e^{it}) = z(t)$ so that $\varphi=\lambda\circ h$ where $h$ is an absolutely continuous homeomorphism of $\mathbb T$ with $|h'| = |\varphi'|$. It is easy to see that $\lambda$ is a bi-Lipschitz homeomorphism from $\mathbb T$ onto $\Gamma$. Indeed, for any $t_1, t_2 \in [0, 2\pi)$, 
$$
K^{-1}|e^{it_1}-e^{it_2}|\leq |\lambda(e^{it_1})-\lambda(e^{it_2})|\leq \pi/2|e^{it_1}-e^{it_2}|.
$$
We say $f\in L^p(\Gamma)$ belongs to $B_{p,p}^1(\Gamma)$ 
if $f\circ \lambda$ is an anti-derivative of a function $h\in L^p(\mathbb T)$. We define the function $f'$ 
by $f'\circ\lambda \lambda' = h$. Then $f'\in L^p(\Gamma)$. The space $B_{p,p}^1(\Gamma)$ is endowed with the natural norm $\|f\|_{B_{p,p}^1(\Gamma)} = \|f'\|_{L^p(\Gamma)}$. 

To prepare for the proof, we introduce the following definition. 
Set $\Gamma_r = \tau(|z| = r)$ as "circular curves", where $\tau$ is a Riemann map that takes $\mathbb D$ to $\Omega$. 
The Hardy space  $E^p(\Omega)$ of the index $p$ in the chord-arc domain $\Omega$ consists of holomorphic functions $F$ in $\Omega$ with finite norm
$$
\Vert F \Vert_p =\Bigg (\frac{1}{2\pi}\sup_{r}\int_{\Gamma_r}|F(w)|^p |dw| \Bigg)^{1/p}.
$$
 Notice that $\tau$ is a homeomorphism of the closures $\overline{\mathbb D}$ onto $\Omega\cup\Gamma$, and absolutely continuous on $\mathbb T$ since $\Gamma$ is rectifiable.

\begin{proof}[Proof of Theorem \ref{H1}]
Let  $f\in B_{p,p}^1(\Gamma)$. 
By David's theorem we may write 
\begin{equation}\label{decom}
    f'=G_i + G_e
\end{equation}
on $\Gamma$. Here, $G_i$ and $G_e$ belong to the Hardy spaces of index $p$ in $\Omega_i$ and $\Omega_e$, respectively, and then $G_i$ and $G_e$ have non-tangential boundary limits almost everywhere on $\Gamma$ with respect to the arc-length measure, denoted still by $G_i$ and $G_e$, so that $\|G_i\|_{L^p(\Gamma)}$ and $\|G_e\|_{L^p(\Gamma)}$ are both controlled from above by $\|f\|_{B_{p,p}^1(\Gamma)}$.

Recall that $G_i \in E^p(\Omega_i)\subset E^1(\Omega_i)$, so that $G_i\circ\tau\tau' \in E^1(\mathbb D_i)$. Let now $\Phi_i$ be an anti-derivative of $G_i$ in $\Omega_i$. Then we see $(\Phi_i\circ\tau)' = G_i\circ\tau\tau' \in E^1(\mathbb D_i)$. By a result of Hardy-Littlewood (see e.g. p.89 in \cite{Gar}), we conclude that $\Phi_i\circ\tau$ is continuous on $\overline{\mathbb D}_i$ and absolutely continuous on $\mathbb T$ such that 
$(\Phi_i\circ\tau)'(\zeta) = \lim_{r \to 1}(\Phi_i\circ\tau)'(r\zeta)$ almost everywhere on $\mathbb T$, and thus $(\Phi_i\circ\tau)'(\zeta) = G_i\circ\tau(\zeta)\tau'(\zeta)$ almost everywhere on $\mathbb T$. Using it, let us define $\Phi_i' = G_i$ on $\Gamma$, and we can similarly define  $\Phi_e' = G_e$ on $\Gamma$. 
Combined with \eqref{decom}  that leads to, by adjusting the constants,  $$f=\Phi_i+\Phi_e$$
on $\Gamma$ 
with the norm of $\Phi_{i,e}$ in $B^1_{p,p}(\Gamma)$ controlled from above by $\|f\|_{B_{p,p}^1(\Gamma)}$. By  Plemelj formulae we conclude that the operator $T$ is bounded on $B_{p,p}^1(\Gamma)$ with respect to the norm $\|\cdot\|_{B_{p,p}^1(\Gamma)}$, and moreover, we have $\|T\|_{B_{p,p}^1(\Gamma)\to B_{p,p}^1(\Gamma)} = \|T\|_{L^p(\Gamma)\to L^p(\Gamma)}$. 
\end{proof}

Now we get ready to show the main result of this subsection: the operator $T$ is bounded on  $B_{p,p}^s(\Gamma)$ for $0 < s < 1$. 
Due to the isomorphism between $B_{p,p}^s(\Gamma)$ and $B_{p,p}^s(\mathbb T)$,  this boils down to proving that the operator with kernel
$$ \text{p.v.}\frac{\lambda'(\zeta)}{\lambda(\zeta)-\lambda(\xi)}$$
is bounded on $B_{p,p}^s(\mathbb T)$. Set $g = f\circ\lambda$. 
This operator is more precisely defined by
$$\widetilde Tg(\xi)=\frac{1}{2\pi i}\text{p.v.}\int_{\mathbb T}\frac{g(\zeta)\lambda'(\zeta)}{\lambda(\zeta)-\lambda(\xi)}d\zeta.$$

Let us introduce Calder\'on's interpolation theorem.
\begin{theo}[p.38 in \cite{Tri}, Ch.6 in \cite{BL} and also \cite{Cal1}]\label{Cald}
Let $s_0, s_1 \in [0, 1]$. If $s_0 \ne s_1$ then we have, in terms of complex interpolation theory of Banach spaces,
$$ B_{p,p}^s(\mathbb T)=[B_{p,p}^{s_0}(\mathbb T),B_{p,p}^{s_1}(\mathbb T)]_{\theta}$$
where  $s = (1-\theta)s_0 +\theta s_1$ and the exponent $\theta \in (0, 1)$. 
\end{theo}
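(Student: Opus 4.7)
The plan is to reduce the statement to complex interpolation of weighted vector-valued sequence spaces by exhibiting $B_{p,p}^s(\mathbb T)$ as a retract of $\ell^p_s(L^p(\mathbb T))$ via a Littlewood-Paley decomposition, in a way that does not depend on $s$.

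First I would fix a smooth dyadic partition of unity $\{\psi_j\}_{j\ge 0}$ on the Fourier side: $\psi_0$ supported in $\{|n|\le 2\}$, and for $j\ge 1$, $\psi_j$ supported in $\{2^{j-1}\le|n|\le 2^{j+1}\}$, with $\sum_j\psi_j\equiv 1$. Setting $\Delta_j f = \sum_n \psi_j(n)\hat f(n)e^{in\theta}$, the classical equivalence (Triebel, \cite{Tri})
$$\|f\|_{B_{p,p}^s(\mathbb T)}\simeq \Big(\sum_{j\ge 0} 2^{jsp}\|\Delta_j f\|_{L^p(\mathbb T)}^p\Big)^{1/p}$$
identifies $B^s_{p,p}(\mathbb T)$ isomorphically with a closed subspace of $\ell^p_s(L^p(\mathbb T))$, the space of sequences $(g_j)_{j\ge 0}$ with $\sum 2^{jsp}\|g_j\|_{L^p}^p<\infty$. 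I would then introduce a fattened family $\tilde\psi_j$ with $\tilde\psi_j\equiv 1$ on $\mathrm{supp}\,\psi_j$, whose associated operators $\tilde\Delta_j$ satisfy $\tilde\Delta_j\Delta_j=\Delta_j$, so that the maps
$$ J: f\longmapsto (\Delta_j f)_{j\ge 0},\qquad P:(g_j)_{j\ge 0}\longmapsto \sum_{j\ge 0}\tilde\Delta_j g_j $$
form a coretraction/retraction pair, i.e., $P\circ J=\mathrm{id}$ on $B^s_{p,p}(\mathbb T)$. The crucial point is that $J$ and $P$ are defined independently of $s$ and are bounded simultaneously as operators $B^s_{p,p}(\mathbb T)\to\ell^p_s(L^p(\mathbb T))$ and $\ell^p_s(L^p(\mathbb T))\to B^s_{p,p}(\mathbb T)$ for every $s\in[0,1]$.

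Next I would invoke the complex interpolation identity for weighted vector-valued sequence spaces (a direct consequence of the Stein–Weiss theorem, Bergh–Löfström \cite{BL}, Theorem 5.6.3): for any Banach space $X$, any $p\in(1,\infty)$ and any $s_0\ne s_1$ in $\mathbb R$,
$$[\ell^p_{s_0}(X),\ell^p_{s_1}(X)]_\theta = \ell^p_s(X),\qquad s=(1-\theta)s_0+\theta s_1.$$
Applied to $X=L^p(\mathbb T)$ this pins down the interpolated sequence space. Finally I would apply the general interpolation principle for retracts (Triebel \cite{Tri}, §1.2.4): if a Banach space $A$ is a retract of $B$ via $(J,P)$ with the same maps compatible with two endpoint structures, then $[A_{s_0},A_{s_1}]_\theta$ is a retract of $[B_{s_0},B_{s_1}]_\theta$ via the same maps. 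Combining the three ingredients yields $[B^{s_0}_{p,p}(\mathbb T),B^{s_1}_{p,p}(\mathbb T)]_\theta = B^s_{p,p}(\mathbb T)$.

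I expect the main obstacle to be bookkeeping at the endpoints $s_0$ or $s_1$ equal to $0$: there $B^0_{p,p}(\mathbb T)$ is not literally $L^p(\mathbb T)$ but a slightly larger Besov space, and one must verify that the Littlewood-Paley characterization and the retract structure still hold and are compatible with $L^p$-interpolation on each $\Delta_j f$. Once this is handled, no genuinely new analysis is needed, since the boundedness of each $\Delta_j$ on $L^p(\mathbb T)$ for $1<p<\infty$ (a consequence of Mikhlin's multiplier theorem on $\mathbb T$, or, equivalently, of the $L^p$-boundedness of the Hilbert transform) is classical.
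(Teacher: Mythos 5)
The paper offers no proof of Theorem \ref{Cald}: it is stated as a known result with references to Triebel, Bergh--L\"ofstr\"om, and Calder\'on. Your retraction argument via Littlewood--Paley decomposition is precisely the standard proof found in those references (Triebel's "method of retraction and coretraction" together with the Stein--Weiss interpolation of weighted sequence spaces), and the mechanics are all correct: $J$ and $P$ are $s$-independent, $P\circ J=\mathrm{id}$, and the weighted $\ell^p(L^p)$ spaces interpolate as you state.

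On your flagged worry about the endpoint $s_0=0$: for the statement as written --- with both endpoints the genuine Besov spaces $B^{s_0}_{p,p}(\mathbb T)$, $B^{s_1}_{p,p}(\mathbb T)$ --- there is in fact no extra bookkeeping needed. The Littlewood--Paley norm $\bigl(\sum_j 2^{jsp}\|\Delta_j f\|_{L^p}^p\bigr)^{1/p}$ characterizes $B^s_{p,p}(\mathbb T)$ uniformly for all $s\in\mathbb R$, including $s=0$, and the same $(J,P)$ pair works. Your proof therefore requires no modification. Where your instinct is right, though, is that the paper subsequently invokes Theorem \ref{Cald} in equation \eqref{01} with $L^p(\mathbb T)$ (rather than $B^0_{p,p}(\mathbb T)$) and $B^1_{p,p}(\mathbb T)=W^{1,p}(\mathbb T)$ (rather than the Besov space of smoothness $1$) at the endpoints. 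For $p\neq 2$, $L^p=F^0_{p,2}\neq B^0_{p,p}$ and $W^{1,p}=F^1_{p,2}\neq B^1_{p,p}$, and complex interpolation of the Triebel--Lizorkin endpoints yields $H^s_p=F^s_{p,2}$ rather than $B^s_{p,p}$ --- a distinction the paper itself emphasizes earlier. Your proof, taken literally, establishes the Besov-endpoint statement but does not cover that variant; real interpolation $(L^p, W^{1,p})_{s,p}=B^s_{p,p}$ would, at the cost of losing the exactness estimate \eqref{norm} in the form used. This is a subtlety in the paper's application, not a gap in your proof of the statement as stated.
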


\noindent The following functorial property of complex interpolation  is a basic assertion in interpolation theory. 
Let $(A_0, A_1)$ and $(B_0, B_1)$  be two interpolation couples of Banach spaces, and let $L$ be a linear operator mapping from $A_0+A_1$ to $B_0+B_1$ such that its restriction to $A_j$ is a linear and bounded operator from $A_j$ to $B_j$ with the norm $M_j$, where $j = 0, 1$. Then the restriction of $L$ to 
$[A_0, A_1]_{\theta}$, $0 < \theta < 1$, is a linear and bounded operator from $[A_0, A_1]_{\theta}$ to $[B_0, B_1]_{\theta}$ with the norm $M_\theta$. Furthermore, it is known that the complex interpolation method is an exact interpolation functor of exponent $\theta$ (see, e.g., p.88 in \cite{BL}) meaning that 
\begin{equation}\label{norm}
    M_{\theta} \leq M_0^{1-\theta}M_1^{\theta}.
\end{equation}

It now suffices to invoke Calder\'on's interpolation theorem to conclude the following. 
\begin{theo}\label{Tspp}
    For $0 < s < 1$ and $1<p<\infty$, the operator $T$ is bounded on $B_{p,p}^s(\Gamma)$.
\end{theo}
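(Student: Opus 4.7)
The plan is to reduce everything to the unit circle and then apply Calderón's complex interpolation theorem between the two endpoints $s=0$ and $s=1$, which we already have at hand.

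First I would use the bi-Lipschitz homeomorphism $\lambda:\mathbb{T}\to \Gamma$ constructed above (arc-length parametrization) to transport the problem to $\mathbb{T}$. Because $\lambda$ and $\lambda^{-1}$ are Lipschitz with constants depending only on $K$, the map $g\mapsto g\circ\lambda^{-1}$ is a bounded isomorphism from $B_{p,p}^{s}(\mathbb{T})$ onto $B_{p,p}^{s}(\Gamma)$ for $s=0$ (it is just a change of variables in $L^p$) and for $s=1$ (since $|\lambda'|\simeq 1$ almost everywhere). This identifies the action of the Cauchy integral $T$ on $B_{p,p}^s(\Gamma)$ with the action on $B_{p,p}^s(\mathbb{T})$ of the operator $\widetilde T$ with kernel $\lambda'(\zeta)/(\lambda(\zeta)-\lambda(\xi))$, as indicated in the excerpt. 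Consequently it suffices to show that $\widetilde T$ is bounded on $B_{p,p}^s(\mathbb{T})$ for every $s\in(0,1)$.

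For the two endpoints, the boundedness of $\widetilde T$ on $B_{p,p}^0(\mathbb{T})=L^p(\mathbb{T})$ follows from David's theorem applied to the chord-arc curve $\Gamma$ (and the change of variable $\lambda$), while the boundedness of $\widetilde T$ on $B_{p,p}^1(\mathbb{T})$ follows from Theorem \ref{H1} together with the transport via $\lambda$. Then I would invoke Theorem \ref{Cald} with $s_0=0$, $s_1=1$ and $\theta=s$: this gives $B_{p,p}^s(\mathbb{T})=[L^p(\mathbb{T}),B_{p,p}^1(\mathbb{T})]_\theta$, and the functorial property of complex interpolation (with the exactness estimate \eqref{norm}) implies that $\widetilde T$ is bounded on the interpolation space with operator norm at most $M_0^{1-s}M_1^{s}$, where $M_0$ is the $L^p(\mathbb{T})$-norm of $\widetilde T$ and $M_1$ its $B_{p,p}^1(\mathbb{T})$-norm. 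Transporting back through $\lambda$ yields the desired boundedness of $T$ on $B_{p,p}^s(\Gamma)$.

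There is really no major obstacle here: the two endpoint statements have already been established in the paper, Calderón's theorem is quoted directly, and the change of variable $\lambda$ is bi-Lipschitz so that all spaces $B_{p,p}^s$ for $s\in\{0,1\}$ (and hence for all $s\in[0,1]$ after interpolation) transfer between $\mathbb{T}$ and $\Gamma$ with equivalent norms. The only mild point to be careful about is the consistency of the interpolation couple on $\mathbb{T}$ versus on $\Gamma$, but since interpolation commutes with bounded isomorphisms this is automatic. Hence the short three-line proof: reduce to $\mathbb{T}$, interpolate between $s=0$ (David) and $s=1$ (Theorem \ref{H1}), transport back.
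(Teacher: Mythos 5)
Your proposal is correct and follows exactly the paper's own route: transfer to $\mathbb{T}$ via the bi-Lipschitz arc-length parametrization $\lambda$, establish the endpoints $s=0$ (David's theorem) and $s=1$ (Theorem~\ref{H1}), and invoke Calder\'on's complex interpolation (Theorem~\ref{Cald}) together with the exactness estimate \eqref{norm}. The only cosmetic difference is that the paper observes (via Theorem~\ref{H1}) that $M_0=M_1$, so the interpolated operator norm is in fact \emph{equal} to $\|T\|_{L^p(\Gamma)\to L^p(\Gamma)}$ rather than merely bounded by $M_0^{1-s}M_1^{s}$.
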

\begin{proof}
 Taking $s_0 = 0$ and $s_1 = 1$ in Theorem \ref{Cald}, we have 
\begin{equation}\label{01}
    B_{p,p}^s(\mathbb T) = [L^p(\mathbb T), B_{p,p}^1(\mathbb T)]_s.
\end{equation}
By David's theorem and Theorem \ref{H1}, we conclude that the operator $\widetilde T$ is bounded on $B_{p,p}^s(\mathbb T)$ with the operator norm $\|\widetilde T\|_{B_{p,p}^s(\mathbb T)\to B_{p,p}^s(\mathbb T)} = \|\widetilde T\|_{L^p(\mathbb T)\to L^p(\mathbb T)}$, and equivalently, the operator $T$ is bounded on $B_{p,p}^s(\Gamma)$ with the operator norm $\|T\|_{B_{p,p}^s(\Gamma)\to H^s(\Gamma)} = \|T\|_{L^p(\Gamma)\to L^p(\Gamma)}$.
    \end{proof}
 It follows from  Plemelj  formulae that every function $f\in B_{p,p}^s(\Gamma)$ can be written uniquely as $f=\Phi_i+\Phi_e$ with $\|\Phi_{i,e}\|_{B_{p,p}^s(\Gamma)}\leq C\|f\|_{B_{p,p}^s(\Gamma)}$ for some constant $C$, $\Phi_{i,e}$ being the boundary values of the holomorphic functions in $\Omega_i$ and $\Omega_e$, respectively. The uniqueness of the decomposition  for $0 < s \leq 1$ follows from the case of $s = 0$. 
 
\subsection{Plemelj-Calder\'on problem on Lipschitz curves}
Clearly, Theorem \ref{Tspp} is also valid for chord-arc curves $\Gamma$  passing through $\infty$, and in particular for Lipschitz curves $\Gamma$ (i.e., there is a Lipschitz function $A: \mathbb R \to \mathbb R$ whose graph is $\Gamma$).  
A theorem by Murai \cite{Mur} states that for a Lipschitz curve $\Gamma$ with the Lipschitz norm $M$, $\|T\|_{L^p(\Gamma)\to L^p(\Gamma)}$ is no more than $C(1+M)^{3/2}$ (see also \cite{Dav87}), where $C$ is a universal constant. If we plug this information into the preceding theorem, we obtain the same bound for $\|T\|_{B_{p,p}^{1-1/p}(\Gamma)\to B_{p,p}^{1-1/p}(\Gamma)}$. But the norm  $\|T\|_{B_{p,p}^{1-1/p}(\Gamma)\to B_{p,p}^{1-1/p}(\Gamma)}$ depends only on the $L^p(\mathbb C)$-boundedness of the Beurling transform (see Section 2), so that it is actually independent of $M$. 
In order to get better estimates of $\|T\|_{B_{p,p}^{s}(\Gamma)\to B_{p,p}^{s}(\Gamma)}$ we thus use Calder\'on's interpolation theorem between $L^p(\Gamma)$ and $B_{p,p}^{1-1/p}(\Gamma)$ for $0<s<1-1/p$ and between $B_{p,p}^{1-1/p}(\Gamma)$ and $B_{p,p}^1(\Gamma)$ for $1-1/p<s<1$. We use more precisely that
\begin{align*}
    B_{p,p}^s(\Gamma)&=[L^p(\Gamma),B_{p,p}^{1-1/p}(\Gamma)]_{\frac{ps}{p-1}},\;\;\;\;\;\,0<s<1-1/p;\\
    B_{p,p}^s(\Gamma)&=[B_{p,p}^{1-1/p}(\Gamma),B_{p,p}^1(\Gamma)]_{ps+1-p},\;\;1-1/p<s<1.
\end{align*}
Using \eqref{norm}, 
we get to 
\begin{theo} If $\Gamma $ is a Lipschitz curve with Lipschitz constant $M$, then we have, for $0<s<1$, 
$$
\|T\|_{B_{p,p}^{s}(\Gamma)\to B_{p,p}^{s}(\Gamma)} \leq C(1+M)^{\frac 32|1-\frac{ps}{p-1}|}
$$
where $C$ depends only on $p$ . 
\end{theo}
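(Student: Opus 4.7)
The plan is to execute the two-piece interpolation scheme indicated in the paragraph preceding the statement, using three endpoint bounds already established earlier in the paper together with Calder\'on's theorem (Theorem \ref{Cald}) and the exact-functoriality inequality \eqref{norm}.

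The three endpoints I would record first are $L^p(\Gamma)$, $B_{p,p}^{1-1/p}(\Gamma)$, and $B_{p,p}^{1}(\Gamma)$. Murai gives $\|T\|_{L^p(\Gamma)\to L^p(\Gamma)}\le C(1+M)^{3/2}$, and Theorem \ref{H1} transfers this estimate verbatim to $B_{p,p}^{1}(\Gamma)$. The key input is the middle endpoint: the Plemelj--Calder\'on argument of Section 2 (in particular the reasoning around \eqref{Beur} and Theorem \ref{As2}) shows that $\|T\|_{B_{p,p}^{1-1/p}(\Gamma)\to B_{p,p}^{1-1/p}(\Gamma)}$ is controlled by the $L^p(\mathbb C)$-norm of the Beurling transform alone, hence by a constant $C$ depending only on $p$ and not on $M$.

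With these three inputs I would then interpolate. For $0<s<1-1/p$, writing $B_{p,p}^s(\Gamma)=[L^p(\Gamma),B_{p,p}^{1-1/p}(\Gamma)]_{\theta}$ with $\theta=\frac{ps}{p-1}$ and applying \eqref{norm} gives
$$\|T\|_{B_{p,p}^s\to B_{p,p}^s}\le \bigl(C(1+M)^{3/2}\bigr)^{1-\theta}\,C^{\theta}= C(1+M)^{\frac 32\bigl(1-\frac{ps}{p-1}\bigr)}.$$
For $1-1/p<s<1$ the dual choice $B_{p,p}^s(\Gamma)=[B_{p,p}^{1-1/p}(\Gamma),B_{p,p}^{1}(\Gamma)]_{\theta}$ with $\theta=ps+1-p$ yields the symmetric estimate; combining both regimes produces the claimed exponent $\tfrac 32|1-\tfrac{ps}{p-1}|$.

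The main obstacle I expect is isolating the $M$-independence of the norm of $T$ on $B_{p,p}^{1-1/p}(\Gamma)$: this must be extracted carefully from the proof of Theorem \ref{As2} at the critical exponent, checking that the implicit constant inherits no dependence on the Lipschitz parametrization of $\Gamma$ beyond the (scale-invariant) $L^p(\mathbb C)$-bound for the Beurling operator. A secondary, more routine, point is to transfer Theorem \ref{Cald} from $\mathbb T$ to $\Gamma$ via the bi-Lipschitz parametrization $\lambda:\mathbb T\to\Gamma$ introduced in Section 4, whose distortion constants are absolute and therefore contribute only a $p$-dependent multiplicative constant that can be absorbed into $C$.
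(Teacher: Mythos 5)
Your plan reproduces the paper's proof exactly: Murai's $L^p$ bound, Theorem \ref{H1} for the $B^1_{p,p}$ endpoint, the $M$-independent Beurling bound at $s=1-1/p$, and the two-piece Calder\'on interpolation with exact functoriality \eqref{norm}. The concerns you flag (pinning down the $M$-independence at the critical index, and the transfer of the interpolation identity to $\Gamma$ via $\lambda$) are exactly the places where the paper is brief, so you have identified the right places to be careful.

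One small arithmetic point you should not gloss over: in the regime $1-1/p<s<1$ the interpolation parameter is $\theta=ps+1-p$ and \eqref{norm} yields the exponent $\tfrac32\theta=\tfrac32(ps+1-p)$, \emph{not} $\tfrac32\bigl|1-\tfrac{ps}{p-1}\bigr|=\tfrac32\cdot\tfrac{ps+1-p}{p-1}$. These agree only when $p=2$; for $p>2$ the stated exponent is strictly smaller than what the argument delivers, so the bound as written is not actually a consequence of the interpolation. The correct statement is piecewise: $\tfrac32\bigl(1-\tfrac{ps}{p-1}\bigr)$ for $s<1-1/p$ and $\tfrac32(ps+1-p)$ for $s>1-1/p$. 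The paper's own theorem has the same slip, and you inherit it verbatim by writing ``yields the symmetric estimate''; the estimate is not symmetric, because the two interpolation couples have different length parameters.
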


\section{Douglas versus Littlewood-Paley: the chord-arc case}

Let  $\Gamma$ be a chord-arc curve of length $2\pi$ with $0$ in its inner domain $\Omega$, and $1<p<\infty$, $0<s<1$. Recall that we have  two ways of generalizing fractional Besov-Sobolev spaces from  the unit circle  $\mathbb T$ to $\Gamma$. The first one can be called the Douglas way: we define $B_{p,p}^s(\Gamma)$ as the set of functions $f\in L^p(\Gamma)$ such that 
$$\Vert f \Vert_{B_{p,p}^s(\Gamma)}^{p} :=  \iint_{\Gamma\times\Gamma}\frac{|f(z)-f(\zeta)|^p}{|z-\zeta|^{1+ps}} |dz| |d\zeta| < \infty,$$
which are intermediate spaces between $B_{p,p}^0(\Gamma)$ (i.e., $L^p(\Gamma)$) and $B_{p,p}^1(\Gamma)$.

Let $B_{p,p}^s(\Omega)$, $0\leq s\leq 1$, be the set of harmonic extensions $u$ in $\Omega$ of all $f \in B_{p,p}^s(\Gamma)$.  Then $B_{p,p}^0(\Omega) = E^p(\Omega) \oplus \overline{E^p(\Omega)}$. More precisely, there exists a unique pair of holomorphic functions $\varphi$ and $\psi$ in $E^p(\Omega)$ with $\varphi(0)=u(0)$ and $\psi(0)=0$ such that $u=\varphi+\overline{\psi}$. Since $\Gamma$ is rectifiable, it makes sense to speak of a tangential direction almost everywhere, and each function $\varphi$ in $E^p(\Omega)$ has a non-tangential limit almost everywhere on $\Gamma$, and conversely,  $\varphi$ is  the harmonic extension of its boundary limit function. Furthermore, since $\Omega$ is a chord-arc domain, in particular a Smirnov domain, $E^p(\Omega)$ coincides with the $L^p(\Gamma)$ closure of polynomials. Based on this,   $E^p(\Omega)$ can be identified with the set of its boundary functions, so it is reasonable to identify $B_{p,p}^0(\Omega)$ and $B_{p,p}^0(\Gamma)$. Finally, for any $0\leq s \leq 1$, since the spaces $B_{p,p}^s(\Omega)$ decrease with $s$, we can identify $B_{p,p}^s(\Omega)$ and $B_{p,p}^s(\Gamma)$ for convenience and switch between the two freely.

The second one is defined by  Littlewood-Paley theory: 
 consider the set of functions $f\in L^p(\Gamma)$ such that its harmonic extension $u$ in $\Omega$ satisfies 
$$\Vert u \Vert_{\mathcal{B}_{p,p}^s(\Omega)}^p:= \iint_\Omega |\nabla u(z)|^p d(z,\Gamma)^{(1-s)p-1} dxdy<\infty.$$ 
The set of such functions $u$ is denoted by $\mathcal{B}_{p,p}^s(\Omega)$. 

 In this section, our main goal is to prove $B_{p,p}^s(\Gamma) = \mathcal B_{p,p}^s(\Omega)$ under some condition on $\Gamma$, or, more precisely, 
 $\Vert f \Vert_{B_{p,p}^s(\Gamma)} \simeq \Vert u \Vert_{\mathcal{B}_{p,p}^s(\Omega)}$. Here, the implicit constant  depends only on  $\Gamma$, $s$ and $p$.

\subsection{The conjugate operator}
Assume that $\Omega$ is a bounded chord-arc domain and $\mathbb D$ is the unit disk. 
Let $z_0$ be a point in $\Omega$. If $u$ is harmonic in $\Omega$ it is well known that there exists a unique harmonic function $\tilde{u}$ in $\Omega$ such that $\tilde{u}(z_0)=0$ and  $u+i\tilde{u}$ is holomorphic in $\Omega$. In the case of $\Omega=\mathbb{D}$, $\tilde{u}$ is the harmonic extension (Poisson) of $H(f)$ where $H$ is the Hilbert transform:
 $$
    H(f)(e^{i\theta}) = \lim_{\epsilon \to 0}\frac{1}{2\pi}\int_{|\theta-\varphi|>\epsilon} \cot\Big(\frac{\theta-\varphi}{2}\Big)f(e^{i\varphi})d\varphi.
    $$
By the Cauchy-Riemann equations, $|\nabla\tilde{u}|=|\nabla u|$. Thus, we see that  $\mathcal{B}_{p,p}^s(\Omega)$ is conjugate-invariant. We denote the subspace of $\mathcal B_{p,p}^s(\Omega)$ consisting of holomorphic functions by $\mathcal {HB}_{p,p}^s(\Omega)$, i.e., the set of $u+i\tilde u$ with $u, \tilde u \in \mathcal B_{p,p}^s(\Omega)$.

A necessary condition for $B_{p,p}^s(\Gamma)=\mathcal{B}_{p,p}^s(\Omega)$ to hold is thus that $B_{p,p}^s(\Gamma)$ is stable by conjugation. Let $\varphi:\,\mathbb D\to \Omega$ be a Riemann map with $\varphi(0)=z_0$. Recall that $t\mapsto z(t)$ denotes the arc-length parametrization of $\Gamma$ and $\lambda(e^{it}) = z(t)$ so that $\varphi=\lambda\circ h$ where $h$ is an absolutely continuous homeomorphism of $\mathbb T$ with $|h'| = |\varphi'|$. Recall that  $\lambda$ is a bi-Lipschitz homeomorphism from $\mathbb T$ onto $\Gamma$. 
A simple computation shows that if $f\in B_{p,p}^s(\Gamma)$; that is, $g = f\circ\lambda \in B_{p,p}^s(\mathbb T)$ and $u$ is its harmonic extension to $\Omega$, then $\tilde{u}$ is the harmonic extension of $\tilde f = \tilde g\circ\lambda^{-1}$. Here, 
$$ \tilde{g}=V_h^{-1}\circ H\circ V_h(g),$$
where $V_h(g) = g\circ h$. 

It is known (see \cite{CoF}, p.247 in \cite{Gar}) that $V_{h}^{-1}HV_h$ is bounded on $L^p(\mathbb T)$, $1<p<\infty$, if and only if $|h'| = |\varphi'|$ belongs to  $A_{p}$ on  $\mathbb T$. Based on this comment on the limiting case $s=0$, we now state a theorem about the general case $0\leq s \leq 1$. 

\begin{theo}\label{conj}
For $p>1,\,0\le s\le 1$, suppose that $\Gamma$ is such that 
\begin{itemize}
    \item $|\varphi'|\in A_{p}$  if $1<p\leq 2$;
    \item $|\varphi'|\in A_{p'}$ \; $(1/p' + 1/p =1)$ if $p > 2$. 
\end{itemize}
Then $B_{p,p}^s(\Gamma)$ is stable by conjugation, and, moreover, the conjugate operator $f\mapsto\tilde f$ on $B_{p,p}^s(\Gamma)$ is bounded. 
\end{theo}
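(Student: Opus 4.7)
The plan is to reduce the statement to the unit circle via the bi-Lipschitz map $\lambda:\mathbb T\to\Gamma$, establish the two endpoint cases $s=0$ and $s=1$, and then interpolate. Writing $\varphi=\lambda\circ h$ as in the paragraphs preceding the theorem, a function $f:\Gamma\to\mathbb C$ lies in $B_{p,p}^{s}(\Gamma)$ if and only if $g:=f\circ\lambda\in B_{p,p}^s(\mathbb T)$, with equivalent norms (by bi-Lipschitzness of $\lambda$), and conjugation on $\Gamma$ corresponds to the operator $T_h:=V_h^{-1}\circ H\circ V_h$ acting on $\mathbb T$. The task thus reduces to showing that $T_h$ is bounded on $B_{p,p}^s(\mathbb T)$ for every $s\in[0,1]$.

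For $s=0$ this is the classical Hunt--Muckenhoupt--Wheeden characterization: $T_h$ is bounded on $L^p(\mathbb T)$ if and only if $|h'|=|\varphi'|\in A_p(\mathbb T)$ (see \cite{CoF}). The hypothesis secures this in both regimes, since for $1<p\le 2$ it is assumed directly, while for $p>2$ we exploit $p'<p$ and thus $A_{p'}\subset A_p$.

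The main step is $s=1$. Using that $H$ commutes with $d/d\theta$ and applying the chain rule, one computes
\begin{equation*}
(T_h g)'(e^{i\theta})=\frac{H\bigl((g'\circ h)\,h'\bigr)(h^{-1}(e^{i\theta}))}{h'(h^{-1}(e^{i\theta}))}.
\end{equation*}
Changing variables $e^{i\theta}=h(e^{i\psi})$ rewrites $\|(T_h g)'\|_{L^p(\mathbb T)}^p$ as $\int_\mathbb T |H((g'\circ h)\,h')|^p\,(h')^{1-p}\,d\psi$. By Hunt--Muckenhoupt--Wheeden, $H$ is bounded on $L^p(\omega,\mathbb T)$ iff $\omega\in A_p(\mathbb T)$; the weight $\omega=(h')^{1-p}$ lies in $A_p$ iff $h'\in A_{p'}$. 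Granting boundedness of $H$ in that weighted space, a second change of variables identifies the resulting bound with $\|g'\|_{L^p(\mathbb T)}=\|g\|_{B_{p,p}^1(\mathbb T)}$, as required. The hypothesis again delivers $|h'|\in A_{p'}$: directly when $p>2$, and via $A_p\subset A_{p'}$ (valid since $p\le p'$) when $1<p\le 2$.

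With both endpoints in hand, I would invoke Calder\'on's complex interpolation theorem (Theorem \ref{Cald}) together with the functorial property of complex interpolation to conclude boundedness of $T_h$ on $B_{p,p}^s(\mathbb T)=[L^p(\mathbb T),B_{p,p}^1(\mathbb T)]_s$ for all $s\in(0,1)$, with operator norm at most the geometric mean of the endpoint norms. Transporting back through $\lambda$ yields the stated result on $\Gamma$. The main obstacle is the $s=1$ endpoint: executing the chain-rule identity correctly and recognizing that the correct Muckenhoupt class at the derivative endpoint is $A_{p'}$ rather than $A_p$; the $L^p$ endpoint is standard and the interpolation step is routine once both endpoints are in place.
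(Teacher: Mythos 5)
Your proposal is correct and follows essentially the same route as the paper's own proof: reduce to $\mathbb T$ via $\lambda$, handle $s=0$ by the weighted $L^p$ boundedness of $V_h^{-1}HV_h$ (requiring $|\varphi'|\in A_p$), handle $s=1$ by differentiating and observing that the relevant weight is $|h'|^{1-p}\in A_p\Leftrightarrow|h'|\in A_{p'}$, and interpolate using Theorem \ref{Cald}. Your explicit chain-rule and change-of-variables computation at the $s=1$ endpoint is a slightly more detailed writing-out of the same step the paper performs.
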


By this theorem, we define  $\text{HB}_{p,p}^s(\Gamma)$ as the subspace of $B_{p,p}^s(\Gamma)$ consisting of functions $f+i\tilde f$ with $f, \tilde f \in B_{p,p}^s(\Gamma)$, and $\text{HB}_{p,p}^s(\Omega)$ as the analytic subspace of $B_{p,p}^s(\Omega)$. 

Before going to the proof, let us comment on the hypotheses of Theorem \ref{conj}. If $\omega$ is an $A_\infty$-weight, let us define $\delta(\omega)=\inf\{q > 1:\,\omega\in A_q\}$. It is known \cite{CoF} that if $\omega\in A_q$ for some $q>1$ then $\delta(\omega)<q$ (see p.254 in \cite{Gar}). The above theorem means that given $p>1$ and the class of curves $\Gamma$ for which the claim is true depends on $p$. If we reverse the point of view and start with a curve $\Gamma$ satisfying $|\varphi'|\in A_2$, then by defining $\delta=\delta(|\varphi'|)$, the claim is true for 
$$
\delta < p < \frac{\delta}{\delta - 1}.
$$ 

On the other hand, it is known (see \cite{JK}) that $\Gamma$ being chord-arc implies $|\varphi'|$ having
$A_\infty$  but there are examples of chord-arc curves such that $|\varphi'|\notin A_2$ (see \cite{JoZi}), we thus know that the chord-arc condition is not sufficient for Theorem \ref{conj} to hold. 

\begin{proof} 
We have just seen that $V_h^{-1}HV_h$ is bounded on $L^p(\mathbb T)$ if and only if $|\varphi'|\in A_{p}$ for any $1<p<\infty$.  
Let now $f\in B_{p,p}^1(\Gamma)$ so that $g = f\circ\lambda \in B_{p,p}^1(\mathbb T)$, the set of $g\in L^p(\mathbb T)$ such that $g' \in L^p(\mathbb T)$. 
We have $(g\circ h)'=g'\circ h h'\in L^p(|h'|^{1-p})$. Recall that a weight $\omega \in A_{p'}$ if and only if $\omega^{-\frac{1}{p'-1}} \in A_p$. By that,   $|h'|^{1-p}\in A_p$ is equivalent to $|h'| = |\varphi'| \in A_{p'}$. Then, we have $H((g\circ h)')\in L^p(|h'|^{1-p})$ with a norm bounded by the $L^p(|h'|^{1-p})$-norm of $(g\circ h)'$ 
(see \cite{CoF}); that implies $(\tilde g)'\in L^p(\mathbb T)$ with $\tilde g = V_{h}^{-1}HV_h(g)$, and thus $\tilde f = \tilde g \circ \lambda^{-1} \in B_{p,p}^1(\Gamma)$ such that $\|\tilde f\|_{B_{p,p}^1(\Gamma)}$ bounded by $\|f\|_{B_{p,p}^1(\Gamma)}$. Since the weight $A_p$ increases with $p$ and 
 from the following complex interpolation theorem (see Theorem \ref{Cald}): 
 \begin{equation}\label{p1p}
     [L^p(\mathbb T), B_{p,p}^1(\mathbb T)]_{s} = B^s_{p,p}(\mathbb T)
 \end{equation}
 the theorem now follows. 
\end{proof}

Specially, a  natural condition on the  curve $\Gamma$ implying that $|\varphi'|\in A_2$ is  the radial-Lipschitz condition. Indeed, if $\Gamma$ is a radial-Lipschitz curve, then it can be shown that $|\varphi'|$ satisfies the Helson-Szeg\"o condition (see subsection 5.3 for a specific proof): $\log |\varphi'| = u + Hv$ with $u \in L^\infty, \; v\in L^{\infty}$ and $\|v\|_{\infty} < \pi/2$, which is equivalent to $|\varphi'| \in A_2$ as follows from the Helson-Szeg\"o theorem (\cite{Gar}).  We may now state:
\begin{cor}\label{conj22}
    For $0\leq s \leq 1$, if $\Gamma$ is such that $|\varphi'|\in A_2$ then $B_{2,2}^s(\Gamma)$ is stable by conjugation, and moreover, the conjugate operator $f\mapsto\tilde f$ in $B_{2,2}^s(\Gamma)$ is bounded. The conclusion holds in particular for radial-Lipschitz curves.  
\end{cor}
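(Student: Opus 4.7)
The first statement of the corollary is an immediate instance of Theorem \ref{conj}: at $p=2$ one has $p=p'=2$, so the two hypotheses $|\varphi'|\in A_p$ (for $1<p\le 2$) and $|\varphi'|\in A_{p'}$ (for $p>2$) coincide with the single assumption $|\varphi'|\in A_2$, and Theorem \ref{conj} delivers the boundedness of $f\mapsto\tilde f$ on $B_{2,2}^s(\Gamma)$ for every $0\le s\le 1$ with no further work.

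For the second statement, I plan to verify $|\varphi'|\in A_2$ for any radial-Lipschitz curve $\Gamma$ by means of the Helson-Szeg\"o criterion, which identifies $A_2$-weights on $\mathbb T$ with those $\omega$ admitting a representation $\log\omega=u+Hv$ with $u,v\in L^\infty(\mathbb T)$ and $\|v\|_\infty<\pi/2$. Since $\Omega$ is chord-arc (hence Smirnov) and $\varphi'$ vanishes nowhere on $\mathbb D$, I fix a single-valued holomorphic branch $G=\log\varphi'$ on $\mathbb D$; its boundary values satisfy $\operatorname{Re} G=\log|\varphi'|$ and $\operatorname{Im} G=\arg\varphi'$, and the harmonic conjugacy of these two pieces yields $\log|\varphi'|=-H(\arg\varphi')+c$ on $\mathbb T$ for some constant $c$. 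Consequently, producing the Helson-Szeg\"o decomposition reduces to the pointwise estimate $\|\arg\varphi'\|_{L^\infty(\mathbb T)}<\pi/2$.

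To obtain this last bound I would exploit the geometric meaning of $\arg\varphi'$. Differentiating $t\mapsto\varphi(e^{it})$ shows that the tangent direction to $\Gamma$ at the image point is $ie^{it}\varphi'(e^{it})$, so $\arg\varphi'(e^{it})$ records precisely the angle by which the tangent to $\Gamma$ deviates from the image of the canonical tangent to $\mathbb T$. For a radial parametrization $\theta\mapsto r(\theta)e^{i\theta}$ the tangent vector $(r'(\theta)+ir(\theta))e^{i\theta}$ is tilted from the purely angular direction $ie^{i\theta}$ by exactly $\arctan(r'(\theta)/r(\theta))$, whose absolute value is bounded by $\arctan M<\pi/2$ when the radial-Lipschitz norm is $M$. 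The main obstacle will be to convert this pointwise geometric bound on $\Gamma$ into the corresponding bound for $\arg\varphi'$ on $\mathbb T$, since one must track the conformal welding drift between the parameter $t$ on $\mathbb T$ and the angular parameter $\theta$ on $\Gamma$. This technical step is precisely what Section~5.3 carries out in detail; granted it, Helson-Szeg\"o closes the argument and gives $|\varphi'|\in A_2$, at which point the first half of the corollary applies to conclude.
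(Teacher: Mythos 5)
Your reduction of the first statement to Theorem~\ref{conj} at $p=2$ is exactly the paper's argument and is correct: with $p=p'=2$ the two alternative hypotheses collapse to $|\varphi'|\in A_2$.

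The second half has a genuine gap. You write $\log|\varphi'|=-H(\arg\varphi')+c$ and then claim that the Helson--Szeg\H{o} criterion reduces to showing $\|\arg\varphi'\|_{L^\infty(\mathbb T)}<\pi/2$. That reduction is forced by your particular choice of decomposition, in which the ``$u$'' term is a constant and all of $\arg\varphi'$ is pushed into the ``$v$'' term. But $\arg\varphi'$ splits as $\arg(\varphi(z)/z)+\arg\bigl(z\varphi'(z)/\varphi(z)\bigr)$, and while the second piece is bounded by $\arctan M<\pi/2$ for an $M$-radial-Lipschitz curve, the first piece --- precisely your ``conformal welding drift'' $\theta(t)-t$ --- is only bounded (since $\log(\varphi(z)/z)$ extends continuously to $\overline{\mathbb D}$), with no reason to be smaller than $\pi/2$. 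So the estimate you hope for, $\|\arg\varphi'\|_\infty<\pi/2$, is not established by Section~5.3 and is not expected to hold in general.

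What Section~5.3 actually does is circumvent this by choosing a \emph{different} Helson--Szeg\H{o} decomposition. Writing
$$\log|\varphi'| = \log\Bigl|\frac{\varphi(z)}{z}\Bigr| - H\Bigl(\operatorname{Arg}\frac{z\varphi'(z)}{\varphi(z)}\Bigr),$$
one takes $u=\log|\varphi(z)/z|\in L^\infty$ (bounded but possibly large) and $v=-\operatorname{Arg}(z\varphi'(z)/\varphi(z))$, and it is this $v$ --- not $\arg\varphi'$ itself --- whose sup-norm is $\arctan M<\pi/2$ by the radial-Lipschitz hypothesis. The essential step you are missing is that the Helson--Szeg\H{o} condition allows a nontrivial $u$ part; the welding drift must be absorbed into $u$, not bounded. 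With that decomposition in hand, $|\varphi'|\in A_2$ follows and the first half of the corollary concludes the proof.
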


\subsection{The operator $V_{s,p}$ on $\mathcal{B}^s_{p,p}(\Omega)$}

Let $\Omega,\,\Omega'$ be two Jordan domains  containing $0$ and $\varphi$ a holomorphic diffeomorphism from $\Omega$ onto $\Omega'$ fixing $0$. Let $f\in \mathcal {HB}_{p,p}^s(\Omega')$. Using the change of variables $\zeta=\varphi(z)$ we have
$$\iint_{\Omega'} d(\zeta,\Gamma')^{(1-s)p-1}|f'(\zeta)|^pd\xi d\eta=\iint_\Omega d(\varphi(z),\Gamma')^{(1-s)p-1}|(f\circ\varphi)'(z)|^p|\varphi'(z)|^{2-p}dxdy.$$
By the Koebe distortion theorem (see \eqref{koebe}) there exists a universal constant $C>1$ such that 
\begin{align}\label{Vs}
&C^{-|(1-s)p-1|}\iint_\Omega d(z,\Gamma)^{(1-s)p-1}|V_{s,p}(f)'(z)|^pdxdy\nonumber\\
\le&\iint_{\Omega'} d(\zeta,\Gamma')^{(1-s)p-1}|f'(\zeta)|^pd\xi d\eta\\
\le& C^{|(1-s)p-1|}\iint_\Omega d(z,\Gamma)^{(1-s)p-1}|V_{s,p}(f)'(z)|^pdxdy,\nonumber
\end{align}
where $V_{s,p}$ is the operator defined by
$$V_{s,p}(f)(z)=\int_0^z(f\circ\varphi)'(u)\varphi'(u)^{1/p-s}du.$$
In other words, the operator $V_{s,p}$ is a bounded isomorphism between $\mathcal {HB}_{p,p}^s(\Omega)$
and $\mathcal {HB}_{p,p}^s(\Omega')$ with the operator norm 
$$\|V_{s,p}\|,\,\|V_{s,p}^{-1}\|\le C^{|(1-s)-1/p|}.$$
Notice that for $s=1-1/p$, this operator is nothing more than the composition by $\varphi$ and that, in this case, $V_{1-1/p,p}$ is an isometry. 

For convenience of later use, when $p=2$, the operator $V_{s,p}$ will be simplified to $V_s$.

In order to better understand the operator $V_s$ let us rewrite it by using an integration by parts:
\begin{align*}
    V_s(f)(z) &=f\circ\varphi(z)\varphi'(z)^{1/2-s}-(1/2-s)\int_0^zf\circ\varphi(u)\varphi'(u)^{1/2-s}\frac{\varphi''(u)}{\varphi'(u)}du\\
    &=T_sf(z)-(1/2-s)S(T_s(f))(z),
\end{align*}
where 
$$T_sf(z)=f\circ\varphi(z)\varphi'(z)^{1/2-s}$$
and 
$$ Sg(z)=\int_0^z g(u)\frac{\varphi''(u)}{\varphi'(u)}du.$$

Let us now specialize to $s=0$ and $(\Omega,\Omega')=(\mathbb D,\Omega)$. Recall that the Hardy space  $E^2(\Omega)$ is the space of holomorphic functions $f:\,\Omega\to \mathbb C$ such that $T_0(f)\in E^2(\mathbb D)$, the classical Hardy space of the unit disk. We say that a function $g \in {\rm BMOA}(\mathbb D)$ if $g \in E^2(\mathbb D)$ and if in addition the boundary values of $g$ on $\mathbb T$ is of bounded mean oscillation (abbr. BMO) in the sense that 
$$
\sup_{I\subset\mathbb T}\frac{1}{|I|}\int_I |g(z) - g_I||dz| < \infty,
$$
where the supremum is taken over all sub-arcs $I$ of $\mathbb T$ and $g_I$ denotes the integral mean of $g$ over $I$. It is known that $\log\varphi' \in {\rm BMOA}(\mathbb D)$ if the domain  $\varphi(\mathbb D)$ is chord-arc, but not vise versa. By Fefferman-Stein, $\log\varphi' \in {\rm BMOA}(\mathbb D)$ if and only if 
$$d\mu =  \bigg|\frac{\varphi''(z)}{\varphi'(z)}\bigg|^2(1-|z|)dxdy$$
is a Carleson measure in $\mathbb D$ (see, e.g., Ch.VI in \cite{Gar}).

\begin{theo}\label{E2A0}
Let $\Omega$ be a Jordan domain containing $0$ and $\varphi$ the Riemann mapping from $\mathbb D$ onto $\Omega$ fixing $0$. The following statements hold: 
\begin{enumerate}
    \item[\rm(1)] If $\log\varphi'\in {\rm BMOA}(\mathbb D)$ then  $ E^2(\Omega)\subset \mathcal {HB}_{2,2}^0(\Omega)$;
    \item[\rm(2)] If $\Omega$ is a chord-arc domain then $\mathcal {HB}_{2,2}^0(\Omega)\subset E^2(\Omega)$.
\end{enumerate}
Moreover, the inclusions are continuous with respect to the norms $\Vert\cdot\Vert_{E^2(\Omega)}$ and $\Vert\cdot\Vert_{\mathcal {B}_{2,2}^0(\Omega)}$. 
\end{theo}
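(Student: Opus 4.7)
The plan is to reduce both inclusions to comparing the $E^2(\mathbb D)$-norms of $g := T_0 f$ and of $V_0 f = g - \tfrac{1}{2}\, S g$, using the factorization derived in the preceding subsection, where $T_0 f(z) = f(\varphi(z))\varphi'(z)^{1/2}$ and $S g(z) = \int_0^z g(u)\, b'(u)\, du$ with $b = \log \varphi'$. Two ingredients make this reduction work: $T_0$ is an isometry between $E^2(\Omega)$ and $E^2(\mathbb D)$, and $V_0$ is a bounded isomorphism of $\mathcal{HB}_{2,2}^0(\Omega)$ onto $\mathcal{HB}_{2,2}^0(\mathbb D)$, which by the classical Littlewood--Paley identity coincides with $E^2(\mathbb D)$ modulo constants. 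Thus (1) amounts to proving $\|V_0 f\|_{E^2(\mathbb D)} \lesssim \|T_0 f\|_{E^2(\mathbb D)}$ and (2) to proving the reverse bound $\|T_0 f\|_{E^2(\mathbb D)} \lesssim \|V_0 f\|_{E^2(\mathbb D)} + |T_0 f(0)|$.

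For (1), I would compute directly from the Littlewood--Paley identity
\[ \|Sg\|_{E^2(\mathbb D)}^2 \simeq \iint_{\mathbb D} |g(z)|^2\, |b'(z)|^2\, (1-|z|)\, dx\, dy, \]
and invoke the Fefferman--Stein characterization recalled by the authors: $b = \log \varphi' \in \mathrm{BMOA}(\mathbb D)$ is exactly the statement that $d\mu_b := |b'|^2 (1-|z|)\, dx\, dy$ is a Carleson measure on $\mathbb D$. The standard Carleson embedding $E^2(\mathbb D) \hookrightarrow L^2(d\mu_b)$ then yields $\|Sg\|_{E^2(\mathbb D)} \lesssim \|g\|_{E^2(\mathbb D)}$ (this is the Aleman--Siskakis theorem in disguise), whence $\|V_0 f\|_{E^2(\mathbb D)} \lesssim \|T_0 f\|_{E^2(\mathbb D)}$. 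Unwinding the $T_0$ and $V_0$ identifications produces $\|f\|_{\mathcal{B}_{2,2}^0(\Omega)} \lesssim \|f\|_{E^2(\Omega)}$, which is (1).

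For (2), I would appeal to the classical area-function (square-function) characterization of Hardy spaces in chord-arc domains, due in this generality to Jerison--Kenig and David--Semmes: for $f$ holomorphic in a chord-arc domain $\Omega$ with reference point $z_0$,
\[ \|f\|_{E^2(\Omega)}^2 \simeq |f(z_0)|^2 + \iint_{\Omega} |f'(w)|^2\, d(w,\Gamma)\, du\, dv = |f(z_0)|^2 + \|f\|^2_{\mathcal{B}_{2,2}^0(\Omega)}, \]
with constants depending only on the chord-arc constant of $\Gamma$. This is exactly the continuous embedding $\mathcal{HB}_{2,2}^0(\Omega) \hookrightarrow E^2(\Omega)$ claimed in (2).

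The delicate point is (2): the $\mathrm{BMOA}$ hypothesis alone does not obviously yield the reverse norm bound in the factorization, and the chord-arc hypothesis is precisely what lets us invoke the area-function characterization. A purely operator-theoretic alternative worth exploring would be to show that $I - \tfrac{1}{2} J_b$ is invertible on $E^2(\mathbb D)$ by exploiting quasi-nilpotency of the Volterra operator $J_b$ for $b \in \mathrm{BMOA}$; a Neumann-series argument would then give (2) under the weaker BMOA assumption of part (1). I would prefer the area-function route here since it sits naturally within the chord-arc machinery used throughout the paper.
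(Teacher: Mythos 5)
Your proof of part (1) matches the paper's argument essentially verbatim: write $V_0 f = T_0 f - \tfrac12 S(T_0 f)$, note $(Sg)' = g\cdot(\log\varphi')'$, and use the Carleson property of $d\mu = |(\log\varphi')'|^2(1-|z|)\,dxdy$ together with the Littlewood--Paley identity to bound $\|S(T_0 f)\|_{\mathcal{HB}_{2,2}^0(\mathbb D)}$ by $\|T_0 f\|_{E^2(\mathbb D)}$; this is precisely the computation in the paper. For part (2), the paper does not cite a two-sided square-function equivalence outright: it unwinds the change of variables to see that $\mathsf g(f\circ\varphi)\in L^2(\mathbb T, |\varphi'|\,d\theta)$, invokes Gundy--Wheeden (via $|\varphi'|\in A_\infty$, which the chord-arc hypothesis supplies) to pass to the non-tangential maximal function $\mathsf n(f\circ\varphi)$ in the same weighted $L^2$, and then uses the Jerison--Kenig estimate $\sup_r\int_{\Gamma_r}|f|^2\,d\sigma\le C\|\mathsf n(f\circ\varphi)\|^2_{L^2(\mathbb T,|\varphi'|d\theta)}$. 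Your version compresses these three steps into a single appeal to the square-function characterization of Hardy spaces on chord-arc domains; this is the same machinery (the characterization is proved by assembling exactly those ingredients), so the shortcut is legitimate provided you give a precise citation — Jerison--Kenig is the right one, while the attribution to David--Semmes is not clearly needed here.

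One substantive caveat: the alternative you float at the end — inverting $I-\tfrac12 J_b$ on $E^2(\mathbb D)$ by a Neumann series, "exploiting quasi-nilpotency of the Volterra operator $J_b$ for $b\in{\rm BMOA}$" — does not work. Volterra-type operators with ${\rm BMOA}$ symbol are in general \emph{not} quasi-nilpotent: already for $b(z)=\log\tfrac{1}{1-z}$ the operator $J_b$ has a nontrivial (disk-shaped) spectrum on $H^2$. Moreover, if that argument went through it would establish part (2) under ${\rm BMOA}$ alone, a strictly weaker hypothesis than chord-arc, and there is no reason to expect that. Since you explicitly do not rely on this route, your proof is not affected, but the quasi-nilpotency claim should be dropped.
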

\begin{proof}
    Suppose  $\log\varphi'\in {\rm BMOA}(\mathbb D)$; that is, $\mu$ is a Carleson measure in $\mathbb D$. 
 Let $f \in E^2(\Omega)$ be such that $T_0f \in E^2(\mathbb D)$. Then by Carleson (see, e.g., Theorem 3.9 in \cite{Gar}) 
$$
\iint_{\mathbb D}|T_0(f)|^2 d\mu \leq C\|T_0f\|_{E^2(\mathbb D)}^2
$$
where $C$ is a constant depending only on the Carleson norm of $\mu$. 
 Since $E^2(\mathbb D) = \mathcal {HB}_{2,2}^0(\mathbb D)$ we have
$$
\iint_{\mathbb D}|(T_0f)'|^2(1-|z|)dxdy < \infty.
$$
Finally, 
\begin{align*}
    \iint_{\mathbb D} |(V_0f)'(z)|^2(1-|z|)dxdy &= \iint_{\mathbb D} \Big|(T_0f)'(z) - \frac{1}{2}T_0f(z)\cdot\frac{\varphi''(z)}{\varphi'(z)}\Big|^2(1-|z|)dxdy\\
    &\leq 2\iint_{\mathbb D}|(T_0f)'|^2(1-|z|)dxdy + \frac{1}{2}\iint_{\mathbb D}|T_0(f)|^2 d\mu\\
    & <\infty.
\end{align*}
Combined with \eqref{Vs}, 
this completes the proof of the statement $(1)$.

For the proof of statement $(2)$ we will need the definition of the (Littlewood-Paley) $\mathsf g$-function of a holomorphic function $f$ in $\mathbb D$:
$$ \mathsf g(f)(e^{i\theta})=\left(\int_0^1(1-r)|f'(re^{i\theta})|^2dr\right)^{1/2}.$$
Suppose now that $f\in \mathcal {HB}_{2,2}^0(\Omega)$. Then $V_0(f)\in \mathcal {HB}_{2,2}^0(\mathbb D)$, which in turn implies that
$$ \iint_{\mathbb D}(1-|u|)|(f\circ \varphi)'(u)|^2|\varphi'(u)|dudv < \infty,$$
or, in other words, $\mathsf g(f\circ \varphi)\in L^2(\mathbb T,|\varphi'|d\theta)$. Since $\Gamma$ is assumed to be chord-arc, we have that $|\varphi'|$ has the weight $A_\infty$ on $\mathbb T$. 
We can then apply a theorem of Gundy-Wheeden \cite{GW} (see also \cite{JK}) which implies that $\mathsf g(f\circ \varphi)\in L^2(\mathbb T,|\varphi'|d\theta)$ if and only if  the non-tangential maximal function $\mathsf n(f\circ \varphi)$ of $f\circ\varphi$ is in $L^2(\mathbb T,|\varphi'|d\theta)$. On the other hand,  for any  "circular curves" $\Gamma_r$ we have that (see p.233 in \cite{JK})
$$
\int_{\Gamma_r}|f(\zeta)|^2d\sigma(\zeta) \leq C\|\mathsf n(f\circ\varphi)\|^2_{L^2(\mathbb T,|\varphi'|d\theta)},
$$
which implies  $f\in E^2(\Omega)$. 
\end{proof}

Notice that for this theorem, we do not need  $|\varphi'|$ having $A_2$ on $\mathbb T$.
This condition is nevertheless necessary for the following corollary to hold by Corollary \ref{conj22} since if it is not attached then the (real) space of real parts of $E^2(\Omega\!\to\!\Gamma)$-functions is a proper subspace of the real space $L_\mathbb R^2(\Gamma, d\sigma)$  
\begin{cor}\label{0trace} 
Let $\Omega$ be a chord-arc domain bounded by $\Gamma$.  If $|\varphi'|\in A_2$ then $B_{2,2}^0(\Gamma)=\mathcal {B}_{2,2}^0(\Omega)$.
\end{cor}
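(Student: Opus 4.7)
The plan is to combine the analytic/antianalytic splitting of complex-valued harmonic functions in the simply connected domain $\Omega$ with Theorem \ref{E2A0}, which under the present hypotheses will identify the holomorphic subspace $\mathcal{HB}_{2,2}^0(\Omega)$ with the Hardy space $E^2(\Omega)$.

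First, I would record the pointwise decomposition: any complex-valued harmonic function $u$ in $\Omega$ admits a unique representation $u = F + \overline{G}$ with $F, G$ holomorphic in $\Omega$ and $G(z_0) = 0$. Since $\partial_z u = F'$ and $\partial_{\bar z} u = \overline{G'}$, one has the identity $|\nabla u|^2 = 2(|F'|^2 + |G'|^2)$. Consequently, $u \in \mathcal{B}_{2,2}^0(\Omega)$ if and only if both $F$ and $G$ belong to $\mathcal{HB}_{2,2}^0(\Omega)$.

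Second, I would verify that both inclusions of Theorem \ref{E2A0} are applicable. Inclusion (2) uses only the chord-arc assumption, which is given. For inclusion (1), I need $\log \varphi' \in \mathrm{BMOA}(\mathbb D)$, and this follows from $|\varphi'|\in A_2 \subset A_\infty$: the $A_\infty$ condition implies $\log|\varphi'| \in \mathrm{BMO}(\mathbb T)$ (a standard consequence of the John--Nirenberg characterization of $A_\infty$-weights), and since $\log \varphi'$ is a holomorphic branch in $\mathbb D$ whose boundary real part lies in BMO, the analytic function $\log\varphi'$ itself belongs to BMOA. Combining the two inclusions gives $\mathcal{HB}_{2,2}^0(\Omega) = E^2(\Omega)$ with equivalent norms, and substituting into the decomposition of the previous step yields $\mathcal{B}_{2,2}^0(\Omega) = E^2(\Omega) \oplus \overline{E^2(\Omega)}$.

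Finally, I would conclude by invoking the identification $B_{2,2}^0(\Omega) = E^2(\Omega)\oplus \overline{E^2(\Omega)}$ recalled at the opening of this section, which together with $B_{2,2}^0(\Omega) = B_{2,2}^0(\Gamma) = L^2(\Gamma)$ closes the argument. The step most to watch for --- and the only one where the full strength of $|\varphi'|\in A_2$ (as opposed to the weaker $A_\infty$ provided by chord-arc alone) is genuinely used --- is precisely this last identification: the equality $L^2(\Gamma) = E^2(\Omega)|_\Gamma \oplus \overline{E^2(\Omega)|_\Gamma}$ is equivalent to boundedness of the conjugate operator on $L^2(\Gamma)$, i.e., Corollary \ref{conj22} applied at $s=0$, and as noted in the paragraph preceding the corollary, without $A_2$ the real parts of boundary values of $E^2(\Omega)$-functions form a proper subspace of $L^2_{\mathbb R}(\Gamma,d\sigma)$, breaking surjectivity of the boundary-value map.
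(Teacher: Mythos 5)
Your proposal is correct and follows essentially the same line of argument the paper intends (Theorem~\ref{E2A0} to identify $\mathcal{HB}_{2,2}^0(\Omega)$ with $E^2(\Omega)$, then the $A_2$ hypothesis via Corollary~\ref{conj22} to upgrade to the full harmonic statement on $\Gamma$). The one minor inefficiency is your derivation of $\log\varphi'\in{\rm BMOA}(\mathbb D)$ from $|\varphi'|\in A_2\subset A_\infty$: the paper has already recorded that the chord-arc hypothesis alone implies $\log\varphi'\in{\rm BMOA}(\mathbb D)$, so inclusion~(1) of Theorem~\ref{E2A0} is available without invoking $A_2$ at this stage; you correctly pinpoint that $A_2$ is only genuinely needed at the final step, for the boundedness of the conjugate operator (equivalently, for the surjectivity of the boundary-value map onto all of $L^2(\Gamma)$).
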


\subsection{The equality $\mathcal{B}_{2,2}^s(\Omega) = B_{2,2}^s(\Gamma)$}
In this subsection, our goal is to prove the following theorem, which is 
one of the main theorems in this paper.

\begin{theo} \label{interpol} 
Let $\Omega$ be a bounded chord-arc domain bounded by $\Gamma$ with $0\in\Omega$, 
and $\varphi$ its Riemann mapping fixing $0$. If $\Gamma$ is such that 
\begin{equation}\label{b1b2}
    \log|\varphi'| = b_1+Hb_2,\qquad b_1, \; Hb_1, \; b_2 \in L^{\infty}(\mathbb T) \;\text{and}\; \Vert b_2\Vert_{\infty}<\frac{\pi}{2}, 
\end{equation}
then, for $0 \leq s \leq 1$, $\mathcal{B}_{2,2}^s(\Omega) = B_{2,2}^s(\Gamma)$ with comparable norms. 
The conclusions hold in particular for the radial-Lipschitz domains. 
\end{theo}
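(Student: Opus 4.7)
The plan is to establish $\mathcal B_{2,2}^s(\Omega)=B_{2,2}^s(\Gamma)$ at the two endpoints $s=0$ and $s=1$, fill in $s\in(0,1)$ by complex interpolation, and finally verify that radial-Lipschitz curves satisfy \eqref{b1b2}.

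For $s=0$, the Helson-Szeg\"o theorem applied to \eqref{b1b2} (only $b_1,b_2\in L^\infty$ and $\|b_2\|_\infty<\pi/2$ are used) yields $|\varphi'|\in A_2(\mathbb T)$. Corollary \ref{0trace} then immediately gives $\mathcal B_{2,2}^0(\Omega)=B_{2,2}^0(\Gamma)$ with equivalent norms.

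For $s=1$, first invoke Corollary \ref{conj22} (applicable since $|\varphi'|\in A_2$) to split both sides into holomorphic and anti-holomorphic parts, reducing the problem to $\mathcal{HB}_{2,2}^1(\Omega)=\mathrm{HB}_{2,2}^1(\Gamma)$. Then apply the operator $V_1$ of Section~5.2, a bounded isomorphism onto $\mathcal{HB}_{2,2}^1(\mathbb D)=\mathrm{HB}_{2,2}^1(\mathbb T)$, and use the identity $V_1 f = T_1 f + \tfrac{1}{2} S(T_1 f)$ with $T_1 f=(f\circ\varphi)\varphi'^{-1/2}$ to read off the boundary trace and compare it with $f|_\Gamma$ pulled back through $\varphi$. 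The outer factor $\varphi'^{-1/2}$ has modulus governed by $-\tfrac{1}{2}(b_1+Hb_2)$ (still within the Helson-Szeg\"o range, so $|\varphi'|^{-1}\in A_2$) and argument governed by $Hb_1-b_2\in L^\infty$, precisely by virtue of the extra hypothesis $Hb_1\in L^\infty$; these estimates together make multiplication by $\varphi'^{\pm 1/2}$ bounded on the analytic $B_{2,2}^1$-class on $\mathbb T$. The auxiliary operator $S$ is controlled by a Carleson-measure estimate for the symbol $\varphi''/\varphi'$, using that \eqref{b1b2} makes $\log\varphi'$ sufficiently well-behaved in $\mathrm{BMOA}(\mathbb D)$. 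This yields the $s=1$ identity.

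For the interpolation, the bi-Lipschitz equivalence $\lambda:\mathbb T\to\Gamma$ and Theorem \ref{Cald} give $B_{2,2}^s(\Gamma)=[B_{2,2}^0(\Gamma),B_{2,2}^1(\Gamma)]_s$ for $s\in(0,1)$, while the corresponding identity for $\mathcal B_{2,2}^s(\Omega)$ is transferred from the disk through $V_s$ on holomorphic parts and Corollary \ref{conj22} on anti-holomorphic parts, uniformly in $s\in[0,1]$ since $|1/2-s|\le 1/2$ keeps $|\varphi'|^{1/2-s}$ inside $A_2$. Interpolating the endpoint identities then gives the equality for every $s\in(0,1)$.

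The radial-Lipschitz addendum is obtained by direct computation of $\log\varphi'$ for $\Gamma=\{r(\theta)e^{i\theta}\}$ with $\log r$ Lipschitz of norm $M$: the tangent-angle formula identifies $b_2\approx\arctan(r'/r)$ with $\|b_2\|_\infty\le\arctan M<\pi/2$, while Lipschitz regularity of $\log r$ forces both $b_1$ and $Hb_1$ into $L^\infty$. The main obstacle is Step~2: controlling the boundary trace of $V_1(f)$ in terms of $f|_\Gamma$ up to norm equivalence demands the fine multiplier estimates for $\varphi'^{\pm 1/2}$ described above, and this is precisely where the hypothesis $Hb_1\in L^\infty$, over and above plain Helson-Szeg\"o, becomes essential.
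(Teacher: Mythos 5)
Your identification of the endpoint cases is on the right track, but the interpolation step is where the proof actually lives, and it is a genuine gap in your proposal. At $s=0$, Helson-Szeg\"o and Corollary~\ref{0trace} give $\mathcal B_{2,2}^0(\Omega)=B_{2,2}^0(\Gamma)$, as you say. At $s=1$ the paper's identification $\mathcal{HB}_{2,2}^1(\Omega)=\mathrm{HB}_{2,2}^1(\Omega)$ is a bare change-of-variables computation, $\int_\Gamma|f'|^2|d\zeta|=\int_{\mathbb T}|V_1(f)'|^2|dz|$, and it uses \emph{none} of the $A_2$ or $Hb_1\in L^\infty$ hypotheses; your elaborate multiplier analysis of $\varphi'^{\pm1/2}$ and the Carleson-measure estimate for $S$ are not what the paper does there and are not where the extra hypothesis enters.

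The real problem is the passage to $0<s<1$. Knowing $\mathcal{HB}_{2,2}^0(\Omega)=\mathrm{HB}_{2,2}^0(\Omega)$ and $\mathcal{HB}_{2,2}^1(\Omega)=\mathrm{HB}_{2,2}^1(\Omega)$ tells you what the endpoints of the interpolation couple $(\mathrm{HB}_{2,2}^0,\mathrm{HB}_{2,2}^1)$ look like, but it does \emph{not} tell you that the Littlewood-Paley scale satisfies $\mathcal{HB}_{2,2}^s(\Omega)=[\mathcal{HB}_{2,2}^0(\Omega),\mathcal{HB}_{2,2}^1(\Omega)]_s$. Your phrase ``the corresponding identity for $\mathcal B_{2,2}^s(\Omega)$ is transferred from the disk through $V_s$'' merely restates the problem: $V_s$ is a different operator for each $s$, so you cannot interpolate a single bounded map between two fixed endpoint couples. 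What the paper actually does is apply Voigt's abstract Stein interpolation (Theorem~\ref{Voigt}) to the analytic family
\[
T_z(f)=e^{cz^2}V_z(f)'=e^{cz^2}\,f'\circ\varphi\cdot(\varphi')^{3/2-z},\qquad z\in S,
\]
between the Bergman scale $[A^2_1,A^2_{-1}]_s=A^2_{1-2s}$ and the scale $\mathrm{HB}_{2,2}^s(\Omega)$. The factor $e^{cz^2}$ with $c\ge\|\mathrm{Arg}\,\varphi'\|_\infty^2/4$ is essential: $|(\varphi')^{-it}|=e^{t\,\mathrm{Arg}\,\varphi'}$ grows exponentially along vertical lines, and without the compensating $e^{-ct^2}$ the vertical-line bounds $M_0$, $M_1$ in Voigt's theorem fail. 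This is precisely where the hypothesis $Hb_1\in L^\infty$ (equivalently, $\mathrm{Arg}\,\varphi'\in L^\infty$) is used — not in the $s=1$ trace computation, as you suggest. Your proposal omits this analytic-family / Stein-interpolation step entirely, and without it there is no bridge from the endpoint equalities to the intermediate $s$.

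Your verification that radial-Lipschitz curves satisfy \eqref{b1b2} is correct and matches the paper's discussion following Theorem~\ref{interpol}.
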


Before going further, let us start by understanding why conditions \eqref{b1b2} are assumed.  The Helson-Szeg\"o theorem (see Ch.IV in \cite{Gar}) says that  $\log|\varphi'| = b_1+Hb_2$  with $b_1, b_2 \in L^{\infty}(\mathbb T)$ and $\Vert b_2\Vert_{\infty}<\pi/2$ if and only if $|\varphi'| \in A_2$ on $\mathbb T$, which is only used to guarantee that $B_{2,2}^s(\Gamma)$ is stable by conjugation for all $0\leq s\leq 1$ (see Corollary \ref{conj22}).    
Note that $\text{Arg}\varphi' = H\log|\varphi'| = Hb_1 - b_2$. The extra assumption  that $Hb_1 \in L^{\infty}(\mathbb T)$ is used to derive that $\text{Arg}\varphi'$ is bounded,  geometrically which means that the boundary curve cannot spiral too much.  For example, for the double spiral curve,  $\text{Arg}\varphi'$ keeps increasing and goes to infinity. 

Let $r(\theta): \mathbb R\to\mathbb (0, +\infty)$ be a $2\pi$-periodic continuous function. Then the curve $\Gamma = \{r(\theta)e^{i\theta}: \;\theta\in [0, 2\pi)\}$ is called a starlike Jordan curve with respect to $0$ and the domain $\Omega$ bounded by $\Gamma$ is called a starlike domain. Recall that if, moreover, $r(\theta)$ is a Lipschitz function, i.e., there exists a positive constant $C(\Gamma)$ such that $\Vert r'\Vert_{\infty} \leq C(\Gamma)$ then the curve $\Gamma$ is called a radial-Lipschitz curve and $\Omega$ is called a radial-Lipschitz domain. Suppose $\varphi$ is a Riemann mapping from $\mathbb D$ to $\Omega$ that fixes $0$. It is known that $\Omega$ is starlike with respect to $0$ if and only if  
\begin{equation}\label{star}
   \varphi'(0) \neq 0 \;\text{and}\; \text{Arg}\frac{z\varphi'(z)}{\varphi(z)}\in \left(-\frac{\pi}{2}\alpha, \frac{\pi}{2}\alpha\right), 
\end{equation}
where $\alpha = 1$. The condition $\varphi'(0) \neq 0$ implies that $\log(\varphi(z)/z)$ is continuous on $\overline{\mathbb D}$ so that $\log|\varphi(z)/z|\in L^{\infty}$,  and $\text{Arg}(\varphi(z)/z) \in L^\infty$.  The condition \eqref{star}, combined with this, shows that $\text{Arg}\varphi'\in L^\infty$. A simple computation shows that $\Omega$ is a radial-Lipschitz domain if and only if condition \eqref{star} holds where the constant $\alpha < 1$ depends on $C(\Gamma)$, from which it follows that $\Omega$ is a chord-arc domain (see p.172 in \cite{Pom}). Notice that 
\begin{align*}
    \log|\varphi'(z)| = \log\left|\frac{\varphi(z)}{z}\right|+\log\left|\frac{z\varphi'(z)}{\varphi(z)}\right| = \log\left|\frac{\varphi(z)}{z}\right|-H\left(\text{Arg}\frac{z\varphi'(z)}{\varphi(z)}\right),
\end{align*}
and 
$$
H\left( \log\left|\frac{\varphi(z)}{z}\right|\right) = \text{Arg}\frac{\varphi(z)}{z}.
$$
We have that a radial-Lipschitz curve $\Gamma$ satisfies the condition \eqref{b1b2}. Consequently, the radial-Lipschitz domain has all the properties of Theorem \ref{interpol}. 

The following abstract interpolation Theorem of Voigt \cite{Voigt} is the main tool in our proof of Theorem \ref{interpol} and subsequent Theorem \ref{interpolpps}:
\begin{theo}\label{Voigt}
    Assume that $(X_0, X_1)$, $(Y_0, Y_1)$ are two interpolation pairs of Banach spaces, and assume that $\check X$ is a dense subspace of $(X_0\cap X_1, \Vert\cdot\Vert_{X_0\cap X_1})$. Denote the strip $S := \{z=s+it\in \mathbb C: \; 0 \leq s \leq 1\}$ and the interpolation spaces $X_s:= [X_0, X_1]_s$, $Y_s := [Y_0, Y_1]_s$. 
    Let $(T_z;\; z\in S)$ be a family of linear mappings $T_z: \check X\to Y_0+Y_1$ with the following properties:
    \begin{enumerate}
    \item[\rm(i)] For all $x \in \check X$ the function $T_{(\cdot)}x: S\to Y_0+Y_1$ is continuous, bounded on $S$, and analytic in $\stackrel{\circ}{S}$;
    \item[\rm(ii)] for $j = 0, 1$, $x \in \check X$, the function $\mathbb R \ni t \mapsto T_{j+it}x \in Y_j$ is continuous, and 
    $$
    M_j := \sup\{\Vert T_{j+it}x\Vert_{Y_j}; t\in \mathbb R, x\in\check X, \Vert x\Vert_{X_j}\leq 1\}<\infty.
    $$
\end{enumerate}
Then, for all $s\in [0, 1]$, $T_s(\check X) \subset Y_s$, 
$$
\Vert T_sx\Vert_{Y_s}\leq M_0^{1-s}M_1^s\Vert x\Vert_{X_s} \quad for \; all \; x \in \check X.
$$

\end{theo}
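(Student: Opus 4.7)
The plan is to unpack the definition of complex interpolation on the $Y$-side, build a suitable analytic $Y_0+Y_1$-valued function $F$ on the strip with $F(s)=T_s x$, and then apply the three-lines lemma. Recall that $\|y\|_{Y_s}$ equals the infimum of $\|F\|_{\mathcal F(Y_0,Y_1)}:=\max\bigl(\sup_t\|F(it)\|_{Y_0},\sup_t\|F(1+it)\|_{Y_1}\bigr)$ over $F\in\mathcal F(Y_0,Y_1)$ with $F(s)=y$; so everything will reduce to exhibiting such an $F$ with the correct norm bound.

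The naive choice $F(z)=T_zx$ works: by hypothesis (i), $F$ is bounded, continuous on $S$, and analytic on $\stackrel{\circ}{S}$, and by (ii), $\|F(it)\|_{Y_0}\le M_0\|x\|_{X_0}$, $\|F(1+it)\|_{Y_1}\le M_1\|x\|_{X_1}$. This only yields $\|T_s x\|_{Y_s}\le M_0^{1-s}M_1^s\max(\|x\|_{X_0},\|x\|_{X_1})$ (after the rescaling trick below) — weaker than what is claimed, since we want $\|x\|_{X_s}$ on the right. The point is to replace $x$ by an entire analytic lift. Given $\varepsilon>0$, I would invoke Calder\'on's classical density lemma in complex interpolation to select a function
\[
g(z)=\sum_{k=1}^{N}e^{\lambda_k z+\mu_k z^2}\,x_k,\qquad x_k\in\check X,\;\lambda_k,\mu_k\in\mathbb C,
\]
with $g(s)=x$ and $\|g\|_{\mathcal F(X_0,X_1)}\le \|x\|_{X_s}+\varepsilon$. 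This is where the density of $\check X$ in $X_0\cap X_1$ is crucial: the standard proof produces such $g$ with coefficients in $X_0\cap X_1$, and density lets us choose them in $\check X$.

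Now by linearity of each $T_z$, define
\[
F(z)=\sum_{k=1}^{N}e^{\lambda_k z+\mu_k z^2}\,T_z x_k .
\]
Property (i) applied to each $x_k$ gives that $F:S\to Y_0+Y_1$ is continuous, bounded on $S$ (the extra $e^{\mu_k z^2}$-factor is bounded on the strip), and analytic in $\stackrel{\circ}{S}$, with $F(s)=\sum_k T_s x_k=T_s x$. The key identity is that since $g(it)\in\check X$ and $T_{it}$ is linear on $\check X$, one has $F(it)=T_{it}(g(it))$, so hypothesis (ii) yields $\|F(it)\|_{Y_0}\le M_0\|g(it)\|_{X_0}$; similarly $\|F(1+it)\|_{Y_1}\le M_1\|g(1+it)\|_{X_1}$.

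Finally, I would apply the usual rescaling: replace $F$ by $F_\lambda(z):=\lambda^{z-s}F(z)$ with $\lambda=M_0/M_1>0$. Then $F_\lambda\in\mathcal F(Y_0,Y_1)$, $F_\lambda(s)=T_s x$, and
\[
\|F_\lambda(it)\|_{Y_0}\le \lambda^{-s}M_0\|g\|_{\mathcal F(X_0,X_1)},\qquad \|F_\lambda(1+it)\|_{Y_1}\le \lambda^{1-s}M_1\|g\|_{\mathcal F(X_0,X_1)},
\]
both of which equal $M_0^{1-s}M_1^s\|g\|_{\mathcal F(X_0,X_1)}$ for the chosen $\lambda$. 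Hence $\|T_s x\|_{Y_s}\le \|F_\lambda\|_{\mathcal F(Y_0,Y_1)}\le M_0^{1-s}M_1^s(\|x\|_{X_s}+\varepsilon)$, and letting $\varepsilon\to 0$ gives the claim. The main obstacle is the density step producing the lift $g$ with values in $\check X$; it is the only place where the density hypothesis enters, and it is essential — without it one only gets the estimate with $\max(\|x\|_{X_0},\|x\|_{X_1})$ in place of $\|x\|_{X_s}$, which is strictly weaker.
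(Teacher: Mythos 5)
The paper offers no proof of this statement: it is quoted verbatim from Voigt's note \cite{Voigt}, so there is nothing internal to compare with. Your skeleton is the standard (and essentially Voigt's) route: build $F(z)=\sum_k c_k(z)\,T_z x_k$ from a special-form analytic lift $g(z)=\sum_k c_k(z)x_k$ of $x$ with coefficients $x_k\in\check X$, use the identity $F(j+it)=T_{j+it}(g(j+it))$ together with (i), (ii), and conclude via the definition of the $[Y_0,Y_1]_s$-norm and the $\lambda^{z-s}$ rescaling. All of those steps are correct.

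The gap is the step you delegate to ``Calder\'on's classical density lemma''. That lemma gives density of the finite sums $\sum_k e^{\lambda_k z+\mu_k z^2}x_k$ in $\mathcal F(X_0,X_1)$; it does not give, for a prescribed $x\in\check X$, such a finite sum $g$ with the \emph{exact} interior value $g(s)=x$ and $\Vert g\Vert_{\mathcal F}\le\Vert x\Vert_{X_s}+\varepsilon$. If you approximate a near-optimal $f$ with $f(s)=x$ by a special $g_0$, you only get $\Vert x-g_0(s)\Vert_{X_s}\le\delta$; the error $h=x-g_0(s)$ is small in $X_s$ but not in $X_0\cap X_1$, so the obvious additive correction $e^{\mu(z^2-s^2)}h$ is not small in $\mathcal F$, and you cannot instead pass to the limit $T_s g_n(s)\to T_s x$, because continuity of $T_s$ with respect to the $X_s$-norm on $\check X$ is precisely what the theorem asserts --- the naive patch is circular. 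The same objection hits your remark that density of $\check X$ in $X_0\cap X_1$ ``lets us choose'' the coefficients in $\check X$: perturbing the coefficients once more destroys $g(s)=x$. This exact-value lift is the real content of the theorem and needs its own argument; one way is to take a near-optimal $f$, set $F_1=f-e^{\rho(z^2-s^2)}x$ (which vanishes at $s$ and keeps a distinguished term with coefficient $x\in\check X$), divide $F_1$ by a bounded analytic scalar function on $S$ vanishing only at $s$ (a Blaschke-type factor for the strip, or $1-e^{c(z-s)}$ with $c$ chosen so that its other zeros leave the strip), approximate the quotient by special sums with coefficients in $\check X$, and multiply back: the correction then vanishes identically at $s$, so exactness is preserved and its coefficients may be pushed into $\check X$ harmlessly. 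With such a lemma your argument closes (and it is then the same proof as Voigt's in spirit); as written, the proposal leaves the crucial step unjustified.
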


We now start the proof of Theorem \ref{interpol}.  Clearly, we only need to show $\mathcal{HB}_{2,2}^s(\Omega) = \text{HB}_{2,2}^s(\Omega)$ with comparable norms.
Recall that 
Bergman spaces with standard weights $A^p_\alpha$ are defined as the sets of holomorphic functions $g$ in the unit disk $\mathbb D$ such that $g \in L^p(\mathbb D, (1-|z|^2)^{\alpha}dxdy)$, i.e., 
$$
\|g\|_{A^p_{\alpha}}^p = \iint_{\mathbb D} |g(z)|^p (1-|z|^2)^{\alpha}dxdy < \infty
$$
where $p > 0$ and $\alpha > -1$. Here,  the assumption that $\alpha > -1$ is essential because the space $L^p(\mathbb D, (1-|z|^2)^{\alpha}dxdy)$ does not contain any holomorphic function other than $0$ when $\alpha \leq -1$. The definition of $A^p_{\alpha}$ is extended  to the case where $\alpha$ is any real number; that is consistent with the traditional definition of $\alpha > -1$ (see Theorem 13 in \cite{ZZ}). From the definition it can be seen that if $\alpha < \beta$ then the strict inclusion $A^p_{\alpha} \subset A^p_{\beta}$ is satisfied. 

For our purpose, we in particular mention that the space $A^2_{-1}$ is the classical Hardy space $E^2(\mathbb D)$,  that is, the set of holomorphic functions $g$ in $\mathbb D$ such that
$$
\int_{\mathbb D}|g'(z)|^2(1-|z|^2)dxdy < \infty.
$$
We also need to use the case of $p = 2$ and $\alpha = 1-2s$ where $0 \leq s < 1$, namely, 
$$
A^2_{1-2s} = \{g \;\text{analytic in}\; \mathbb D: \iint_{\mathbb D}|g(z)|^2 (1-|z|^2)^{1-2s} dxdy < \infty\}.
$$
This is a closed linear subspace of the Hilbert space $L^2(\mathbb D, (1-|z|^2)^{1-2s}dxdy)$. For these spaces, the following interpolation theorem (see Theorem 36 in \cite{ZZ}) holds true: 
\begin{equation}\label{Binterpol}
    [A^2_1, A^2_{-1}]_{s} = A^2_{1-2s},\qquad 0 < s < 1
\end{equation}
with equivalent norms.

Suppose that $\varphi$ is a conformal map from $\mathbb D$ onto $\Omega$ fixing $0$. For any $f \in \mathcal {HB}_{2,2}^s(\Omega)$, $0 \leq s < 1$, we have seen from \eqref{Vs} that
$$
\iint_\mathbb D |V_s(f)'(z)|^2 (1-|z|)^{1-2s} dxdy \simeq \iint_{\Omega} |f'(\zeta)|^2 d(\zeta,\Gamma)^{1-2s} d\xi d\eta
$$
where the implicit constants depend only on $s$, and 
$$V_s(f)'(z)=(f\circ\varphi)'(z)\varphi'(z)^{1/2-s}.$$
From it we see that  
\begin{equation}\label{neq1}
    f \in \mathcal {HB}_{2,2}^s(\Omega) \;\Leftrightarrow \; V_s(f)' \in A^2_{1-2s} 
\end{equation}
with comparable norms. 
Note that any $g \in A^2_{1-2s}$ can be written in the form $V_s(f)'$ by picking $f(z) = \int_0^zg\circ\varphi^{-1}(u)(\varphi^{-1})'(u)^{3/2-s} du$. 

Motivated by  equivalence \eqref{neq1}, we define $\mathcal{HB}_{2,2}^1(\Omega)$ as the set of analytic functions $f$ in $\Omega$ satisfying $V_1(f)' \in A^2_{-1}$. It is easy to see that $f$ is an anti-derivative of a function in $E^2(\Omega)$. To be more precise,  recall that $A^2_{-1} = E^2(\mathbb D)$ we have 
$$
\int_{\Gamma} |f'(\zeta)|^2 |d\zeta| = \int_{\mathbb T} |f'\circ\varphi(z) \varphi'(z)^{1/2}|^2 |dz| = \int_{\mathbb T} |V_1(f)'(z)|^2 |dz| < \infty.
$$
By this definition,  $\mathcal{HB}_{2,2}^1(\Omega) = \text{HB}_{2,2}^1(\Omega)$.  Recall that we also have   $\mathcal{HB}_{2,2}^0(\Omega)=\text{HB}_{2,2}^0(\Omega)$ with equivalent norms by Theorem \ref{E2A0}. Combining this with  \eqref{neq1}, we can  see that this boils down to proving 
\begin{equation}\label{neq2}
    f \in \text{HB}_{2,2}^s(\Omega) \;\Leftrightarrow \; V_s(f)' \in A^2_{1-2s} 
\end{equation}
with comparable norms for $0<s<1$. For this purpose, we will use Theorem \ref{Voigt} in the following. 

Recall that $(A^2_1, A^2_{-1})$ is an interpolation pair, and $[A^2_1, A^2_{-1}]_{s} = A^2_{1-2s},\; 0 < s < 1$. Notice that the boundary curve $\Gamma$ of $\Omega$ is a chord-arc curve which is the image of $\mathbb T$ under the bi-Lipschitz mapping $\lambda$,   we then conclude by \eqref{01} that $(\text{HB}_{2,2}^0(\Omega), \text{HB}_{2,2}^1(\Omega))$ is an interpolation pair and $[\text{HB}_{2,2}^0(\Omega), \text{HB}_{2,2}^1(\Omega)]_s = \text{HB}_{2,2}^s(\Omega)$.   
Note that  $\Omega$ is a chord-arc domain, in particular a Smirnov domain, that is equivalent to that the Hardy space $E^2(\Omega)$ (i.e., $\text{HB}_{2,2}^0(\Omega)$) coincides with the $L^2(\Gamma)$ closure of the set $\check{B}$ of polynomials on $\Gamma$ (see Theorem 10.6 in \cite{Duren}).  Since $\text{HB}_{2,2}^1(\Omega)$ is the set of functions $f \in E^2(\Omega)$ such that $f' \in E^2(\Omega)$, the set $\check{B}$ is also a dense subspace of $\text{HB}_{2,2}^1(\Omega)$. By the inclusion relation $\text{HB}_{2,2}^1(\Omega)\subset \text{HB}_{2,2}^0(\Omega)$, we have $\text{HB}_{2,2}^0(\Omega)\cap\text{HB}_{2,2}^1(\Omega) = \text{HB}_{2,2}^1(\Omega)$. 

Define a family of linear mappings $(T_z;\; z \in S)$ on $\check B$ by
$$
T_z(f) := e^{cz^2}V_z(f) = e^{cz^2}f'\circ\varphi(\varphi')^{\frac{3}{2}-z}.
$$
Here, $c$ is a constant which will be determined later. Now we will prove that for each $z \in S$, the linear mapping $T_z$ sends $\check B$ to $A^2_1$, that is also the sum space   $A^2_1+A^2_{-1}$, and satisfies the properties (i) and (ii) of Theorem \ref{Voigt}. 

First, we prove that for each $z = s+it \in S$, each polynomial $f \in \check B$, the integral
\begin{equation}\label{Tzf}
    \iint_{\mathbb D}|T_z(f)(\zeta)|^2(1-|\zeta|)d\xi d\eta
\end{equation} 
converges. 
From simple computation it follows that  $|e^{cz^2}|^2 = e^{2c(s^2-t^2)}$,  $|(\varphi')^{\frac{3}{2}-z}|^2 = e^{2t\text{Arg}\varphi'}|\varphi'|^{3-2s}$ and $|f'\circ\varphi|^2 < C_1$  since the diameter of $\Omega$ is finite. Here, the constant $C_1$ depends on $f$ and $\Omega$. 
Then 
\begin{align*}
 \iint_{\mathbb D}|T_z(f)(\zeta)|^2(1-|\zeta|)d\xi d\eta 
\leq C_1e^{2c}\left(e^{-ct^2}e^{t\Vert\text{Arg}\varphi'\Vert_{\infty}}\right)^2 \iint_{\mathbb D}|\varphi'|^{2(1-s)}|\varphi'(\zeta)|(1-|\zeta|)d\xi d\eta
\end{align*}
Set $A:= \Vert\text{Arg}\varphi'\Vert_{\infty}$ and pick $c \geq A^2/4$ we then have that $e^{-ct^2}e^{tA}\leq e$. By the Koebe distortion theorem (see \eqref{koebe}), $|\varphi'(\zeta)|(1-|\zeta|)$ is comparable to $d(\varphi(\zeta),\; \Gamma)$, which is bounded due to the boundedness of the diameter of $\Omega$. By  H\"older inequality,
$$
\iint_{\mathbb D}|\varphi'|^{2(1-s)}d\xi d\eta \leq \pi^s \left(\iint_\mathbb D |\varphi'|^2 d\xi d\eta\right)^{1-s} < \infty.
$$
We can then conclude that the integral \eqref{Tzf} converges, and, moreover, it has an upper-bound depending only on $\Omega$ and given $f$. 
 That implies that the linear mapping $T_z$ sends $\check B$ to $A^2_1$, and  for any $f \in \check B$ the function $T_{(\cdot)}f:\; S\to A^2_1$ is bounded. In particular, 
\begin{equation*}
    \begin{split}
        \Vert T_{it}f\Vert_{A^2_1}^2 & = \iint_{\mathbb D}|T_{it}(f)(\zeta)|^2(1-|\zeta|^2)d\xi d\eta \\ 
        &\leq e^{2c+3} \iint_\mathbb D |f'\circ\varphi|^2|\varphi'|^2|\varphi'|(1-|\zeta|)d\xi d\eta\\
        &\leq 4e^{2c+3} \iint_{\Omega}|f'(z)|^2d(z, \Gamma)dxdy,
    \end{split}
\end{equation*}
that is comparable to $\Vert f\Vert^2_{B_{2,2}^0(\Omega)}$ by Theorem \ref{E2A0}. Thus, 
$$
    M_0 := \sup\{\Vert T_{it}f\Vert_{A^2_1};\; t\in \mathbb R, f\in\check B, \Vert f\Vert_{B_{2,2}^0(\Omega)}\leq 1\}<\infty.
    $$
    
Next,  for any $t\in\mathbb R$, any $f\in\check B$ with $\Vert f\Vert_{B_{2,2}^1(\Omega)}\leq 1$
the norm $\Vert T_{1+it}(f)\Vert_{A^2_{-1}}$, by Theorem \ref{E2A0}, is comparable to $\int_{\mathbb T}|T_{1+it}(f)(\zeta)|^2|d\zeta|$. Noting 
$$
|T_{1+it}(f)(\zeta)|^2 = |f'\circ\varphi(\zeta)|^2|\varphi'(\zeta)|e^{2c}\left(e^{-ct^2}e^{t\text{Arg}\varphi'(\zeta)} \right)^2, \qquad \zeta \in \mathbb T, 
$$
we then have 
$$
\int_{\mathbb T}|T_{1+it}(f)(\zeta)|^2|d\zeta| \leq e^{2(1+c)}\int_{\Gamma}|f'(\zeta)|^2|d\zeta| \leq e^{2(1+c)}.
$$
Thus, 
$$
    M_1 := \sup\{\Vert T_{1+it}f\Vert_{A^2_{-1}};\; t\in \mathbb R, f\in\check B, \Vert f\Vert_{B_{2,2}^1(\Omega)}\leq 1\}<\infty.
    $$
    By the continuity of the parametrized integral,  all functions $S \ni z \mapsto T_{z}f \in A^2_1$, \; $\mathbb R \ni t \mapsto T_{it}f \in A^2_1$, and $\mathbb R \ni t \mapsto T_{1+it}f \in A^2_{-1}$ are continuous. 

    Finally, to prove that for any $f\in\check B$, $\stackrel{\circ}{S} \ni z \mapsto T_{z}f \in A^2_1$ is a holomorphic map, we use a general result about the infinite dimensional holomorphy (see p.206 in \cite{Lehto}). It says that suppose $X$ and $Y$ are Banach spaces over the complex numbers and $U$ is a domain of $X$ then a continuous function $f:U \to Y$ is holomorphic if there exists a total subset $A$ of the dual $Y^*$ such that for every $\alpha\in A$ the function $\alpha\circ f: U\to \mathbb C$ is holomorphic. Here, a subset $A$ of $Y^*$ is total if $\alpha(y)=0$ for all $\alpha \in A$ implies that $y=0$. 
    
    It is known that suppose $\alpha > -1$ and $0<p<\infty$ the inequality
    \begin{equation}\label{zhzh}
      (1-|\zeta|^2)^{\frac{2+\alpha}{p}}|g(\zeta)|\leq C\Vert g\Vert_{A^p_\alpha}  
    \end{equation}
holds for some  constant $C$ depending only on $\alpha$ and any $\zeta\in\mathbb D$ (see Theorem 2.1 in \cite{Zhu},  p.73 in \cite{ZZ}). 
    For each $\zeta \in \mathbb D$, define $l_{\zeta}(g) = g(\zeta)$ for $g \in A^2_1$. 
Taking $p=2, \alpha = 1$ in \eqref{zhzh}, we see that $\Vert  l_{\zeta}\Vert \leq C(1-|\zeta|^2)^{-3/2}$, which implies that $l_{\zeta}\in (A^2_1)^*$. Set $A := \{l_{\zeta};\;\zeta \in \mathbb D\}$. Clearly, $A$ is a total subset of $(A^2_1)^*$. Now for each $l_{\zeta}\in A$, one can see that the function 
    $$\stackrel{\circ}{S} \ni z \mapsto l_{\zeta}\circ T_z(f) = e^{cz^2}f'\circ\varphi(\zeta)(\varphi')^{3/2-z}(\zeta)\in\mathbb C$$
    is holomorphic. 

   We can then invoke Theorem \ref{Voigt}, and see that for all $s \in (0, 1)$, $T_s(\check B)\subset A^2_{1-2s}$, and moreover, 
\begin{equation}\label{MM}
    \Vert T_s(f)\Vert_{A^2_{1-2s}} \leq M_0^{1-s}M_1^s\Vert f \Vert_{B_{2,2}^s(\Omega)}, \qquad \text{for\; all}\; f\in \check B. 
\end{equation}
   Since $\check B$ is dense in $\text{HB}_{2,2}^0(\Omega)\cap \text{HB}_{2,2}^1(\Omega)$ and $\text{HB}_{2,2}^0(\Omega)\cap \text{HB}_{2,2}^1(\Omega)$ is dense in $\text{HB}_{2,2}^s(\Omega)$ for all $s \in [0, 1]$ (see \cite{Voigt}), it immediately follows that $\check B$ is dense in $\text{HB}_{2,2}^s(\Omega)$. Thus, \eqref{MM} holds for all $f \in \text{HB}_{2,2}^s(\Omega)$. The bounded inverse theorem implies that the inverse of the operator $T_s$ is also bounded. This completes the proof of Theorem \ref{interpol}.

\subsection{The equality $\mathcal{B}_{p,p}^s(\Omega) = B_{p,p}^s(\Gamma)$}

 In this subsection, our goal is to prove the following
 \begin{theo} \label{interpolpps} 
Let $\Omega$ be a bounded chord-arc domain bounded by $\Gamma$ with $0\in\Omega$ and $\varphi$ its Riemann mapping fixing $0$. Let $0<s<1$, $1<p<\infty$. 
If $\Gamma$ is such that $\text{Arg}\,\varphi' \in L^{\infty}$ and 
either
\begin{itemize}
\item when $p\geq 2$, $|\varphi'|\in A_{p'}$ \quad $(1/p+1/p'=1)$;
\item when $p<2$, $|\varphi'|\in A_{p}$
\end{itemize}
then it holds that $\mathcal{B}_{p,p}^s(\Omega) = B_{p,p}^s(\Gamma)$ with comparable norms.
\end{theo}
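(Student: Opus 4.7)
The plan is to adapt the blueprint of Theorem \ref{interpol} to the $L^p$ setting, using Voigt's interpolation theorem (Theorem \ref{Voigt}) to transfer endpoint identifications at $s=0$ and $s=1$ to all intermediate exponents. The hypothesis on $|\varphi'|$ is exactly the condition of Theorem \ref{conj}, so $B_{p,p}^s(\Gamma)$ is stable under harmonic conjugation with a bounded conjugate operator; combined with the Cauchy–Riemann equation $|\nabla \tilde u|=|\nabla u|$, which gives conjugate-invariance of $\mathcal{B}_{p,p}^s(\Omega)$, this reduces the problem to proving the analytic identity $\mathcal{HB}_{p,p}^s(\Omega)=\text{HB}_{p,p}^s(\Omega)$ with comparable norms.

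The Koebe distortion estimate \eqref{Vs} transports the Littlewood–Paley norm to $\mathbb{D}$: for $f$ analytic in $\Omega$,
$$f \in \mathcal{HB}_{p,p}^s(\Omega) \;\Longleftrightarrow\; V_{s,p}(f)' \in A^{p}_{(1-s)p-1},$$
where $V_{s,p}(f)'(z)=(f\circ\varphi)'(z)\varphi'(z)^{1/p-s}$ and $A^{p}_{\alpha}$ is the weighted Bergman space on $\mathbb{D}$ (with the extended meaning of $\alpha\le -1$ of \cite{ZZ}). At $s=1$ this equivalence collapses to the definition $V_{1,p}(f)'\in A^{p}_{-1}=E^p(\mathbb{D})\Leftrightarrow f'\in E^p(\Omega)$, i.e.\ $f\in\text{HB}_{p,p}^{1}(\Omega)$. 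At $s=0$ we need the $L^p$-analog of Theorem \ref{E2A0}, namely $\mathcal{HB}_{p,p}^{0}(\Omega)=E^p(\Omega)$: one inclusion rests on the Carleson-measure characterization of $\log\varphi'\in\text{BMOA}$ (valid in the chord-arc case) together with the $L^p$-Carleson embedding; the other uses the weighted Gundy–Wheeden identity for the $\mathsf g$-function and the non-tangential maximal function on $L^{p}(|\varphi'|d\theta)$, which applies because $|\varphi'|\in A_\infty$.

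For interpolation, the weighted Bergman spaces satisfy $[A^{p}_{p-1},A^{p}_{-1}]_{s}=A^{p}_{(1-s)p-1}$ (Theorem 36 in \cite{ZZ}), while on the target side, transfer through the bi-Lipschitz parametrization $\lambda$ together with Calderón's theorem gives $[\text{HB}_{p,p}^{0}(\Omega),\text{HB}_{p,p}^{1}(\Omega)]_s=\text{HB}_{p,p}^{s}(\Omega)$. The Smirnov property of chord-arc domains makes the polynomials $\check B$ dense in $\text{HB}_{p,p}^{0}(\Omega)$, hence in both endpoints simultaneously. Set
$$T_z(f)=e^{cz^{2}}(f\circ\varphi)'(\varphi')^{1/p-z},\qquad z\in S=\{0\le\Re z\le 1\},$$
for $f\in\check B$, choosing $c\ge\|\text{Arg}\,\varphi'\|_\infty^{2}/4$ so that the Gaussian factor absorbs the growth from $|(\varphi')^{-it}|=e^{t\,\text{Arg}\varphi'}$ (this is where the hypothesis $\text{Arg}\,\varphi'\in L^\infty$ enters). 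Continuity and boundedness of $t\mapsto T_{it}f$ into $A^{p}_{p-1}$ and of $t\mapsto T_{1+it}f$ into $A^{p}_{-1}$ follow from the endpoint identifications together with elementary $L^p$-estimates, and holomorphy of $z\mapsto T_z f$ on the open strip is obtained by the point-evaluation criterion exactly as in the $p=2$ case (the functionals $l_\zeta(g)=g(\zeta)$ form a total subset of $(A^{p}_{p-1})^{*}$ via the standard pointwise estimate $|g(\zeta)|\lesssim(1-|\zeta|^{2})^{-(\alpha+2)/p}\|g\|_{A^{p}_\alpha}$). Voigt's theorem then yields a bounded operator $T_s:\text{HB}_{p,p}^{s}(\Omega)\to A^{p}_{(1-s)p-1}$; density of $\check B$ extends it to the whole space, and the bounded inverse theorem provides the reverse inequality.

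The main obstacle is the endpoint $s=0$: for $p=2$ the proof of Theorem \ref{E2A0} relied on Hilbert-space tools (the $L^{2}$ Carleson embedding and the $\mathsf g$-function identity in $L^{2}(|\varphi'|d\theta)$ of \cite{GW}). In the $L^p$ setting the correct substitutes are the $L^p$-Carleson embedding of Hardy functions and the $p$-th power Gundy–Wheeden equivalence between $\mathsf g(f\circ\varphi)$ and $\mathsf n(f\circ\varphi)$ in $L^{p}(|\varphi'|d\theta)$; both are available but require the Muckenhoupt class hypothesis built into the statement, and it is precisely the place where the dichotomy $p\ge 2$ vs.\ $p<2$ is felt, since one must choose whether to work with $|\varphi'|$ or its dual weight $|\varphi'|^{1-p'}$. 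A secondary, but routine, obstacle is verifying that $\check B$ is dense in $\text{HB}_{p,p}^{1}(\Omega)$ when the Smirnov property is exploited only at $s=0$, which one handles by noting $\text{HB}_{p,p}^{1}(\Omega)\subset\text{HB}_{p,p}^{0}(\Omega)$ and approximating derivatives first.
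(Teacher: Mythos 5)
Your framework matches the paper's at the outer layer: reduce to the analytic statement $\mathcal{HB}_{p,p}^s(\Omega)=\text{HB}_{p,p}^s(\Omega)$ via conjugate-invariance (Theorem~\ref{conj} on the trace side, Cauchy--Riemann on the area side), transport to the disk through $V_{s,p}$, then apply Voigt's interpolation theorem (Theorem~\ref{Voigt}) to the family $e^{cz^2}(f\circ\varphi)'(\varphi')^{\,\cdot}$, using $\text{Arg}\,\varphi'\in L^\infty$ to control the Gaussian absorption. Where you diverge is the choice of interpolation endpoints, and this is where the argument breaks down.

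You interpolate vertically in the $(p,s)$-plane, fixing $p$ and running $s$ from $0$ to $1$, which forces you to establish the endpoint identity $\mathcal{HB}_{p,p}^0(\Omega)=E^p(\Omega)$. This identity is \emph{false} for $p\neq 2$, already on the disk. The space $\mathcal{HB}_{p,p}^0(\mathbb D)$ is $\{g\ \text{analytic}:\ \iint_{\mathbb D}|g'|^p(1-|z|)^{p-1}\,dxdy<\infty\}$, and by the Hardy--Littlewood (Flett) inequalities one only has $H^p(\mathbb D)\subsetneq\mathcal{HB}_{p,p}^0(\mathbb D)$ for $p<2$ and $\mathcal{HB}_{p,p}^0(\mathbb D)\subsetneq H^p(\mathbb D)$ for $p>2$, with equality exactly at $p=2$ via Littlewood--Paley. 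The same mismatch defeats your proposed repair with Gundy--Wheeden: that theorem gives $\Vert\mathsf g(g)\Vert_{L^p(\omega)}\simeq\Vert\mathsf n(g)\Vert_{L^p(\omega)}$ for $\omega\in A_\infty$, but $\Vert\mathsf g(g)\Vert_{L^p(\omega)}^p$ is $\int_{\mathbb T}\bigl(\int_0^1(1-r)|g'(re^{i\theta})|^2\,dr\bigr)^{p/2}\omega\,d\theta$, which is \emph{not} comparable to the mixed area integral $\iint|g'|^p(1-|z|)^{p-1}\omega\,dxdy$ unless $p=2$ (the inner $L^2$ average in $\mathsf g$ cannot be swapped for an $L^p$ one). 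The same problem contaminates your $s=1$ endpoint: in the Zhao--Zhu extended scale, $A^p_{-1}$ is $\{g:g'\in A^p_{p-1}\}$, which coincides with $E^p(\mathbb D)$ only when $p=2$.

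This is precisely why the paper avoids $s=0$ for $p\neq 2$. Instead it interpolates diagonally, between the conformally-invariant critical point $s=1/p$ at exponent $p$ (where $\mathcal{HB}_{p,p}^{1/p}(\Omega)=\text{HB}_{p,p}^{1/p}(\Omega)$ holds for any chord-arc $\Gamma$ by Theorem~\ref{WZ12}, the Bergman weight $\alpha=p-2>-1$ is classical, and no Littlewood--Paley identity is needed) and the full $p=2$ family $s\in(0,1)$ from Theorem~\ref{interpol}. The family becomes $\widetilde T_z(f)=e^{cz^2}(f\circ\varphi)'(\varphi')^{(\frac12-s)z+1}$, and the Bergman interpolation is $[A^p_{p-2},A^2_{1-2s}]_\theta=A^q_{(1-\alpha)q-1}$ with both index and smoothness moving simultaneously. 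Unless you can prove an $L^p$ endpoint at $s=0$ by some other route --- and the Hardy--Littlewood obstruction makes that impossible in the form you wrote --- the vertical interpolation cannot close.
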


 We recall that if $\delta =\inf\{q > 1:\,|\varphi'|\in A_q\}$ then the theorem is true for 
$$ \delta<p<\frac{\delta}{\delta-1}.$$
In particular, $\delta<2$ so that $|\varphi'|\in A_2$. That implies that $B_{p,p}^s(\Gamma)$ is stable by conjugation by Theorem \ref{conj}. Together with $\text{Arg}\,\varphi' \in L^{\infty}$, this also implies that $\mathcal{B}_{2,2}^s(\Omega) = B_{2,2}^s(\Gamma)$, $0\leq s \leq 1$, with comparable norms by Theorem \ref{interpol}. These two results will be used in the proof of Theorem \ref{interpolpps}.

On the other hand, for the proof of  Theorem \ref{interpolpps} we also need to use the claim: $\mathcal{B}_{p,p}^{1/p}(\Omega) = B_{p,p}^{1/p}(\Gamma)$ with comparable norms for $1<p<\infty$ provided that the curve $\Gamma$ is chord-arc (see Theorem \ref{WZ12}).  For convenience we sketch its proof here:  suppose that $f \in B_{p,p}^{1/p}(\Gamma)$ and its harmonic extension in $\Omega$ is still denoted by $f$ then the assertion follows from the following reasoning:
$$
\Vert f\Vert_{B_{p,p}^{1/p}(\Gamma)} \simeq \Vert f\circ\lambda\Vert_{B_{p,p}^{1/p}(\mathbb T)} \simeq  \Vert f\circ\varphi\Vert_{B_{p,p}^{1/p}(\mathbb T)} \simeq \Vert f\circ\varphi\Vert_{\mathcal B_{p,p}^{1/p}(\mathbb D)} = \Vert f\Vert_{\mathcal B_{p,p}^{1/p}(\Omega)},
$$
where the implicit constants depend only on $\Omega$ and $p$. 
The above steps are in order due to $(1)$ $\lambda$ is a bi-Lipschitz homeomorphism of $\mathbb T$ onto $\Gamma$; $(2)$ $\lambda^{-1}\circ\varphi$ is a quasisymmetry; $(3)$ when $\Gamma = \mathbb T$ this is well-known, as we mentioned before; $(4)$ the space $\mathcal B_{p,p}^{1/p}(\Omega)$ is conformal invariant.

 Recall that $\mathcal{B}_{p,p}^s(\Omega)$ is conjugate invariant. We have seen, under the assumption of Theorem \ref{interpolpps}, that $B_{p,p}^s(\Gamma)$ is also conjugate invariant (see Theorem \ref{conj}), in order to prove Theorem \ref{interpolpps} we  thus only need to prove the following 

 \begin{theo} \label{interpolppsH} 
Let $\Omega$ be a bounded chord-arc domain bounded by $\Gamma$ with $0\in\Omega$ and $\varphi$ its Riemann mapping fixing $0$. Let $0<s<1$, $1<p<\infty$. 
If $\Gamma$ is such that $\text{Arg}\,\varphi' \in L^{\infty}$ 
then it holds that 
\begin{equation}\label{pps}
     \mathcal{HB}_{p,p}^s(\Omega) = \text{HB}_{p,p}^s(\Gamma)
 \end{equation}
 with comparable norms.  The conclusion holds in particular for the radial-Lipschitz domains.
\end{theo}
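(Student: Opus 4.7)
I would extend the Voigt interpolation argument used to prove Theorem \ref{interpol} from the Hilbert case to general $p$, interpolating between the endpoints $s=0$ and $s=1$. By \eqref{Vs} together with the Koebe distortion theorem, the equivalence $f \in \mathcal{HB}^s_{p,p}(\Omega) \Leftrightarrow V_{s,p}(f)' \in A^p_{(1-s)p-1}$ holds with comparable norms, so the task becomes matching this weighted Bergman norm with the $\text{HB}^s_{p,p}(\Gamma)$-norm via the abstract interpolation theorem of Voigt (Theorem \ref{Voigt}).

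For the endpoints, the $s=1$ case is immediate: $V_{1,p}(f)' \in A^p_{-1} = E^p(\mathbb{D})$ corresponds to $f$ being an antiderivative of an $E^p(\Omega)$-function, which is exactly $\text{HB}^1_{p,p}(\Gamma)$ with norm equivalent to $\|f'\|_{L^p(\Gamma)}$. At $s=0$ I would prove the $p$-analog of Theorem \ref{E2A0}: the inclusion $E^p(\Omega) \subset \mathcal{HB}^0_{p,p}(\Omega)$ follows from Carleson embedding into $E^p(\mathbb{D})$ applied to the Carleson measure $d\mu = |\varphi''/\varphi'|^2(1-|z|)dxdy$, which is Carleson because $\log\varphi' \in \text{BMOA}$ (a consequence of $\Gamma$ being chord-arc). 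The reverse inclusion comes from the Gundy-Wheeden theorem providing the $L^p(\mathbb{T}, |\varphi'|d\theta)$-equivalence of the $g$-function and the non-tangential maximal function, valid because $|\varphi'| \in A_\infty$ (Jerison-Kenig). Thus $\mathcal{HB}^0_{p,p}(\Omega) = E^p(\Omega) = \text{HB}^0_{p,p}(\Gamma)$.

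Combining these endpoint identifications with the Bergman interpolation $[A^p_{p-1}, A^p_{-1}]_s = A^p_{(1-s)p-1}$ (the $L^p$-extension of \eqref{Binterpol}) and the boundary interpolation $[\text{HB}^0_{p,p}(\Gamma), \text{HB}^1_{p,p}(\Gamma)]_s = \text{HB}^s_{p,p}(\Gamma)$ (obtained by pulling back through the bi-Lipschitz map $\lambda:\mathbb{T}\to\Gamma$ and invoking Calderón's theorem \ref{Cald}), I would apply Voigt's theorem to
\[
T_z(f) := e^{cz^2} f'(\varphi)\,(\varphi')^{1+1/p-z}, \qquad z \in S,
\]
with $\check X = \check B$ (polynomials, dense in $\text{HB}^1_{p,p}(\Gamma)$ since $\Omega$ is Smirnov), $X_j = \text{HB}^j_{p,p}(\Gamma)$, and $Y_0 = A^p_{p-1}$, $Y_1 = A^p_{-1}$. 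The hypothesis $\text{Arg}\,\varphi' \in L^\infty$ is used precisely here: on the vertical line $z = j+it$,
\[
|T_{j+it}(f)(\zeta)|^p = e^{cp(j^2-t^2) + pt\,\text{Arg}\,\varphi'(\zeta)}\,|f'(\varphi(\zeta))|^p\,|\varphi'(\zeta)|^{(1+1/p-j)p},
\]
so choosing $c \geq \|\text{Arg}\,\varphi'\|_\infty^2/4$ ensures uniform $t$-boundedness, yielding the constants $M_0, M_1$ required by Voigt. Holomorphy of $z \mapsto T_z(f)$ into $A^p_{p-1}$ follows by the same point-evaluation argument used in the $p=2$ proof (using the Bergman bound $|g(\zeta)| \leq C(1-|\zeta|^2)^{-(2+\alpha)/p}\|g\|_{A^p_\alpha}$ to certify that $\{l_\zeta : \zeta \in \mathbb{D}\}$ is a total subset of $(A^p_{p-1})^*$). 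Voigt's conclusion $T_s(f) \in A^p_{(1-s)p-1}$ with norm bounded by $\|f\|_{\text{HB}^s_{p,p}(\Gamma)}$ extends from $\check B$ to $\text{HB}^s_{p,p}(\Gamma)$ by density, and the reverse inclusion follows from the bounded inverse theorem.

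\textbf{Main obstacle.} The principal technical issue is the $s=0$ endpoint, i.e., verifying the $p$-version of $\mathcal{HB}^0_{p,p}(\Omega) = E^p(\Omega)$ using only the chord-arc assumption. While both Carleson embedding and the Gundy-Wheeden theorem admit $L^p$-analogs, one must carefully check that the two chord-arc consequences $\log\varphi' \in \text{BMOA}$ and $|\varphi'| \in A_\infty$ suffice, without invoking a stronger $A_p$-hypothesis on $|\varphi'|$ (which the theorem does not assume). A secondary point is confirming that polynomials are dense in $\text{HB}^1_{p,p}(\Gamma)$ with respect to its stronger natural norm, which is essential for extending Voigt's conclusion from $\check B$ to the whole space.
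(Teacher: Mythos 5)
Your plan follows the paper's general strategy of applying Voigt's interpolation theorem to the family $V_{z,p}(f)'$ (up to a normalizing factor $e^{cz^2}$, which you correctly tie to $\Vert\mathrm{Arg}\,\varphi'\Vert_\infty$). But your choice of endpoints does not work, and you have essentially flagged the problem yourself without diagnosing why it is fatal.

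The $s=0$ endpoint identification $\mathcal{HB}^0_{p,p}(\Omega)=E^p(\Omega)$, which you want to obtain by adapting Theorem \ref{E2A0} to general $p$, is false for $p\neq 2$ already when $\Omega=\mathbb D$. Indeed $\mathcal{HB}^0_{p,p}(\mathbb D)$ is the set of holomorphic $f$ with
$$\iint_{\mathbb D}|f'(z)|^p(1-|z|)^{p-1}\,dxdy<\infty,$$
which is an analytic Besov space, not the Hardy space $H^p(\mathbb D)$; these coincide only at $p=2$. (This is the same reason the Littlewood–Paley seminorm in the introduction is asserted to characterize $B^s_{p,p}(\mathbb T)$ only for $s\in(0,1)$, not $s=0$.) So no amount of chord-arc/BMOA hypothesis on $\Gamma$ can rescue the endpoint—the obstruction is intrinsic to the disk. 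A similar issue appears at $s=1$: $A^p_{-1}$ in the extended Zhao–Zhu sense is again a Besov-type space, equal to $E^p(\mathbb D)$ only when $p=2$, so your identification $A^p_{-1}=E^p(\mathbb D)$ and the interpolation formula $[A^p_{p-1},A^p_{-1}]_s=A^p_{(1-s)p-1}$ are both unsupported for $p\neq 2$.

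The paper circumvents this by interpolating along a diagonal in $(p,s)$-space rather than the vertical segment you chose. One endpoint is $s=1/p$, where the weight exponent is $p-2$ and the space $\mathcal{HB}^{1/p}_{p,p}(\Omega)$ is conformally invariant, so the identification with $\mathrm{HB}^{1/p}_{p,p}(\Gamma)$ is already known for every chord-arc $\Gamma$ (Theorem \ref{WZ12}); the other is the fully established $p=2$ case (Theorem \ref{interpol}). The resulting family $\widetilde T_z(f)=e^{cz^2}f'\circ\varphi\,(\varphi')^{(\frac{1}{2}-s)z+1}$ varies both the integrability index $q(\theta)$ and the smoothness index $\alpha(\theta)$ simultaneously, the endpoint Bergman spaces $A^p_{p-2}$ and $A^2_{1-2s}$ have exponents strictly greater than $-1$, and a short computation shows that every target pair $(q,\alpha)$ with $q\in(1,\infty)$, $\alpha\in(0,1)$ is hit by a suitable choice of $(p,s,\theta)$. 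This is the essential ingredient missing from your plan. Your secondary worry about density of polynomials in $\mathrm{HB}^1_{p,p}(\Gamma)$ is comparatively minor and is handled by the Smirnov property of chord-arc domains, exactly as you suggest.
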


\begin{proof} 
For any $f \in \mathcal {HB}_{p,p}^s(\Omega)$, $0 < s < 1$, $1<p<\infty$, we have seen from \eqref{Vs} that
$$
\iint_\mathbb D |V_{s,p}(f)'(z)|^p (1-|z|)^{(1-s)p-1} dxdy \simeq \iint_{\Omega} |f'(\zeta)|^p d(\zeta,\Gamma)^{(1-s)p-1} d\xi d\eta
$$
where the implicit constant depends only on $s$ and $p$, and 
$$V_{s,p}(f)'(z)=(f\circ\varphi)'(z)\varphi'(z)^{1/p-s}.$$
From it we see that  
\begin{equation}
    f \in \mathcal {HB}_{p,p}^s(\Omega) \;\Leftrightarrow \; V_{s,p}(f)' \in A^p_{(1-s)p-1} 
\end{equation}
with comparable norms. 
Noting that any $g \in A^p_{(1-s)p-1}$ can be written in the form $V_{s,p}(f)'$ by choosing $f(z) = \int_0^zg\circ\varphi^{-1}(u)(\varphi^{-1})'(u)^{1+1/p-s} du$, we find that the operator  $\mathcal {HB}_{p,p}^s(\Omega) \ni f \mapsto V_{s,p}(f)' \in A^p_{(1-s)p-1}$ is a bounded isomorphism. 
This boils down to proving the following:  
\begin{equation}\label{equivalent}
    f \in \text{HB}_{p,p}^s(\Omega) \;\Leftrightarrow \; V_{s,p}(f)' \in A^p_{(1-s)p-1} 
\end{equation}
 with comparable norms. For this purpose, we will use Theorem \ref{Voigt} in the following.

We have proved that  for any $1<p<\infty$,
 $\mathcal{HB}_{p,p}^{1/p}(\Omega)=\text{HB}_{p,p}^{1/p}(\Omega)$ if $\Gamma$ is a chord-arc curve;  and for any $0<s<1$, 
 $\mathcal{HB}_{2,2}^s(\Omega)=\text{HB}_{2,2}^s(\Omega)$ if, moreover, $\Gamma$ satisfies  $\text{Arg}\varphi' \in L^{\infty}$, thus the equivalence \eqref{equivalent} is already known for $s=1/p$, $1<p<\infty$, and also for $0<s<1$, $p=2$.

Recall that $(A^p_{p-2}, A^2_{1-2s})$ is an interpolation pair, and for $0 \leq \theta \leq 1$, 
\begin{equation}
     [A^p_{p-2}, A^2_{1-2s}]_{\theta} = A^q_{(1-\alpha)q-1}. 
 \end{equation}
We have also seen that $(\text{HB}_{p,p}^{1/p}(\Omega), \text{HB}_{2,2}^s(\Omega))$ is an interpolation pair and 
\begin{equation}\label{pp22}
     [\text{HB}_{p,p}^{1/p}(\Omega), \text{HB}_{2,2}^s(\Omega)]_\theta = \text{HB}_{q,q}^{\alpha}(\Omega)
 \end{equation} provided that $\Gamma$ is a chord-arc curve. Here,
\begin{equation}\label{alphaq}
    \begin{split}
        \alpha = \alpha(\theta) &:= (1-\theta)/p+\theta s,\\ 
        q = q(\theta) &:= 2p/(2(1-\theta)+\theta p).
    \end{split}
\end{equation} 
Some elementary arithmetic leads to the conclusion that for any given $\alpha \in (0, 1)$ and $q \in (1, \infty)$, there exist $p \in (1, \infty)$, $s \in (0, \,\min (1, 5/2-2/p))$, and $\theta \in [0, 1]$ such that \eqref{alphaq} holds. 
 Recall that in the proof of Theorem \ref{interpol} we have shown that $\check B$, the set of polynomials on $\Gamma$, is dense in $\text{HB}_{2,2}^s(\Omega)$. Using a similar argument, we can see that $\check B$ is also dense in $\text{HB}_{p,p}^{1/p}(\Omega)$ and thus $\check B$ is a dense subspace of the intersection space $\text{HB}_{p,p}^{1/p}(\Omega)\cap \text{HB}_{2,2}^s(\Omega)$. 

Define a family of linear mappings $(\widetilde T_z;\; z = \theta + it \in S)$, $S$ representing a vertical strip with $0\leq \theta\leq 1$ as before, on $\check B$ by
$$
\widetilde T_z(f) := e^{cz^2}V_{\alpha(z),q(z)}(f)' = e^{cz^2}f'\circ\varphi(\varphi')^{\frac{1}{q(z)}+1-\alpha(z)} = e^{cz^2}f'\circ\varphi(\varphi')^{(\frac{1}{2}-s)z+1}.
$$
Here, we still choose the constant $c \geq A^2/4$ as in the proof of Theorem \ref{interpol} where $A := \Vert\text{Arg}\varphi'\Vert_{\infty}$, so that $e^{-ct^2}e^{tA}\leq e$.  We will prove that for each $z = \theta + it \in S$, the linear mapping $\widetilde T_z$ sends $\check B$ to $A^p_{p-2} + A^2_{1-2s}$, and the family  $(\widetilde T_z)$ satisfies the properties (i) and (ii) of Theorem \ref{Voigt}. 

We first estimate $|\widetilde T_z(f)(\zeta)|^p$ for each $z = \theta +it \in S$ and each $f \in \check B$. From a simple computation, it follows that 
$$|e^{cz^2}|^p = e^{pc(\theta^2-t^2)}\leq e^{pc}e^{-ct^2p},$$ and   
$$|(\varphi')^{(1/2-s)z+1}|^p = e^{t(s-1/2)p\text{Arg}\varphi'}\left(|\varphi'|^{1+(1/2-s)\theta}\right)^p \leq e^{|t|pA}\left(|\varphi'|^{1+(1/2-s)\theta}\right)^p.$$ 
There exists a constant $C_1$ such that $|f'\circ\varphi|^p < C_1$  since the diameter of the domain $\Omega$,  with $0$ as an interior point, is finite. Then 
\begin{align}\label{tildeT}
    |\widetilde T_z(f)(\zeta)|^p \leq C_1e^{pc}\left(e^{-ct^2+|t|A}\right)^p\left(|\varphi'|^{1+(1/2-s)\theta}\right)^p
    \leq C_1e^{pc}e^p\left(|\varphi'|^{1+(1/2-s)\theta}\right)^p.
\end{align}
To show that the linear mapping $\widetilde T_z$ sends $\check B$ to $A^p_{p-2} + A^2_{1-2s}$, 
we split into the following three cases:

\noindent Case $1$.  when $s \leq 1/2$
\begin{align*}
   \iint_{\mathbb D}|\widetilde T_z(f)(\zeta)|^2(1-|\zeta|)^{1-2s}d\xi d\eta & \leq C_1e^{2c}e^2\iint_{\mathbb D}\left(|\varphi'|^{1+(1/2-s)\theta}\right)^2(1-|\zeta|)^{1-2s}d\xi d\eta\\
   & \leq C_1e^{2c}e^2\iint_{\mathbb D}|\varphi'|^2|\varphi'|^{(1-2s)\theta}(1-|\zeta|)^{(1- 2s)\theta}d\xi d\eta 
\end{align*}
which is bounded by a constant depending only on $s$ and $\Omega$. 

\noindent Case $2$. when $s > 1/2$ and $p \geq 2$
\begin{align*}
  \iint_{\mathbb D}|\widetilde T_z(f)(\zeta)|^p(1-&|\zeta|)^{p-2} d\xi d\eta 
  \leq  C_1e^{pc}e^p\iint_{\mathbb D}\left(|\varphi'|^{1+(1/2-s)\theta}\right)^p(1-|\zeta|)^{p-2}d\xi d\eta\\
    \leq &C_1e^{pc}e^p\iint_{\mathbb D} \left((|\varphi'|(1-|\zeta|))^{1+(1/2-s)\theta}\right)^{p-2} |\varphi'|^{2+(1-2s)\theta}  d\xi d\eta \\
   \leq& C_1e^{pc}e^p \text{diam}(\Gamma)^{p-2}\iint_\mathbb D |\varphi'|^{2+(1-2s)\theta}  d\xi d\eta
\end{align*}
which, using H\"older inequality, is bounded by a constant depending only on $p$ and $\Omega$.

\noindent Case $3$.  When $1/2 < s<\min (1, 5/2-2/p)$ and $1< p < 2$, it suffices to show that for all $\theta \in (0,1)$, 
$$\iint_\mathbb D|\varphi'(z)|^{2+p\theta\left(\frac 12-s\right)}d(\varphi(z),\Gamma)^{p-2}dxdy<\infty.$$
First of all,  since $s>1/2$ and $p<2$ we have
$$ 1<2+p\theta\left(\frac 12-s\right)<2.$$
By H\"older inequality, 
\begin{equation*}
    \begin{split}
        &\frac 1\pi \iint_\mathbb D|\varphi'(z)|^{2+p\theta(\frac 12-s))}d(\varphi(z),\Gamma)^{p-2}dxdy\\
        \le &\left(\frac 1\pi\iint_\mathbb D|\varphi'(z)|^{2}d(\varphi(z),\Gamma)^{\frac{2(p-2)}{2+p\theta(\frac 12-s)}}dxdy\right)^{\frac{2+p\theta(\frac 12-s)}{2}},
    \end{split}
\end{equation*}
and the last integral is equal, by the change of variables $\zeta=\varphi(z)$, to
$$\iint_{\Omega}d(\zeta,\Gamma)^{\frac{2(p-2)}{2+p\theta(\frac 12-s)}}d\xi d\eta.$$
The domain being Lipschitz, this integral is finite if and only if 
$$\frac{2(p-2)}{2+p\theta(\frac 12-s)}>-1,$$
or, equivalently, if and only if 
$$0 < \theta<2\frac{1-\frac 1p}{s-\frac 12}.$$
Thus, a sufficient condition for finiteness will be that
$$2\frac{1-\frac 1p}{s-\frac 12}>1\Leftrightarrow s<\frac 52-\frac 2p.$$ 
Consequently, when $1<p<\infty,\, 0<s<\min (1, 5/2-2/p)$,  the linear mapping $\widetilde T_z$ sends $\check B$ to $A^p_{p-2} + A^2_{1-2s}$,  and for all $f \in \check B$, the mapping $\widetilde T_{(\cdot)}(f): S \to A^p_{p-2} + A^2_{1-2s}$ is bounded.  

Taking $\theta = 0$ in \eqref{tildeT} we can see  
$$
 \iint_\mathbb D|\widetilde T_{it}(f)|^p (1-|z|^2)^{p-2} dxdy \leq C(p, c)\iint_\Omega|f'(\zeta)|^pd(\zeta, \Gamma)^{p-2}d\xi d\eta
    $$
    where the constant $C(p,c)$ depends only on $p$ and $c$. 
By that we get  
\begin{align*}
    M_0 &:= \sup\{\|\widetilde T_{it}f\|_{A^p_{p-2}};\; t\in\mathbb R, f\in\check B, \Vert f\Vert_{B_{p,p}^{1/p}(\Gamma)} \leq 1\} < \infty.
    \end{align*}
Taking $\theta = 1$ in \eqref{tildeT} we can  see  
$$
 \iint_\mathbb D|\widetilde T_{1+it}(f)|^2 (1-|z|^2)^{1-2s}dxdy\leq C(c, s)\iint_\Omega|f'(\zeta)|^2 d(\zeta, \Gamma)^{1-2s}d\xi d\eta
    $$
    where $C(s, c)$ is a constant depending only on $s$ and $c$. 
From this it follows that 
\begin{align*}
    M_1 &:= \sup\{\|\widetilde T_{1+it}f\|_{A^2_{1-2s}};\; t\in\mathbb R, f\in\check B, \Vert f\Vert_{B_{2,2}^{s}(\Gamma)} \leq 1\}
    < \infty. 
\end{align*}
 By the continuity of the parametrized integral,  all functions $S \ni z \mapsto \widetilde T_{z}f \in A^p_{p-2} + A^2_{1-2s}$, \; $\mathbb R \ni t \mapsto \widetilde T_{it}f \in A^p_{p-2}$, and $\mathbb R \ni t \mapsto \widetilde T_{1+it}f \in A^2_{1-2s}$ are continuous. Finally,  for any $f\in\check B$, the proof of holomorphy of the mapping $\stackrel{\circ}{S} \ni z \mapsto \widetilde T_{z}f \in A^p_{p-2} + A^2_{1-2s}$ is similar to that in the proof of Theorem \ref{interpol}. We omit the details here. This completes the proof of Theorem \ref{interpolppsH}. 
 \end{proof}

\begin{rem}
    {\rm Recall that in Theorems \ref{interpol} and \ref{interpolpps} we only focused on the case of bounded  domains. The case of  unbounded domains can actually be transformed to the bounded case; see Appendix for details, and  similar assertions also hold for the unbounded case.}
\end{rem}

Let $\Gamma$ be a radial Lipschitz curve and $\Omega_{i,e}$ be the inner domain and the outer domain of $\Gamma$. Let $\varphi_{i,e}$ be a Riemann map for $\Omega_{i,e}$. By Helson-Szeg\"o theorem, we see $|\varphi_{i,e}'|\in A_p$, $p<2$ on $\mathbb T$. Set 
 $$p_0^{i,e}=\inf\{p>1:\; |\varphi_{i,e}'|\in A_p\}\;\;\mathrm{and}\;\;p_0=\max(p_0^i,p_0^e).$$
We end this subsection with an estimate of $p_0$ in terms of the geometry of the curve.
 Notice that the two functions
$$\text{Arg}\left(\frac{z\varphi'_{i,e}(z)}{\varphi_{i,e}(z)}\right)$$
 have the same $L^\infty$-norm that is  less than $\pi/2$. We define $M$ as being the $\tan$ of this norm and call this number the Lipschitz-norm of $\Gamma$. We now use the same argument as that below Theorem \ref{interpol}: 
 $$\log|\varphi_{i,e}'(z)|^p = \log\left|\frac{\varphi_{i,e}(z)}{z}\right|^p+\log\left|\frac{z\varphi_{i,e}'(z)}{\varphi_{i,e}(z)}\right| = \log\left|\frac{\varphi_{i,e}(z)}{z}\right|^p-H\left(p\text{Arg}\frac{z\varphi_{i,e}'(z)}{\varphi_{i,e}(z)}\right),$$
 we thus conclude by Helson-Szeg\"o theorem that 
 $ |\varphi_{i,e}'|^p$
 is an $A_2$-weight on the unit circle $\mathbb T$ if $p<\frac{\pi}{2\arctan M}$, and this holds in particular for $p=1$.  
 On the other hand, let $\omega$ be an $A_2$-weight and $\tilde{p}$ the supremum of the set of $p$ such that $\omega^p\in A_2$. It is not difficult to see that $\delta(\omega) (:= \inf\{q > 1:\,\omega\in A_q\}) \le 1+\frac{1}{\tilde{p}}$. Then we get an estimate value of $p_0$:
 $$
 1+\frac{1}{\frac{\pi}{2\arctan M}} = 1+ \frac{2\arctan M}{\pi}. 
 $$
 We may then state the following corollary of Theorem \ref{interpolpps}.
\begin{cor}\label{MMM}
    If $\Gamma$ is a $M$-radial-Lipschitz curve and $p\in \left(1+ \frac{2\arctan M}{\pi}, 1+\frac{\pi}{2\arctan M}\right)$, $s\in (0,1)$ then 
\begin{equation}\label{equalitybis}
\mathcal{B}_{p,p}^{s}(\Omega_i\!\to\!\Gamma)=\mathcal{B}_{p,p}^{s}(\Omega_e\!\to\!\Gamma)=B_{p,p}^{s}(\Gamma). 
\end{equation}
\end{cor}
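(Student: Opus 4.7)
The plan is to obtain Corollary \ref{MMM} as a direct application of Theorem \ref{interpolpps} to both complementary components $\Omega_i$ and $\Omega_e$, using the weight estimates of the paragraph immediately preceding the corollary to verify its hypotheses on the full stated $p$-interval.

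First I would recall that a radial-Lipschitz curve is chord-arc, and that the identity
$$\log|\varphi_{i,e}'(z)| = \log\left|\frac{\varphi_{i,e}(z)}{z}\right| - H\!\left(\text{Arg}\frac{z\varphi_{i,e}'(z)}{\varphi_{i,e}(z)}\right),$$
together with the starlike/radial-Lipschitz characterization already used around Theorem \ref{interpol}, gives $\text{Arg}\,\varphi_{i,e}' \in L^\infty$ and $\|\text{Arg}(z\varphi_{i,e}'/\varphi_{i,e})\|_\infty \le \arctan M$. By the Helson--Szeg\"o theorem this yields $|\varphi_{i,e}'|^q \in A_2(\mathbb T)$ for every $q$ with $q\arctan M < \pi/2$. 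Hence, with $\tilde p_{i,e} := \sup\{q:|\varphi_{i,e}'|^q\in A_2\}$, the elementary bound $p_0^{i,e} \le 1+1/\tilde p_{i,e}$ recalled in the paragraph above the corollary gives
$$p_0 := \max(p_0^i,p_0^e) \le 1 + \frac{2\arctan M}{\pi}<2.$$

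Next I would verify the hypotheses of Theorem \ref{interpolpps} for $p$ in the interval $\bigl(1+\tfrac{2\arctan M}{\pi},\,1+\tfrac{\pi}{2\arctan M}\bigr)$: if $p<2$, then $p > p_0$ directly ensures $|\varphi_{i,e}'|\in A_p$; if $p\ge 2$, then $p-1 < \pi/(2\arctan M)$ rewrites as $p' = 1+(p-1)^{-1} > 1+2\arctan M/\pi \ge p_0$, so $|\varphi_{i,e}'|\in A_{p'}$. In both cases the weight hypothesis is satisfied for both Riemann maps, while the boundedness of $\text{Arg}\,\varphi_{i,e}'$ has been secured above. Applying Theorem \ref{interpolpps} to $\Omega_i$ and---via the reduction of the unbounded case to the bounded one described in the remark after Theorem \ref{interpolppsH} and detailed in the appendix---to $\Omega_e$ then yields $\mathcal B_{p,p}^s(\Omega_{i,e}\!\to\!\Gamma) = B_{p,p}^s(\Gamma)$, which is the claim.

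I do not foresee any serious obstacle: the argument is essentially bookkeeping of the $A_p$/$A_{p'}$ duality across $p=2$, and the endpoints of the interval arise naturally as $p_0$ and $p_0/(p_0-1)$, reflecting the fact that the bound $p_0 \le 1+2\arctan M/\pi$ is the only quantitative input available here and that no further improvement can be expected from this interpolation-based method.
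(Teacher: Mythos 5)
Your proposal is correct and follows essentially the same route as the paper: the paragraph preceding Corollary \ref{MMM} establishes the estimate $p_0 \le 1 + 2\arctan M/\pi$ via Helson--Szeg\"o, and the corollary is then obtained by applying Theorem \ref{interpolpps} to both complementary components (the unbounded one via the appendix). Your filled-in case analysis of the $A_p$/$A_{p'}$ condition across $p=2$ and the identification of the two endpoints as $p_0$ and $p_0/(p_0-1)$ are exactly the implicit bookkeeping the paper leaves to the reader.
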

Notice that this interval of $p$-value converges to $(1, \infty)$ as $M\to 0$ and to a singleton $\{2\}$ as $M\to\infty$.
 
\subsection{Almost-Dirichlet principle revisited.}
 In Section 3 we have proven Theorem \ref{123b} which states that given a quasicircle $\Gamma,\,p>2,\,$ $\frac{h(\Gamma)}{2}-\frac{h(\Gamma)-1}{p}<s<1-\frac{h(\Gamma)-1}{p}$ (notice that when $\Gamma$ is chord-arc $h(\Gamma)=1$ and thus $1/2<s<1$) then if
 $$\mathcal{B}_{p,p}^s(\Omega_i\to \Gamma)=\mathcal{B}_{p,p}^s(\Omega_e\to \Gamma)$$
 we then have that this space, which we have denoted by $\mathcal{B}_{p,p}^s(\Gamma)$, is equal to $W^{1,p}(\mathbb C,\omega)|_\Gamma$. 
Moreover,  we have shown that the inclusion $W^{1,p}(\mathbb C,\omega)|_\Gamma\subset\mathcal{B}_{p,p}^s(\Gamma)$ implies that the spaces 
 $\mathcal{B}_{p,p}^s(\Omega_{i,e})$ obey the almost-Dirichlet principle, which means that
 there exists $C>0$ such that for any continuous function $F$ on $\overline \Omega$ which is a $C^1$-function in $\Omega$, if we call $u$ the harmonic extension of $F|_\Gamma$ in $\Omega$ then
 $$ \|u\|_{\mathcal{B}_{p,p}^s(\Omega)}^{p}\le C \iint_{\Omega}|\nabla F(z)|^pd(z,\Gamma)^{(1-s)p-1}dxdy.$$
As before, let $\varphi_{i,e}$ be a Riemann map for $\Omega_{i,e}$, and let
 $$p_0^{i,e}=\inf\{p>1:\; |\varphi_{i,e}'|\in A_p\}\;\;\mathrm{and}\;\;p_0=\max(p_0^i,p_0^e).$$
 \begin{theo} If $\Gamma$ is a radial-Lipschitz curve then $\mathcal{B}_{p,p}^s(\Omega_i)$ has the almost-Dirichlet property if $2<p<\frac{p_0}{p_0-1}$ and $s>1/2$.
 \end{theo}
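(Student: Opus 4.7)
The plan is to reduce the statement to an application of Theorem \ref{123b}. The radial-Lipschitz assumption will enter only through verifying the nontrivial hypothesis that the inner and outer trace spaces coincide; once this is in place, the almost-Dirichlet principle follows from the identification $\mathcal B_{p,p}^s(\Gamma)=W^{1,p}(\omega,\mathbb C)|_\Gamma$ combined with Chua's extension theorem for $A_p$-weighted Sobolev spaces on quasidisks.

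First I would note that a radial-Lipschitz curve is chord-arc, so $h(\Gamma)=1$ and the geometric conditions of Theorem \ref{123b} collapse to $p>2$ and $1/2<s<1$; moreover, by Theorem \ref{As2} applied with $h(\Gamma)=1$, the weight $\omega(z)=d(z,\Gamma)^{(1-s)p-1}$ automatically lies in $A_p$ in this range. To verify
$$\mathcal B_{p,p}^s(\Omega_i\to\Gamma)=\mathcal B_{p,p}^s(\Omega_e\to\Gamma),$$
I would invoke Theorem \ref{interpolpps} on both $\Omega_i$ and $\Omega_e$: radial-Lipschitz curves satisfy $\mathrm{Arg}\,\varphi'_{i,e}\in L^\infty$ as recalled in Subsection 5.3, and the remaining hypothesis $|\varphi'_{i,e}|\in A_{p'}$ is equivalent to $p'>p_0^{i,e}$, which is guaranteed exactly by the restriction $p<p_0/(p_0-1)$. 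Theorem \ref{interpolpps} thus yields $\mathcal B_{p,p}^s(\Omega_{i,e})=B_{p,p}^s(\Gamma)$ with comparable norms, and in particular the equality of trace spaces needed to feed into Theorem \ref{123b}.

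With these hypotheses verified, Theorem \ref{123b} will identify $\mathcal B_{p,p}^s(\Gamma)=W^{1,p}(\omega,\mathbb C)|_\Gamma$ with equivalent norms. To conclude the almost-Dirichlet principle, let $F$ be continuous on $\overline\Omega_i$, $C^1$ in $\Omega_i$, with finite weighted Dirichlet integral. Since $\Omega_i$ is a quasidisk and $\omega\in A_p$, Chua's extension theorem \cite{Chua} furnishes $\widetilde F\in W^{1,p}(\omega,\mathbb C)$ agreeing with $F$ on $\overline\Omega_i$ and satisfying $\|\widetilde F\|_{W^{1,p}(\omega,\mathbb C)}\le C\|F\|_{W^{1,p}(\omega,\Omega_i)}$. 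Then $F|_\Gamma=\widetilde F|_\Gamma\in\mathcal B_{p,p}^s(\Gamma)$, and writing $u$ for the harmonic extension of $F|_\Gamma$ in $\Omega_i$, the chain of norm equivalences gives
$$\|u\|_{\mathcal B_{p,p}^s(\Omega_i)}^p \le C\|\widetilde F\|_{W^{1,p}(\omega,\mathbb C)}^p \le C\iint_{\Omega_i}|\nabla F(z)|^p d(z,\Gamma)^{(1-s)p-1}dxdy.$$
The argument is essentially pure assembly of previously established facts; the only delicate point is the existence of the bounded extension operator $W^{1,p}(\omega,\Omega_i)\to W^{1,p}(\omega,\mathbb C)$, so the main obstacle will simply be checking that Chua's construction applies in the present range of $A_p$-weights on quasidisks, which it does precisely because we already know $\omega\in A_p$.
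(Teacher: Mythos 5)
Your proposal is correct and takes essentially the same route as the paper, whose own proof simply reads ``It follows immediately from Theorems \ref{123b}, \ref{conj}, \ref{interpolpps}''; you have correctly identified that $p<p_0/(p_0-1)$ is equivalent to $p'>p_0$, that this supplies the $A_{p'}$ hypothesis of Theorem \ref{interpolpps} while $\mathrm{Arg}\,\varphi'_{i,e}\in L^\infty$ comes from radial-Lipschitzness, and that Theorem \ref{123b} together with Chua's extension theorem then yields the almost-Dirichlet bound (Theorem \ref{conj} is used implicitly inside Theorem \ref{interpolpps}, which you correctly invoke).
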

 \begin{proof} It follows immediately from Theorems  \ref{123b}, \ref{conj}, \ref{interpolpps}. 
 \end{proof}

  Using the estimate value of $p_0$ in the last subsection, we have the following corollary.
  
 \begin{cor}\label{M-almost}
If $\Gamma$ is a radial-Lipschitz curve with norm $M>0$ then $\mathcal{B}_{p,p}^s(\Omega_i)$ has the almost-Dirichlet property if
$$2<p<1+\frac{\pi}{2\arctan M},\quad \frac{1}{2}<s<1.$$
\end{cor}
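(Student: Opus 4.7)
The plan is to deduce this corollary directly from the immediately preceding theorem, combined with the explicit bound on $p_0$ in terms of $M$ that was recorded at the end of the previous subsection. Since the preceding theorem already establishes the almost-Dirichlet property for $\mathcal{B}_{p,p}^s(\Omega_i)$ whenever $\Gamma$ is radial-Lipschitz, $s>1/2$, and $2<p<p_0/(p_0-1)$, it suffices to verify that for a radial-Lipschitz curve of norm $M$, the interval $(2,\,1+\pi/(2\arctan M))$ is contained in $(2,\,p_0/(p_0-1))$.

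For the first step, I would recall the computation already carried out before Corollary~\ref{MMM}. For a radial-Lipschitz curve of norm $M$ one has $\|\mathrm{Arg}(z\varphi_{i,e}'/\varphi_{i,e})\|_\infty = \arctan M$, and the identity
\[
\log|\varphi_{i,e}'(z)|^p \;=\; p\log\bigl|\varphi_{i,e}(z)/z\bigr|\;-\;H\!\left(p\,\mathrm{Arg}\frac{z\varphi_{i,e}'(z)}{\varphi_{i,e}(z)}\right),
\]
combined with the Helson--Szeg\H{o} theorem, yields $|\varphi_{i,e}'|^p \in A_2(\mathbb{T})$ for every $p < \pi/(2\arctan M)$. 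Using then the elementary fact that for any $A_2$-weight $\omega$ with $\tilde{p} := \sup\{p : \omega^p \in A_2\}$ one has $\delta(\omega) \le 1 + 1/\tilde{p}$, one obtains
\[
p_0 \;\le\; 1 + \frac{2\arctan M}{\pi}.
\]

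For the second step, since the map $x \mapsto x/(x-1)$ is decreasing on $(1,\infty)$, the bound above translates into
\[
\frac{p_0}{p_0-1} \;\ge\; \frac{\,1+\tfrac{2\arctan M}{\pi}\,}{\tfrac{2\arctan M}{\pi}} \;=\; 1 + \frac{\pi}{2\arctan M},
\]
so the admissible range of $p$ in the preceding theorem contains $(2,\,1+\pi/(2\arctan M))$, and the corollary follows.

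I do not expect a real obstacle here: the entire content of the corollary beyond the preceding theorem is the quantitative Helson--Szeg\H{o}-based estimate $p_0 \le 1 + 2\arctan M/\pi$, and this estimate was effectively derived already in the paragraph preceding Corollary~\ref{MMM}. The only subtlety is to ensure that both the inner and outer conformal maps $\varphi_i,\varphi_e$ are handled on the same footing, which is automatic since the radial-Lipschitz condition produces the same value of $\|\mathrm{Arg}(z\varphi'/\varphi)\|_\infty$ from either side.
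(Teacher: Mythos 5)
Your proposal is correct and follows the same approach as the paper: both deduce the corollary directly from the preceding theorem together with the estimate $p_0 \le 1 + \frac{2\arctan M}{\pi}$ obtained via Helson--Szeg\H{o} in the previous subsection, noting that $x\mapsto x/(x-1)$ is decreasing so that $p_0/(p_0-1) \ge 1 + \pi/(2\arctan M)$. You have simply spelled out the monotonicity step more explicitly than the paper, which states the conclusion without comment.
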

\noindent  This interval of the $p$-value converges to $(2, \infty)$ as $M\to 0$ and to a singleton $\{2\}$ as $M\to\infty$.

\section{Appendix}
Let $\Gamma$ be a Jordan curve with $\infty \notin \Gamma$, and $\Omega_{i,e}$ the connected components of $\overline{\mathbb C}\setminus\Gamma$ as above. 
In all this work, we have treated $\Omega_i$ and $\Omega_e$ similarly: in doing so we have overlooked the fact that $\Omega_e$ is unbounded, thus requiring an extra argument. The purpose of this appendix is to fill this gap. 

Without loss of generality, suppose that $\Gamma$ is included in the unit disk $\mathbb D$ with  $0$ being an interior point. 
 The map $z\mapsto 1/z$ transforms $\Omega_e$ into a bounded domain $\Omega'_i$ with boundary $\Gamma'=1/\Gamma$; $\Omega_i'$ contains $0$ that corresponds to $\infty$ in $\Omega_e$. 
  We say $u$ is harmonic in the domain $\Omega_e$ with $\infty$ being an interior point if $u(1/z)$ is harmonic in the bounded domain  $\Omega_i'$. 
  Let $p>1$ and $s\in (0,1)$ be as usual and consider $u\in \mathcal{B}_{p,p}^s(\Omega_e)$. 
  Let $v:\Omega_i'\to \mathbb C$ be defined by $v(z)=u(1/z)$. By a change of  variables $z=1/w$ we conclude by the Koebe distortion theorem \eqref{koebe} that 
\begin{equation}\label{ieie}
    \iint_{\Omega_e}|\nabla u(w)|^p d(w, \Gamma)^{(1-s)p-1}dudv \simeq \iint_{\Omega_i'} |\nabla v(z)|^p d(z, \Gamma')^{(1-s)p-1} |z|^{2(sp-1)} dxdy
\end{equation}
where the implicit constant is universal. 
We have the following
\begin{prop}\label{ie}
If $u \in \mathcal{B}_{p,p}^s(\Omega_e) $ then 
$v\in \mathcal{B}_{p,p}^s(\Omega_i').$
\end{prop}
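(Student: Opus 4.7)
The plan is to combine the equivalence \eqref{ieie} with a localization argument at $z=0$. By the hypothesis $u \in \mathcal{B}_{p,p}^s(\Omega_e)$ and \eqref{ieie}, the integral
\[ I := \iint_{\Omega_i'} |\nabla v(z)|^p\, d(z, \Gamma')^{(1-s)p-1} |z|^{2(sp-1)}\, dxdy \]
is finite, and my task reduces to showing that the same integral without the factor $|z|^{2(sp-1)}$ is finite. Since $\Omega_i'$ is bounded, the weight $|z|^{2(sp-1)}$ is controlled away from the origin, so the natural strategy is to split $\Omega_i'$ into a small disk $D(0,r)$ around $0$ and its complement, and to treat the two pieces by different arguments.

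To set up the splitting, I would first observe that because $\Gamma$ is a compact subset of the open unit disk $\mathbb{D}$, there exists $\delta>0$ with $\Gamma \subset \{|w|\le 1-\delta\}$. Consequently $\{|w|\ge 1-\delta\}$ is connected, contains $\infty$, and thus lies entirely in $\Omega_e$; this gives $\Gamma' = 1/\Gamma \subset \{|z|\ge 1/(1-\delta)\}$ and $\overline{D(0,1)} \subset \Omega_i'$. Fixing, say, $r = 1/2$, the modulus $|z|$ on $\Omega_i'\setminus D(0,r)$ lies between $r$ and the diameter of $\Omega_i'$, so $|z|^{2(sp-1)}$ is pinched between two positive constants, and hence the piece of the target integral over $\Omega_i' \setminus D(0,r)$ is controlled by a constant multiple of $I$, hence finite.

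The piece over $D(0,r)$ is handled using the convention adopted in the appendix: $u$ is called harmonic in $\Omega_e$ with $\infty$ an interior point precisely when $v(z) = u(1/z)$ is harmonic on all of $\Omega_i'$, in particular on a neighborhood of $0$. Interior regularity for harmonic functions then gives a uniform bound on $|\nabla v|$ on the compact set $\overline{D(0,r)} \subset \Omega_i'$, while on $D(0,r)$ the distance $d(z,\Gamma')$ is bounded below by $1/(1-\delta)-r>0$, keeping $d(z,\Gamma')^{(1-s)p-1}$ bounded. The integrand is therefore uniformly bounded on $D(0,r)$, a set of finite area, so this piece is trivially finite. Combining the two bounds yields the required finiteness, i.e.\ $v \in \mathcal{B}_{p,p}^s(\Omega_i')$. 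There is no real obstacle here: the whole point of the appendix is to observe that the discrepancy factor $|z|^{2(sp-1)}$ in \eqref{ieie} is a local-at-$0$ nuisance that is neutralized by the harmonicity of $v$ at $0$ and by $\Gamma'$ being uniformly bounded away from $0$.
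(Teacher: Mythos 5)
Your localization at $z=0$ --- splitting $\Omega_i'$ into $D(0,r)$ and its complement, then observing on the complement that $|z|^{2(sp-1)}$ is pinched between two positive constants --- mirrors the paper's proof for the outer piece. The difference lies in how the inner piece is estimated. You bound the integral over $D(0,r)$ by appealing only to the qualitative fact that $\nabla v$ is continuous on $\Omega_i'$, hence bounded on the compact set $\overline{D(0,r)}$, together with the observation that $d(z,\Gamma')$ stays away from $0$ there. That does suffice for finiteness, so your proof literally establishes the proposition as stated. However, the paper's argument is quantitative and yours is not: the paper applies the mean-value property of $\nabla v$ over two concentric disks $D(\zeta,R_1)\subset D(\zeta,R_2)$, with the annulus $\{R_1<|z-\zeta|<R_2\}$ contained in $\{1/2<|z|<1\}\subset\Omega_i'$, to obtain
\begin{equation*}
|\nabla v(\zeta)|\le\frac{1}{\pi(R_2^2-R_1^2)}\iint_{\{1/2<|z|<1\}}|\nabla v|\,dxdy,\qquad |\zeta|<r,
\end{equation*}
and then uses \eqref{ieie} together with H\"older's inequality to bound the right-hand side by $C(\Gamma)\|u\|_{\mathcal{B}_{p,p}^s(\Omega_e)}$. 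The payoff is an estimate of the form $\|v\|_{\mathcal{B}_{p,p}^s(\Omega_i')}\le C\|u\|_{\mathcal{B}_{p,p}^s(\Omega_e)}$, which is the operative content of the appendix: the map $u\mapsto v$ must be a \emph{bounded} isomorphism so that the bounded-domain results of Section~5 (Theorems~\ref{interpol} and~\ref{interpolpps}) transfer to $\Omega_e$ with comparable norms, and mere finiteness is not enough for that. Your "interior regularity gives a uniform bound" step has no control in terms of $\|u\|_{\mathcal{B}_{p,p}^s(\Omega_e)}$, and one cannot fix it by bounding $\sup_{D(0,r)}|\nabla v|$ through $\sup|v|$, since the semi-norm vanishes on constants. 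If you want the argument to feed into the later sections, replace the appeal to continuity on a compact set by the annular mean-value estimate above.
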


\begin{proof}
    Let $r<1/4$ and $R_1,R_2$ be such that $\frac 12+r<R_1<R_2<1-r$. For any $\zeta$ such that $|\zeta|<r$ the annulus $A_\zeta=\{R_1<|z-\zeta|<R_2\}$ is included in the annulus $\{\frac 12<|z|<1\}$. By the mean value property for harmonic functions, we have, for $|\zeta|<r$, 
$$
\nabla v(\zeta) = \frac{1}{\pi R_i^2}\iint_{|z-\zeta|<R_i} \nabla v(z) dxdy, \quad i=1, 2
$$
so that 
    
$$
|\nabla v(\zeta)|\leq \frac{1}{\pi (R_2^2 - R_1^2)}\iint_{\{1/2<|z|<1\}}|\nabla v(z)|dxdy. 
$$
Noting \eqref{ieie} we can conclude  using  H\"older inequality that $|\nabla v| $ is uniformly  bounded in the disk $|\zeta|<r$ by a constant depending only on $\Gamma$ multiplied by the norm of $u$ in $\mathcal{B}_{p,p}^s(\Omega_e)$. This estimate implies that
\begin{equation}\label{ieieie}
    \iint_{|z|<r}|\nabla v(z)|^p d(z, \Gamma')^{(1-s)p-1}dxdy < \infty.
\end{equation}

 In order to control the rest of the integral, giving the norm of $v$:
 \begin{equation}\label{ieieie}
    \iint_{\Omega'_i\setminus \{|z|\leq r\}}|\nabla v(z)|^p d(z, \Gamma')^{(1-s)p-1}dxdy, 
\end{equation}
 we first observe that the map $w = 1/z$ is bi-Lipschitz in the domain $\Omega'_i\setminus (|z|<r)$ and then that there exists a constant $C$ depending only on $\Gamma$ such that
$$\frac{1}{C|w|}d(w,\Gamma)\le d(1/w,\Gamma')\le \frac{C}{|w|}d(w,\Gamma).$$
The details are left to the reader.
\end{proof}

\medskip

\textbf{Statements and Declarations.} 
The authors declare that there are no conflicts of interest and that there are no data associated with this work.

\bigskip

\textbf{Acknowledgments. }
This work is supported by the National Natural Science Foundation of China (Grant Nos. 12271218 and 12571083) and the University of Orl\'eans. The authors also warmly thank the Poincar\'e Institute in Paris for its hospitality through the framework of the program "Research in Paris". Special thanks also to G\'erard Bourdaud for his help and for interesting discussions.


\begin{thebibliography}{99}
\bibitem{Ahl}
Ahlfors, L.V.: 
\newblock {\em Lectures on Quasiconformal Mappings}. 
\newblock Mathematical Studies, vol. 10. Van Nostrand, Princeton (1966)

\bibitem{ArcozziR}
Arcozzi, N., Rochberg, R.: 
\newblock  Invariance of capacity under quasisymmetric maps of the circle: an easy proof. In {\em  Trends in harmonic analysis}, 
\newblock  Springer INdAM Ser. 3, Springer, Milan, 27-32 (2013)

\bibitem{Ast}
Astala, K.: 
\newblock Calder\'on's problem for Lipschitz classes and the dimension of quasicircles. 
\newblock {\em Rev.  Mat. Iberoam.}  4, 469-486 (1988)

\bibitem{BL}
Bergh, J., L\"ofstr\"om, J.: 
\newblock {\em Interpolation Spaces. An Introduction}. 
\newblock Grundlehren der Mathematischen Wissenschaften, No. 223. Springer-Verlag, Berlin-New York (1976)

\bibitem{Bou}
Bourdaud, G.: 
\newblock Changes of variables in Besov Spaces II.
\newblock {\em Forum Math.} 12,  545-563 (2000)

\bibitem{Cal}
Calder\'on, A.P.: 
\newblock Cauchy integrals on Lipschitz curves and related operators. 
\newblock {\em Proc. Nat. Acad. Sci. U.S.A.}  74, 1324-1327 (1977)

\bibitem{Cal1}
Calder\'on, A.P.: 
\newblock Intermediate spaces and interpolation. 
\newblock {\em Studia Math., Special Series I}, 31-34 (1963)

\bibitem{Chua}
Chua, S-K.: 
\newblock Extension theorems for Weighted Sobolev Spaces. 
\newblock {\em Indiana Univ. Math. J.} 41, 1027-1076 (2000)


\bibitem{CoF}
Coifman, R., Fefferman, C.: 
\newblock Weighted norm inequalities for maximal functions and singular integrals. 
\newblock {\em Studia Math.}  51(3), 241-250 (1974)

\bibitem{CMM}
Coifman, R.,  McIntosh, A., Meyer, Y.:  
\newblock L'int\'egrale de Cauchy d\'efinit un op\'erateur born\'e sur $L^2$ pour les courbes lipschitziennes. 
\newblock {\em Ann. of Math.} 116(2), 361-387 (1982)

\bibitem{Dav}
David, G.: 
\newblock Op\'erateurs int\'egraux  singuliers sur certaines courbes du plan complexe. 
\newblock {\em Ann. Sci. \'Ecole Norm. Sup.} 17, 157-189 (1984)

\bibitem{Dav87}
David, G.: 
\newblock A lower bound for the norm of the Cauchy operator on Lipschitz graphs. 
\newblock {\em Trans. Amer. Math. Soc.} 302(2), 741–750 (1987)


\bibitem{Din}
Ding, Z.: 
\newblock A proof of the trace theorem of Sobolev spaces on Lipschitz domains. 
\newblock {\em Proc. Amer. Math. Soc.} 124(2), 591-600 (1996)

\bibitem{Dou}
Douglas, J.: 
\newblock Solution of the problem of Plateau.  
\newblock {\em Trans. Amer. Math. Soc.} 33, 263-321 (1931)

\bibitem{Duren}
Duren, P.: 
{\em Theory of $H^p$ Spaces}. Academic Press. New York and London (1970)

\bibitem{Gar}
Garnett, J.: 
\newblock {\em Bounded Analytic Functions}. 
\newblock Academic Press, New York (1980)

\bibitem{GM}
Gehring, F.W.,  Martio, O.:  
\newblock Quasidisks and the Hardy-Littlewood property. 
\newblock {\em Complex Variables Theory Appl.}  2(1), 67-78 (1983)

\bibitem{Gold}
Gol'dstein, V.M., Latfullin, T.G., Vodop'yanov, S.K.:
\newblock Criteria for extension of functions of the class $L^1_2$ from unbounded plain domains.
\newblock {\em Siberian Math. J.} (English translation)  20(2), 298-301 (1979)


\bibitem{GW}
Gundy, R.F., Wheeden, R.L.: 
\newblock Weighted integral inequalities for the nontangential maximal function, Lusin area integral, and Walsh-Paley series. 
\newblock {\em Studia Math.} 49, 107–124 (1973/74)

\bibitem{Hin}
Hinkkanen, A.:  
\newblock Modulus of continuity of harmonic functions.
\newblock {\em Journal d'Analyse Math.}  51, 1-29 (1988)


\bibitem{JK}
Jerison, D.S.,  Kenig, C.E.: 
\newblock Hardy spaces, $A_{\infty}$, and singular integrals on chord-arc domains. 
\newblock {\em Math. Scand.} 50, 221-247 (1982)

\bibitem{Jon}
Jones, P.W.: 
\newblock Quasiconformal mappings and extendability of functions in Sobolev spaces. 
\newblock {\em Acta. Math.} 147,  71-88 (1981)

\bibitem{JoZi} 
Jones, P.,  Zinsmeister, M.:  
\newblock Sur la transformation conforme des domaines de Lavrentiev. 
\newblock {\em C. R. Acad. Sci. Paris S\'er. I Math.}  295(10), 563-566 (1982)

\bibitem{JW1} 
Jonsson, A., Wallin, H.: 
\newblock {\em Function Spaces on Subsets of $\mathbb R^n$.}
\newblock  {\em Math. Rep.}  2(1)  (1984)

\bibitem{Lehto}
Lehto, O.: 
{\em Univalent Functions and Teichm\"uller Spaces}. 
Graduate Texts in Math. 109, Springer (1987)



\bibitem{Leoni2017}
Leoni, G.:
\newblock{\em A First Course in Sobolev Spaces}.
\newblock Graduate Studies in Mathematics, vol. 18, 2nd edn. American Mathematical Society, Providence, RI (2017)

\bibitem{Liu} 
Liu, T.: 
\newblock Cauchy integral on chord-arc curves and the related research. 
\newblock {\em Ph.D. dissertation}.   Soochow University (2024)

\bibitem{LS} 
Liu, T.,  Shen, Y.: 
\newblock The jump problem for the critical Besov space. 
\newblock {\em Math. Z.}  306(4), 59 (2024)

\bibitem{LS-1} 
Liu, T.,  Shen, Y.: 
\newblock A brief approach to a Riemann-Hilbert problem on quasi-circles. 
\newblock {\em Chin. Ann. Math. Ser.B}  45(6), 971-978 (2024)

\bibitem{Lehto-Virtanen}
Lehto, O., Virtanen, K.I.: 
\newblock {\em Quasiconformal Mappings in the Plane}. 2nd edn. Springer-Verlag, New York, Heidelberg, Berlin (1973)

\bibitem{Mat}
Matsuzaki, K.: 
\newblock  Integrable Teichm\"uller spaces for analysis on Weil-Petersson curves.
\newblock {\em arXiv} 2508.20341

\bibitem{Maz} 
Maz'ya, V.: 
\newblock {\em Sobolev Spaces with Applications to Elliptic PDE}. 
\newblock  Grundlehren der Math. Wis.  342, Springer (2011)

\bibitem{MR} 
Mironescu, P., Russ, E.: 
\newblock Traces of weighted Sobolev Spaces, Old and New. 
\newblock {\em Nonlinear Analysis}  119, 354-381 (2011)

\bibitem{Muc72}
Muckenhoupt, B.:
\newblock Weighted norm inequalities for the Hardy maximal function.
\newblock {\em Trans. Amer. Math. Soc.} 165, 207-226 (1972)


\bibitem{Mur} 
Murai, T.:  
\newblock Boundedness of singular integral operators of Calder\'on type.VI. 
\newblock {\em Nagoya Math. J.} 102, 127-133 (1986)

\bibitem{Martio-V}
Martio, O., Vuorinen, M.: 
\newblock Whitney cubes, $p$-capacity, and Minkowski content.
\newblock {\em Expo. Math.} 5, 17-40 (1987)


\bibitem{NaS} 
Nag, S., Sullivan, D.: 
\newblock Teichm\"uller theory and the universal period mapping via quantum calculus and the  $H^{1/2}$  space on the circle. 
\newblock {\em Osaka J. Math.} 32(1), 1-34 (1995)

\bibitem{Ple} 
Plemelj, J.:  
\newblock Riemannsche Funktionenscharen mit gegebener Monodromiegruppe. 
\newblock {\em Monatsh.  Math. Phys.} 19(1), 211-245 (1908)

\bibitem{Pom}
Pommerenke, Ch.: Boundary Behaviour of Conformal Maps. Springer (1992)

\bibitem{Sc1} 
Schippers, E., Staubach, W.:  
\newblock Harmonic reflection in quasicircles and well-posedness of a Riemann problem on quasidisks. 
\newblock {\em J. Math. Anal. Appl.} 448(2), 864–884 (2017)

\bibitem{SS-AASF} 
Schippers, E., Staubach, W.:  
\newblock Well-posedness of a Riemann-Hilbert problem on $d$-regular quasidisks. 
\newblock {\em Ann. Acad. Sci. Fenn. Math.} 42, 141-147 (2017)

\bibitem{SS-MAAN} 
Schippers, E., Staubach, W.:  
\newblock Plemelj-Sokhotski isomorphism for quasicircles in Riemann surfaces and the Schiffer operators. 
\newblock {\em Math. Ann.} 378, 1613-1653 (2020)

\bibitem{SS-EMS} 
Schippers, E., Staubach, W.:  
\newblock Analysis on quasidisks: A unified approach through transmission and jump problems. 
\newblock {\em EMS Surv. Math. Sci.} 9, 31-97 (2022)

\bibitem{Ste} 
Stegenga, D.A.:  
\newblock Multipliers of the Dirichlet space. 
\newblock {\em Illinois J. Math.}  24(1), 113-139 (1980)  

\bibitem{Stein1970}
Stein, E.M.:
\newblock {\em Singular Integrals and Differentiability Properties of Functions},
\newblock Princeton Mathematical Series, vol.30, Princeton University Press, Princeton, N.J., 1970. 


\bibitem{Tri} 
Triebel, H.: 
\newblock {\em Theory of Function Spaces. II.} 
\newblock Monogr. Math., 84, Birkh\"auser Verlag, Basel (1992)


\bibitem{Va} 
V\"ais\"al\"a, J.:  
\newblock Porous sets and quasisymmetric maps. 
\newblock {\em Trans. Amer. Math. Soc.}, 299(2), 525-533 (1987)

\bibitem{Vo}
Vodop'yanov, S.K.:
\newblock Mappings of homogeneous groups and embeddings of functional spaces.
\newblock {\em Siberian Math. Zh.} 30, 25-41 (1989)

\bibitem{Voigt}
Voigt, J.: 
\newblock Abstract Stein interpolation.
\newblock {\em Math. Nachr.} 157, 197-199 (1992)


\bibitem{WZ} 
Wei, H., Zinsmeister, M.: 
\newblock Dirichlet spaces over chord-arc domains. 
\newblock {\em Math. Ann.}  391(1), 
 1045–1064 (2025)
 
 \bibitem{WZ2} 
Wei, H., Zinsmeister, M.: 
\newblock $p$-Dirichlet spaces over chord-arc domains. 
\newblock {\em ArXiv} 2407.11577

\bibitem{Zhu}
Zhu, K.: 
{\em Spaces of Holomorphic Functions in the Unit Ball}. 
Graduate Texts in Math. 226, Springer (2004)


\bibitem{ZZ} 
Zhao, R., Zhu, K.: 
\newblock Theory of Bergman spaces in the unit ball of $\mathbb C^n$. 
\newblock {\em M\'em. Soc. Math. Fr. (N.S.)} No. 115 (2008)

 
\bibitem{Zi1} 
Zinsmeister, M.:  
\newblock Problèmes de Dirichlet, Neumann, Calderón dans les quasidisques pour les classes höldériennes 
\newblock (Dirichlet, Neumann and Calder\'on problems in quasidisks for H\"older classes).  
\newblock {\em Rev. Mat. Iberoam.} 2(3), 319–332 (1986)
\end{thebibliography}
\end{document}